\newcommand{\mystyle}{numeric}
\renewcommand{\mystyle}{numeric}
\newtheorem{lemma}{Lemma}
\newtheorem{theorem}{Theorem}
\newtheorem{example}{Example}
\newtheorem{definition}{Definition}
\newtheorem{proposition}{Proposition}
\newtheorem{corollary}{Corollary}
\newtheorem{assumption}{Assumption}
\providecommand{\tightlist}{%
  \setlength{\itemsep}{0pt}\setlength{\parskip}{0pt}}
\title{Univalent Material Set Theory}
\author{Håkon Robbestad Gylterud \and Elisabeth Stenholm}
\begin{document}

\maketitle

\scrollmode

\begin{abstract}

  Homotopy type theory (HoTT) can be seen as a generalisation of structural set
  theory, in the sense that 0-types represent structural sets within the more
  general notion of types. For material set theory, we also have concrete models
  as 0-types in HoTT, but this does not currently have any generalisation to
  higher types. The aim of this paper is to give such a generalisation of material
  set theory to higher type levels within homotopy type theory. This is achieved
  by generalising the construction of the type of iterative sets
  \cite{gylterud-iterative} to obtain an $n$-type universe of $n$-types. At level
  1, this gives a connection between groupoids and multisets.
  
  More specifically, we define the notion of an ∈-structure as a type with an
  extensional binary type family and generalise the axioms of constructive set
  theory to higher type levels. There is a tight connection between the univalence
  axiom and the extensionality axiom of ∈-structures. Once an ∈-structure is
  given, its elements can be seen as representing types in the ambient type
  theory. A useful property of these structures is that an ∈-structure of n-types
  is itself an n-type, as opposed to univalent universes, which have higher type
  levels than the types in the universe.
  
  The theory has an alternative, coalgebraic formulation, in terms of coalgebras
  for a certain hierarchy of functors, $Pⁿ$, which generalises the powerset
  functor from sub-types to covering spaces and $n$-connected maps in general. The
  coalgebras which furthermore are fixed-points of their respective functors in
  the hierarchy are shown to model the axioms given in the first part.
  
  As concrete examples of models for the theory developed we construct the initial
  algebras of the $Pⁿ$ functors. In addition to being an example of initial
  algebras of non-polynomial functors, this construction allows one to start with
  a univalent universe and get a hierarchy of ∈-structures which gives a
  stratified ∈-structure representation of that universe. These types are moreover
  $n$-type universes of $n$-types which contain all the usual types an type
  formers. The universes are cumulative both with respect to universe levels and
  with respect to type levels.
  
  All the results are formalised in the proof-assistant Agda.
  
\end{abstract}

\hypertarget{introduction}{%
\section{\texorpdfstring{Introduction
\label{introduction}}{Introduction }}\label{introduction}}

\emph{Material set theories} are set theories which emphasise the notion
of sets as collections of elements (often themselves sets) and where the
identity of individual elements is tracked across sets, usually with a
global, binary membership relation (∈). This category includes the
traditional Zermelo--Fraenkel (ZF) set theory. ZF is a theory in the
language of first-order logic, intended as a foundation for mathematics.
In what follows, we work within the framework of homotopy type theory
(HoTT), fully formalized in Agda. HoTT is a structural framework, with
the Univalence Axiom in particular allowing identification of types
which are structurally the same, i.e. equivalent -- deemphasising the
individual elements and their identity outside the structure of the
type. Taking a step back, one can see HoTT as a generalisation of
structural set theory to higher type levels. The high-level question we
attempt to answer in this paper is: What is the corresponding
generalisation of material set theory to higher type levels? From this
vantage point we will regard material set theory in the same way that a
classical mathematician regards group theory. Namely, we study a certain
type of mathematical structures, and interest ourselves in their
properties and relationships. The structures we study are not groups,
but what we call ∈-structures: types with an extensional elementhood
relation.

Since we are working in HoTT, we can consider the type level of the
underlying type of sets and of the relation of an ∈-structure. In
classical set theory, the statement ``\(x ∈ y\)'' is a proposition. But
in our setting, we can consider ∈-structures where \(x ∈ y\) is a type
of any type level. Similarly, we can generalise the axioms of ZF to
higher type levels. An example of such a ∈-structure, where \(x ∈ y\) is
allowed to be a type of any level, was considered in \emph{Multisets in
type theory} \cite{gylterud-multisets} by one of the authors.

We aim to give a higher level generalisation of material set theory, by
considering ∈-structures where \(x ∈ y\) is an \(n\)-type. With care,
the usual properties, which we know and love from set-theory, can be
reformulated and proven to hold in our models. But sometimes what used
to be a single property generalises to several properties when taking
higher type levels into consideration. Let us, for the sake of building
some intuition, take a closer look at level 1 of this generalisation.

In a level 1 ∈-structure, elements are related by a set-valued
∈-relation: given two elements, \(x,y : V\) the type \(x ∈ y\) is a set.
One way of understanding this to think of \(y\) as a multiset where
\(x\) may occur more than once. For instance, if \(x ∈ y\) is a finite
type with \(n\) elements, then we can think of this as saying that \(x\)
occurs \(n\) times in \(y\). The generalised properties support this
interpretation: For instance, level 1 (unordered) tupling allows the
formation of multisets of the form \(\{x₀,⋯,x_n\}₁\) where repetitions
will be counted separately. But a level 1 ∈-structure may also support
level 0 (unordered) tupling, with a separate operation,
\(\{x₀,⋯,x_n\}₀\), which becomes a set: \(x∈\{x₀,⋯,x_n\}₀\) being
proposition for any \(x\).

The connection between level 1 ∈-structures and level 1 types,
i.e.~groupoids, is the (perhaps surprising) observation that these
multisets represent groupoids. First of all, a level 1 ∈-structure is
itself a groupoid: The identifications between multisets are free to
permute the occurrences of a given element, giving rise to non-trivial
automorphisms. For instance, the type \(\{∅,∅\}₁ =\{∅,∅\}₁\) has two
distinct elements. A consequence of this is that if we look at the total
type of elements of a multiset, \(\El␣x := ∑_{y:V}y∈x\), we get a
groupoid -- the groupoid represented by \(x\). At first glance, it might
seem as though \(\El␣x\) might always be a set. For instance,
\(\El␣\{∅,∅\}₁\) is a set with two elements. But, by nesting multisets,
we can represent other groupoids. For instance, the cyclic group with
two elements (as a groupoid) is represented by \(\{\{∅,∅\}₁\}₀\), the
set which contains the multiset \(\{∅,∅\}₁\) exactly once. The reason
why \(\El␣\{\{∅,∅\}₁\}₀ = B(ℤ₂)\) is a bit subtle. Notice, the
alternation of subscripts on the tuplings. Had we instead chosen
\(\{\{∅,∅\}₁\}₁\), we would have two occurrences of \(\{∅,∅\}₁\),
because of its two automorphisms, while (perhaps counter-intuitively)
\(\El␣\{\{∅,∅\}₁\}₁\) is the unit type. When we do a 0-singleton of a
multiset, say \(x\), the total type is in general the connected
component, because \(y ∈ \{x\}_0 ≃ ∥y=x∥_{-1}\) and hence
\(\El␣\{x\}₀ ≃ ∑_{y:V}∥y=x∥_{-1}\). So, if \(x\), as \(\{∅,∅\}₁\) does,
has non-trivial automorphisms, these will show up in \(\El␣\{x\}₀\). In
a strong enough level 1 ∈-structure, any (small) group can be
represented.

There is an immediate connection between univalent set theory and
homotopy type theory, whereby there is an equivalence between
∈-structures and coalgebras for the \(n\)-truncated maps functors
\(\T^{n+1}_U : \Type → \Type\), which associates to each type \(X\) the
type of n-truncated maps into \(X\). Thus, \(\T⁰_U X\) is the type of
subtypes of \(X\), while \(\T¹_UX\) is the type of covering spaces of
\(X\) and so on. We show that these functors have initial algebras,
\(\Vⁿ\), and determine the univalent set theory axioms satisfied by the
initial algebras and other fixed-points of these functors. These initial
algebras hence form a family of models of univalent material set theory,
motivating the axioms and interpolating between the standard iterative
hierarchy and the generalised multisets.

One way in which univalent material set theory distinguishes itself in
HoTT is that type levels are \emph{off by one}. What this means is that
models form \(n\)-type based families of \(n\)-types: if \(A,B : V\) are
\(n\)-types in \((V,∈)\) (a notion made precise in Definition
\ref{element-n-type}) then \(A=B\) is an \(n-1\)-type. This means that
\(V\) is an \(n\)-type. This contrasts the situation for univalent
universes, where a well-known result \cite{kraus_truncation_2015} states
that if \(U\) contains strict \(n\)-types the type level of \(U\) is
itself at least \(n+1\).

The model given by the initial algebra \(\Vⁿ\) is of level \(n\), and
thus the type \(\Vⁿ\) is an \(n\)-type. The type can be equipped with
the structure of a Tarski style universe. The decoding of an element in
\(\Vⁿ\) is an \(n\)-type, so \(\Vⁿ\) is an \(n\)-type universe of
\(n\)-types. Moreover, the decoding holds up to definitional equality,
making it very ergonomic to use.

Type levels being off by one might seem strange at first, but it is
caused by the \(∈\)-relation imposing extra structure and thereby
killing automorphisms. This observation generalises what is known about
the cumulative hierarchy in models of (usual) set theory in HoTT, where
\(V\) is a set of sets. Especially in category theory, this can be
useful to strictify structures. For instance, as explored in \emph{The
Category of Iterative Sets in Homotopy Type Theory and Univalent
Foundations} \cite{gratzer2024category}, when recreating the category
with family structure on sets in HoTT, one finds oneself blocked by the
fact that the types in a context forms a strict groupoid, not a set. By
using an ∈-structure as the category of contexts, the off-by-one
property sidesteps this block, yielding a good category with families.

Another perspective we explore is extracting types from ∈-structures. A
given element of an ∈-structure has a type of elements, and considering
the whole ∈-structure we can ask what types can be represented as types
of elements within it. Some insights into how replacement affects
representations of types, such as ℕ for the axiom of infinity, has been
collected in Section \ref{representations-of-types-in-e-structures}. In
particular, we show that the replacement property in set theory says
that the \(∈\)-structure supports all choices of representations of a
type equally (Proposition \ref{internalisation-from-replacement}).

\hypertarget{contributions}{%
\subsection{Contributions}\label{contributions}}

The following are the main contributions of the paper.

\begin{itemize}
\tightlist
\item
  Construction of initial algebras for the non-polynomial functors
  \(\Tⁿ_U\), generalising the construction of the type of iterative of
  sets as the initial algebra for the powerset functor to higher type
  levels (Theorem \ref{V-n-initial}).
\item
  Proof that these initial algebras are \(n\)-type universes of
  \(n\)-types, with definitional decoding (Section
  \ref{section-tarski-universe}).
\item
  Generalisation of the axioms of set theory to properties of
  ∈-structures of any type level (Section \ref{section-e-structures}).
\item
  A framework for representations of types in ∈-structures. This is
  applied to give a new formulation of the axiom of infinity, which does
  not fix a specific encoding of the natural numbers (Definition
  \ref{natural-numbers}).
\item
  Equivalence of \(\Tⁿ_U\)-coalgebras and \(U\)-like ∈-structures,
  generalising the well-known connection between coalgebra and set
  theory (Theorem \ref{U-like-equiv-T-n-coalgebra}).
\item
  Proof that any fixed-point of \(\Tⁿ_U\) is a model of the
  generalisations of the axioms of set theory, except foundation, both
  generalising and proving in HoTT the result by Rieger
  \cite{rieger1957} (Section \ref{section-fixed-point-models}).
\item
  New and short proof of the fiberwise equivalence lemmas: equivalence
  of families of maps (resp. equivalences) and maps (resp. equivalences)
  of total spaces respecting the first coordinate (Lemma
  \ref{total-fiberwise-map} and Corollary \ref{total-fiberwise-equiv}).
\end{itemize}

Some of the ideas and definitions of this article were present in an
unpublished preprint, titled ``Non-wellfounded sets in HoTT''
\cite{GylterudBonnevier2020}. This preprint however, had a flawed
argument in its fourth section and the main construction of that
preprint cannot be carried out as described there. The results from
Sections 2 and 3 of the preprint, which were correct, have been
generalised to higher type levels. These generalisations can now be
found in Section 2 and 5 of the current paper.

\hypertarget{formalisation}{%
\subsection{Formalisation}\label{formalisation}}

Everything in this paper has been formalised in the \texttt{Agda} proof
assistant \cite{agda}. Our formalisation builds on the
\texttt{agda-unimath} library \cite{agda-unimath}, which is an extensive
library of formalised mathematics from the univalent point of view.

The formalisation for this paper can be found at:
\url{https://git.app.uib.no/hott/hott-set-theory}. Throughout the paper
there will also be clickable links to specific lines of Agda code
corresponding to a given result. These will be shown as the Agda logo
\AgdaHen.

\hypertarget{notation-and-universes}{%
\subsection{Notation and universes}\label{notation-and-universes}}

A lot of the basic constructions within HoTT have an established
notation at this point in time. Nevertheless, to avoid confusion, we
include here a list of some of the, perhaps less obvious, notation we
will use in this paper. The notation we do not include in this list will
usually follow the conventions of the HoTT Book \cite{hottbook}.

\begin{itemize}
\tightlist
\item
  \(\emptytype\) denotes the empty type, with eliminator \(\exfalso\).
\item
  \(\unittype\) denotes the unit type.
\item
  \(\twoelemtype\) denotes the type with two elements.
\item
  \(\Nat\) denotes the type of natural numbers, with constructors \(0\)
  and \(\sucN\).
\item
  \(\idequiv\) denotes the identity equivalence, on a given type.
\item
  \(f ∼ g\) denotes the type of homotopies from \(f\) to \(g\):
  \(∏_{x : X} f␣x = g␣x\).
\item
  \(\reflhtpy\) denotes the homotopy \(f ∼ f\) given by the map
  \(λ␣x.␣\refl\).
\item
  \(\fib␣f␣y\) denotes the homotopy fiber: \(∑_{x : X} f␣x = y\).
\item
  \(π₀\) and \(π₁\) denote the first, respectively second, projection
  out of a Σ-type.
\item
  Given a path \(p : x = y\), \(p^{-1}\) denotes the inverse path
  \(y = x\).
\item
  Given a family \(P\) of types over \(X\) and a path \(p : x = y\),
  \(\tr{P}{p}{}\) denotes the transport function from \(P␣x\) to \(P␣y\)
  over \(p\).
\item
  Given types \(A\) and \(B\) and a path \(p : A = B\),
  \(\coe␣p : A → B\) is the map defined by path induction, taking the
  identity map for \(\refl\).
\item
  Given an invertible function \(f\) (usually an equivalence),
  \(f^{-1}\) denotes the inverse.
\item
  Given a family of maps \(f : ∏_{x : X} P␣x → Q␣x\),
  \(\total␣f : ∑_{x : X} P␣x → ∑_{x : X} Q␣x\) is the function:
  \(λ␣(x,p).␣(x,f␣x␣p)\).
\item
  \(\isntruncmap{n} f\) is the proposition that \(f\) is an
  \(n\)-truncated map: \(∏_{y : Y} \isntype{n} (\fib␣f␣y)\).
\item
  \(X ↪ Y\) is the type of propositionally truncated maps:
  \(∑_{f : X → Y} \isntruncmap{(-1)} f\).
\item
  \(X ↪_n Y\) is the type of \(n\)-truncated maps:
  \(∑_{f : X → Y} \isntruncmap{n} f\).
\item
  \(X ↠ Y\) is the type of \((-1)\)-connected maps:
  \(∑_{f : X → Y} ∏_{y : Y} \iscontr \| \fib␣f␣y \|_{-1}\).
\item
  \(X ↠_n Y\) is the type of \(n\)-connected maps:
  \(∑_{f : X → Y} ∏_{y : Y} \iscontr \| \fib␣f␣y \|_n\)
\item
  \(\exuniqueinl{x : X} P␣x\) denotes the type:
  \(\iscontr \left(∑_{x : X} P␣x\right)\).
\item
  \(\funext\) is the function \(f ∼ g → f = g\) given by function
  extensionality.
\item
  \(\ua\) is the function \(X ≃ Y → X = Y\) given by univalence.
\item
  \(\Prop_U\) is the type of all propositions in the universe \(U\),
  i.e.~the type \(∑_{X : U} \isprop X\).
\item
  \(\Set_U\) is the type of all sets in the universe \(U\), i.e.~the
  type \(∑_{X : U} \isset X\).
\item
  More generally, \(\nType{n}_U\) is the type of all \(n\)-types in the
  universe \(U\), i.e.~the type \(∑_{X : U} \isntype{n} X\).
\end{itemize}

We will use the same terminology as the HoTT Book regarding type levels.
But we will also define a notion of \emph{level} for ∈-structures and
elements in (the carrier of) an ∈-structure. This overloading of
terminology should be fine however, since it should be clear from the
context what kind of level we are referring to. The notions of
\emph{mere proposition} and \emph{mere set} are used to denote types of
level \(-1\) and \(0\) respectively, when there is need for clarity.

In this paper we will assume two type universes\footnote{The
  formalisation differs in this respect as it uses a hierarchy of
  universes, but the relationship between \(U\) and \(\Type\) in the
  article, is the same as the relationship between \texttt{UU\ i} and
  \texttt{UU\ (lsuc\ i)} in the formalisation.}, a large univalent
universe, denoted \(\Type\), and a small univalent universe, denoted
\(U\). We use cumulative universes, i.e.~\(U : \Type\) and
\(X : \Type\), for all \(X : U\). It is assumed that both \(U\) and
\(\Type\) are closed under the usual type formers, such as Π-types,
Σ-types, and identity types. The constructions below also use the empty
type and the type of natural numbers. We will use function
extensionality freely.

From Section
\ref{equivalence-of-extensional-coalgebras-and-u-like-e-structures} and
onwards, we will also assume that we can construct small images in
certain situations. This assumption is informed by Rijke's modified join
construction \cite{rijke2017}, which can be used to construct such small
images. One can alternatively assume that \(U\) is closed under homotopy
colimits, from which the smallness of (certain) images follows by the
join construction.

\hypertarget{structures}{%
\section{\texorpdfstring{∈-structures\label{section-e-structures}}{∈-structures}}\label{structures}}

In this section we give the definition of ∈-structures\footnote{This
  section defines the notion of ∈-structure slightly differently than
  ∈-structures were defined in a previous article by one of the authors
  \cite{gylterud-iterative}. This difference is by incorporating
  extensionality and generalising to higher type levels.} and formulate
properties of these inspired by set theory. Most of the properties are
indexed by a type level, from 0 to ∞. The level 0 version of the
property is equivalent to the usual set theoretic concept for
∈-structures of level 0, while the ∞ version was explored in
\cite{gylterud-multisets}.

\begin{definition}[\agdalink{https://elisabeth.stenholm.one/univalent-material-set-theory/e-structure.core.html\#1878}]

An \textbf{∈-structure} is a pair \((V,∈)\) where \(V : \Type\) and
\(∈␣: V → V → \Type\), which is \textbf{extensional}: for each
\(x,y : V\), the canonical map \(x = y → ∏_{z : V} {z ∈ x} ≃ {z ∈ y}\)
is an equivalence of types.\end{definition}

Extensionality states that we can distinguish sets by their elements. It
is expressed in first order logic using logical equivalence, but since
we are working in the framework of HoTT and want to allow for
elementhood relations of higher type level, we use instead equivalence
of types. This of course reduces to logical equivalence in the case when
the ∈-relation is propositional.

Many times we will want to talk about all members of a given element in
\(V\). We introduce a notation for this.

\begin{definition}[\agdalink{https://elisabeth.stenholm.one/univalent-material-set-theory/e-structure.core.html\#5019}]

\label{def-el}Given an ∈-structure, \((V,∈)\), we define the family
\(\El : V → \Type\) by \(\El␣a ≔ ∑_{x : V} x ∈ a\).\end{definition}

The usual notion of (extensional) model of set theory corresponds to
∈-structures, \((V,∈)\), where \(x ∈ y\) is a mere proposition for each
\(x,y : V\) (and consequently \(V\) is a mere set). However, there are
examples of extensional ∈-structures where \(V\) is not a mere set. One
such example, based on Aczel's \cite{aczel1978} type \(\W{A:U}{A}\), was
explored in an article by one of the authors \cite{gylterud-multisets}.

We can stratify ∈-structures based on the type level of the ∈-relation.

\begin{definition}[\agdalink{https://elisabeth.stenholm.one/univalent-material-set-theory/e-structure.core.html\#4010}]

Given \(n : \Nat_{-2}\), an ∈-structure \((V,∈)\) is said to be of
\textbf{level (n+1)} if for every \(x,y:V\) the type \(x ∈ y\) is an
\(n\)-type.\end{definition}

The following proposition explains the occurrence of \(n+1\) in the
definition above:

\begin{proposition}[\agdalink{https://elisabeth.stenholm.one/univalent-material-set-theory/e-structure.core.html\#4100}]

\label{m-h-level}In an ∈-structure, \((V,∈)\), of level \(n\) the type
\(V\) is an \(n\)-type.\end{proposition}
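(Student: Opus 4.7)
The plan is to unfold the definition of level: saying $(V,∈)$ has level $n$ means that for all $x,y:V$ the type $x ∈ y$ is an $(n-1)$-type (writing $n = m+1$ with $m:\Nat_{-2}$, so $x ∈ y$ is $m$-type). Showing $V$ is an $n$-type then amounts to showing that $x = y$ is an $(n-1)$-type for every $x,y:V$, and the extensionality hypothesis is exactly what lets us rewrite $x = y$ in terms of the $∈$-relation.

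Concretely, I would proceed as follows. Fix $x,y : V$. By extensionality, the canonical map
\[
  x = y \;\longrightarrow\; \prod_{z:V} (z ∈ x) ≃ (z ∈ y)
\]
is an equivalence, so it suffices to show that the codomain is an $(n-1)$-type. For each $z:V$, the types $z ∈ x$ and $z ∈ y$ are $(n-1)$-types by assumption. Since $\text{isEquiv}(f)$ is a proposition and the function type $(z ∈ x) → (z ∈ y)$ is an $(n-1)$-type (as function types inherit the truncation level of the codomain), the type of equivalences $(z ∈ x) ≃ (z ∈ y)$ is an $(n-1)$-type. Dependent products preserve $(n-1)$-types, so $\prod_{z:V} (z ∈ x) ≃ (z ∈ y)$ is an $(n-1)$-type, and transporting back along the extensionality equivalence shows $x = y$ is too. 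This is the characterisation of $V$ being an $n$-type.

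There is no real obstacle; the only thing to be careful about is the indexing convention ``level $(n+1)$ means $x ∈ y$ is an $n$-type'', which produces the off-by-one phenomenon highlighted in the introduction and which is precisely what makes the calculation come out to $n$ rather than $n+1$. The edge cases fit into the same argument: at level $n = -1$ the hypothesis is that $x ∈ y$ is contractible, whence the product of equivalences between contractible types is contractible, so $x = y$ is contractible and $V$ is in particular a mere proposition.
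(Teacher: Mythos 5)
Your proposal is correct and follows exactly the paper's own argument: use extensionality to identify $x = y$ with $\prod_{z:V} (z \in x) \simeq (z \in y)$ and observe that the latter inherits the truncation level of the $\in$-relation. The additional detail you supply about why the type of equivalences between $(n-1)$-types is an $(n-1)$-type is a fair elaboration of a step the paper leaves implicit.
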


\begin{proof}

Let \((V,∈)\) be an ∈-structure of level \((n+1)\), for
\(n : \Nat_{-2}\). By extensionality \(x = y\) is equivalent to
\(∏_{z : V} {z ∈ x} ≃ {z ∈ y}\), which is an \(n\)-type, hence \(V\) is
an \((n+1)\)-type.\end{proof}

\textbf{Remark:} By definition there are no \(-2\) level ∈-structures,
and the \(-1\) level structures all have trivial ∈-structure. Thus, we
shall from here on focus on the ∈-structures of level 0 or higher.

Of special interest will be the elements of an ∈-structure which look
like sets, in the sense that elementhood is a proposition. This is by
definition the case for all elements in ∈-structures of level 0, but
such elements may occur in structures of all levels.

\begin{definition}[\agdalink{https://elisabeth.stenholm.one/univalent-material-set-theory/e-structure.core.html\#2845}]

\label{element-n-type}An element \(x:V\) is a \((k+1)\)-type in
\((V,∈)\) if \(y∈x\) is of level \(k : \Nat_{-1}\) for all
\(y:V\).\end{definition}

If \(x : V\) is a \(0\)-type in \((V,∈)\), we also say it is a
\emph{mere set} in \((V,∈)\).

\begin{proposition}[\agdalink{https://elisabeth.stenholm.one/univalent-material-set-theory/e-structure.core.html\#3705}]

If \(x : V\) is a \((k+1)\)-type in \((V,∈)\) then \(x = y\) is a
\(k\)-type for any \(y:V\).\end{proposition}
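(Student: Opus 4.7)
The plan is to unfold $x = y$ via extensionality and then reduce the required type-level bound to a closure property of $k$-types under function types, $\Sigma$, and $\Pi$. By the extensionality axiom of the $\in$-structure, there is an equivalence
\[
(x = y) \simeq \prod_{z : V} (z \in x) \simeq (z \in y).
\]
Being a $k$-type transfers along equivalences and is preserved by $\Pi$-types, so it suffices to show that for every $z : V$, the type $(z \in x) \simeq (z \in y)$ is a $k$-type.

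The delicate point is that the hypothesis only tells us that $z \in x$ is a $k$-type; nothing is assumed about $z \in y$. The trick I would use is to invert the equivalence first: $(z \in x) \simeq (z \in y)$ is itself equivalent (via inversion of equivalences) to $(z \in y) \simeq (z \in x)$, which unfolds to $\sum_{f : (z \in y) \to (z \in x)} \isequiv(f)$. Now the hypothesis is in the codomain position: since $z \in x$ is a $k$-type, the function type $(z \in y) \to (z \in x)$ is a $k$-type (function types into $k$-types are $k$-types, regardless of the domain). The predicate $\isequiv(f)$ is a proposition, and since $k \geq -1$ (by the assumption $k : \Nat_{-1}$) every proposition is a $k$-type. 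Closure of $k$-types under $\Sigma$ then gives that $(z \in y) \simeq (z \in x)$, and hence $(z \in x) \simeq (z \in y)$, is a $k$-type.

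Putting these pieces together, $\prod_{z : V}\,(z \in x) \simeq (z \in y)$ is a $k$-type, and therefore so is $x = y$, as desired. The main obstacle — and really the only nontrivial step — is recognising that one must flip the direction of the equivalence to place the type known to be truncated into the codomain; once that is done the argument is a direct assembly of standard closure properties of $n$-types.
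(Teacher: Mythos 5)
Your proof is correct and follows the same route as the paper: apply extensionality to replace $x = y$ by $\prod_{z:V} (z \in x) \simeq (z \in y)$ and then invoke closure of $k$-types under $\Pi$, $\Sigma$, and function types. The paper simply asserts that the latter type is a $k$-type because each $z \in x$ is; your inversion of the equivalence to put $z \in x$ in codomain position is exactly the detail needed to justify that assertion, so you have in fact spelled out the one step the paper leaves implicit.
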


\begin{proof}

By extensionality, the type \(x = y\) has the same type level as the
type \(∏_{z : V} z ∈ x ≃ z ∈ y\). The latter is a \(k\)-type as
\(z ∈ x\) is a \(k\)-type for every \(z : V\).\end{proof}

Note that \(x : V\) being a \(k\)-type in \((V,∈)\) does not imply that
\(\El␣x\) is a \(k\)-type. However, if \(V\) is a \(k\)-type then
\(\El␣x\) is a \(k\)-type if \(x\) is a \(k\)-type.

\hypertarget{ordered-pairing}{%
\subsection{Ordered pairing}\label{ordered-pairing}}

The characteristic property of ordered pairs is that two pairs are equal
if and only if the first coordinates are equal and the second
coordinates are equal, i.e.~it is a pair where the order of the elements
matters. In our setting this means that the ordered pair of two elements
\(x,y : V\) should be an element \(〈x,y〉 : V\) such that for any other
ordered pair \(〈x',y'〉\), for some \(x', y' : V\),
\(〈x,y〉 = 〈x',y'〉\) exactly when \(x=x'\) and \(y=y'\). Using the
characterisation of the identity types of cartesian products, this is
equivalently saying that \(〈x,y〉 = 〈x',y'〉\) exactly when
\((x,y) =_{V × V} (x',y')\). In order to allow for higher level
∈-structures the ``exactly when'' should be replaced with type
equivalence. Moreover, we want it to be the canonical one, in the sense
that if \(x≡x'\) and \(y≡y'\) then the equivalence should send \(\refl\)
to \(\refl\).

In HoTT, this is the statement that ordered pairing is an embedding
\(V × V ↪ V\). This neatly encapsulates and generalises the usual
characterisation of equality of ordered pairs in a way that is
completely independent of the level of the ∈-structure. But it does not
uniquely define the encoding of ordered pairs. Indeed, there are several
ways to encode ordered pairs in ordinary set theory. The usual
Kuratowsky pairing will work for 0-level ∈-structures, but not for
higher level structures. Luckily, Norbert Wiener's encoding,
\(〈x,y〉 ≔ \{\{\{x\},∅\},\{\{y\}\}\}\), will work for structures of any
level.

This was originally observed in previous work by one of the authors
\cite{gylterud-multisets}. However, there the equivalence
\(\left(〈x,y〉 = 〈x',y'〉\right) ≃ \left((x,y) =_{V × V} (x',y')\right)\)
was not required to be the canonical one, as we require here. Moreover,
the encoding of ordered pairs used the untruncated variants of
singletons and unordered pairs. But it may be the case that we can only
construct truncated variants in a given ∈-structure. Therefore, we will
here construct ordered pairs, based on the Wiener encoding, but for any
truncation level of singletons and unordered pairs.

Since the results which follow are independent of encoding, we will not
commit to any specific way of forming ordered pairs, but simply assume
ordered pairing as an extra structure.

\begin{definition}[\agdalink{https://elisabeth.stenholm.one/univalent-material-set-theory/e-structure.core.html\#7351}]

Given an ∈-structure, \((V,∈)\), \textbf{an ordered pairing structure}
on \((V,∈)\) is an embedding \(V × V ↪ V\).\end{definition}

While having ordered pairing is a structure, once the pairing structure
is fixed, the notion of \emph{being an ordered pair} is a proposition.
As is clear from the proof below, this fact is just another formulation
of the characterisation of equality of ordered pairs.

\begin{proposition}

\label{ordered-pair-prop}Being an ordered pair is a mere proposition:
for a fixed ∈-structure \((V,∈)\) with ordered pairing structure
\(〈-,-〉\), the type \(∑_{a,b:V}〈a,b〉 = x\) is a proposition, for all
\(x : V\).\end{proposition}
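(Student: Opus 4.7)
The plan is to recognise the type $\sum_{a,b:V}\langle a,b\rangle = x$ as essentially the fiber of the ordered pairing map at $x$, and then invoke the fact that embeddings have propositional fibers.

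First, I would rewrite the iterated $\Sigma$-type by uncurrying: the type $\sum_{a:V}\sum_{b:V}\langle a,b\rangle = x$ is equivalent to $\sum_{p : V \times V}\langle \pi_0 p, \pi_1 p\rangle = x$, which is exactly $\fib\,\langle -,-\rangle\,x$ (after noting that $\langle\pi_0 p,\pi_1 p\rangle$ is definitionally $\langle -,-\rangle$ applied to $p$). This equivalence is immediate from the universal property of $\Sigma$-types (associativity and the equivalence $V \times V \simeq \sum_{a:V} V$ on the nose up to a trivial reshuffling).

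Next, by assumption the ordered pairing structure $\langle -,-\rangle : V \times V \to V$ is an embedding, i.e.\ a $(-1)$-truncated map. By definition of $(-1)$-truncated map, every fiber $\fib\,\langle -,-\rangle\,x$ is a proposition. Transporting propositionality across the equivalence from the first step gives that $\sum_{a,b:V}\langle a,b\rangle = x$ is a proposition.

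There is no real obstacle here; the proof is essentially a one-line unfolding together with the standard characterisation of embeddings as maps with propositional fibers. The only thing to be slightly careful about is making the identification with $\fib\,\langle -,-\rangle\,x$ explicit, since strictly speaking the pairing map takes a single argument in $V \times V$ while the $\Sigma$-type in the statement is presented in curried form; but this is routine.
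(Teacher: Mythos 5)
Your proof is correct and is essentially the same as the paper's: both identify $\sum_{a,b:V}\langle a,b\rangle = x$ with the fiber of the pairing map over $x$ and conclude by the fact that embeddings have propositional fibers. The extra care you take with the currying is fine but not needed beyond what the paper leaves implicit.
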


\begin{proof}

Observe that \(∑_{a,b:V}〈a,b〉 = x\) is the fiber of \(〈-,-〉\) over
\(x\), which is a proposition since \(〈-,-〉\) is an
embedding.\end{proof}

\hypertarget{properties-of--structures}{%
\subsection{Properties of
∈-structures}\label{properties-of--structures}}

In this section we explore how further set-theoretic notions, such as
pairing, union, replacement, separation and exponentiation, can be
expressed as mere propositions about ∈-structures. As mentioned in the
introduction, these notions can be generalised in different ways by
using different levels of truncation. When characterising unions, for
instance, it makes a difference whether the truncated existential
quantifier or dependent pair types are used. With the truncated
existential quantifier we only get one copy of each element in the
union, but with the dependent pair type we may get more copies.

A given ∈-structure can satisfy several versions, but a recurring theme
is that \(n\)-level structures will only satisfy the \(k\)-truncated
versions for \(k ≤ n\).

\textbf{Convention:} \emph{In the rest of the paper we will consider the
type of truncation levels to be the type \(\Nat^∞_{-2}\), i.e.~the usual
truncation levels extended by an element \(∞\) for which \(\| P \|_∞\)
is defined as \(P\) and such that \(∞ - 1 = ∞ = ∞ + 1\).}

\emph{The properties of ∈-structures will be parameterised by truncation
level. If the truncation level is omitted, in the notation or reference
to a property, we mean the variant that is labeled with \(0\) (which
usually involves \((-1)\)-truncation in the definition).}

Interestingly, even the untruncated versions of the set theoretic
properties end up being propositions. For instance, to have all ∞-unions
is a mere property of ∈-structures. This is because the properties
characterise the material sets they claim existence of up to equality,
by extensionality.

\begin{proposition}[\agdalink{https://elisabeth.stenholm.one/univalent-material-set-theory/e-structure.core.html\#2298}]

\label{is-prop-fixed-e-rel}Given an ∈-structure, \((V,∈)\), let
\(φ : V → \Type\) be a type family on \(V\). Then the type
\(∑_{x : V} ∏_{z : V} z ∈ x ≃ φ␣z\) is a proposition.\end{proposition}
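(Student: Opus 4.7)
The plan is to use the standard trick that a type is a proposition exactly when it is contractible whenever it is inhabited. Write $\psi(x) := \prod_{z:V}(z \in x) \simeq \varphi(z)$; I aim to show that, given any inhabitant $(x_0, \alpha_0) : \sum_{x:V}\psi(x)$, the whole total space is contractible.

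First I would construct, for each $x : V$, a chain of equivalences
$\psi(x) \simeq \prod_{z:V}(z \in x) \simeq (z \in x_0) \simeq (x = x_0).$
The first equivalence is obtained by pointwise post-composition with $\alpha_0(z)^{-1} : \varphi(z) \simeq (z \in x_0)$, which uses only the given inhabitant $\alpha_0$ and general properties of function types and equivalences. The second equivalence is precisely the extensionality axiom of $(V,\in)$ applied to the pair $x, x_0$. So we get a fiberwise equivalence $\psi(x) \simeq (x = x_0)$.

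Passing to total spaces yields $\sum_{x:V}\psi(x) \simeq \sum_{x:V} x = x_0$, and the right-hand side is contractible as it is the based path space at $x_0$. Hence the left-hand side is contractible, and therefore the original $\Sigma$-type is a proposition. There is no real obstacle: the proof is essentially a single application of extensionality, after using the hypothetical inhabitant $\alpha_0$ to rewrite the arbitrary family $\varphi$ as the $\in$-fiber of a specific element $x_0$. The only minor bookkeeping is verifying that pointwise composition with a family of equivalences is itself an equivalence of $\Pi$-types, which is standard.
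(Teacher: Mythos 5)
Your proof is correct and follows essentially the same route as the paper: assume an inhabitant, use it to replace $φ\,z$ by $z ∈ x_0$, apply extensionality to reduce to the based path space $∑_{x:V} x = x_0$, and conclude contractibility. The only difference is presentational (you work fiberwise and then pass to total spaces, while the paper chains the equivalences directly on the $Σ$-type), which changes nothing of substance.
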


\begin{proof}

Assume \((x,α) : ∑_{x : V} ∏_{z : V} z ∈ x ≃ φ␣z\), then it is enough to
show that the type is contractible. We have the following chain of
equivalences: \begin{align*}
    \left(∑_{x' : V} ∏_{z : V} z ∈ x' ≃ φ␣z\right)
        ≃ \left(∑_{x' : V} ∏_{z : V} z ∈ x' ≃ z ∈ x\right)
        ≃ \left(∑_{x' : V} x' = x\right)
\end{align*} The last type is contractible.\end{proof}

\textbf{Remark:} Proposition \ref{is-prop-fixed-e-rel} states that the
generalisation of the unrestricted comprehension, \(\{z | φ␣z\}\),
determines a set uniquely, when existent. Many existence statements in
set theory can be seen as fleshing out for which forms \(φ\) this
comprehension defines a set.

\hypertarget{unordered-tuples}{%
\subsubsection{Unordered tuples}\label{unordered-tuples}}

The usual notion of pairing naturally extends to unordered tupling of
any arity. The arity can be expressed by any type. The usual pairing
operation is tupling with respect to the booleans, the singleton
operation is tupling for the unit type, and the empty set is tupling for
the empty type. We generalise the property of having tupling to all
truncation levels.

\begin{definition}[\agdalink{https://elisabeth.stenholm.one/univalent-material-set-theory/e-structure.property.unordered-tupling.html\#2110}]

Given \(k : \Nat^∞_{-1}\) and a type \(I\), an ∈-structure, \((V,∈)\),
has \textbf{\((k+1)\)-unordered \(I\)-tupling} if for every
\(v : I → V\) there is \(\{v\}_{k+1} : V\) such that
\(∏_{z:V} z∈\{v\}_{k+1} ≃ \| ∑_{i:I} v␣i = z \|_k\)

For the special cases \(I = \Fin␣n\), we say that \((V,∈)\) has
\(k\)-unordered \(n\)-tupling. If \((V,∈)\) has \(k\)-unordered
\(n\)-tupling for every \(n:\Nat\), we say that \((V,∈)\) has finite,
\(k\)-unordered tupling.\end{definition}

We will use the usual notation for finite tuplings, but with a subscript
for the truncation level: \(k\)-unordered \(n\)-tupling is denoted by
\(\{x₀,⋯,x_{n-1}\}_k\). For \(k\)-unordered \(\emptytype\)-tupling we
will use the notation \(∅\). Observe that \(\{x₀,⋯,x_{n-1}\}_k\) is a
\(k\)-type in \((V,∈)\).

The set \(∅\) is the set with no elements. For any \(z : V\) we have:
\begin{align}
    z ∈ ∅ 
        ≃ \Big\| ∑_{i:\emptytype} \exfalso␣i = z \ \Big\|_{k-1}
        ≃ \|\ \emptytype \ \|_{k-1}
        ≃ \emptytype
\end{align} Note here that the truncation level \(k\) does not matter
(hence why we exclude it from the notation \(∅\)). By extensionality,
the sets corresponding to \(k\)-unordered and \(k'\)-unordered
\(\emptytype\)-tupling, for any two \(k\) and \(k'\), are equal. Note
also that \(∅\) is a mere set in any ∈-structure.

\textbf{Remark:} We say that an ∈-structure, \((V,∈)\), has
\textbf{empty set} if there is an element \(x : V\) such that
\(∏_{z : V} z ∈ x ≃ \emptytype\). By the previous paragraph, this is
equivalent to saying that \((V,∈)\) has \(\emptytype\)-tupling.

However, the singletons \(\{x\}_k\) may be different for different
truncation levels. For any \(x, z : V\) we have: \begin{align}
    z ∈ \{x\}_k
        ≃ \Big\| ∑_{i:\unittype} x = z \ \Big\|_{k-1}
        ≃ \|\ x = z\ \|_{k-1}
\end{align} In the special case when \(x\) is a mere set in \((V,∈)\),
the truncation level does not matter in that \(\{x\}_k = \{x\}_0\), by
extensionality. As an example: \(\{∅\}_k = \{∅\}_{k'}\) for any two
\(k\) and \(k'\), since \(∅\) is a mere set. Starting at level 1,
repetitions matter in tupling: Given \(k > 0\) we have
\(\{∅,∅\}_k ≠ \{∅\}_k\), since
\(∅ ∈ \{∅,∅\}_k ≃ ∥ {∅ = ∅} + {∅ = ∅}∥_k ≃ 2\) while
\(∅ ∈ \{∅\}_k ≃ ∥∅ = ∅∥_k ≃ 1\).

An interesting usage of 0-unordered 1-tupling combined with 1-unordered
\(n\)-tupling is the construction of a set \(s_n\) such that
\(\El␣s_n = B(S_n)\), the classifying type of the symmetric group on
\(n\) elements. Simply let \(s_n = \{\{n∅\}₁\}₀\), where \(n∅ : Vⁿ\) is
the vector with \(n\) copies of ∅.

\hypertarget{ordered-pairs-from-unordered-tuples}{%
\subsubsection{Ordered pairs from unordered
tuples}\label{ordered-pairs-from-unordered-tuples}}

As noted above, ordered pairs can be constructed from the empty set,
singletons and unordered pairs, using Norbert Wiener's encoding:
\(〈x,y〉 ≔ \{\{\{x\},∅\},\{\{y\}\}\}\). In ∈-structures of arbitrary
level it makes sense to ask what level of truncation \(\{-\}_k\) and
\(\{-,-\}_{k'}\) should be used when defining ordered pairs. The
construction using the \((-1)\)-truncated variants does not work in
higher level structures since
\(\{\{\{x\}₀,∅\}₀,\{\{y\}₀\}₀\}₀ = \{\{\{x'\}₀,∅\}₀,\{\{y'\}₀\}₀\}₀\) is
a proposition, while \((x,y) = (x',y')\) need not be.

If \((V,∈)\) has level \(n\), then we know that \((x,y) = (x',y')\) has
type level \(n-1\). For that case the \((n-1)\)-truncated variants would
give us the correct type level for \(〈x,y〉 = 〈x',y'〉\). But, we
observe that \(\{-\}_n = \{-\}_∞\) in ∈-structures of level \(n\) since
we have \(z ∈ \{x\}_n ≃ \|z = x\|_{n-1} ≃ (z = x)\). For unordered pairs
we need to distinguish between the \((-1)\)-truncated case and all other
cases since coproducts are not closed under propositions. More
specifically, for \(n ≥ 1\) we have \(\{-,-\}_{n} = \{-,-\}_∞\) if
\((V,∈)\) has level \(n\), since
\(z ∈ \{x,y\}_{n} ≃ \|(z = x) + (z = y)\|_{n-1} ≃ ((z = x) + (z = y))\).
This equivalence does not hold for \(n = 0\) and arbitrary \(x,y : V\).
However, if \(x ≠ y\), then \(z = x → z ≠ y\) and the equivalence holds.
We use these observations to make a general construction of ordered
pairs which we can instantiate for ∈-structures of all levels.

\begin{lemma}[\agdalink{https://elisabeth.stenholm.one/univalent-material-set-theory/e-structure.property.unordered-tupling.html\#16024}]
\label{ord-pair-from-embs-lma}

Given an ∈-structure, \((V,∈)\), with an operation \(α : V × V → V\)
with equivalences
\(e : ∏_{x, y : V} x ≠ y → ∏_{z : V} z ∈ α(x, y) ≃ ((z = x) + (z = y))\),
and two disjoint embeddings, \(f\ g : V ↪ V\),
i.e.~\(∏_{x, y : V} f␣x ≠ g␣y\), then \(α ∘ (f × g): V × V → V\) is an
embedding.\end{lemma}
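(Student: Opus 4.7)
Show that $h := α ∘ (f × g)$ induces an equivalence on identity types. Factor the canonical map on identity types as
\[
(x,y) = (x',y') \;\longrightarrow\; (f(x), g(y)) = (f(x'), g(y')) \;\longrightarrow\; α(f(x), g(y)) = α(f(x'), g(y')).
\]
The first factor is an equivalence because products of embeddings are embeddings, so $f × g$ is an embedding. It thus suffices to prove that, writing $u := f(x),\ v := g(y),\ u' := f(x'),\ v' := g(y')$, the canonical map $(u,v) = (u',v') → α(u,v) = α(u',v')$ is an equivalence. By the disjointness hypothesis, $u ≠ v,\ u ≠ v',\ u' ≠ v,\ u' ≠ v'$.

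The strategy is to build an equivalence on the other side through a chain of canonical equivalences. By extensionality of $(V, ∈)$ combined with the hypothesis $e$ (applicable by $u ≠ v$ and $u' ≠ v'$),
\[
α(u,v) = α(u',v') \;≃\; ∏_{z : V} \bigl((z = u) + (z = v)\bigr) \;≃\; \bigl((z = u') + (z = v')\bigr).
\]
The total space of the family $z ↦ (z=u) + (z=v)$ over $V$ is equivalent to $\twoelemtype$, with projection to $V$ given by $0 ↦ u,\ 1 ↦ v$, and similarly for $u', v'$. By Corollary \ref{total-fiberwise-equiv}, the above type of fiberwise equivalences is further equivalent to the type of equivalences $\twoelemtype ≃ \twoelemtype$ respecting these two maps into $V$. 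Since the only automorphisms of $\twoelemtype$ are the identity and the swap, and the swap case would require $u = v'$ together with $u' = v$, contradicting disjointness, only the identity component contributes, and it yields precisely $u = u'$ together with $v = v'$, i.e., $(u,v) = (u',v')$.

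The composite equivalence $α(u,v) = α(u',v') ≃ (u,v) = (u',v')$ sends $\refl$ to $\refl$ at every step: the identity path on $α(u,v)$ corresponds to the identity family of $≃$'s, to the identity automorphism of $\twoelemtype$ sitting in the identity component of the case split, and finally to $\refl$ on $(u,v)$. Its inverse is therefore a family of maps $(u,v) = (u',v') → α(u,v) = α(u',v')$ that preserves $\refl$; by path induction any such family equals the canonical action-on-paths map, so the latter is an equivalence.

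The main obstacle is the case split on $\twoelemtype ≃ \twoelemtype$ and aligning its identity component with reflexivity: disjointness is exactly what eliminates the swap case, leaving the identity component which matches $\refl$ step by step. Beyond this, the remaining work is routine path-induction bookkeeping verifying $\refl$-preservation along each link of the chain.
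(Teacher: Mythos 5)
Your proof is correct, and it reaches the same skeleton as the paper's (extensionality, then the equivalences $e$, then disjointness to rigidify, then the embeddings $f,g$, then a $\refl$-chase), but the key middle step is handled by a genuinely different device. The paper stays fiberwise: it shows directly that an equivalence of coproducts $((z = f\,x)+(z = g\,y)) \simeq ((z = f\,x')+(z = g\,y'))$ with mutually exclusive summands splits as a product of equivalences of the summands, and only then collapses the $\prod_z$ using contractibility of singletons. You instead pass to total spaces via Corollary \ref{total-fiberwise-equiv}, identify the total space of $z \mapsto (z=u)+(z=v)$ with $\twoelemtype$, and use that $\twoelemtype$ has exactly two automorphisms, with disjointness killing the swap. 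Both arguments use disjointness for exactly the same purpose; yours trades the paper's pointwise case analysis for a single finite combinatorial one, at the cost of invoking the classification of $\twoelemtype \simeq \twoelemtype$ and a slightly longer $\refl$-chase through the total-space equivalences. One organizational caveat: having factored out $\ap{f\times g}$ up front, your family of equivalences $\alpha(u,v)=\alpha(u',v') \simeq (u,v)=(u',v')$ is only defined for $u',v'$ of the form $f\,x', g\,y'$ (disjointness is only available there), so you cannot literally run path induction over $(u',v')$ in $V\times V$ to identify its inverse with $\ap{\alpha}$; you should instead precompose with $\ap{f\times g}$ and apply the $\refl$-preservation argument to the full composite, indexed by $(x',y')$ --- which is a one-line repair and is in effect what the paper does by carrying $f$ and $g$ through the whole chain.
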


\begin{proof}

We need to show, for any \((x, y), (x', y') : V × V\), that
\(\ap{α ∘ (f × g)} : (x, y) = (x' , y') → α(f␣x, g␣y) = α(f␣x', g␣y')\)
is an equivalence. To this end, it is enough to construct some
equivalence between the identity types that sends
\(\refl : α(f␣x, g␣y) = α(f␣x, g␣y)\) to \(\refl : (x, y) = (x , y)\).

We have the following chain of equivalences:

\begin{align}
\left(α(f␣x, g␣y) = α(f␣x', g␣y')\right)
    \label{ord-pair-equiv-1}
    &≃ ∏_{z : V} z ∈ α(f␣x, g␣y) ≃ z ∈ α(f␣x', g␣y') \\
    \label{ord-pair-equiv-2}
    &≃ ∏_{z : V} ((z = f␣x) + (z = g␣y)) ≃ ((z = f␣x') + (z = g␣y')) \\
    \label{ord-pair-equiv-3}
    &≃ ∏_{z : V} ((z = f␣x) ≃ (z = f␣x')) × ((z = g␣y) ≃ (z = g␣y'))\\
    &≃ \left(∏_{z : V} (z = f␣x) ≃ (z = f␣x')\right)
     × \left(∏_{z : V} (z = g␣y) ≃ (z = g␣y')\right) \\
    &≃ \left(f␣x = f␣x'\right) × \left(g␣y = g␣y'\right) \\
    \label{ord-pair-equiv-4}
    &≃ \left(x = x'\right) × \left(y = y'\right) \\
    &≃ ((x, y) = (x', y'))
\end{align}

In step (\ref{ord-pair-equiv-1}) we use extensionality for \((V,∈)\). In
step (\ref{ord-pair-equiv-2}) we use the equivalences \(e␣(f␣x)␣(g␣y)\)
and \(e␣(f␣x')␣(g␣y')\), together with the fact that \(f␣x ≠ g␣y\) and
\(f␣x' ≠ g␣y'\). The equivalence (\ref{ord-pair-equiv-3}) follows from
the fact that \(z = f␣x\) and \(z = g␣y'\), and \(z = f␣x'\) and
\(z = g␣y\), are, respectively, mutually exclusive. In step
(\ref{ord-pair-equiv-4}) we use the fact that \(f\) and \(g\) are
embeddings.

We chase \(\refl : α(f␣x, g␣y) = α(f␣x, g␣y)\) through the equivalence:

\begin{align}
\refl &↦ λ␣z. \idequiv \\
      &↦ λ␣z.\left(e␣z\right) ∘ \idequiv ∘ \left(e␣z\right)^{-1} \\
      &=\ λ␣z. \idequiv \\
      &↦ λ␣z.(\idequiv, \idequiv) \\
      &↦ (λ␣z.\idequiv, λ␣z.\idequiv) \\
      &↦ (\refl, \refl) \\
      &=\ \left(\ap{f}␣\refl, \ap{g}␣\refl\right) \\
      &↦ \left(\ap{f}^{-1}\left(\ap{f}␣\refl\right), \ap{g}^{-1}\left(\ap{g}␣\refl\right)\right) \\
      &=\ (\refl, \refl) \\
      &↦\ \refl
\end{align}

where we have used the fact that extensionality for \((V,∈)\) sends
\(\refl\) to \(\idequiv\).\end{proof}

For the Norbert Wiener construction of ordered pairs we thus have to
show that both \(\{\{-\}_{n}\}_{n}\) and \(\{\{-\}_{n},∅\}_{n}\) are
embeddings. Using the previous observation about the relationship
between the \(n\)-truncated versions and the ∞-truncated ones, it is
enough to show this for the ∞-truncated versions.

\begin{lemma}[\agdalink{https://elisabeth.stenholm.one/univalent-material-set-theory/e-structure.property.unordered-tupling.html\#9770}]
\label{singleton-emb-lma}

The function \(\{-\}_∞ : V → V\) is an embedding.\end{lemma}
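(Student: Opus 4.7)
The plan is to show that $\ap{\{-\}_\infty} : (x = y) \to (\{x\}_\infty = \{y\}_\infty)$ is an equivalence for every $x, y : V$, following the same template as in Lemma \ref{ord-pair-from-embs-lma}: construct an explicit equivalence between the two identity types and verify that it sends $\refl$ to $\refl$, so that by path induction it must coincide with $\ap{\{-\}_\infty}$.

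First I would apply extensionality for $(V,\in)$ to reduce $\{x\}_\infty = \{y\}_\infty$ to $\prod_{z : V} (z \in \{x\}_\infty) \simeq (z \in \{y\}_\infty)$. Then, invoking the defining equivalence $z \in \{x\}_\infty \simeq \| \sum_{i : \unittype} x = z \|_\infty \simeq (x = z)$, which is valid because $\infty$-truncation is the identity, this rewrites to $\prod_{z : V} (x = z) \simeq (y = z)$. The final step is the Yoneda-style fact that the map $(x = y) \to \prod_{z : V} (x = z) \simeq (y = z)$ sending $\refl$ to $\lambda z.\, \idequiv$ is itself an equivalence; this follows from the contractibility of the total space $\sum_{y : V} \prod_{z : V} (x = z) \simeq (y = z)$ with centre $(x, \lambda z.\, \idequiv)$, which reduces via univalence and a $\Pi$–$\Sigma$ swap to contractibility of the based path space at $x$.

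Composing the chain yields an equivalence $(x = y) \simeq (\{x\}_\infty = \{y\}_\infty)$, and to identify it with $\ap{\{-\}_\infty}$ I would chase $\refl : x = x$ through the composite exactly as in Lemma \ref{ord-pair-from-embs-lma}: extensionality sends $\refl$ to $\lambda z.\, \idequiv$, conjugation by the equivalences $z \in \{x\}_\infty \simeq (x = z)$ on either side preserves $\idequiv$, and the Yoneda step sends $\lambda z.\, \idequiv$ back to $\refl$. Since $\ap{\{-\}_\infty}$ also sends $\refl$ to $\refl$, the two maps between identity types agree at the reflexivity point and hence agree everywhere by path induction on $x = y$.

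The main obstacle is purely the canonicity bookkeeping in the last step — confirming that each link in the chain preserves reflexivity/identity — rather than any conceptual subtlety. Because no step in the chain introduces a nontrivial twist (all three equivalences are transparently natural in the chosen base points), the reflexivity chase goes through routinely, and the conclusion that $\{-\}_\infty$ is an embedding follows.
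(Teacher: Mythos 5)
Your proposal is correct and follows essentially the same route as the paper's proof: reduce $\{x\}_\infty = \{y\}_\infty$ via extensionality and the defining equivalence $z \in \{x\}_\infty \simeq (z = x)$ to $\prod_{z:V}(z=x)\simeq(z=y) \simeq (x=y)$, then chase $\refl$ through the chain to conclude the composite coincides with $\ap{\{-\}_\infty}$ by path induction. The only (immaterial) difference is that you orient the chain from $x=y$ to $\{x\}_\infty = \{y\}_\infty$ while the paper reads it in the other direction.
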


\begin{proof}

We follow the same strategy as in the proof of Lemma
\ref{ord-pair-from-embs-lma}. For any \(x, y : V\) we have the following
chain of equivalences:

\begin{align}
\left(\{x\}_∞ = \{y\}_∞\right)
    \label{singleton-emb-equiv-1}
    &≃ ∏_{z : V} z ∈ \{x\}_∞ ≃ z ∈ \{y\}_∞ \\
    &≃ ∏_{z : V} \left(z = x\right) ≃ \left(z = y\right) \\
    &≃ \left(x = y\right)
\end{align}

In step (\ref{singleton-emb-equiv-1}) we use extensionality for
\((V,∈)\).

Let \(e : ∏_{x : V} ∏_{z : V} z ∈ \{x\}_∞ ≃ \left(z = x\right)\) be the
defining family of equivalences for \(\{-\}_∞\). We chase
\(\refl : \{x\}_∞ = \{x\}_∞\) along the chain of equivalences above:

\begin{align}
\refl \label{singleton-emb-map-1}
    &↦ λ␣z. \idequiv \\
    &↦ λ␣z.(e␣x␣z) ∘ \idequiv ∘ (e␣x␣z)^{-1} \\
    &=\ λ␣z.\idequiv \\
    &↦ \refl
\end{align}

In step (\ref{singleton-emb-map-1}) we use the fact that extensionality
for \((V,∈)\) sends \(\refl\) to \(\idequiv\).\end{proof}

\begin{lemma}[\agdalink{https://elisabeth.stenholm.one/univalent-material-set-theory/e-structure.property.unordered-tupling.html\#12350}]
\label{lma-unordered-pair-emb}

Let \(α : V → V → V\) be such that
\(e : ∏_{x, y : V} x ≠ y → ∏_{z : V} z ∈ α␣x␣y ≃ ((z = x) + (z = y))\).
Then \(λ␣x.␣α␣\{x\}_∞␣∅ : V → V\) is an embedding.\end{lemma}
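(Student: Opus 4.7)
The plan is to mimic the strategy of Lemma \ref{ord-pair-from-embs-lma} and Lemma \ref{singleton-emb-lma}: construct an explicit equivalence between the relevant identity types, and then verify that $\refl$ is sent to $\refl$ along it, which forces the equivalence to agree with $\ap{\lambda x.\,α\,\{x\}_∞\,∅}$.

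The first step is to observe that $\{x\}_∞ ≠ ∅$ for every $x : V$. Indeed, the defining equivalence of $\{-\}_∞$ applied to $\refl : x = x$ yields an element of $x ∈ \{x\}_∞$, while $z ∈ ∅$ is the empty type; hence any identification $\{x\}_∞ = ∅$ would transport this witness into $x ∈ ∅$, a contradiction. With this disjointness in hand, I would assemble, for fixed $x, x' : V$, the following chain of equivalences:
\begin{align*}
(α\,\{x\}_∞\,∅ = α\,\{x'\}_∞\,∅)
  &≃ \prod_{z : V} \bigl(z ∈ α\,\{x\}_∞\,∅ ≃ z ∈ α\,\{x'\}_∞\,∅\bigr) \\
  &≃ \prod_{z : V} \bigl(((z = \{x\}_∞) + (z = ∅)) ≃ ((z = \{x'\}_∞) + (z = ∅))\bigr) \\
  &≃ \prod_{z : V} \bigl((z = \{x\}_∞ ≃ z = \{x'\}_∞) × (z = ∅ ≃ z = ∅)\bigr) \\
  &≃ (\{x\}_∞ = \{x'\}_∞) × (∅ = ∅) \\
  &≃ (\{x\}_∞ = \{x'\}_∞) \\
  &≃ (x = x')
\end{align*}
Here extensionality of $(V,∈)$ gives the first transition; the equivalence $e$ applied at $(\{x\}_∞, ∅)$ and $(\{x'\}_∞, ∅)$—legal thanks to the disjointness—gives the second; mutual exclusivity of the summands on each side forces any equivalence of the coproducts to decompose componentwise, giving the third; the fourth combines distributing $\prod$ over $×$ with a second use of extensionality; the fifth uses that $(∅ = ∅)$ is contractible (since $∅$ is a mere set in $(V,∈)$, so by the proposition after Definition \ref{element-n-type} every self-identification equals $\refl$); and the last transition is Lemma \ref{singleton-emb-lma}.

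Finally, exactly as in the proofs of Lemma \ref{ord-pair-from-embs-lma} and Lemma \ref{singleton-emb-lma}, I would chase $\refl : α\,\{x\}_∞\,∅ = α\,\{x\}_∞\,∅$ through the chain, checking at each step that it maps to the canonical element (first $λ\,z.\,\idequiv$, then the pair of identity equivalences, then the pair of $\refl$s, then $\refl$), using that extensionality sends $\refl$ to $\idequiv$ and that the equivalences $e$ conjugate trivially on $\idequiv$. This ensures the constructed equivalence agrees with $\ap{\lambda x.\,α\,\{x\}_∞\,∅}$, proving it is an equivalence and hence that the map is an embedding. The main subtle point is the third step: without the disjointness $\{x\}_∞ ≠ ∅$ (and $\{x'\}_∞ ≠ ∅$), equivalences of the two coproducts would not be forced to preserve the sum structure, and the factorisation into component equivalences—on which the remainder of the chain rests—would fail.
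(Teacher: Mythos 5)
Your proof is correct and follows essentially the same route as the paper's: establish $\{x\}_∞ ≠ ∅$ from the inhabitedness of $x ∈ \{x\}_∞$ versus the emptiness of $x ∈ ∅$, build the chain of equivalences via extensionality, the hypothesis $e$, mutual exclusivity of the summands, contractibility of the $∅$-component, and Lemma \ref{singleton-emb-lma}, and then chase $\refl$ through. The only difference is cosmetic: the paper collapses the $(z = ∅) ≃ (z = ∅)$ factor by contractibility before applying the Yoneda-style equivalence $∏_{z}\bigl((z=a) ≃ (z=b)\bigr) ≃ (a=b)$ to the singleton component, whereas you apply that equivalence to both components and discard the contractible $(∅ = ∅)$ afterwards.
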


\begin{proof}

First, we observe that for all \(x : V\), \(\{x\}_∞ ≠ ∅\) since
\(x ∈ \{x\}_∞\) is inhabited but \(x ∈ ∅\) is empty. We now follow the
same strategy as in the proof of Lemma \ref{ord-pair-from-embs-lma}. For
any \(x, y : V\) we have the following chain of equivalences:

\begin{align}
\left(α␣\{x\}_∞␣∅ = α␣\{y\}_∞␣∅\right)
    \label{pair-sing-empty-equiv-1}
    &≃ ∏_{z : V} (z ∈ α␣\{x\}_∞␣∅) ≃ (z ∈ α␣\{y\}_∞␣∅) \\
    &≃ ∏_{z : V} \left((z = \{x\}_∞) + (z = ∅)\right) ≃ \left((z = \{y\}_∞) + (z = ∅)\right) \\
    \label{pair-sing-empty-equiv-2}
    &≃ ∏_{z : V} \left((z = \{x\}_∞) ≃ (z = \{y\}_∞)\right) × \left((z = ∅) ≃ (z = ∅)\right) \\
    \label{pair-sing-empty-equiv-3}
    &≃ ∏_{z : V} (z = \{x\}_∞) ≃ (z = \{y\}_∞) \\
    &≃ \left(\{x\}_∞ = \{y\}_∞\right) \\
    \label{pair-sing-empty-equiv-4}
    &≃ \left(x = y\right)
\end{align}

In step (\ref{pair-sing-empty-equiv-1}) we use extensionality for
\((V,∈)\). In step (\ref{pair-sing-empty-equiv-2}) we use the fact that
\(\{x\}_∞ ≠ ∅\) and \(\{y\}_∞ ≠ ∅\). In step
(\ref{pair-sing-empty-equiv-3}) we use the fact that
\((z = ∅) ≃ (z = ∅)\) is contractible since ∅ is a mere set, and hence
\(z = ∅\) is a proposition. In step (\ref{pair-sing-empty-equiv-4}) we
use Lemma \ref{singleton-emb-lma}.

We chase \(\refl : α␣\{x\}_∞␣∅ = α␣\{x\}_∞␣∅\) along the chain of
equivalences above:

\begin{align}
\refl
    \label{pair-sing-empty-map-1}
    &↦ λ␣z.\idequiv \\
    &↦ λ␣z.(e␣z) ∘ \idequiv ∘ (e␣z)^{-1} \\
    &=\ λ␣z.\idequiv \\
    &↦ λ␣z.(\idequiv, \idequiv) \\
    &↦ λ␣z.\idequiv \\
    &↦ \refl \\
    &=\ \ap{\{-\}_∞}␣\refl \\
    &↦ \ap{\{-\}_∞}^{-1}␣\left(\ap{\{-\}_∞}␣\refl\right) \\
    &=\ \refl
\end{align}

In step (\ref{pair-sing-empty-map-1}) we use the fact that
extensionality for \((V,∈)\) sends \(\refl\) to \(\idequiv\).\end{proof}

\begin{theorem}[\agdalink{https://elisabeth.stenholm.one/univalent-material-set-theory/e-structure.property.unordered-tupling.html\#17726}]

\label{0-level-ordered-pairs}If \((V,∈)\) is an ∈-structure of level
\(n\) which has ∅, \(\{-\}_{n}\) and \(\{-,-\}_{n}\), then it has an
ordered pairing structure given by
\(λ␣(x,y).\{\{\{x\}_{n},∅\}_{n},\{\{y\}_{n}\}_{n}\}_{n}\).\end{theorem}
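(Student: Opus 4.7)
The plan is to apply Lemma~\ref{ord-pair-from-embs-lma} with $α \coloneq \{-,-\}_n$, $f \coloneq λ␣x.\,\{\{x\}_n,∅\}_n$, and $g \coloneq λ␣y.\,\{\{y\}_n\}_n$, so that the encoding $λ␣(x,y).\,α(f␣x,g␣y)$ is an embedding. This reduces the theorem to three subgoals: (i) $α$ satisfies the equivalence hypothesis $z ∈ α(x,y) ≃ (z=x)+(z=y)$ whenever $x≠y$; (ii) $f$ and $g$ are embeddings; and (iii) $f␣x ≠ g␣y$ for all $x,y:V$.

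For (i), the definition of $\{-,-\}_n$ gives $z ∈ \{x,y\}_n ≃ \|(z=x)+(z=y)\|_{n-1}$, so it suffices to show the coproduct is already $(n-1)$-truncated. For $n ≥ 1$ this follows from Proposition~\ref{m-h-level} together with the fact that coproducts of $(n-1)$-types are $(n-1)$-types when $n-1 ≥ 0$. For $n=0$ the hypothesis $x ≠ y$ makes the two summands mutually exclusive, so the coproduct is a proposition and the $(-1)$-truncation is trivial. Thus $α$ satisfies the hypothesis of both Lemma~\ref{ord-pair-from-embs-lma} and Lemma~\ref{lma-unordered-pair-emb} in a single uniform argument.

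For (ii), I would first note that in a level-$n$ ∈-structure the defining equivalence $z ∈ \{x\}_n ≃ \|z = x\|_{n-1}$ simplifies to $z ∈ \{x\}_n ≃ (z = x)$, because $V$ is an $n$-type by Proposition~\ref{m-h-level}; hence $\{-\}_n$ is (up to the uniqueness given by Proposition~\ref{is-prop-fixed-e-rel}) the operation $\{-\}_∞$, and Lemma~\ref{singleton-emb-lma} applies directly. Then $g$ is a composite of two embeddings, hence an embedding. For $f$, the equivalence verified in (i) is precisely the hypothesis of Lemma~\ref{lma-unordered-pair-emb}, which then yields that $f = λ␣x.\,α(\{x\}_n,∅)$ is an embedding.

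For (iii), I would show that $∅$ distinguishes $f␣x$ from $g␣y$: we have $∅ ∈ \{\{x\}_n,∅\}_n$ via the right summand of the equivalence in (i), while $∅ ∈ \{\{y\}_n\}_n$ would require $∅ = \{y\}_n$, which is impossible because $y ∈ \{y\}_n$ is inhabited (by $|\refl|$) whereas $y ∈ ∅$ is empty. Extensionality then rules out $f␣x = g␣y$. The main subtlety is the split on whether $n = 0$ or $n ≥ 1$ in step~(i); once this is phrased around the disjointness hypothesis $x ≠ y$ built into the lemmas, the rest of the argument is essentially bookkeeping.
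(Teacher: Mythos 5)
Your proposal is correct and takes essentially the same route as the paper: the paper's proof is literally a one-line corollary of Lemmas~\ref{ord-pair-from-embs-lma}, \ref{singleton-emb-lma} and \ref{lma-unordered-pair-emb}, with the observations you spell out in steps (i)--(iii) (that $\{-\}_n$ and, for $x\neq y$, $\{-,-\}_n$ coincide with the $\infty$-truncated versions in a level-$n$ structure, and that $\{\{x\}_n,\emptyset\}_n$ and $\{\{y\}_n\}_n$ are disjoint) appearing as the discussion immediately preceding those lemmas. Your write-up just makes explicit the bookkeeping the paper leaves implicit.
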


\begin{proof}

Corollary of Lemma \ref{ord-pair-from-embs-lma}, Lemma
\ref{singleton-emb-lma} and Lemma
\ref{lma-unordered-pair-emb}.\end{proof}

\hypertarget{restricted-separation}{%
\subsubsection{Restricted separation}\label{restricted-separation}}

In constructive set theory \cite{aczel1978}, restricted separation is
the ability to construct sets of the form \(\{z ∈ x␣|␣Φ␣z\}\), for a
formula Φ where all quantifiers are bounded (i.e.~\(∀a∈b⋯\) and
\(∃a∈b⋯\)). One way to internalise this to type theory is to require
that the predicate \(Φ : V → \Prop_U\) is a predicate of propositions in
\(U\). We do the same here in that we require the predicate to take
values in \(U\), but as with the other properties, we generalise to
higher truncation levels.

\begin{definition}[\agdalink{https://elisabeth.stenholm.one/univalent-material-set-theory/e-structure.property.restricted-separation.html\#616}]

An ∈-structure, \((V,∈)\), has \textbf{\(U\)-restricted
\((k+1)\)-separation}, for \(k : \Nat^∞_{-1}\), if for every \(x : V\)
and \(P : \El x → \nType{k}_U\) there is an element \(\{x␣|␣P\} : V\)
such that
\(∏_{z:V} z ∈ \{x␣|␣P\} ≃ ∑_{e : z ∈ x} P␣(z,e)\).\end{definition}

\textbf{Remark:} Usually, in set theory, the predicate is defined for
any \(x\), even when taking the restricted separation. But this falls
under the above: Assume \(Q : V → \nType{k}_U\), then one can readily
define \(P(z,e) = Q␣z\) and the defining property becomes
\(z ∈ \{x␣|␣P\} ≃ (z ∈ x × Q␣z)\). However, going in the opposite
direction only works at level 0, because the predicate \(P\) may
otherwise depend on the specific witness of elementhood.

\textbf{Remark:} At level 1, we can use \(0\)-unordered \(1\)-tupling,
1-unordered tupling and \(U\)-restricted \(1\)-separation, to construct
for every group \(G\) a set \(s_G\) such that \(\El␣s_G = B(G)\), the
classifying type for the group.\footnote{For a thorough treatment of
  groups and classifying types, see the \emph{Symmetry} book
  \cite{bezem2022symmetry}} First let \(|G|∅ : |G| → V\) be the constant
function mapping every element of the group to ∅. This is the tuple to
which we will apply 1-unordered tupling, constructing \(\{|G|∅\}₁\).
This multiset has one copy of ∅ for every element of the group, hence
its automorphisms in \(V\) are the bijections on \(|G|\):

\begin{align}
    \left(\{|G|∅\}₁ = \{|G|∅\}₁\right) &≃ ∏_{z:V} (z∈\{|G|∅\}₁) ≃ (z∈\{|G|∅\}₁) \\
                         &≃ ∏_{z:V} (|G|×(z=∅)) ≃ (|G|×(z=∅)) \\
                         &≃ \left(|G|=_U|G|\right)
\end{align}

If we take the 0-singleton \(\{\{|G|∅\}₁\}₀\), we get a set which has
\(B(\operatorname{Aut}_U␣|G|)\) as its type of elements:

\begin{align}
 \El␣\{\{|G|∅\}₁\}₀ &= ∑_{z:V}z∈\{\{|G|∅\}₁\}₀ \\
                    &≃ ∑_{z:V}∥z = \{|G|∅\}₁ ∥_{-1} \\
                    &= B(\operatorname{Aut}_V␣\{|G|∅\}₁)\\
                    &≃ B(\operatorname{Aut}_U␣|G|)
\end{align}

The final ingredient is that we need a map
\(f : B(G) → B(\operatorname{Aut}␣|G|)\). This map is the well-known map
induced by multiplication in the group. This is a cover, i.e. its fibers
are sets, hence \(\fib␣f : B(\operatorname{Aut}␣|G|) → \nType{0}\),
which we will coerce along the equivalence
\(B(\operatorname{Aut}_U␣|G|) ≃ \El␣\{\{|G|∅\}₁\}₀\) to obtain
\(P_G : \El␣\{\{|G|∅\}₁\}₀ → \nType{0}\). Thus, we define
\(s_G := \{\{\{|G|∅\}₁\}₀|P_G\}\), which then has the desired property:

\begin{align}
    \El␣s_G &= ∑_{z:V}z∈\{\{\{|G|∅\}₁\}₀|P_G\} \\
            &≃ ∑_{z:V}∑_{e:z∈\{\{|G|∅\}₁\}₀}  P_G(z,e) \\
            &≃ ∑_{q : \El␣\{\{|G|∅\}₁\}₀} P_G␣q \\
            &≃ ∑_{q : B(\operatorname{Aut}␣|G|)} \fib␣f␣q \\
            &≃ B(G)
   \end{align}

\hypertarget{replacement}{%
\subsubsection{Replacement}\label{replacement}}

Replacement says that given a set \(a\), and a description of how to
replace its elements, we can create a new set containing exactly the
replacements of elements in \(a\). Of course, two elements in \(a\) may
be replaced by the same elements, so the property must include
truncation.

\begin{definition}[\agdalink{https://elisabeth.stenholm.one/univalent-material-set-theory/e-structure.property.replacement.html\#642}]

An ∈-structure, \((V,{∈})\), has \textbf{\((k+1)\)-replacement}, for
\(k : \Nat^∞_{-1}\), if for every \(a:V\) and \(f : \El␣a → V\) there is
an element \(\{f(x)␣|␣x∈a\} : V\) such that
\(∏_{z:V} z ∈ \{f(x)␣|␣x∈a\} ≃ \big\|∑_{x:\El␣a}f␣x=z \;\big\|_k\)\end{definition}

\textbf{Remark:} Having \(k\)-replacement is to have \(k\)-unordered
\(I\)-tupling for \(I = \El a\), for any \(a:V\).

In Section \ref{representations-of-types-in-e-structures} we further
explore the implications of \(k\)-replacement and how it affects
representations of types within ∈-structures.

\hypertarget{union}{%
\subsubsection{Union}\label{union}}

Just as for tupling, there is a notion of union to consider for each
type level, for ∈-structures, distinguished by the truncation of the
existential quantifier.

\begin{definition}[\agdalink{https://elisabeth.stenholm.one/univalent-material-set-theory/e-structure.property.union.html\#1110}]

An ∈-structure, \((V,∈)\), has \textbf{\((k+1)\)-union}, for
\(k : \Nat^∞_{-1}\), if for every \(x : V\) there is \(⋃_k x : V\) such
that
\(∏_{z:V} z ∈ ⋃_k x ≃ \big\|∑_{y:V} \; z ∈ y\ ×\ y ∈ x \; \big\|_k\).\end{definition}

As usual, from pairing and general unions, one can define binary
\(k\)-union by \(x ∪_k y ≔ ⋃_k \{x,y\}_k\). For \(z : V\) we have:
\begin{align}
    z ∈ x ∪_k y
        &≃ \Big\|∑_{w:V} \; z ∈ w\ ×\ w ∈ \{x,y\}_k\Big\|_{k-1} \\
        &≃ \Big\|∑_{w:V} \; z ∈ w\ ×\ \| w = x + w = y \|_{k-1}\Big\|_{k-1} \\
        &≃ \Big\|∑_{w:V} \; z ∈ w\ ×\ \left(w = x + w = y\right) \Big\|_{k-1} \\
        &≃ \Big\|z ∈ x + z ∈ y \Big\|_{k-1}
\end{align} Binary \(0\)-union is the usual binary union in set theory,
where copies are discarded. The higher level binary union keeps copies.
For example, \(\{∅\}_1 ∪_0 \{∅\}_1 = \{∅\}_0\), while
\(\{∅\}_1 ∪_k \{∅\}_1 = \{∅,∅\}_k\), for \(k ≥ 1\).

\hypertarget{exponentiation}{%
\subsubsection{Exponentiation}\label{exponentiation}}

Exponentiation states that for any two sets \(a\) and \(b\) there is a
set that contains exactly the functions from \(a\) to \(b\). In order to
express this property we need to first generalise the notion of a
function internal to an ∈-structure. As opposed to the other properties,
the notion of a function can be expressed in a uniform way for
\(∈\)-structures of any level. As in usual set theory, the notion of a
function is relative to a choice of ordered pairing structure.

But what constitutes a function between two generalised sets, say
\(a,b:V\)? The perhaps easiest answer, which is the one we will argue
for here, is that it is in essence a function \(\El␣a → \El␣b\), which
can then be represented as set itself by taking its graph. At level 0,
in usual set theory, a function is completely determined by its graph.
The general situation is, however, that if \(a\) and \(b\) are of level
\(n+1\) there is an \(n\)-type of function structures which can be put
on a set of pairs. We call this structure \(\oper_{a␣b}\), and
Proposition \ref{operation-emb-functions-equiv} constructs an
equivalence between \(\El␣a → \El␣b\) and sets with
\(\oper_{a␣b}\)-structure.

\textbf{Remark:} As usual in constructive set theory, we use the
exponentiation axiom instead of the powerset axiom. One could define the
powerset axiom for higher level structures by saying that \(z : V\) is a
subset of \(x : V\) if for all \(y : V\) there is an embedding
\(y ∈ z ↪ y ∈ x\). However, in any fixed-point model, the powerset axiom
would require a small subobject classifier in the ambient type theory,
hence why we do not consider it further in this paper.

\begin{definition}[\agdalink{https://elisabeth.stenholm.one/univalent-material-set-theory/e-structure.property.exponentiation.html\#2233}]

Given three elements \(a,b,f:V\) in an ∈-structure \((V,∈)\) with
ordered pairing \(〈-,-〉 : V × V ↪ V\), define a type: \begin{align*}
\oper_{a␣b}␣f ≔&\left(∏_{x:V}␣x ∈ a ≃ ∑_{y:V} 〈x , y〉 ∈ f \right) \\
             × &\left(∏_{x:V}∏_{y:V}〈x,y〉∈f→y∈b \right) \\
             × &\left(∏_{z:V}␣z ∈ f → ∑_{x:V}␣∑_{y:V}␣z=〈x , y〉 \right)
\end{align*}\end{definition}

The first conjunct of \(\oper_{a␣b}␣f\) states that the pairs in \(f\)
form an operation with domain \(a\), and the second states that its
codomain is \(b\). The third conjunct, which is always a proposition by
Proposition \ref{ordered-pair-prop}, states that \(f\) only contains
pairs.

Note that \(\oper\) here is the same as Definition 8 in
\cite{gylterud-multisets}. However, in this paper we define another type
which we show is equivalent to \(\oper\) and which is of a more type
theoretic flavor.

The type \(\oper_{a␣b}␣f\) is not always a proposition. Hence, an
element \(p : \oper_{a␣b}␣f\) should be regarded as an operation
structure on \(f\) with domain \(a\) and codomain \(b\). However, in the
case when \(a\) and \(b\) are mere sets, \(\oper_{a␣b}␣f\) is a
proposition.

\begin{proposition}[\agdalink{https://elisabeth.stenholm.one/univalent-material-set-theory/e-structure.property.exponentiation.html\#15763}]

Given three elements \(a,b,f:V\) in an ∈-structure \((V,∈)\) with
ordered pairing \(〈-,-〉 : V × V ↪ V\), if \(a\) and \(b\) are mere
sets in \((V,∈)\), then \(\oper_{a␣b}␣f\) is a proposition equivalent to
the following type: \begin{align*}
    &\left(∏_{x:V}␣x ∈ a → \exuniquedisp{y:V} 〈x , y〉 ∈ f \right) \\
    × &\left(∏_{x:V}∏_{y:V}〈x,y〉∈f\ →\ x∈a\ ×\ y∈b \right) \\
    × &\left(∏_{z:V}␣z ∈ f → ∑_{x:V}␣∑_{y:V}␣z=〈x , y〉 \right)
\end{align*}\end{proposition}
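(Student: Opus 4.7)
The plan is to establish the claimed equivalence and then observe that the right-hand type is manifestly a proposition, so that $\oper_{a\,b}\,f$ inherits this property through the equivalence. The overall strategy is a chain of equivalences that rewrite the first and second conjuncts while leaving the third unchanged.

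The key observation is that for a proposition $P$ and any type $Q$, there is a canonical equivalence $(P \simeq Q) \simeq (P \to \iscontr\,Q) \times (Q \to P)$: an equivalence to a proposition forces $Q$ to be a proposition as well, and an inhabited proposition is contractible. Since $a$ is a mere set, $x \in a$ is a proposition for every $x : V$, so we may apply this pointwise to rewrite the first conjunct of $\oper_{a\,b}\,f$ as
\[\prod_{x:V}\left(x \in a \to \exuniquedisp{y:V}\langle x,y\rangle \in f\right)\times\left(\sum_{y:V}\langle x,y\rangle \in f \to x \in a\right).\]
Distributing the pointwise product over the outer $\prod$ and currying the second factor gives $\prod_{x,y:V}\langle x,y\rangle \in f \to x \in a$. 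Combining this with the original second conjunct via the standard equivalence $(P \to Q) \times (P \to R) \simeq (P \to Q \times R)$ produces the new second conjunct $\prod_{x,y:V}\langle x,y\rangle \in f \to (x \in a \times y \in b)$. The third conjunct is carried through unchanged, and composing these steps yields the desired equivalence.

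To see that the new type is itself a proposition, inspect each conjunct: the first is a $\prod$-type of implications into $\iscontr$, which is always a proposition; the second is a $\prod$-type of implications into $x \in a \times y \in b$, a product of propositions because $a$ and $b$ are mere sets; and the third is a $\prod$-type of implications into the fiber of $\langle-,-\rangle$ over $z$, a proposition by Proposition \ref{ordered-pair-prop}. The main conceptual step is the first reformulation --- recasting an equivalence with propositional domain as a unique-choice statement together with a reverse implication --- and once this is in place, the remainder of the argument is routine $\Pi$-$\Sigma$ bookkeeping.
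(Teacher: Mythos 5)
Your proof is correct and follows essentially the same route as the paper: the paper also rewrites the equivalence $x \in a \simeq \sum_{y:V}\langle x,y\rangle \in f$ (using that $x \in a$ is a proposition, via the contractible-fibers characterisation of equivalences) into the pair $\left(\sum_{y:V}\langle x,y\rangle \in f \to x \in a\right) \times \left(x \in a \to \exuniquedisp{y:V}\langle x,y\rangle \in f\right)$, then curries and rearranges, and finally checks propositionhood of each conjunct exactly as you do.
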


\begin{proof}

We start by proving the equivalence. First, we observe that for any
\(x : V\), \(p : x ∈ a\), and function
\(e : ∑_{y:V} 〈x , y〉 ∈ f → x ∈ a\) we have an equivalence
\begin{align*}
\fib␣e\ p\ ≃ ∑_{y:V} 〈x , y〉 ∈ f
\end{align*}

since for any \(q : ∑_{y:V} 〈x , y〉 ∈ f\) the type \(e␣q = p\) is
contractible because \(a\) is a mere set. It thus follows that for any
\(x:V\) we have \begin{align*}
\left(x ∈ a ≃ ∑_{y:V} 〈x , y〉 ∈ f\right)
    &≃ ∑_{e:∑_{y:V} 〈x , y〉 ∈ f → x ∈ a} ∏_{p:x ∈ a} \iscontr (\fib␣e␣p) \\
    &≃ \left(∑_{y:V} 〈x , y〉 ∈ f → x ∈ a\right)
        × \left(x ∈ a → \exuniquedisp{y:V} 〈x , y〉 ∈ f\right)
\end{align*}

The desired equivalence follows from this, after some currying and
rearranging.

Since \(\exuniqueinl{y:V} 〈x , y〉 ∈ f\) is a proposition it follows
that \(\left(∏_{x:V}␣x ∈ a → \exuniqueinl{y:V} 〈x , y〉 ∈ f \right)\)
is a proposition. When \(a\) and \(b\) are mere sets, \(x∈a\ ×\ y∈b\) is
a proposition, and hence
\(\\\left(∏_{x:V}∏_{y:V}〈x,y〉∈f\ →\ (x∈a)\ ×\ (y∈b) \right)\) is a
proposition. Finally, since ordered pairing is an embedding, and
\(∑_{x:V}␣∑_{y:V}␣z=〈x , y〉\) is essentially the fibres of the pairing
operation, it follows that
\[∏_{z:V}␣z ∈ f → ∑_{x:V}␣∑_{y:V}␣z=〈x , y〉\] is a
proposition.\end{proof}

This equivalence was proven as Lemma 6.10 in \cite{gylterud-iterative}
for the type \(V\) constructed there. Here we show the equivalence for
general ∈-structures.

\begin{corollary}

If \((V,∈)\) is of level 0 then \(\oper_{a␣b}␣f\) is a proposition, for
all \(a,b,f : V\).\end{corollary}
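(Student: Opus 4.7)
The plan is to reduce this directly to the previous proposition. That proposition gave us that whenever $a$ and $b$ are mere sets in $(V,\in)$, the type $\oper_{a\,b}\,f$ is a proposition. So the only thing left to check is that in a level~$0$ ∈-structure, every element is automatically a mere set in the sense of Definition~\ref{element-n-type}.

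To see this, I unfold the definitions. An ∈-structure $(V,\in)$ is of level~$0$ iff for every $x,y:V$ the type $x\in y$ is a $(-1)$-type, i.e.\ a proposition. By Definition~\ref{element-n-type}, an element $x:V$ is a $0$-type (mere set) in $(V,\in)$ exactly when $y\in x$ is a $(-1)$-type for every $y:V$. These two conditions coincide once the structure has level~$0$, so every $a:V$ is a mere set.

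Therefore, given arbitrary $a,b,f:V$ in a level~$0$ ∈-structure, both $a$ and $b$ are mere sets, and the previous proposition applies to give that $\oper_{a\,b}\,f$ is a proposition. There is no real obstacle here; the statement is essentially a repackaging of the previous proposition under the observation that the level-$0$ hypothesis is strong enough to make the ``$a$ and $b$ are mere sets'' hypothesis hold uniformly.
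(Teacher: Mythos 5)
Your proof is correct and matches the paper's intended argument exactly: the paper states this as an immediate corollary of the preceding proposition, relying on precisely the observation you make, namely that in a level-$0$ ∈-structure every element is a mere set in the sense of Definition~\ref{element-n-type}, so the hypothesis of that proposition holds for all $a,b:V$. Nothing is missing.
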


We can now define exponentiation for ∈-structures.

\begin{definition}[\agdalink{https://elisabeth.stenholm.one/univalent-material-set-theory/e-structure.property.exponentiation.html\#2442}]

An ∈-structure, \((V,∈)\) with an ordered pairing structure, has
\textbf{exponentiation} if for every two \(a,b:V\) there is an element
\(b^a\) such that \(∏_{f:V} (f ∈ b^a) ≃ \oper_{a␣b}␣f\).\end{definition}

While \(\oper\) is a straightforward internalisation of the set
theoretic definition of a function, it can be inconvenient to work with.
What follows is a definition that is equivalent to \(\oper\) but which
is sometimes easier to use.

\begin{definition}[\agdalink{https://elisabeth.stenholm.one/univalent-material-set-theory/e-structure.property.exponentiation.html\#2647}]

Given three elements \(a,b,f:V\) in an ∈-structure \((V,∈)\) with
ordered pairing \(〈-,-〉 : V × V ↪ V\), define a type: \begin{align*}
\operr_{a␣b}␣f ≔&∑_{φ:\El␣a␣→␣\El␣b}\ ∏_{z:V} \left(z ∈ f ≃ ∑_{x:\El␣a} 〈π_0\ x, π_0␣(φ\ x)〉=z\right)
\end{align*}\end{definition}

\begin{proposition}[\agdalink{https://elisabeth.stenholm.one/univalent-material-set-theory/e-structure.property.exponentiation.html\#2916}]

\label{operation-equiv}For any \(a,b,f:V\) we have the following
equivalence:

\begin{align*}
\oper_{a␣b}␣f ≃ \operr_{a␣b}␣f
\end{align*}\end{proposition}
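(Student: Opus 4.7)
The plan is to construct mutually inverse maps $\Phi \colon \oper_{a\,b}\,f \to \operr_{a\,b}\,f$ and $\Psi$ going the other way. The guiding idea is that the first conjunct of $\oper$, summed over $V$, yields an equivalence $\El\,a \simeq \sum_{x,y:V}\langle x,y\rangle\in f$; composing with the second conjunct produces a function $\El\,a \to \El\,b$, which is exactly the $\varphi$ datum in $\operr$. Conversely, given $\varphi$ together with the graph condition of $\operr$, one can read off all three conjuncts of $\oper$.

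For $\Phi$, given $(e, h, g)$, I would set
\[\varphi(x,p) := (\pi_0(e(x)(p)),\; h(x, \pi_0(e(x)(p)), \pi_1(e(x)(p)))),\]
so that $\pi_0(\varphi(x,p)) = \pi_0(e(x)(p))$. The required equivalence for $\operr$ then follows, for each $z:V$, from the chain
\begin{align*}
\sum_{x:\El\,a} \langle \pi_0 x, \pi_0(\varphi\,x)\rangle = z
 &\simeq \sum_{x:V}\sum_{p:x\in a} \langle x, \pi_0(e(x)(p))\rangle = z \\
 &\simeq \sum_{x,y:V} (\langle x,y\rangle \in f) \times (\langle x,y\rangle = z) \\
 &\simeq (z\in f) \times \sum_{x,y:V} \langle x,y\rangle = z \\
 &\simeq z\in f,
\end{align*}
where the second step applies $e(x)$ fiberwise, the third transports $\langle x,y\rangle \in f$ along the path into $z\in f$, and the last uses that $\sum_{x,y:V}\langle x,y\rangle = z$ is the fiber of the embedding $\langle -,-\rangle$ (cf.~Proposition~\ref{ordered-pair-prop}), hence a proposition, which is inhabited whenever $z\in f$ via $g$.

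For $\Psi$, given $(\varphi, \alpha)$, I would recover $e(x)(p)$ as the pair $(\pi_0(\varphi(x,p)),\, \alpha(\langle x, \pi_0(\varphi(x,p))\rangle)^{-1}((x,p),\refl))$, extract $h(x,y,q)$ by applying $\alpha(\langle x,y\rangle)$ to $q$ and using the embedding property of $\langle -,-\rangle$ to identify the preimage's first coordinate with $x$ and its $\varphi$-image with $y$ (so that the $\El\,b$-witness of $\varphi$ becomes a witness of $y\in b$), and read off $g$ from $\alpha(z)(q)$ by forgetting into the pair-fiber. The main obstacle is verifying the round trips $\Phi\circ\Psi = \mathrm{id}$ and $\Psi\circ\Phi = \mathrm{id}$. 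Fortunately, the third component of $\oper$ is always propositional (Proposition~\ref{ordered-pair-prop}), the equivalence data in both sides are $\Pi$-types of equivalences and hence characterised by their underlying families of functions, and the embedding property of $\langle -,-\rangle$ collapses most of the residual path algebra to $\refl$, so the pointwise obligations after function extensionality should all discharge.
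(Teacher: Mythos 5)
Your overall strategy --- constructing explicit maps $\Phi$ and $\Psi$ and checking the round trips --- is genuinely different from the paper's proof, which never constructs inverse maps at all: the paper rewrites $\oper_{a\,b}\,f$ step by step into $\operr_{a\,b}\,f$ through a chain of standard equivalences (the fiberwise/total-space correspondence of Lemma~\ref{total-fiberwise-map} and Corollary~\ref{total-fiberwise-equiv}, Lemma~\ref{sigma-emb} applied to the embedding $\langle -,-\rangle$, interchange of $\Pi$- and $\Sigma$-types, and contraction of singletons), so all two-sided-inverse obligations are absorbed into already-established lemmas. Your forward map $\Phi$ is correctly constructed, and the four-step chain you give for the $\alpha$-component of $\operr_{a\,b}\,f$ is sound.

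The gaps are in $\Psi$ and in the round trips. First, the first conjunct of $\oper_{a\,b}\,f$ is a family of \emph{equivalences} $x \in a \simeq \sum_{y:V}\langle x,y\rangle \in f$; you only define the underlying functions $p \mapsto (\pi_0(\varphi(x,p)),\ \alpha(\cdots)^{-1}((x,p),\refl))$ and never show they are equivalences. This is provable (essentially by running a chain like the paper's: apply $\alpha$ fiberwise, interchange, then use the embedding property of $\langle-,-\rangle$ and singleton contraction to identify $\sum_{y:V}\langle x,y\rangle\in f$ with $x\in a$, and finally check the underlying map agrees), but it is a real obligation, not a formality, and without it $\Psi$ is not even defined. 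Second, and more seriously for this route, your justification of the round trips does not go through as stated: you argue that the equivalence data on both sides are characterised by their underlying functions and that the third conjunct is propositional, but the second conjunct of $\oper_{a\,b}\,f$, the family of maps $h : \langle x,y\rangle\in f \to y\in b$, is neither an equivalence nor a proposition in general (for structures of level $\ge 1$ the type $y\in b$ can be a proper set or higher), so identifying $h$ with its image under $\Psi\circ\Phi$ is a genuine pointwise identification in $y\in b$; likewise identifying $\alpha$ with the $\alpha$-component of $\Phi(\Psi(\varphi,\alpha))$ requires tracing a concrete element of $z\in f$ through your four-step chain and back, which involves nontrivial transports along paths produced by the embedding $\langle-,-\rangle$. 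These computations are exactly what the paper's equivalence-chain formulation is designed to avoid, and asserting that they ``should all discharge'' leaves the crux of your argument unproved.
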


Before we prove this equivalence we prove two general lemmas that we
will need, concerning fiberwise functions and equivalences.

\begin{lemma}[\agdalink{https://elisabeth.stenholm.one/univalent-material-set-theory/foundation.embeddings.html\#8702}]
\label{sigma-emb}

Let \(A\) and \(B\) be types and let \(C : B → \Type\) be a type family
over \(B\). For any embedding \(f : A ↪ B\) and element
\(γ : ∏_{(b,c):∑_{b:B} C␣b} \fib␣f\
b\) we have an equivalence \begin{align*}
∑_{a:A} C␣(f\ a) ≃ ∑_{b:B} C␣b.
\end{align*}\end{lemma}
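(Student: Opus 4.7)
The plan is to exploit that an embedding $f$ makes each fiber $\fib\,f\,b$ a proposition, and that $\gamma$ provides an inhabitant of this proposition whenever $C\,b$ is inhabited. Hence inserting or removing a $\fib\,f\,b$ factor under a $\Sigma_{b:B}$ which also carries a $C\,b$ is an equivalence, and once we have the fiber available we can swap the order of summation and contract.

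Concretely, I would construct the equivalence in three steps. First, observe that the projection
\begin{align*}
    \left(\sum_{b:B} \fib\,f\,b \times C\,b\right) \longrightarrow \left(\sum_{b:B} C\,b\right),\qquad (b,\phi,c)\mapsto(b,c),
\end{align*}
is an equivalence: its fiber over $(b,c)$ is $\fib\,f\,b$, which is a proposition (because $f$ is an embedding) and is inhabited by $\gamma(b,c)$, hence contractible. Second, by unfolding $\fib\,f\,b = \sum_{a:A} f\,a = b$ and swapping the two $\Sigma$s, we get
\begin{align*}
    \sum_{b:B} \fib\,f\,b \times C\,b \;\simeq\; \sum_{a:A}\sum_{b:B} (f\,a = b) \times C\,b.
\end{align*}
Third, for each fixed $a$, the type $\sum_{b:B} (f\,a = b) \times C\,b$ is equivalent to $C\,(f\,a)$ by contracting the singleton $\sum_{b:B} f\,a = b$ and transporting $c$ along the induced path; this yields $\sum_{a:A} C\,(f\,a)$. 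Composing the three steps (and inverting the first) gives the desired equivalence.

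There is no real obstacle here; the only thing to be careful about is that one really does need $\gamma$ and not merely that $f$ is an embedding: without $\gamma$ there may be $b$ in the support of $C$ that are not in the image of $f$, and the two total spaces will genuinely differ. The proof is essentially a structured application of the contractibility of based-path spaces, together with the standard fact that an embedding has propositional fibers.
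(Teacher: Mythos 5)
Your proof is correct and follows essentially the same route as the paper's: both reduce the problem to the observation that $\fib\,f\,b$ is a proposition (since $f$ is an embedding) inhabited by $γ$, hence contractible, together with singleton contraction. The paper computes the fiber of the canonical map $(a,c)\mapsto(f\,a,c)$ directly and shows it is equivalent to $\fib\,f\,b$, whereas you factor that same map through the intermediate type $\sum_{b:B}\fib\,f\,b\times C\,b$; the two arguments are interchangeable.
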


\begin{proof}

Let \(F : ∑_{a:A} C␣(f\ a) → ∑_{b:B} C␣b\) be the function given by
\(F␣(a, c) := (f␣a, c)\). For \((b, c) : ∑_{b:B} C␣b\), we have the
following chain of equivalences: \begin{align}
    \fib␣F␣(b,c)
        &≃ ∑_{a : A} ∑_{c' : C␣(f␣a)} ∑_{p : f␣a = b} \tr{C}{p}{c'} = c \\
        &≃ ∑_{a : A} ∑_{p : f␣a = b} ∑_{c' : C␣(f␣a)} c' = \tr{C}{p^{-1}}{c} \\
        \label{sigma-emb-1}
        &≃ \fib␣f␣b
\end{align}

Step (\ref{sigma-emb-1}) uses the fact that
\(∑_{c' : C␣(f␣a)} c' = \tr{C}{p^{-1}}{c}\) is contractible.

Thus, \(\fib␣F\ (b,c)\) is a proposition, since \(f\) is an embedding,
which is inhabited by \(γ\), and therefore contractible.\end{proof}

There is an equivalence between fiberwise equivalences and equivalences
of the total spaces that respect the first coordinate. This will be
useful several times because it gives two equivalent characterisations
of equality in slices over a type.

\begin{lemma}[\agdalink{https://elisabeth.stenholm.one/univalent-material-set-theory/foundation.slice.html\#6332}]

\label{total-fiberwise-map}For any type \(A\), and any families
\(P, Q : A → \Type\) we have that

\begin{equation*}
    \left( ∏_{x:A} P␣x → Q␣x \right)
    ␣≃␣ \left( ∑_{α:∑_{x:A} P␣x → ∑_{x:A} Q␣x} π₀ ∘ α = π₀ \right)
\end{equation*}\end{lemma}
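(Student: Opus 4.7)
The plan is to reduce the right-hand side through a short chain of equivalences, using the universal property of $\Sigma$-types together with contractibility of a based path space.

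The first step is to apply the standard uncurrying equivalence for $\Sigma$-valued maps,
$$\left(\sum_{x:A} P\,x \to \sum_{x:A} Q\,x\right) \;\simeq\; \sum_{a \,:\, \sum_{x:A} P\,x \to A}\; \prod_{w \,:\, \sum_{x:A} P\,x} Q(a\,w),$$
sending $\alpha$ to the pair $(\pi_0 \circ \alpha,\; \pi_1 \circ \alpha)$. Under this identification the condition $\pi_0 \circ \alpha = \pi_0$ in the right-hand side translates directly to $a = \pi_0$, and the right-hand side becomes
$$\sum_{a}\;\sum_{b \,:\, \prod_w Q(a\,w)}\;(a = \pi_0).$$

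Since the factor $(a = \pi_0)$ does not depend on $b$, I would swap the two inner $\Sigma$'s and reassociate to obtain
$$\sum_{(a,p) \,:\, \sum_{a'}(a' = \pi_0)}\;\prod_{w} Q(a\,w).$$
The outer index type $\sum_{a'}(a' = \pi_0)$ is a based path space and hence contractible with centre $(\pi_0, \refl)$; contracting it leaves $\prod_{w \,:\, \sum_{x:A} P\,x} Q(\pi_0\,w)$, and a final currying gives $\prod_{x:A}\prod_{p:P\,x} Q\,x \simeq \prod_{x:A} P\,x \to Q\,x$, as required.

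The remaining obligation is to verify that the composite equivalence has the expected form: a fiberwise map $f : \prod_x P\,x \to Q\,x$ should correspond to $(\total f,\; \refl)$, where $\total f\,(x,p) = (x, f\,x\,p)$. Tracing $f$ through the chain, it lifts first to the pair $(\pi_0,\; \lambda(x,p).\,f\,x\,p)$, whose first component is literally $\pi_0$ so that the required identification is $\refl$; packaging this pair back yields $\total f$. The only non-cosmetic ingredient is function extensionality, used to treat $\pi_0 \circ \alpha = \pi_0$ as equality of functions while rearranging, and this is assumed throughout the paper, so the main ``obstacle'' is really nothing more than careful bookkeeping of the chain.
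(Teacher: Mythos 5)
Your proof is correct and uses the same ingredients as the paper's — the $Π$/$Σ$ interchange law, contraction of a based path space, currying, and a trace showing $f ↦ (\total␣f, \refl)$ — just run from the right-hand side to the left, with the singleton contracted once at the level of the whole function type $∑_{a'}(a' = π₀)$ rather than pointwise as $∑_{x':A}(x' = x)$ inside the $Π$. This is essentially the same argument as in the paper.
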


\begin{proof}

We have the following chain of equivalences: \begin{align}
    \label{fib-map-step-1}
    \left(∏_{x:A} P␣x → Q␣x\right)
        &≃ ∏_{(x,\_):∑_{x:A} P␣x} Q␣x \\
    \label{fib-map-step-2}
        &≃ ∏_{(x,\_):∑_{x:A} P␣x} \; ∑_{(x',\_):∑_{x':A} x' = x} Q␣x' \\
        &≃ ∏_{(x,\_):∑_{x:A} P␣x} \; ∑_{(x',\_):∑_{x':A} Q␣x'} x' = x \\
    \label{fib-map-step-4}
        &≃ ∑_{α:∑_{x:A} P␣x → ∑_{x:A} Q␣x} π₀ ∘ α = π₀
\end{align}

where (\ref{fib-map-step-1}) is currying, (\ref{fib-map-step-2}) follows
from the fact that \(∑_{x':A} x' = x\) is contractible and
(\ref{fib-map-step-4}) is the interchange law between Σ-types and
Π-types, together with function extensionality. The equivalence sends
\(f\) to \((\total␣f, \refl)\).\end{proof}

\textbf{Remark:} Lemma \ref{total-fiberwise-map} has been independently
added by Egbert Rijke to the agda-unimath library \cite{agda-unimath}.
The proof there is slightly different, with a direct construction of the
equivalence, instead of equivalence reasoning.

\begin{corollary}[\agdalink{https://elisabeth.stenholm.one/univalent-material-set-theory/foundation.slice.html\#8713}]

\label{total-fiberwise-equiv}For any type \(A\), and any families
\(P, Q : A → \Type\) we have an equivalence

\begin{equation*}
    \left( ∏_{x:A} P␣x␣≃␣Q␣x \right)
    ␣≃␣ \left( ∑_{α:∑_{x:A} P␣x␣≃␣∑_{x:A} Q␣x} π₀ ∘ α = π₀ \right)
\end{equation*}\end{corollary}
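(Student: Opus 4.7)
My plan is to refine the equivalence of Lemma \ref{total-fiberwise-map} by restricting both sides to subtypes cut out by appropriate propositions: fiberwise being an equivalence on the left, and being an equivalence on the total map on the right. The idea is that $P\,x \simeq Q\,x$ is built as ``map plus is-equivalence'', and similarly an equivalence of total spaces is a total map with an is-equivalence witness, so after some $\Sigma$-shuffling, the only substantive step beyond Lemma \ref{total-fiberwise-map} is matching up the two is-equivalence predicates.

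Concretely, I would first apply $\Pi$--$\Sigma$ distributivity on the left to obtain
\begin{equation*}
\prod_{x:A}\bigl(P\,x \simeq Q\,x\bigr) \;\simeq\; \sum_{F : \prod_{x:A} P\,x \to Q\,x} \prod_{x:A} \operatorname{isEquiv}(F\,x),
\end{equation*}
and reassociate the right-hand side to pull the is-equivalence condition out:
\begin{equation*}
\sum_{\alpha : \sum_{x:A} P\,x \,\simeq\, \sum_{x:A} Q\,x} π_0 \circ \alpha = π_0 \;\simeq\; \sum_{(\alpha, p) : \Theta} \operatorname{isEquiv}(\alpha),
\end{equation*}
where $\Theta \mathrel{\mathop:}= \sum_{\alpha' : \sum_{x:A} P\,x \to \sum_{x:A} Q\,x} π_0 \circ \alpha' = π_0$. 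Lemma \ref{total-fiberwise-map} gives an equivalence $\prod_{x:A} P\,x \to Q\,x \;\simeq\; \Theta$ sending $F$ to $(\total\,F, \refl)$, so it remains to show that for every $F$,
\begin{equation*}
\prod_{x:A} \operatorname{isEquiv}(F\,x) \;\simeq\; \operatorname{isEquiv}(\total\,F).
\end{equation*}
Since both sides are propositions, it suffices to exhibit a logical equivalence, which follows from the standard fiber equivalence $\fib\,(\total\,F)\,(x, q) \simeq \fib\,(F\,x)\,q$ for $(x, q) : \sum_{x:A} Q\,x$.

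The main obstacle is essentially bookkeeping: the $\Sigma$-reassociation on the right that separates $\operatorname{isEquiv}(\alpha)$ from the condition $π_0 \circ \alpha = π_0$ has to be done carefully so that the witness $(\total\,F, \refl)$ supplied by Lemma \ref{total-fiberwise-map} lands exactly in the factor where being-an-equivalence is attached to the map component. The substantive mathematical input, that a fiberwise map is a fiberwise equivalence if and only if its total map is an equivalence, is classical and I would invoke it as a black box rather than re-prove it.
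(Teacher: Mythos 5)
Your proposal is correct and follows essentially the same route as the paper: the paper's proof is exactly "apply Lemma \ref{total-fiberwise-map} together with HoTT Book Theorem 4.7.7 (a fiberwise transformation is a fiberwise equivalence iff its total map is an equivalence)", and your argument just spells out the $\Sigma$-bookkeeping that makes this combination precise, using the same key fact (via the fiber equivalence $\fib\,(\total\,F)\,(x,q) \simeq \fib\,(F\,x)\,q$) as the substantive input.
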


\begin{proof}

This follows from Lemma \ref{total-fiberwise-map} by an application of
Theorem 4.7.7 in the HoTT Book \cite[p.~185]{hottbook}, which states
that a fiberwise transformation is a fiberwise equivalence if and only
if the corresponding total function is an equivalence.\end{proof}

\begin{proof}[Proof of Proposition \ref{operation-equiv}]
We outline the key steps of the equivalence. For full details see the Agda
formalisation. We have the following chain of equivalences:

\begin{align}
\oper_{a␣b}␣f 
    \label{oper-equiv-1}
    ≃ &\left(∏_{z:\El␣f} \fib\ 〈-,-〉\ (π_0\ z)\right) \\
    × &\left(∑_{σ:\El␣a ≃ ∑_{(x,y):V × V} 〈x,y〉 ∈ f} π_0 ∼ π_0 ∘ π_0 ∘ σ\right) \nonumber \\
    × &\left(∑_{φ:∑_{(x,y):V × V} 〈x,y〉 ∈ f → \El␣b} π_0 ∘ φ ∼ π_1 ∘ π_0\right) \nonumber \\
    \label{oper-equiv-2}
    ≃ &∑_{γ:∏_{z:\El␣f} \fib\ 〈-,-〉\ (π_0\ z)} \\
        &\hspace{16pt} \left(∑_{σ:\El␣a ≃ \El␣f} π_0 ∘ σ^{-1} ∼ π_0 ∘ π_0 ∘ γ\right) \nonumber \\
        &\hspace{6pt} × \left(∑_{φ:\El␣f → \El␣b} π_0 ∘ φ ∼ π_1 ∘ π_0 ∘ γ\right) \nonumber \\
    \label{oper-equiv-3}
    ≃ &∑_{σ:\El␣a ≃ \El␣f} ∑_{φ:\El␣f → \El␣b} ∏_{z:\El␣f} \\
        &\hspace{16pt} ∑_{(x,y) : V × V}
            \left(〈x,y〉 = π_0␣z\right) × 
            \left(π_0␣(σ^{-1}\ z) = x\right) ×
            \left(π_0␣(φ\ z) = y\right) \nonumber \\
    \label{oper-equiv-4}
    ≃ &∑_{σ:\El␣a ≃ \El␣f} ∑_{φ:\El␣f → \El␣b} ∏_{z:\El␣f}
        〈π_0␣(σ^{-1}\ z),π_0␣(φ\ z)〉=π_0\ z \\
    \label{oper-equiv-5}
    ≃ &∑_{φ:\El␣a → \El␣b} ∑_{σ:\El␣a ≃ \El␣f} ∏_{x:\El␣a}
        〈π_0\ x, π_0␣(φ\ x)〉=π_0␣(σ\ x) \\
    \label{oper-equiv-6}
    ≃ &\ \operr_{a␣b}␣f
\end{align}

where in (\ref{oper-equiv-1}) we apply Corollary \ref{total-fiberwise-equiv}
to the first conjunct and Lemma \ref{total-fiberwise-map} to the second
conjunct, and then rearrange. In (\ref{oper-equiv-2}) we use Lemma
\ref{sigma-emb} to construct an equivalence $∑_{(x,y):V × V} 〈x,y〉 ∈ f ≃
\El␣f$ which we apply in the second and third conjuncts. In
(\ref{oper-equiv-3}) we rearrange and then use the interchange law for
$∏$-types and $∑$-types. In (\ref{oper-equiv-4}) we use extensionality for
cartesian products and the fact that
$∑_{(x,y):V × V} (π_0␣(σ^{-1}\ z),π_0␣(φ␣z))=(x,y)$ is contractible. In (\ref{oper-equiv-5}) we swap $\El␣a$ for
$\El␣f$ and then rearrange. Finally, in (\ref{oper-equiv-6}) we use
Corollary \ref{total-fiberwise-equiv} again.\qedhere

\end{proof}

The notion of \(\operr\) captures type theoretic functions in the sense
that the type of all operations from \(a : V\) to \(b : V\) is a subtype
of the type \(\El a → \El b\).

\begin{proposition}[\agdalink{https://elisabeth.stenholm.one/univalent-material-set-theory/e-structure.property.exponentiation.html\#20060}]

\label{operation-emb-functions}Given \(a, b : V\) in an ∈-structure
\((V,∈)\) with ordered pairing \(〈-,-〉\), there is a canonical
embedding \begin{align*}
    \left(∑_{f : V} \operr_{a␣b}␣f\right) ↪ \left(\El a → \El b\right)
\end{align*}\end{proposition}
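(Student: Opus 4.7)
The canonical map I have in mind sends a triple $(f,\varphi,e) : \sum_{f:V} \operr_{a\,b}\,f$ to its underlying type-theoretic function $\varphi : \El a \to \El b$, i.e.\ it is simply the second projection composed with the first projection (extracting $\varphi$ from $(f,(\varphi,e))$). To prove this is an embedding, I need to show that its fibre over any $\varphi : \El a \to \El b$ is a proposition, and for that the plan is to reduce the fibre to the fixed-$\in$-relation situation handled by Proposition \ref{is-prop-fixed-e-rel}.

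First I would fix $\varphi : \El a \to \El b$ and compute the fibre. Unfolding definitions, an element of the fibre is a tuple $(f,\varphi',e,p)$ with $f:V$, $\varphi' : \El a \to \El b$, a family of equivalences $e : \prod_{z:V} (z\in f) \simeq \sum_{x:\El a} \langle \pi_0 x, \pi_0(\varphi' x)\rangle = z$, and a path $p : \varphi' = \varphi$. Contracting $\varphi'$ along $p$ (the $\sum_{\varphi':\El a \to \El b} \varphi' = \varphi$ piece is contractible), the fibre is equivalent to
\begin{equation*}
\sum_{f:V}\ \prod_{z:V} (z \in f) \simeq \psi(z),
\qquad\text{where } \psi(z) \mathrel{:=} \sum_{x:\El a} \langle \pi_0 x, \pi_0(\varphi x)\rangle = z.
\end{equation*}

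Now I would invoke Proposition \ref{is-prop-fixed-e-rel} on the type family $\psi : V \to \Type$ to conclude that this $\Sigma$-type is a proposition. This is exactly the general principle that an unrestricted comprehension $\{z \mid \psi(z)\}$, when it exists, is uniquely determined by extensionality; here the role of the predicate is played by $\psi(z) = \sum_{x:\El a}\langle\pi_0 x,\pi_0(\varphi x)\rangle = z$.

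The only subtlety is being careful that the canonical map picked out really is the one appearing in the embedding structure (i.e.\ that projecting first to $\operr_{a\,b}\,f$ and then to $\varphi$ coincides on identifications with the chain of equivalences above). This is routine bookkeeping once the fibre has been rewritten, so I do not expect any real obstacle beyond checking that the contraction of $\varphi' = \varphi$ leaves the remaining data in the form required by Proposition \ref{is-prop-fixed-e-rel}. No further lemmas are needed; the main content of the proof is the reduction to a fixed-$\in$-relation existence/uniqueness statement.
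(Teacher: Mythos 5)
Your proposal is correct and is essentially the paper's proof: the paper rearranges the $\Sigma$-types so that the type becomes $\sum_{\varphi : \El a \to \El b}(\text{proposition})$ and takes the first projection, which is exactly your direct computation that each fibre of the extraction map is, after contracting the singleton $\sum_{\varphi'}\varphi'=\varphi$, the type $\sum_{f:V}\prod_{z:V}(z\in f)\simeq\psi(z)$, a proposition by Proposition \ref{is-prop-fixed-e-rel}. The underlying map and the key lemma are the same in both arguments.
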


\begin{proof}

By swapping the Σ-types we have the following equivalence

\begin{align}
    \left(∑_{f : V} \operr_{a␣b}␣f\right)
        \label{operation-emb-functions-step}
        ≃ ∑_{φ : \El a → \El b} ∑_{f : V} ∏_{z:V} \left(z ∈ f ≃ ∑_{x:\El␣a} 〈π_0\ x, π_0␣(φ\ x)〉=z\right)
\end{align}

The type \(∑_{f : V} ∏_{z:V} \left(z ∈ f ≃ ∑_{x:\El␣a} 〈π_0\ x, π_0␣(φ\
x)〉=z\right)\) is a proposition, by Proposition
\ref{is-prop-fixed-e-rel}. The embedding is thus the composition of
(\ref{operation-emb-functions-step}) with the first
projection.\end{proof}

An operation from \(a : V\) to \(b : V\) is thus a function from
\(\El a → \El b\). A natural question to ask is when all functions
\(\El a → \El b\) correspond to some operation.

\begin{proposition}[\agdalink{https://elisabeth.stenholm.one/univalent-material-set-theory/e-structure.property.exponentiation.html\#20601}]

\label{operation-emb-functions-equiv}Given \(a, b : V\) in an
∈-structure \((V,∈)\) of level \(n : \Nat^∞\), with ordered pairing
\(〈-,-〉\), the embedding in Proposition \ref{operation-emb-functions}
is an equivalence if \((V,∈)\) has \(n\)-replacement.\end{proposition}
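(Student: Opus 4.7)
The first move is to identify the fibers of the embedding. By the swap-of-$\Sigma$-types equivalence (\ref{operation-emb-functions-step}) in the proof of Proposition \ref{operation-emb-functions}, our embedding is (up to equivalence) the first projection $\sum_{\varphi : \El a \to \El b} F(\varphi) \to (\El a \to \El b)$, where $F(\varphi) := \sum_{f : V} \prod_{z : V} (z \in f \simeq \sum_{x : \El a} \langle \pi_0\,x, \pi_0(\varphi\,x)\rangle = z)$. By Proposition \ref{is-prop-fixed-e-rel} the type $F(\varphi)$ is a proposition, so the embedding is an equivalence precisely when $F(\varphi)$ is inhabited for every $\varphi$.

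The natural candidate comes from $n$-replacement. Applying it to $g : \El a \to V$ defined by $g(x) := \langle \pi_0\,x, \pi_0(\varphi\,x)\rangle$, we obtain $f := \{g(x) \mid x \in a\}$ satisfying $z \in f \simeq \|\sum_{x : \El a} g(x) = z\|_{n-1}$. This $f$ witnesses $F(\varphi)$ provided the $(n-1)$-truncation on the right-hand side can be dropped, i.e.\ provided $\sum_{x : \El a} g(x) = z$ is already an $(n-1)$-type.

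The key ingredient for the truncation bound is that ordered pairing $\langle -, -\rangle : V \times V \hookrightarrow V$ is an embedding, so its fiber over $z$ is a proposition. I would rewrite
\[
\sum_{x : \El a} \langle \pi_0\,x, \pi_0(\varphi\,x)\rangle = z \;\simeq\; \sum_{(u,v) : \fib\,\langle -, -\rangle\,z} \; \sum_{x : \El a} (\pi_0\,x, \pi_0(\varphi\,x)) = (u,v),
\]
and then contract the first coordinate $u : V$ against the path $\pi_0\,x = u$ to reduce the inner sum to $\sum_{q : u \in a} \pi_0(\varphi(u, q)) = v$. Since $u \in a$ is an $(n-1)$-type (the structure has level $n$) and equalities in the $n$-type $V$ are $(n-1)$-types (Proposition \ref{m-h-level}), this final expression is an $(n-1)$-type, and a $\Sigma$ of $(n-1)$-types indexed by a proposition is again an $(n-1)$-type. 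Thus the truncation collapses and $f$ inhabits $F(\varphi)$.

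The main obstacle is exactly this truncation-level bookkeeping: a naive estimate of the type level of $\sum_{x : \El a} g(x) = z$ gives $n$, one level too high, because $\El a$ is in general only an $n$-type. What rescues us is the embedding-ness of $\langle -, -\rangle$: it pins the first coordinate $u$ of each $x \in \El a$ to a fixed element of $V$, which can then be contracted away before it contributes to the truncation level of the sum. The $n = \infty$ case is trivial, since then $\infty$-replacement gives the untruncated equivalence directly.
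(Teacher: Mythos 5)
Your proposal is correct and follows essentially the same route as the paper: reduce to inhabiting the propositional fiber (via Proposition \ref{is-prop-fixed-e-rel}), apply $n$-replacement to $λ\,x.\langle π_0\,x, π_0(φ\,x)\rangle$, and drop the truncation by showing $∑_{x:\El a}\langle π_0\,x, π_0(φ\,x)\rangle = z$ is $(n-1)$-truncated. The paper proves that last claim by exhibiting the type (after inserting the contractible $∑_{y:\El b}φ\,x=y$) as the fiber of the composite $(n-1)$-truncated map $\langle -,-\rangle ∘ (π_0 × π_0)$, whereas you fiber over the pairing embedding and contract the first coordinate — the same ingredients (pairing is an embedding, $u ∈ a$ is an $(n-1)$-type, identity types of $V$ are $(n-1)$-types) packaged slightly differently.
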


\begin{proof}

Suppose \((V,∈)\) has \(n\)-replacement. By applying it to the map
\(λ␣x.〈π_0\ x, π_0␣(φ\ x)〉 : \El a → V\), we get an element of the
following type: \begin{align*}
    ∑_{f : V} ∏_{z:V} \left(z ∈ f ≃ \Big\|∑_{x:\El␣a} 〈π_0\ x, π_0␣(φ\ x)〉=z\Big\|_{n-1}\right)
\end{align*} \textbf{Claim:} The type
\(∑_{x:\El␣a} 〈π_0\ x, π_0␣(φ\ x)〉=z\) is \((n-1)\)-truncated.

Given that the claim is true we can drop the truncation. Thus the type
\begin{align*}
    ∑_{f : V} ∏_{z:V} \left(z ∈ f ≃ ∑_{x:\El␣a} 〈π_0\ x, π_0␣(φ\ x)〉=z\right)
\end{align*} is contractible, because it is an inhabited proposition,
and hence the embedding constructed in Proposition
\ref{operation-emb-functions} is an equivalence.

It remains to prove the claim. First, we have the following equivalence:
\begin{align}
    ∑_{x:\El␣a} \left(〈π_0\ x, π_0␣(φ\ x)〉=z\right)
        ≃ ∑_{(x , y) : \El a × \El b} \left(〈π_0\ x, π_0␣y〉=z\right)
            × \left(φ\ x = y\right)
\end{align} This follows from the fact that \(∑_{y : \El b} φ\ x = y\)
is contractible. The type \(φ\ x = y\) is \((n-1)\)-truncated since
\(V\) is an \(n\)-type, by Proposition \ref{m-h-level}. The type
\(∑_{(x , y) : \El a × \El b} \left(〈π_0\ x, π_0\ y〉=z\right)\) is the
fiber of the composite map \(〈-,-〉 ∘ (π_0 × π_0)\) over \(z : V\). The
map \(〈-,-〉\) is an embedding and thus an \((n-1)\)-truncated map. The
fibers of the two projection maps over \(z\) are \(z ∈ a\) and \(z ∈ b\)
respectively. These types are \((n-1)\)-truncated. So the composite map
is \((n-1)\)-truncated, and the claim follows.\end{proof}

\hypertarget{accessible-elements-and-foundation}{%
\subsubsection{Accessible elements and
foundation}\label{accessible-elements-and-foundation}}

We choose the same approach as the HoTT Book \cite{hottbook} to
well-foundedness, namely accessibility predicates. The axiom of
foundation is then the statement that all elements are accessible. This
need not be true in a given ∈-structure, but the subtype of accessible
elements inherits the ∈-structure from the base type. This new
∈-structure satisfies foundation.

\begin{definition}[\agdalink{https://elisabeth.stenholm.one/univalent-material-set-theory/order-theory.accessible-elements-relations.html\#1138}]

Given an ∈-structure, \((V,∈)\), define inductively the predicate
\(\Acc : V → \Type\) by

\begin{itemize}
\tightlist
\item
  \(\acc : ∏_{x : V} (∏_{y : V} y ∈ x → \Acc y) → \Acc x\)
\end{itemize}

\end{definition}

\begin{lemma}[\agdalink{https://elisabeth.stenholm.one/univalent-material-set-theory/order-theory.accessible-elements-relations.html\#3710}]

\label{lma-acc-prop}For every \(x : V\) the type \(\Acc x\) is a mere
proposition.\end{lemma}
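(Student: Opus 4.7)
The plan is to prove that $\Acc\, x$ is a proposition by showing that any two inhabitants $a, b : \Acc\, x$ are equal. I proceed by $\Acc$-induction on $a$, taking the motive $\lambda a.\, \prod_{b : \Acc\, x} a = b$. In the induction step we may assume $a = \acc\, x\, f$ for some $f : \prod_{y : V} y \in x \to \Acc\, y$, and the induction hypothesis supplies, for every $y : V$, every $e : y \in x$, and every $c : \Acc\, y$, an identification $f\, y\, e = c$.

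Given an arbitrary $b : \Acc\, x$, a second application of the $\Acc$-eliminator (now with motive $\lambda b.\, \acc\, x\, f = b$) reduces the task to producing $\acc\, x\, f = \acc\, x\, g$ for an arbitrary $g : \prod_{y : V} y \in x \to \Acc\, y$. By $\ap{\acc\, x}$ it suffices to construct $f = g$, and by two applications of function extensionality this amounts to showing $f\, y\, e = g\, y\, e$ for every $y : V$ and $e : y \in x$. This is exactly the induction hypothesis instantiated at $c := g\, y\, e$.

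The argument is otherwise entirely routine; the only decision worth noting is the choice of motive in the outer induction. Performing the induction on $a$ with motive $\lambda a.\, \prod_{b : \Acc\, x} a = b$, rather than first fixing $b$, ensures that the induction hypothesis is uniform in $b$ and thus available at the $g$ obtained when destructuring $b$. No machinery beyond function extensionality and the standard eliminator of $\Acc$ is needed, so the proof uses none of the HoTT-specific features of the ambient theory.
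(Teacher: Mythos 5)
Your proof is correct: the paper itself just cites Lemma 10.3.2 of the HoTT Book, and that lemma's proof is precisely your double-induction argument (outer induction with motive $\lambda a.\,\prod_{b:\Acc\,x} a = b$, inner case analysis on $b$, then $\ap{\acc\,x}$ and function extensionality). So you have simply written out the argument the paper defers to, including the key point about keeping $b$ universally quantified in the outer motive.
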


\begin{proof}

Lemma 10.3.2 in the HoTT Book \cite[p.~454]{hottbook}.\end{proof}

\begin{definition}[\agdalink{https://elisabeth.stenholm.one/univalent-material-set-theory/e-structure.property.foundation.html\#283}]

An ∈-structure, \((V,∈)\), has \textbf{foundation} if
\(∏_{x:V}\Acc x\).\end{definition}

Since accessibility is a proposition, we can define for a given
∈-structure, the subtype of accessible elements.

\begin{definition}[\agdalink{https://elisabeth.stenholm.one/univalent-material-set-theory/e-structure.accessible-elements.html\#1416}]

Given an ∈-structure, \((V,∈)\), define the type
\(V_{\Acc} ≔ ∑_{x : V} \Acc x\) and define the binary relation
\(∈_{\Acc} ≔ λ␣(x : V_{\Acc})␣(y : V_{\Acc}).␣π_0␣x ∈ π_0␣y\).\end{definition}

The subtype of accessible elements inherits the ∈-structure from the
base type.

\begin{proposition}[\agdalink{https://elisabeth.stenholm.one/univalent-material-set-theory/e-structure.accessible-elements.html\#8984}]

Given an ∈-structure, \((V,∈)\), the pair \((V_{\Acc},∈_{\Acc})\) forms
an ∈-structure.\end{proposition}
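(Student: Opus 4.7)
The underlying type $V_{\Acc}$ and binary family $∈_{\Acc}$ are given by definition, so only extensionality requires proof. Fix $x = (x_0, p_x)$ and $y = (y_0, p_y)$ in $V_{\Acc}$; we must show that the canonical map
\[
    c_{x,y} : (x = y) \longrightarrow ∏_{z : V_{\Acc}} (z ∈_{\Acc} x) ≃ (z ∈_{\Acc} y)
\]
is an equivalence. The plan is to construct an explicit equivalence $φ$ with the same domain and codomain, verify that $φ(\refl) = λ z.\,\idequiv$, and then conclude $φ = c_{x,y}$ by based path induction on $α : x = y$.

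The construction of $φ$ assembles three equivalences. First, since $\Acc$ is a proposition by Lemma \ref{lma-acc-prop}, the projection $π_0 : V_{\Acc} → V$ is an embedding, whence $(x = y) ≃ (x_0 = y_0)$. Second, by extensionality of $(V, ∈)$, we obtain $(x_0 = y_0) ≃ ∏_{z_0 : V} (z_0 ∈ x_0) ≃ (z_0 ∈ y_0)$. Third, after unfolding $∈_{\Acc}$ and currying the codomain of $c_{x,y}$ into $∏_{z_0 : V} \Acc z_0 → ((z_0 ∈ x_0) ≃ (z_0 ∈ y_0))$, it suffices to show that for every $z_0 : V$ the constant map
\[
    ((z_0 ∈ x_0) ≃ (z_0 ∈ y_0)) \longrightarrow (\Acc z_0 → ((z_0 ∈ x_0) ≃ (z_0 ∈ y_0)))
\]
is an equivalence. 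Its inverse sends $f$ to the equivalence whose forward component takes $u : z_0 ∈ x_0$ to $f(h_x(z_0, u))(u)$, where $h_x$ is the function underlying the inversion $p_x = \acc(h_x)$; the backward component is defined symmetrically using $p_y$. Propositionality of $\Acc$ ensures that this is genuinely a two-sided inverse to the constant map.

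Composing these three equivalences yields $φ$. Chasing $\refl : x = x$ through each stage gives $φ(\refl) = λ z.\,\idequiv$, which matches $c_{x,x}(\refl)$ by definition of the canonical map. Based path induction on $α : x = y$ then identifies $φ$ with $c_{x,y}$, so $c_{x,y}$ is an equivalence. The main obstacle is the pointwise equivalence in the third stage: although the intuition is clear, namely that every member of an accessible element is itself accessible and so quantification over $V$ collapses onto $V_{\Acc}$, building a clean two-sided inverse requires essential use of propositionality of $\Acc$ so that the extracted witnesses $h_x(z_0, u)$ and $h_y(z_0, v)$ behave coherently. The remaining steps are routine identity-type manipulation.
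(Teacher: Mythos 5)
Your proof is correct, and in the one non-trivial step it takes a genuinely different route from the paper. Both arguments open identically: \((x = y) ≃ (π_0␣x = π_0␣y)\) because \(\Acc\) is propositional (Lemma \ref{lma-acc-prop}), followed by extensionality of \((V,∈)\) to land in \(∏_{z_0 : V} (z_0 ∈ π_0␣x) ≃ (z_0 ∈ π_0␣y)\). From there the paper passes to total spaces: it applies Corollary \ref{total-fiberwise-equiv} to rewrite the family of fiberwise equivalences as an equivalence of Σ-types over \(V\) commuting with \(π_0\), inserts the factor \(\Acc␣z\) into each fiber (contractible there, since members of accessible elements are accessible), reassociates into Σ-types over \(V_{\Acc}\), and applies Corollary \ref{total-fiberwise-equiv} once more. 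You instead stay fiberwise: you curry the target into \(∏_{z_0 : V} \Acc␣z_0 → \left((z_0 ∈ π_0␣x) ≃ (z_0 ∈ π_0␣y)\right)\) and show directly that weakening by the hypothesis \(\Acc␣z_0\) is an equivalence, building the inverse from the destructed accessibility witnesses \(h_x, h_y\) and using propositionality of \(\Acc\) for the required coherences. The two proofs consume the same two mathematical inputs --- propositionality of \(\Acc\) and closure of accessibility under membership --- but yours trades the total-space lemma for an explicit two-sided inverse, which is more elementary and self-contained, while the paper's version keeps every step an off-the-shelf equivalence, making the final computation that \(\refl\) is sent to \(λ␣z.\idequiv\) a mechanical chase. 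Your concluding step (verify agreement on \(\refl\), then identify the constructed equivalence with the canonical map by path induction) is exactly the device the paper uses implicitly.
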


\begin{proof}

We need to prove extensionality for \((V_{\Acc},∈_{\Acc})\). For
\(x,y : V_{\Acc}\) we have the following chain of equivalences:
\begin{align}
    \left(x = y\right)
        \label{subtype-e-str-1}
        &≃ \left(π_0␣x = π_0␣y\right) \\
        \label{subtype-e-str-2}
        &≃ ∏_{z:V} z ∈ π_0␣x ≃ z ∈ π_0␣y \\
        \label{subtype-e-str-3}
        &≃ ∑_{e\, :\, ∑_{z:V} z ∈ π_0␣x\ ≃\ ∑_{z:V} z ∈ π_0␣y}
            π_0 ∘ e ∼ π_0 \\
        \label{subtype-e-str-4}
        &≃ ∑_{e\, :\, ∑_{z:V} \left(z ∈ π_0␣x\right) × \left(\Acc z\right)
            \ ≃\ ∑_{z:V} \left(z ∈ π_0␣y\right) × \left(\Acc z\right)}
            π_0 ∘ e ∼ π_0 \\
        \label{subtype-e-str-5}
        &≃ ∑_{e\, :\, ∑_{z:V_{\Acc}} π_0␣z ∈ π_0␣x
            \ ≃\ ∑_{z:V_{\Acc}} π_0␣z ∈ π_0␣y} π_0 ∘ e ∼ π_0 \\
        &≃ ∏_{z:V_{\Acc}} z ∈_{\Acc} x ≃ z ∈_{\Acc} y
\end{align}

In step (\ref{subtype-e-str-1}) we use Lemma \ref{lma-acc-prop}. Step
(\ref{subtype-e-str-2}) is extensionality for \((V,∈)\) and step
(\ref{subtype-e-str-3}) is Corollary \ref{total-fiberwise-equiv}. In
step (\ref{subtype-e-str-4}) we use the fact that elements of accessible
sets are accessible and thus \(\Acc z\) is contractible. Step
(\ref{subtype-e-str-5}) is some rearranging of conjuncts in the base
type together with the characterisation of equality in subtypes for the
fibration. The last step is again an application of Corollary
\ref{total-fiberwise-equiv}.

Chasing \(\refl\) along this chain of equivalences we see that it is
sent to \(λ␣z.\idequiv\).\end{proof}

The subtype of accessible elements satisfies foundation.

\begin{theorem}[\agdalink{https://elisabeth.stenholm.one/univalent-material-set-theory/e-structure.accessible-elements.html\#9784}]

Given an ∈-structure, \((V,∈)\), the ∈-structure \((V_{\Acc},∈_{\Acc})\)
has foundation.\end{theorem}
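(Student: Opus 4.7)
The plan is to prove foundation for $(V_{\Acc}, ∈_{\Acc})$ by induction on the accessibility predicate $\Acc$ of $(V, ∈)$. Let $\Acc'$ denote the accessibility predicate for $(V_{\Acc}, ∈_{\Acc})$; the goal $\prod_{p : V_{\Acc}} \Acc'\, p$ uncurries, since $V_{\Acc} = \sum_{x:V} \Acc\, x$, to
\[
    \prod_{x : V} \prod_{a : \Acc\, x} \Acc'\, (x, a),
\]
and the inner $\prod$ is a perfect candidate for $\Acc$-induction on $a$.

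In the induction step, I would fix $x : V$ together with $h : \prod_{y : V} y ∈ x → \Acc\, y$ and assume the induction hypothesis: for every $y : V$ and every $e : y ∈ x$, the pair $(y, h(y, e)) : V_{\Acc}$ satisfies $\Acc'$. Applying the constructor $\acc$ of $\Acc'$ reduces the goal $\Acc'\, (x, \acc(x, h))$ to producing an $\Acc'$-witness for every $(y, b) : V_{\Acc}$ with $(y, b) ∈_{\Acc} (x, \acc(x, h))$; unfolding $∈_{\Acc}$, this means supplying such a witness for every $y : V$, $b : \Acc\, y$, and $e : y ∈ x$.

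The only subtlety is that the induction hypothesis supplies accessibility of $(y, h(y, e))$ rather than of $(y, b)$. This discrepancy evaporates by Lemma~\ref{lma-acc-prop}: since $\Acc\, y$ is a mere proposition, $b = h(y, e)$, hence $(y, b) = (y, h(y, e))$ in $V_{\Acc}$, and the induction hypothesis transports across this equality to give the required witness. Beyond this bookkeeping, there is no genuine obstacle; the argument is just the correct set-up of $\Acc$-induction, with Lemma~\ref{lma-acc-prop} handling the propositionality matching between the two accessibility witnesses.
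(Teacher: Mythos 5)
Your proposal is correct and follows essentially the same route as the paper: induction on the $\Acc$-component of an element of $V_{\Acc}$, applying the constructor of the new accessibility predicate, and repairing the mismatch between the two witnesses of $\Acc\,y$ by a transport justified by Lemma~\ref{lma-acc-prop}. The paper compresses this into a one-line recursive definition with the parenthetical ``modulo a transport''; you have simply made that transport explicit.
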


\begin{proof}

There are two different accessibility predicates at play here. Let
\(\Acc\) and \(\acc\) be accessibility with respect to \((V,∈)\) and let
\(\Acc'\) and \(\acc'\) be accessibility with respect to
\((V_{\Acc},∈_{\Acc})\). We need to show \(∏_{x : V_{\Acc}} \Acc' x\),
which we do by using the induction principle of \(\Acc\) (modulo a
transport): \begin{align*}
    &α : ∏_{x : V_{\Acc}} \Acc' x \\
    &α(x,\acc␣f) ≔ \acc'␣\left(λ␣(y : V_{\Acc})(p : π_0␣y ∈ x).␣
        α(π_0␣y, f␣(π_0␣y)␣p)\right)\qedhere
\end{align*}\end{proof}

\hypertarget{internalisations-of-types-in--structures}{%
\section{\texorpdfstring{Internalisations of types in ∈-structures
\label{representations-of-types-in-e-structures}}{Internalisations of types in ∈-structures }}\label{internalisations-of-types-in--structures}}

Any element \(a:V\) gives rise to a type \(\El a\) (Definition
\ref{def-el}), and in some sense \(a\) represents \(\El a\) inside the
bigger structure of \((V,∈)\). For instance, an operation from \(a\) to
\(b\) is precisely a function from \(\El a → \El b\) (Proposition
\ref{operation-emb-functions}). A natural question to ask is: Which
types can be represented as elements in this way in a given ∈-structure?
In this section we introduce some basic vocabulary for talking about
this kind of representation. We apply this by giving a very flexible
formulation of the axiom of infinity -- which constructively is often
formulated as the existence of a set collecting the natural numbers for
some chosen encoding of these. The flexibility of our formulation is
that it takes the encoding as a parameter, making very few assumptions
about it. We prove that for ∈-structures that satisfy replacement, the
existence of a set of natural numbers is independent of encoding.

\hypertarget{internalisations-and-representations}{%
\subsection{Internalisations and
representations}\label{internalisations-and-representations}}

For the rest of this section, fix an ∈-structure \((V,∈)\) and
\(A : \Type\).

\begin{definition}[\agdalink{https://elisabeth.stenholm.one/univalent-material-set-theory/e-structure.internalisations.html\#1746}]

\textbf{A \((V,∈)\)-internalisation of \(A\)} is an element \(a : V\)
such that \(\El a ≃ A\).\end{definition}

There can be several different internalisations of \(A\). However, if we
fix an encoding of the elements of \(A\) in \(V\), there is at most one
element in \(V\) which is an internalisation of \(A\) with respect to
this encoding.

An encoding of the elements of \(A\) in \(V\) is a function \(A → V\).
We will call this a representation because of the superficial similarity
with classical representation theory, which can be seen as the study of
functors \(G → \operatorname{Vec}_k\) for some group \(G\) and field
\(k\). In our case representations are functions \(A → V\) where \(A\)
can be any type (not necessarily a group), and the codomain is some
∈-structure.

\begin{definition}[\agdalink{https://elisabeth.stenholm.one/univalent-material-set-theory/e-structure.internalisations.html\#1854}]

\textbf{A \((V,∈)\)-representation of \(A\)} is a function
\(A → V\).\end{definition}

Our main concern is going to be if a given representation gives rise to
an internalisation of the domain in the ∈-structure itself. In other
words, if the elements pointed out by the representation can be
collected to a set such that the type of elements is exactly the type
being represented.

\begin{definition}[\agdalink{https://elisabeth.stenholm.one/univalent-material-set-theory/e-structure.internalisations.html\#1929}]

Given a \((V,∈)\)-representation of \(A\), say \(f : A → V\), let
\textbf{an internalisation of \(f\)} be an \(a : V\) such that for every
\(z : V\) we have \(z ∈ a ≃ \fib f␣z\).\end{definition}

We say that a representation is internalisable if there is an
internalisation of it. An internalisation of a representation is an
internalisation of the domain.

\begin{proposition}[\agdalink{https://elisabeth.stenholm.one/univalent-material-set-theory/e-structure.internalisations.html\#2103}]

\label{int-type-from-int-repr}Let \(f : A → V\) be a
\((V,∈)\)-representation of \(A\) and let \(a : V\) be an
internalisation of \(f\). Then \(a\) is an internalisation of
\(A\).\end{proposition}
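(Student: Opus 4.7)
The plan is to unfold the definition of $\El a$ and chain equivalences until we reach $A$. By Definition \ref{def-el}, $\El a \equiv \sum_{z:V} z \in a$. Applying the hypothesis on $a$ fiberwise (via, e.g., Corollary \ref{total-fiberwise-equiv} applied to the family $\lambda z.\ z \in a$ and the family $\lambda z.\ \fib f\ z$ over $V$) yields an equivalence
\[
    \sum_{z:V} z \in a \;\simeq\; \sum_{z:V} \fib f\ z.
\]

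Next I would expand the definition of the homotopy fiber, giving $\sum_{z:V} \sum_{x:A} f\,x = z$, and then swap the two $\Sigma$-types to obtain $\sum_{x:A} \sum_{z:V} f\,x = z$. The inner type $\sum_{z:V} f\,x = z$ is a based path space, hence contractible, so the total type is equivalent to $A$. Composing these equivalences gives $\El a \simeq A$, which is precisely what it means for $a$ to be a $(V,\in)$-internalisation of $A$.

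None of the steps pose a genuine obstacle; the entire argument is routine manipulation of $\Sigma$-types and the defining property of $a$ as an internalisation of $f$. The only choice worth noting is invoking Corollary \ref{total-fiberwise-equiv} (as opposed to transporting pointwise) to get the first equivalence cleanly, which keeps the proof uniform with the rest of the paper's style.
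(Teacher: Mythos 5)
Your proof is correct and matches the paper's argument: both first turn the fiberwise equivalences $z \in a \simeq \fib f\,z$ into an equivalence of total spaces, and then identify $\sum_{z:V}\fib f\,z$ with $A$. The only cosmetic difference is that the paper cites Lemma~4.8.2 of the HoTT Book for that last equivalence, whereas you re-derive it inline by swapping $\Sigma$-types and contracting the based path space.
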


\begin{proof}

By Lemma 4.8.2 in the HoTT Book \cite{hottbook}, we have: \begin{align*}
    \El a ≡ \left(∑_{z : V} z ∈ a\right) ≃ \left(∑_{z : V} \fib f␣z\right) ≃ A,
\end{align*} showing that \(a\) is an internalisation of
\(A\).\end{proof}

Once a representation of a type is fixed, the internalisation is
uniquely defined. Thus, we will speak of \emph{the} internalisation of a
given representation.

\begin{proposition}[\agdalink{https://elisabeth.stenholm.one/univalent-material-set-theory/e-structure.internalisations.html\#2453}]

\label{is-prop-int-of-repr}Given a \((V,∈)\)-representation of \(A\),
say \(f : A → V\), the internalisation of \(f\) is uniquely defined.
That is, the type \(∑_{a:V} ∏_{z:V} \left( z∈a ≃ \fib f␣z \right)\) is a
proposition.\end{proposition}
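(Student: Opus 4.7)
The statement is a direct instance of Proposition \ref{is-prop-fixed-e-rel}. My plan is therefore very short: I would take the type family $\varphi : V \to \Type$ to be $\varphi\ z := \fib f\ z$, noting that this is a perfectly good type family on $V$ (its values live in $\Type$ since $V : \Type$ and $f : A \to V$, so fibers of $f$ are types). Then the type $\sum_{a:V} \prod_{z:V} \left(z \in a \simeq \fib f\ z\right)$ is literally the instance $\sum_{x : V} \prod_{z : V} z \in x \simeq \varphi\ z$ of the type appearing in Proposition \ref{is-prop-fixed-e-rel}, and hence is a proposition.

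There is no real obstacle here; the conceptual work was already done in the proof of Proposition \ref{is-prop-fixed-e-rel}, where extensionality of $(V,\in)$ is used to show that fixing the elementhood relation of an element determines it uniquely. The only thing worth remarking in the write-up is that $\fib f$ has the right variance to serve as the family $\varphi$, i.e.\ that we are fixing the elementhood predicate to be ``being in the fiber of $f$ over $z$'', which depends only on $z : V$.
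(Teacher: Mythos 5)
Your proposal is correct and is exactly the paper's own proof: the paper states this proposition as a corollary of Proposition \ref{is-prop-fixed-e-rel}, obtained by instantiating the family $φ$ with $\fib f$. Your additional remark that $\fib f$ is a well-formed type family on $V$ is a fine bit of bookkeeping but adds nothing essential.
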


\begin{proof}

Corollary of Proposition \ref{is-prop-fixed-e-rel}.\end{proof}

The notion of representation gives us a way to separate the
internalisation of a type into a structure part (how the elements are
encoded) and the property that this structure can be internalised. In
fact, an internalisation of a type is exactly the same as an
internalisation of a representation of that type.

\begin{proposition}[\agdalink{https://elisabeth.stenholm.one/univalent-material-set-theory/e-structure.internalisations.html\#2977}]

\label{equiv-int-repr}The type of all internalisable
\((V,∈)\)-representations of \(A\),
\(∑_{f:A→V}∑_{a:V} ∏_{z:V}z∈a ≃ \fib f␣z\) is equivalent to the type of
all internalisations of \(A\),
\(∑_{a:V}\left(\El␣a ≃ A\right)\).\end{proposition}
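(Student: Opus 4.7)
The plan is to reduce the problem to a statement about a single fixed $a : V$ via a $\Sigma$-swap, and then establish the inner equivalence using Corollary \ref{total-fiberwise-equiv} together with the standard characterisation of slices from HoTT Book Lemma 4.8.2. First, I would swap the outer $\Sigma$-types:
\begin{align*}
\sum_{f:A\to V}\sum_{a:V}\prod_{z:V}(z\in a \simeq \fib f z) \;\simeq\; \sum_{a:V}\sum_{f:A\to V}\prod_{z:V}(z\in a \simeq \fib f z),
\end{align*}
reducing the task to showing, for each fixed $a : V$, that $\sum_{f:A\to V}\prod_{z:V}(z\in a \simeq \fib f z)$ is equivalent to $\El a \simeq A$.

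For this inner equivalence, I would apply Corollary \ref{total-fiberwise-equiv} to the families $z\mapsto (z \in a)$ and $z\mapsto \fib f z$, converting the $\prod$ of fiberwise equivalences into an equivalence of total spaces over $V$ respecting first projection, yielding
\begin{align*}
\sum_{f:A\to V}\sum_{e : \El a \simeq \sum_{z:V}\fib f z} \left(\pi_0 \circ e = \pi_0\right).
\end{align*}
Next, using the canonical equivalence $\phi_f : A \simeq \sum_{z:V}\fib f z$ from HoTT Book Lemma 4.8.2 — which satisfies $\pi_0 \circ \phi_f = f$ — I would substitute $\psi := \phi_f^{-1} \circ e : \El a \simeq A$, so that the condition $\pi_0 \circ e = \pi_0$ becomes $f \circ \psi = \pi_0$. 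A further $\Sigma$-swap produces
\begin{align*}
\sum_{\psi : \El a \simeq A}\sum_{f : A \to V} \left(f \circ \psi = \pi_0\right),
\end{align*}
and for each fixed $\psi$ the inner $\Sigma$-type is contractible, since $f$ is forced to be $\pi_0 \circ \psi^{-1}$. The total type thus reduces to $\El a \simeq A$, completing the inner equivalence.

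The main obstacle I expect is the bookkeeping involved in transporting the condition $\pi_0 \circ e = \pi_0$ through Lemma 4.8.2, and confirming that $\phi_f$ indeed has $\pi_0 \circ \phi_f = f$ in the precise sense required for the contractibility step to go through with function extensionality. An alternative, more direct strategy would be to exhibit the equivalence by hand: the forward map sends $(f,a,\alpha)$ to $(a, \El a \simeq A)$ via Proposition \ref{int-type-from-int-repr}, and the inverse sends $(a, e)$ to $(\pi_0 \circ e^{-1},\, a,\, \alpha')$, where $\alpha'_z$ chains the equivalence $\fib(\pi_0 \circ e^{-1})\, z \simeq \fib\, \pi_0\, z$ (fibers preserved under precomposition with an equivalence) with the standard identification $\fib\, \pi_0\, z \simeq (z \in a)$ of the fiber of a first projection.
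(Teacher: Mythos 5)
Your proposal is correct and follows essentially the same route as the paper: the paper likewise applies Corollary \ref{total-fiberwise-equiv} (with the identification $\sum_{z:V}\fib\,f\,z \simeq A$ folded in) to obtain $\sum_{e:\El a \simeq A}\sum_{f:A\to V}(f = \pi_0\circ e^{-1})$ and then contracts the inner $\Sigma$-type, with the outer $\Sigma$-swap handled as "some rearranging" at the end rather than up front. Your explicit tracking of the Lemma 4.8.2 step and of the condition $\pi_0\circ e = \pi_0$ is exactly the bookkeeping the paper leaves implicit.
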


\begin{proof}

For \(a : A\), we have the following chain of equivalences:
\begin{align}
    \left(∑_{f : A → V} ∏_{z : V} z ∈ a ≃ \fib f␣z\right)
        \label{prop-int-repr-equiv-step-1}
        &≃ ∑_{f : A → V} ∑_{e : \El a ≃ A} f ∘ e = π_0 \\
        \label{prop-int-repr-equiv-step-2}
        &≃ ∑_{e : \El a ≃ A} ∑_{f : A → V} f = π_0 ∘ e^{-1} \\
        \label{prop-int-repr-equiv-step-3}
        &≃ \left(\El a ≃ A\right)
\end{align} Step (\ref{prop-int-repr-equiv-step-1}) is an application of
Corollary \ref{total-fiberwise-equiv}. We rearrange in step
(\ref{prop-int-repr-equiv-step-2}), and finally, step
(\ref{prop-int-repr-equiv-step-3}) is the fact that the total space of
paths from a given point is contractible.

The desired equivalence follows from the equivalence above, after some
rearranging.\end{proof}

If a representation \(f : A → V\) is an embedding, we say that the
representation is \emph{faithful}. The truncation level of a
representation determines the level of the internalisation. In the
special case of a faithful representation, the internalisation is a mere
set.

\begin{proposition}[\agdalink{https://elisabeth.stenholm.one/univalent-material-set-theory/e-structure.internalisations.html\#5394}]

An internalisation of \(f : A → V\) is a \((k+1)\)-type if and only if
\(f\) is a \(k\)-truncated representation, for
\(k : \Nat_{-1}\).\end{proposition}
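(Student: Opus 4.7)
The plan is to unfold both sides of the biconditional and reduce them to the same pointwise condition on fibers using the defining equivalence of an internalisation. Recall Definition \ref{element-n-type}: an element $a : V$ is a $(k+1)$-type in $(V,∈)$ precisely when $z ∈ a$ is a $k$-type for every $z : V$. On the other hand, $f$ being a $k$-truncated representation means by definition that $\fib f\,z$ is a $k$-type for every $z : V$.

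First I would observe that, since $a$ is an internalisation of $f$, we have by definition an equivalence $z ∈ a ≃ \fib f\,z$ for every $z : V$. Because type levels are invariant under equivalence, $z ∈ a$ is a $k$-type if and only if $\fib f\,z$ is a $k$-type. Quantifying over $z : V$ gives the biconditional between the two global conditions, which is exactly the statement of the proposition.

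There is no real obstacle here: the proposition is essentially a definitional unfolding wrapped around the transport of truncation levels across an equivalence. The only mild subtlety is the $(k{+}1)$ versus $k$ indexing, which is built into Definition \ref{element-n-type} and matches the definition of a $k$-truncated map, so the two shifts line up automatically. The case $k = -1$ (propositional truncation, i.e., faithful representations yielding mere sets) is subsumed by the general argument without requiring separate treatment.
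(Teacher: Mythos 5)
Your proof is correct and follows the same route as the paper: unfold Definition \ref{element-n-type} and the definition of a $k$-truncated map, then transfer the truncation level across the defining equivalence $z ∈ a ≃ \fib f\,z$ for each $z : V$. Nothing further is needed.
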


\begin{proof}

Let \(a : V\) be an internalisation of \(f\). Then \(a\) is a
\((k+1)\)-type if and only if \(\fib f␣z\) is a \(k\)-type, for all
\(z : V\). In other words, \(a\) is a \((k+1)\)-type if and only if
\(f\) is a \(k\)-truncated map.\end{proof}

Note here that if \(f\) is a faithful representation, it is not
necessarily the case that the type \(A\) itself is a set. A mere set in
\(V\) can be the internalisation of a faithful representation of a type
of any level, as long as it can be embedded into \(V\). An exception to
this is if \(V\) is of level 0, meaning that it \emph{only} contains
mere sets, then \(V\) itself is a set, all internalisable
representations are faithful, and hence any type with an internalisable
representation will be a set.

Representations and internalisations give an new perspective on the
properties of Section \ref{section-e-structures}. All of them, except
foundation, can be seen as stating that a certain representation has an
internalisation. For instance, exponentiation can be seen as stating
that the representation sending a map \(\El␣a → \El␣b\) to its graph,
can be internalised.

Replacement, in particular, can be seen in an interesting light. In
classical set theory, replacement is the axiom schema which says that if
you have a set you can replace its elements and still get a set, as long
as the replacements are uniquely defined (by a formula in the
first-order language of set theory). In terms of representations we can
view this as follows: Given an internalisation of a type as a set, any
other representation can be internalised by replacing the elements of
the internalisation by their alternate representation. The various
levels of replacement properties correspond to restrictions on what kind
of representations can be replaced. For instance, 0-replacement gives
the replacement property for faithful representations.

\begin{proposition}[\agdalink{https://elisabeth.stenholm.one/univalent-material-set-theory/e-structure.internalisations.html\#7765}]

\label{internalisation-from-replacement}If \((V,∈)\) satisfies
\((k+1)\)-replacement and \(A\) has an internalisation, then any
\(k\)-truncated representation \(f : A → V\) of \(A\) has an
internalisation.\end{proposition}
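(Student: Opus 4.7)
The plan is to build the internalisation directly from the one guaranteed for $A$ by composing with $f$ and applying the replacement property.

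First I would unpack the hypotheses: choose an internalisation $a_0 : V$ of $A$, so we have an equivalence $e : \El a_0 ≃ A$. Precomposing $f$ with $e$ yields a map $g := f ∘ e : \El a_0 → V$, which is again a $k$-truncated map, since equivalences are $k$-truncated and the composite of $k$-truncated maps is $k$-truncated.

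Next I would apply $(k+1)$-replacement to the pair $(a_0, g)$ to obtain an element $b := \{g(x)\;|\;x∈a_0\} : V$ satisfying
\begin{equation*}
    ∏_{z:V}\; z ∈ b ≃ \Big\|\, \textstyle\sum_{x:\El a_0} g\,x = z \,\Big\|_k .
\end{equation*}
I would then produce, for each $z:V$, the chain of equivalences
\begin{equation*}
    \textstyle\sum_{x:\El a_0} g\,x = z
        ≃ \textstyle\sum_{a:A} f\,a = z
        \equiv \fib f\, z,
\end{equation*}
using the equivalence $e : \El a_0 ≃ A$ together with the definition $g = f ∘ e$.

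The final step, and the only point that really uses the hypothesis on $f$, is to drop the truncation: since $f$ is a $k$-truncated map, $\fib f\, z$ is a $k$-type for every $z$, so the canonical map $\fib f\, z → \|\fib f\, z\|_k$ is an equivalence. Combining the three equivalences gives $z ∈ b ≃ \fib f\, z$, witnessing that $b$ is an internalisation of $f$. I do not expect a real obstacle here; the only thing to be careful about is the bookkeeping of truncation levels and ensuring that $(k+1)$-replacement is exactly what is needed so that, after discarding the $k$-truncation, the remaining type is precisely $\fib f\, z$.
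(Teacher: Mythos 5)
Your proposal is correct and follows essentially the same route as the paper's proof: internalise $A$ via an equivalence $e : \El a_0 ≃ A$, apply $(k{+}1)$-replacement to $f ∘ e$, swap the base of the $\Sigma$-type back to $A$ along $e$, and drop the $k$-truncation because $f$ is a $k$-truncated map. The bookkeeping you flag at the end is exactly the content of the paper's final step, so there is no gap.
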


\begin{proof}

Let \(a : V\) be an internalisation of \(A\), with a given
\(α : \El a ≃ A\). Then apply \((k+1)\)-replacement to
\(f∘α : \El a → V\) to obtain \(b : V\) such that for all \(z : V\)
there is an equivalence \begin{align}
    z ∈ b ≃ \Big\|∑_{x:\El␣a}f (α␣x) = z\Big\|_k
\end{align} Since \(f\)~is \(k\)-truncated and α is an equivalence, we
can swap the base type \(\El␣a\) with \(A\) and drop the truncation,
giving us an equivalence \begin{align}
    z ∈ b ≃ ∑_{x:A} f␣x = z
\end{align} Thus \(b\) internalises \(f\).\end{proof}

\hypertarget{natural-numbers-infinity}{%
\subsection{Natural Numbers / Infinity}\label{natural-numbers-infinity}}

There are many ways of formulating the axiom of infinity. The simplest
formulation is perhaps \(∃␣u␣(∅ ∈ u ∧ ∀␣x∈u\ ∃␣y∈u\ x∈y)\), which can be
found in some texts, such as \emph{Set Theory} by Bell \cite{bell2011}.
This formulation depends on foundation for the result to actually be an
infinite set, as this axiom is also satisfied by the co-hereditarily
finite sets (the final coalgebra of the the finite powerset functor), by
the element defined by the equation \(x = \{∅,x\}\). Another approach is
to define the notion of a successor set, for instance
\(s␣x = x ∪ \{x\}\) or \(s␣x = \{x\}\) and then postulate the existence
of a set containing \(∅\) and which is closed under \(s\). This becomes
an infinite set even without foundation, since the successor function
preserves well-foundedness and increases rank.

In Aczel's CZF \cite{aczel1978}, the axiom of infinity is formulated as
the existence of a set of natural numbers,
\(∃␣z␣\operatorname{Nat}(z)\). Aczel's formulation of this axiom
determines the set of natural numbers uniquely if foundation is assumed.
However, if foundation is not assumed, one may have fixed points of the
successor function which can be thrown in while still satisfying Aczel's
\(\operatorname{Nat}\) predicate. For instance, given a quine atom,
\(q = \{q\}\), then for any \(n\) we have
\(\operatorname{Nat}(n) → \operatorname{Nat}(n ∪ q)\). This can be
remedied by further assuming that \(n\) is accessible, in which case
\(n\) is uniquely determined.

When choosing a property corresponding to the axiom infinity for
∈-structures in general, we will leverage the fact that we are not bound
by first-order logic, and try to give a direct and intuitive
formulation, which does not depend on foundation or assumptions about
accessibility. We will also keep to the principle that properties
postulating existence of sets should be uniquely determined, or be
explicitly given as extra structure. Therefore, we will \emph{not} say
that an ∈-structure has natural numbers to mean that \(\Nat\) has an
internalisation, as the type \(∑_{a : V} \El a ≃ \Nat\) is not a
proposition. Having an internalisation of a type is a structure, rather
than a property. However, having an internalisation \emph{with respect
to a fixed representation} is a property (Proposition
\ref{is-prop-int-of-repr}), and it is the one we will use.

We will follow Aczel in choosing the natural numbers as the canonical
infinite set, but leave the exact encoding of the natural numbers as
extra structure. This leaves room for some exotic representations of the
natural numbers, as the internalisation might not even be a mere set.
Once an encoding is given, the set of naturals is uniquely determined,
and will from a certain point of view behave like the usual natural
numbers.

\begin{definition}[\agdalink{https://elisabeth.stenholm.one/univalent-material-set-theory/e-structure.property.natural-numbers.html\#1491}]

\label{natural-numbers}Given an ∈-structure \((V,∈)\) with a
representation of \(\Nat\), \(f : \Nat → V\), we say that \((V,∈)\) has
\textbf{natural numbers represented by \(f\)} if \(f\) has an
internalisation.\end{definition}

Suppose \(n\) is a set of natural numbers represented by \(f\) and let
\(e : \El n ≃ \Nat\) be the equivalence given by Proposition
\ref{int-type-from-int-repr}. Then we can define zero as \begin{align}
    &\zero : \El n \\
    &\zero ≔ e^{-1}␣0
\end{align} and we can define the successor function as \begin{align*}
    &\suc : \El n → \El n \\
    &\suc x = e^{-1} ∘ \sucN ∘ e
\end{align*} where \(\sucN\) is the successor function on \(\Nat\).

The usual induction principle holds, with respect to \(\zero\) and
\(\suc\).

\begin{proposition}[\agdalink{https://elisabeth.stenholm.one/univalent-material-set-theory/e-structure.property.natural-numbers.html\#2516}]

Given an ∈-structure \((V,∈)\) with natural numbers \(n : V\)
represented by \(f : \Nat → V\), let \(P : \El n → \Type\) be a type
family on \(\El n\). Given \(P␣\zero\) and
\(∏_{x : \El n} P␣x → P␣(\suc␣x)\), there is an element of the type
\(∏_{x : \El n} P␣x\).\end{proposition}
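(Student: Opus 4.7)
The plan is to transfer the usual induction principle of $\Nat$ along the equivalence $e : \El␣n ≃ \Nat$ furnished by Proposition~\ref{int-type-from-int-repr}. Recall that by construction $\zero = e^{-1}(0)$ and $\suc␣x = e^{-1}(\sucN(e␣x))$.

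First I would define an auxiliary family $Q : \Nat → \Type$ by $Q␣k := P␣(e^{-1}(k))$, so that proving $∏_{x : \El␣n} P␣x$ reduces, via the propositional identification $e^{-1}(e␣x) = x$ supplied by $e$ being an equivalence, to proving $∏_{k : \Nat} Q␣k$. The base case $Q␣0$ is definitionally $P␣\zero$, which is the given hypothesis.

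For the step case, suppose $k : \Nat$ and $q : Q␣k$, i.e.~a term in $P␣(e^{-1}(k))$. Instantiating the step hypothesis at $x := e^{-1}(k)$ and applying it to $q$ produces an element of $P␣(\suc␣(e^{-1}(k))) = P␣(e^{-1}(\sucN(e␣(e^{-1}(k)))))$. From the propositional identification $e␣(e^{-1}(k)) = k$, transporting along its image under $P \circ e^{-1} \circ \sucN$ yields the required element of $P␣(e^{-1}(\sucN␣k)) = Q␣(k+1)$.

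By $\Nat$-induction we then obtain a term of $∏_{k : \Nat} Q␣k$; evaluating at $k := e␣x$ for any $x : \El␣n$ and transporting once more, now along the identification $e^{-1}(e␣x) = x$, gives the desired element of $P␣x$. The only non-trivial aspect will be the bookkeeping of transports along the two homotopies of the equivalence $e$, but this is entirely routine and presents no genuine obstacle, since the construction is simply the standard transfer of an induction principle along an equivalence.
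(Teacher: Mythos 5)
Your proposal is correct and matches the paper's proof, which simply states that the result follows from the induction principle on $\Nat$ transported along the equivalence $e : \El␣n ≃ \Nat$ from Proposition~\ref{int-type-from-int-repr}. You have merely spelled out the transport bookkeeping that the paper leaves implicit.
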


\begin{proof}

Let \(e : \El n ≃ \Nat\) be the equivalence given by Proposition
\ref{int-type-from-int-repr}. The result follows from the induction
principle on \(\Nat\), transported along \(e\).\end{proof}

\begin{example}[\agdalink{https://elisabeth.stenholm.one/univalent-material-set-theory/e-structure.property.natural-numbers.html\#8455}]

\label{von-neumann-nats}Let \((V,∈)\) be an ∈-structure with the empty
set, 0-singletons and binary 0-union. Then we can define the usual von
Neumann representation of the natural numbers \(f : \Nat → V\)
recursively by \(f␣0 ≔  ∅\) and \(f␣(k+1) ≔ f␣k␣∪₀␣\{f␣k\}₀\). This will
be a faithful representation, and an ∈-structure having natural numbers
represented by \(f\) will be equivalent to the usual characterisations
of the von Neumann natural numbers in set theory, such as the axiom of
infinity in CZF (in the well-founded sets).\end{example}

\begin{example}

Let \((V,∈)\) be an ∈-structure with the empty set, binary 1-union and
0-singletons, then let \(f : \Nat → V\) be defined by \(f␣0 ≔ ∅\) and
\(f␣(k+1)  ≔ f␣k␣∪₁␣\{∅\}₀\). This representation of \(\Nat\) is not
faithful, since \(f␣k\) has non-trivial automorphisms when k
\textgreater{} 1. For instance \(f␣3 =  \{∅,∅,∅\}₁\) has \(3! = 6\)
automorphisms. An internalisation, \(n : V\), of this representation
would be a multiset with the interesting property that
\((f␣k ∈ n) ≃ \Fin (k!)\).\end{example}

Interestingly, as long as there is enough replacement in the
∈-structure, the exact choice of representation of \(\Nat\) does not
matter:

\begin{proposition}[\agdalink{https://elisabeth.stenholm.one/univalent-material-set-theory/e-structure.property.natural-numbers.html\#1614}]

If \((V,∈)\) satisfies \((k+1)\)-replacement and has natural numbers for
some representation, then \((V,∈)\) has natural numbers represented by
\(f\) for any \(k\)-truncated representation \(f\).\end{proposition}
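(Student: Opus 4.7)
The plan is to reduce this almost immediately to Proposition \ref{internalisation-from-replacement}. By hypothesis, $(V,\in)$ has natural numbers represented by some $g : \Nat \to V$, meaning $g$ has an internalisation $a : V$. By Proposition \ref{int-type-from-int-repr}, this $a$ is then an internalisation of the type $\Nat$ itself, giving an equivalence $\El\, a \simeq \Nat$. So $\Nat$ satisfies the hypothesis ``has an internalisation'' needed by Proposition \ref{internalisation-from-replacement}.

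Given the hypothesised $(k+1)$-replacement and any $k$-truncated representation $f : \Nat \to V$, all three hypotheses of Proposition \ref{internalisation-from-replacement} are met (with $A = \Nat$). Applying it yields an internalisation of $f$, which by Definition \ref{natural-numbers} is exactly what it means for $(V,\in)$ to have natural numbers represented by $f$.

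There is essentially no obstacle here: the statement is a specialisation of Proposition \ref{internalisation-from-replacement} to $A = \Nat$, combined with the observation (via Proposition \ref{int-type-from-int-repr}) that ``having natural numbers for some representation'' implies ``$\Nat$ has an internalisation''. The only minor thing to be careful about is ensuring that $k$ is being used consistently: the truncation level of the new representation $f$ must match the level of replacement available, which is guaranteed by the statement. No further work with the structure of $\Nat$, the successor, or the representation $g$ is required.
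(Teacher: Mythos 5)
Your proof is correct and follows exactly the paper's own argument: use Proposition \ref{int-type-from-int-repr} to upgrade ``natural numbers for some representation'' to ``$\Nat$ has an internalisation'', then apply Proposition \ref{internalisation-from-replacement} to the given $k$-truncated $f$. Nothing is missing.
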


\begin{proof}

Suppose \((V,∈)\) has natural numbers for some representation. By
Proposition \ref{int-type-from-int-repr} it follows that \(\Nat\) has an
internalisation. Thus the result follows by Proposition
\ref{internalisation-from-replacement}.\end{proof}

\hypertarget{equivalence-of-extensional-coalgebras-and-u-like--structures}{%
\section{Equivalence of extensional coalgebras and U-like
∈-structures}\label{equivalence-of-extensional-coalgebras-and-u-like--structures}}

\label{equivalence-of-extensional-coalgebras-and-u-like-e-structures}

There is a well-established coalgebraic reading of set theory in which
models of set theory can be understood as coalgebras for the powerset
functor on the category of classes. This can be traced back to Rieger
\cite{rieger1957}, but perhaps found more explicitly in work by Osius
\cite{osius1974}.\footnote{More recent work in this direction is the
  work of Paul Taylor \cite{taylor2023} studying coalgebraic notions of
  well-foundedness and recursion.} In this section we establish a
similar correspondence between ∈-structures and coalgebras. We define a
hierarchy of functors \(\Tⁿ_U\), relative to a universe, and stratified
by type levels, whose extensional coalgebras correspond to ∈-structures
at the given type level.

\begin{definition}[\agdalink{https://elisabeth.stenholm.one/univalent-material-set-theory/functor.n-slice.html\#1463}]

For \(n : \Nat_{-1}^∞\), define \(\T_U^{n+1} : \Type → \Type\) by
\(\T_U^{n+1}␣X = ∑_{A:U} A ↪_n X\).\end{definition}

\textbf{Remark:} \(\T^∞_U\) is a polynomial functor, but for finite
\(n\), \(\Tⁿ_U\) is \emph{not} polynomial. Note that \(\T⁰_U\) is a
\(U\)-restricted `powerset functor' in that \(\T⁰_U␣X\) is the type of
\(U\)-small subtypes of \(X\).

The functorial action of \(\Tⁿ_U\) on a function \(f : X → Y\) is to
postcompose with \(f\) and then take the \((n-1)\)-image and the
\((n-1)\)-image inclusion. There is a size issue with this though. With
the usual construction of the \(n\)-image as
\(∑_{b:B} \|∑_{a:A}g␣a = b\|_n\) \cite{hottbook}, this type is in \(U\)
if \(U\) is closed under truncation and if both the domain and the
codomain are in \(U\). In our case the domain is in \(U\), but the
codomain need not be. Is it possible to weaken the assumption that the
codomain is in \(U\) (given that the domain is) and still conclude that
the \(n\)-image is in \(U\)?

\hypertarget{images-of-small-types}{%
\subsection{Images of small types}\label{images-of-small-types}}

As a higher groupoid, the image of a function \(f : X → Y\) looks like
it has the points from \(X\), but the paths are the paths from \(Y\)
resulting from applying \(f\). For example, consider the function
\(f : \unittype → \Type\) given by \(f␣x ≔ \twoelemtype\). The image of
\(f\) is the type \(∑_{x:\Type} \|\twoelemtype = x\|_{-1}\). There is
one point, namely \((\twoelemtype, |\refl|_{-1})\). However, by
univalence, there are two distinct paths
\((\twoelemtype, |\refl|_{-1}) = (\twoelemtype, |\refl|_{-1})\): the
identity equivalence and the equivalence that flips the elements in
\(\twoelemtype\) (we only need to consider equality on the first
coordinate).

More generally, the \(n\)-image looks like it has the \(k\)-cells, for
\(k ≤ n+1\), from \(X\), but the \((n+2)\)-cells are the \((n+2)\)-cells
from \(Y\) between the \((n+1)\)-cells resulting from applying \(f\).
\emph{From this intuition it seems reasonable that if all the
\((n+2)\)-iterated identity types of \(Y\) lie in \(U\), along with
\(X\), then the resulting higher groupoid \(n\)-image also lies in
\(U\).}

It turns out that it is enough to assume that this holds for
\((-1)\)-images. As pointed out below, this is not true for all
univalent universes, but the results presented here will rely upon the
assumption that this holds for our particular universe.

Rijke's modified join construction \cite{rijke2017} proves that a
sufficient criterion for this smallness of images assumption to hold in
a univalent universe is that \(U\) is closed under homotopy colimits (or
has graph quotients), and that global function extensionality holds.
Here, we have chosen instead to directly assume that images of
\(U\)-small types into locally \(U\)-small types are \(U\)-small.

The formulation of this assumption relies on two natural notions of
smallness --- which are also found in Rijke's work \cite{rijke2017}
along with the fact that being small a proposition. We generalise to a
definition of local smallness at any level.

\begin{definition}[\agdalink{https://elisabeth.stenholm.one/univalent-material-set-theory/foundation-core.small-types.html\#1212}]

A type \(X\) is \textbf{essentially \(U\)-small} if there is \(A : U\)
such that \(A ≃ X\). That is, \(∑_{A:U} A ≃ X\).\end{definition}

We proceed by induction to define local smallness at higher levels.

\begin{definition}[\agdalink{https://elisabeth.stenholm.one/univalent-material-set-theory/image-factorisation.html\#1703}]

Let \(X\) be a type. We define the notion of \(X\) being \(n\)-locally
\(U\)-small, for \(n : ℕ^{∞}\), as follows:

\begin{itemize}
    \item $X$ is $0$-locally $U$-small if it is essentially $U$-small.
    \item $X$ is $(n+1)$-locally $U$-small if for all $x, y : X$, the
    identity type $x = y$ is $n$-locally $U$-small.
    \item $X$ is ∞-locally $U$-small.
\end{itemize}

\end{definition}

We say that a \(1\)-locally \(U\)-small type is \textbf{locally
\(U\)-small}.

\begin{lemma}[\agdalink{https://elisabeth.stenholm.one/univalent-material-set-theory/foundation-core.small-types.html\#2464}]
\label{smallprop}

Being essentially \(U\)-small is a mere proposition.\end{lemma}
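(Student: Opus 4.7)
The plan is to prove that $\sum_{A:U} A \simeq X$ is a proposition by showing it is contractible whenever it is inhabited. So fix an element $(A_0, \alpha_0) : \sum_{A:U} A \simeq X$ and aim to construct an equivalence between the whole type and a based path space in $U$, which is automatically contractible.

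First I would observe that for any $A : U$, post-composition with $\alpha_0^{-1}$ gives an equivalence between the types of equivalences
\begin{equation*}
    (A \simeq X) \;\simeq\; (A \simeq A_0).
\end{equation*}
This uses only that $\alpha_0 : A_0 \simeq X$ is fixed, and the fact that composition with an equivalence on one side is an equivalence on the ambient hom-type. Next, univalence applied to $U$ (which we have by assumption, since $U$ is a univalent universe) yields
\begin{equation*}
    (A \simeq A_0) \;\simeq\; (A = A_0).
\end{equation*}
Chaining these and taking $\sum$ over $A : U$ preserves equivalences, so
\begin{equation*}
    \left(\sum_{A:U} A \simeq X\right) \;\simeq\; \left(\sum_{A:U} A = A_0\right).
\end{equation*}
The right-hand side is the based path space at $A_0$ in $U$, hence contractible by a standard result.

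From contractibility of this type under the assumption of an inhabitant, propositionality follows, since a type that is contractible whenever inhabited is a proposition. The main subtlety, though routine, is making sure the equivalence $(A \simeq X) \simeq (A \simeq A_0)$ is constructed in a way that respects $\sum$-formation, so that the total equivalence goes through cleanly; this is handled by Corollary \ref{total-fiberwise-equiv}, which tells us that a fiberwise equivalence lifts to an equivalence on total spaces. Apart from that, the argument is a direct application of univalence for $U$ and contractibility of singletons.
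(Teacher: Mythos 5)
Your proof is correct, but it is organised differently from the one in the paper. The paper explicitly remarks that the type $\sum_{A:U} A \simeq X$ ``looks close to being of the form $\sum_{b:T} b = a$, which is known to be contractible'', but declines to pursue that reduction because $X$ is not a type in $U$; instead it takes two inhabitants $(A,\alpha)$ and $(B,\beta)$, produces the path $\ua(\beta^{-1}\cdot\alpha) : A = B$ by univalence, and then verifies by an explicit transport computation in the family $\lambda Y.\, Y \simeq X$ that $\alpha$ is carried to $\beta$. Your observation is that the reduction to a based path space \emph{does} go through once you re-base at an inhabitant: composing with $\alpha_0^{-1}$ gives a fiberwise equivalence $(A \simeq X) \simeq (A \simeq A_0)$ whose codomain only involves types in $U$, so univalence for $U$ applies, and the total space becomes the contractible singleton $\sum_{A:U} A = A_0$. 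This buys you a cleaner argument: the transport computation that the paper carries out by hand is absorbed into the general facts that fiberwise equivalences induce equivalences of total spaces and that singletons are contractible. The cost is that you prove the slightly stronger-looking (but equivalent) statement ``contractible when inhabited'' and must invoke univalence for $U$ plus the total-space lemma, whereas the paper's argument works directly from the definition of a mere proposition. Both are complete; the underlying path $\ua(\alpha_0^{-1}\cdot\alpha)$ is the same in each.
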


\begin{proof}

We must show that the type \(∑_{A:U} A ≃ X\) is a mere proposition. This
type looks close to being of the form \(∑_{b:T} b = a\), which is known
to be contractible, but since \(X\) is not a type in the universe we
cannot directly apply univalence. Instead we give a direct proof based
on the definition of being a mere proposition.

Let \((A,α)\) and \((B,β)\) be two elements of \(∑_{A:U} A ≃ X\).
Applying univalence, we observe that \(\ua␣(β^{-1} · α) : A = B\).

It remains to show that we get β by transporting α along the path
\(\ua␣(β^{-1} · α)\), in the family \(λ(Y : U). Y ≃ X\). But this is
easily computed with path algebra: \begin{align*}
\tr{λ Y. Y ≃ X}{\ua␣(β^{-1} · α)}{α}    &= α · (β^{-1} · α)^{-1} \\
                                        &= α · α^{-1} · β \\
                                        &= β \qedhere
\end{align*}\end{proof}

Now, the following assumption, which will be assumed throughout the rest
of the paper, can be formulated:

\begin{assumption}[Images of small types \agdalink{https://elisabeth.stenholm.one/univalent-material-set-theory/image-factorisation.html\#4904}]
\label{image-assumption}

For every small type \(A : U\) and every locally \(U\)-small
\(X : Type\), and given a function \(f : A → X\), there is a \(U\)-small
type \(\image␣f : U\), and functions \(\surj f : A ↠ \image␣f\) and
\(\incl f : \image␣f ↪ X\), such that for every \(a : A\) we have
\(\incl f\ (\surj f␣x) ≡ f␣x\).\end{assumption}

\textbf{Remark:} Not all univalent universes satisfy the assumption
above. For instance, given any univalent universe, \(U\), we can define
the subuniverse of mere sets, \(\Set_U\). This subuniverse is univalent,
and locally \(\Set_U\)-small, but the image of the map
\((λ␣x.\twoelemtype) : \unittype → \Set_U\) is not a set, as we saw
above. Thus, the image of this map is not essentially \(\Set_U\)-small.

While the work in this article is presented informally in type theory,
there might be some benefit here to spell out the assumption as
formulated in Agda. This formulation is uniform in universe levels (the
parameters \texttt{i} and \texttt{j}) so that it can be applied at any
level.

\begin{verbatim}
module _ {i j}
         {Domain : Type i} {Codomain : Type j}
         (_ : is-locally-small i Codomain)
         (f : Domain → Codomain) where

  postulate Image : Type i

  postulate image-inclusion : Image ↪ Codomain

  postulate image-quotient : Domain ↠ Image

  postulate image-β
     : ∀ x → (image-inclusion 〈 image-quotient 〈 x 〉 〉) ↦ f x

  {-# REWRITE image-β #-}
\end{verbatim}

This code can be found in the project source repository, where the file
containing the above snippet is called \texttt{image-factorisation.agda}
(\agdalink{https://elisabeth.stenholm.one/univalent-material-set-theory/image-factorisation.html}).

From the assumption of small \((-1)\)-images, the smallness of
\(n\)-images follows.

\begin{proposition}[\agdalink{https://elisabeth.stenholm.one/univalent-material-set-theory/image-factorisation.html\#7077}]

\label{small-n-images}Let \(n : \Nat^∞_{-2}\). For every small type
\(A : U\) and every \((n+2)\)-locally \(U\)-small \(X : \Type\), and
given a function \(f : A → X\), there is a small type
\(\image_n␣f : U\), together with an \(n\)-connected map
\(\surj_n f : A ↠_n \image_n␣f\) and an \(n\)-truncated map
\(\incl_n f : \image_n␣f ↪_n X\), such that for every \(a : A\) we have
\(\incl f\ (\surj f␣x) = f␣x\).\end{proposition}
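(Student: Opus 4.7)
The plan is to induct on $n : \Nat^\infty_{-2}$, taking $n = -1$ (Assumption \ref{image-assumption}) as the main base case and handling $n = -2$ separately. For $n = -2$, being $0$-locally $U$-small is exactly being essentially $U$-small, so I pick $X' : U$ with $\alpha : X' \simeq X$ and set $\image_{-2}\ f := X'$, $\incl_{-2}\ f := \alpha$ (an equivalence, hence $(-2)$-truncated), and $\surj_{-2}\ f := \alpha^{-1} \circ f$ (every map is trivially $(-2)$-connected). For $n = -1$, the statement is precisely Assumption \ref{image-assumption}.

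For the inductive step from $n$ to $n+1$, given $f : A \to X$ with $A : U$ and $X$ being $(n+3)$-locally $U$-small, I first apply the induction hypothesis at level $n$ to $f$ itself, which yields a factorisation $A \xrightarrow{q} I_n \xrightarrow{i} X$ with $I_n : U$, with $q$ being $n$-connected, and with $i$ being $n$-truncated. Since $i$ has $n$-type fibres and $X$ is $(n+3)$-locally $U$-small, the type $I_n$ inherits $(n+2)$-local $U$-smallness, and the task reduces to constructing an $(n+1)$-image of the $n$-connected map $q : A \to I_n$ between small types.

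To bump $q$ from $n$-connected to $(n+1)$-connected I use the standard fact that $q$ is $(n+1)$-connected precisely when its diagonal $\Delta q : A \to A \times_{I_n} A$, defined by $a \mapsto (a, a, \refl)$, is $n$-connected, with an analogous shift on the truncation side. Since $I_n : U$ and $U$ is closed under identity types and $\Sigma$-types, $A \times_{I_n} A$ is in $U$, so the induction hypothesis at level $n$ applies to $\Delta q$ and produces its $n$-image. From this I assemble an $(n+1)$-image factorisation $A \xrightarrow{q'} I \xrightarrow{j} I_n$ of $q$ itself, with $I : U$, and then set $\image_{n+1}\ f := I$, $\surj_{n+1}\ f := q'$, and $\incl_{n+1}\ f := i \circ j$, which is $(n+1)$-truncated as the composition of an $n$-truncated and an $(n+1)$-truncated map. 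The factorisation identity $\incl \circ \surj = f$ follows from those at each intermediate step.

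The hard part will be making the passage from the $n$-image of $\Delta q$ to the $(n+1)$-image of $q$ itself precise, and verifying that the resulting $j : I \to I_n$ is indeed $(n+1)$-truncated with $I$ small. If the diagonal-based refinement becomes awkward, an alternative is to directly target the formula $I := \sum_{x : I_n} \| \fib\ q\ x \|_{n+1}$: by applying Assumption \ref{image-assumption} to appropriate derived maps (in the spirit of Rijke's join construction, but using only $(-1)$-images) one can construct small $(n+1)$-truncations of the fibres of $q$, which are themselves in $U$ since both $A$ and $I_n$ are.
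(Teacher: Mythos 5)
Your base cases and the overall shape of the induction match the paper's proof, and your instinct that the inductive step must route through the identity types of $A$ (via the diagonal $\Delta q$, equivalently via $\ap{q}$) is exactly the right one. The reduction from $f$ to its $n$-connected part $q : A → I_n$ is also sound, since postcomposing the $(n+1)$-image of $q$ with the $n$-truncated map $i$ yields the $(n+1)$-image of $f$. But the step you flag as ``the hard part'' is a genuine gap, not just a technicality: producing the $(n+1)$-image of $q$ \emph{from} the $n$-image of $\Delta q$ amounts to forming a quotient of $A$ by relation-level data, and that requires pushouts or quotient types, which the paper deliberately does not assume --- the only non-standard assumption available is smallness of $(-1)$-images (Assumption \ref{image-assumption}). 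Your fallback plan of targeting $∑_{x : I_n} \| \fib␣q␣x \|_{n+1}$ directly runs into the same problem: small $(n+1)$-truncations of small types are an instance of the level-$(n+1)$ statement you are in the middle of proving, so you cannot invoke them without a separate argument.

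The paper closes this gap differently, and the move is worth internalising. It does not try to \emph{build} the image; it observes that the desired image already exists as a large type, namely the HoTT-Book image $∑_{x:X}\|\fib␣f␣x\|$ at the appropriate level, which receives a surjection $s$ from the small type $A$. Since being essentially $U$-small is a proposition and $s$ is surjective, smallness of this large type follows from Assumption \ref{image-assumption} once one knows it is \emph{locally} $U$-small. And that is precisely where the induction hypothesis enters: the identity type $s␣x = s␣y$ is equivalent to the image, one level down, of $\ap{f} : (x = y) → (f␣x = f␣y)$, whose domain is small (as $A : U$) and whose codomain is one level less locally small than $X$ --- so the IH applies and delivers local smallness. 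In short, your diagonal is the right object to feed to the IH, but its role should be to certify that a pre-existing large candidate is locally small, so that the $(-1)$-image assumption can realise it as a type in $U$; it cannot serve as raw material from which to assemble the image.
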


\begin{proof}

This is Proposition 2.2 in \cite{christensen_non-accessible_2021}, but
we will also give our own proof here, formulated in a slightly different
way. Let \(A : U\) and \(X : Type\) be \(n\)-locally \(U\)-small, and
let \(f : A → X\). We proceed by induction on \(n\).

For the base case we need to construct a type \(\image_{-2}␣f : U\)
together with a \((-2)\)-connected map
\(\surj_{-2} f : A ↠_{-2} \image␣f\) and a \((-2)\)-truncated map
\(\incl_{-2} f : \image␣f ↪_{-2} X\). Note that any map is
\((-2)\)-connected and that \((-2)\)-truncated maps are precisely
equivalences. By assumption \(X\) is essentially \(U\)-small, i.e.~there
is a type \(X' : U\) together with an equivalence \(e : X ≃ X'\). So we
take \(\image_{-2}␣f ≔ X'\), \(\surj_{-2} f ≔ e ∘ f\) and
\(\incl_{-2} f ≔ e^{-1}\).

Suppose the proposition holds for \(n\), and suppose \(X\) is
\((n+1)\)-locally \(U\)-small. Let \(\image^{HoTT}_{n} g\) denote the
HoTT Book definition of the \(n\)-image of a function \(g\)
\cite[Def.~7.6.3]{hottbook}. We will show that \(\image^{HoTT}_{n-1} f\)
is essentially \(U\)-small. By Theorem 7.6.6 in \cite{hottbook} we have
a factorisation

\begin{center}
    \begin{tikzcd}
        A \arrow[rr, "f"] \arrow[rdd, "s"'] &                                                           & X \\
                                            &                                                           &   \\
                                            & \image^{HoTT}_{n-1} f \arrow[ruu, "i"'] &  
    \end{tikzcd}
\end{center}

where \(s\) is \((n-1)\)-connected and \(i\) is \((n-1)\)-truncated. The
map \(s\) is in particular surjective, so by the uniqueness of the image
factorisation, \(\image^{HoTT}_{n-1} f ≃ \image␣s\). Thus we are done if
we can show that \(\image^{HoTT}_{n-1} f\) is locally \(U\)-small. Since
being \(U\)-small is a proposition and \(s\) is surjective, it is enough
to show that \(s␣x = s␣y\) is essentially \(U\)-small for all
\(x,y : A\). But by the characterisation of equality in \(n\)-images we
have the equivalence
\(\left(s␣x = s␣y\right) ≃ \image^{HoTT}_{n-2}␣\left(\ap{f}␣x␣y\right)\).
The domain \(x = y\) of \(\ap{f}␣x␣y\) is in \(U\) and the codomain
\(f␣x = f␣y\) is \(n\)-locally \(U\)-small by assumption. Thus, by the
induction hypothesis and the uniqueness of the \((n-2)\)-image
factorisation,
\(\image^{HoTT}_{n-2}␣\left(\ap{f}␣x␣y\right) ≃ \image_{n-2}␣\left(\ap{f}␣x␣y\right)\)
and thus \(s␣x = s␣y\) is essentially \(U\)-small.

For the case \(n = ∞\), note that an ∞-connected map is an equivalence
and that any map is ∞-truncated. Thus we take \(\image_∞␣f ≔ A\),
\(\surj_∞␣f ≔ \idequiv\) and \(\incl_∞␣f ≔ f\).\end{proof}

\hypertarget{the-functor-tux207f_u}{%
\subsection{\texorpdfstring{The functor
\(\Tⁿ_U\)}{The functor \textbackslash Tⁿ\_U}}\label{the-functor-tux207f_u}}

With the construction of \(U\)-small \(n\)-images, we can define the
functorial action of \(\Tⁿ_U\). For \(n : \Nat^∞\), and two types
\(X, Y\) such that \(Y\) is \((n+1)\)-locally \(U\)-small, the
\((n-1)\)-image of any function \(φ : X → Y\), \(\image_{n-1}␣φ\), lies
in \(U\) by Proposition \ref{small-n-images}. Moreover, the image
inclusion \(\incl_{n-1}␣φ : \image_{n-1}␣φ → Y\) is an
\((n-1)\)-truncated map. Thus we make the following definition:

\begin{definition}[\agdalink{https://elisabeth.stenholm.one/univalent-material-set-theory/functor.n-slice.html\#4079}]

Let \(n : \Nat^∞\). For types \(X, Y\) such that \(Y\) is
\((n+1)\)-locally \(U\)-small, and a function \(φ : X → Y\) let
\(\Tⁿ_U␣φ : \Tⁿ_U␣X → \Tⁿ_U␣Y\) be the function \begin{align*}
        \Tⁿ_U φ\ (A , f) ≔ \left(\image_{n-1}␣(φ ∘ f), \incl_{n-1}␣(φ ∘ f)\right)
\end{align*}\end{definition}

\textbf{Note:} For \(n = ∞\) we get \(\T^∞_U φ\ (A, f) ≡ (A, φ ∘ f)\).

Thinking of the elements in \(\Tⁿ_U␣X\) as slices over the type \(X\),
equality should be fiberwise equivalence. This is indeed the case, as
the next proposition shows.

\begin{proposition}[\agdalink{https://elisabeth.stenholm.one/univalent-material-set-theory/functor.n-slice.html\#2527}]

\label{T-n-equality}Given \(n : \Nat^∞\) and a type \(X\), for
\((A,f), (B,g) : \Tⁿ_U␣X\) there is an equivalence \begin{equation*}
    \left((A,f) = (B,g)\right) ≃ ∏_{x : X} \fib␣f␣x ≃ \fib␣g␣x
\end{equation*}\end{proposition}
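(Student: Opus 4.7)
The plan is to compute the identity type of $\Tⁿ_U X$ directly, reduce it to equality in the slice $\sum_{A:U}(A \to X)$, rewrite in terms of total spaces of fibres, and then conclude by Corollary \ref{total-fiberwise-equiv}.

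First, since being an $(n-1)$-truncated map is a mere proposition, the second projection inside $A \hookrightarrow_{n-1} X$ contributes nothing to the identity type, so the equality $(A,f,p) = (B,g,q)$ in $\Tⁿ_U X$ is equivalent to the equality $(A,f) = (B,g)$ in $\sum_{A:U}(A \to X)$. Applying the usual characterisation of identity in a $\Sigma$-type together with univalence then gives
\[
    \bigl((A,f) = (B,g)\bigr) \simeq \sum_{e : A \simeq B} f \circ e^{-1} = g,
\]
because transport of $f : A \to X$ along $\ua\,e$ in the family $Y \mapsto (Y \to X)$ reduces to precomposition with $e^{-1}$.

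Next, for any $f : A \to X$ there is a canonical equivalence $A \simeq \sum_{x:X} \fib f x$ under which $f$ corresponds to $\pi_0$, and similarly for $(B,g)$. Rewriting both sides using these equivalences turns the right-hand side above into
\[
    \sum_{e : \left(\sum_{x:X} \fib f x\right) \simeq \left(\sum_{x:X} \fib g x\right)} \pi_0 \circ e^{-1} = \pi_0.
\]
Modulo inverting $e$ (which is itself an autoequivalence at the level of equivalences), this is exactly the right-hand side produced by Corollary \ref{total-fiberwise-equiv} applied to the families $\fib f$ and $\fib g$ over $X$. That corollary then supplies the desired equivalence with $\prod_{x:X} \fib f x \simeq \fib g x$.

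The main obstacle is the coherence bookkeeping: one must verify that transport in the family $Y \mapsto (Y \to X)$ really gives precomposition with $e^{-1}$, and that the canonical identification of $A$ with $\sum_{x:X} \fib f x$ carries $f$ to $\pi_0$ up to the canonical homotopy, so that inverting $e$ aligns the slice equality with the format demanded by Corollary \ref{total-fiberwise-equiv}. Once this is pinned down, each remaining step is either a standard HoTT calculation (path-pair characterisation, univalence, total-space-of-fibres equivalence) or a direct application of the corollary, and tracking where $\refl$ is sent along the composite confirms that the equivalence obtained is the canonical one.
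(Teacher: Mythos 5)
Your proof is correct and follows essentially the same route as the paper: reduce to the identity type of $\sum_{A:U}(A \to X)$ via the subtype observation, apply the $\Sigma$-type characterisation and univalence, pass to total spaces of fibres via Lemma 4.8.2 of the HoTT Book, and finish with Corollary \ref{total-fiberwise-equiv}. The only differences are cosmetic (you compute the transport as $f \circ e^{-1} = g$ rather than $g \circ e = f$, hence the extra inversion step, and you handle the truncation predicate at the start rather than the end).
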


\begin{proof}

For \((A,f), (B,g) : \T^∞_U␣X\) we have the following chain of
equivalences: \begin{align}
    \left((A,f) = (B,g)\right)
        \label{T-n-equality-1}
        &≃ ∑_{α : A = B} g ∘ (\coe␣α) = f \\
        \label{T-n-equality-2}
        &≃ ∑_{α : A ≃ B} g ∘ α = f \\
        \label{T-n-equality-3}
        &≃ ∑_{α:∑_{x:X} \fib␣f␣x \; ≃ \; ∑_{x:X} \fib␣g␣x} π₀ ∘ α = π₀ \\
        \label{T-n-equality-4}
        &≃ ∏_{x : X} \fib␣f␣x ≃ \fib␣g␣x
\end{align}

Step (\ref{T-n-equality-1}) is the usual characterisation of equality in
φ-types, step (\ref{T-n-equality-2}) is the univalence axiom, step
(\ref{T-n-equality-3}) is Lemma 4.8.2 in the HoTT Book
\cite[p.~186]{hottbook} and step (\ref{T-n-equality-4}) is Corollary
\ref{total-fiberwise-equiv}. (Note: the equivalence
\(\left(∑_{α : A ≃ B} g ∘ α = f\right) ≃ ∏_{x:X}( \fib␣f␣x ≃ \fib␣g␣x)\)
was proved as Lemma 5 in \emph{Multisets in type theory}
\cite{gylterud-multisets}, by explicitly constructing an equivalence.)

For finite \(n\), the type \(\Tⁿ_U␣X\) is a subtype of \(\T^∞_U␣X\), and
thus has the same identity types.\end{proof}

The characterisation of equality on \(\Tⁿ_U\) allows us to compute the
type level.

\begin{corollary}[\agdalink{https://elisabeth.stenholm.one/univalent-material-set-theory/functor.n-slice.html\#2750}]

\label{T-n-level}Given \(n : \Nat^∞\) and a type \(X\), the type
\(\Tⁿ_U␣X\) is an \(n\)-type.\end{corollary}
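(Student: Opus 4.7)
The plan is to combine Proposition~\ref{T-n-equality} with the standard fact that $n$-types are closed under the type formers appearing in the right-hand side of that equivalence. A type is an $n$-type precisely when all its identity types are $(n-1)$-types, so it suffices to show that for any $(A,f),(B,g) : \Tⁿ_U X$, the type $(A,f) = (B,g)$ is an $(n-1)$-type.

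By Proposition~\ref{T-n-equality}, this identity type is equivalent to $\prod_{x:X} \fib f\ x ≃ \fib g\ x$, so it is enough to show the latter is an $(n-1)$-type. Now, unfolding the definition of $\Tⁿ_U$, an element $(A,f) : \Tⁿ_U X$ is a pair with $f : A ↪_{n-1} X$, which means that $\fib f\ x$ is an $(n-1)$-type for every $x : X$, and similarly for $g$. Since the type of equivalences between two $(n-1)$-types is again an $(n-1)$-type (equivalences form a subtype of a function type, and function types preserve truncation), each fibre $\fib f\ x ≃ \fib g\ x$ is an $(n-1)$-type. Then the dependent product $\prod_{x:X}(\fib f\ x ≃ \fib g\ x)$ is an $(n-1)$-type because $\Pi$-types preserve truncation levels in the codomain.

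For the degenerate case $n = \infty$, the statement is vacuous since every type is $\infty$-truncated. No step here is really the hard part: the entire content has been packaged into Proposition~\ref{T-n-equality}, and what remains is a routine bookkeeping argument about closure of truncation levels under equivalences, $\Pi$-types, and identity types. The one subtle point worth flagging in the write-up is the indexing convention, namely that $\Tⁿ_U$ as used in the corollary corresponds to $(n-1)$-truncated maps in the definition of $\T^{n+1}_U$, so that the fibres land exactly one level below the desired truncation level of $\Tⁿ_U X$.
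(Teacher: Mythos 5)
Your proposal is correct and follows essentially the same route as the paper: both deduce from Proposition~\ref{T-n-equality} that the identity type $(A,f)=(B,g)$ is equivalent to $\prod_{x:X}\fib\ f\ x \simeq \fib\ g\ x$, and conclude it is an $(n-1)$-type because the fibres of $(n-1)$-truncated maps are $(n-1)$-types and equivalences and $\Pi$-types preserve this level. Your remark about the off-by-one indexing convention matches the paper's setup exactly.
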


\begin{proof}

For \((A,f), (B,g) : \Tⁿ_U␣X\) and \(x : X\), the types \(\fib␣f␣x\) and
\(\fib␣g␣x\) are \((n-1)\)-truncated. Thus the identity type
\((A,f) = (B,g)\) is \((n-1)\)-truncated.\end{proof}

The functor \(\Tⁿ_U\) preserves homotopies.

\begin{proposition}[\agdalink{https://elisabeth.stenholm.one/univalent-material-set-theory/functor.n-slice.html\#4270}]

\label{T-n-preserves-homotopies}Let \(n : \Nat^∞\). For types \(X, Y\)
such that \(Y\) is \((n+1)\)-locally \(U\)-small, and functions
\(φ, ψ : X → Y\), any homotopy \(ε  : φ ∼ ψ\) induces a homotopy
\(\Tⁿ_U␣ε : \Tⁿ_U φ ∼ \Tⁿ_U ψ\). Moreover, \(\reflhtpy\) is sent to
\(\reflhtpy\).\end{proposition}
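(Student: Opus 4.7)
The plan is to reduce the whole statement to function extensionality, which cleanly handles both the construction of the induced homotopy and the fact that reflexive homotopies are sent to reflexive homotopies. The key observation is that for fixed $X$ and $Y$ with $Y$ locally $U$-small enough, the functor $\Tⁿ_U$ gives a function of type $(X → Y) → (\Tⁿ_U␣X → \Tⁿ_U␣Y)$, so any identification of functions $\varphi = \psi$ yields an identification $\Tⁿ_U␣\varphi = \Tⁿ_U␣\psi$ by $\ap$, which we can then turn back into a homotopy via $\happly$.

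Concretely, given $\varepsilon : \varphi \sim \psi$, I would first apply $\funext$ to produce a path $p \coloneq \funext␣\varepsilon : \varphi = \psi$. Then I would define
\[
    \Tⁿ_U␣\varepsilon \coloneq \happly\bigl(\ap{\Tⁿ_U}{p}\bigr) : \Tⁿ_U␣\varphi \sim \Tⁿ_U␣\psi.
\]
No new calculation is needed here; all the work has already been done in defining the functorial action of $\Tⁿ_U$ on functions and in Proposition \ref{T-n-equality}, which guarantees that paths in $\Tⁿ_U␣Y$ are well behaved.

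For the second part, I would use the standard compatibility of $\funext$ with reflexivity, namely $\funext␣\reflhtpy_\varphi = \refl_\varphi$. From this it follows that $\ap{\Tⁿ_U}{\refl_\varphi} = \refl_{\Tⁿ_U␣\varphi}$ by the action of $\ap$ on reflexivity, and then $\happly␣\refl_{\Tⁿ_U␣\varphi} = \reflhtpy_{\Tⁿ_U␣\varphi}$ by definition of $\happly$. Composing these three equalities gives $\Tⁿ_U␣\reflhtpy_\varphi = \reflhtpy_{\Tⁿ_U␣\varphi}$, as desired.

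There is no real obstacle in this argument, since it is essentially bookkeeping around $\funext$ and $\happly$. The only point where one might pause is to check that the definition agrees with the intuitive pointwise one, where for each $(A, f) : \Tⁿ_U␣X$ one would directly observe that $\varphi \circ f \sim \psi \circ f$ (via $a \mapsto \varepsilon␣(f␣a)$), promote this to an equality by $\funext$, and then apply $\ap$ to the image-factorisation constructions; these two presentations agree by naturality of $\happly$, but the funext-based formulation avoids having to reason about fibers of image inclusions explicitly, which is why I prefer it.
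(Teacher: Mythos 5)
Your proposal is correct and is essentially the paper's own proof: the paper defines $\Tⁿ_U␣ε␣(A,f) ≔ \ap{λ␣σ.\Tⁿ_U␣σ␣(A,f)}(\funext␣ε)$ and then uses $\funext␣\reflhtpy = \refl$ together with $\ap{}$ of $\refl$ being $\refl$, which is exactly your $\happly ∘ \ap{\Tⁿ_U} ∘ \funext$ argument packaged pointwise. The two presentations differ only in whether evaluation at $(A,f)$ happens inside the $\ap{}$ or afterwards via $\happly$, which agree as you note.
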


\begin{proof}

Given \(ε : φ ∼ ψ\) we construct the homotopy \begin{align*}
      &\Tⁿ_U ε : \Tⁿ_U φ ∼ \Tⁿ_U ψ \\
      &\Tⁿ_U ε␣(A , f) ≔ \ap{λ␣σ.\Tⁿ_U σ␣(A , f)} (\funext␣ε)
 \end{align*} For \(\Tⁿ_U \reflhtpy\) we have the following chain of
equalities: \begin{align*}
      \Tⁿ_U \reflhtpy␣(A , f)
           &≡ \ap{λ␣σ.\Tⁿ_U σ␣(A , f)} (\funext␣\reflhtpy) \\
           &= \ap{λ␣σ.\Tⁿ_U σ␣(A , f)} \refl \\
           &≡ \refl \qedhere
 \end{align*}\end{proof}

We recap the definition of an algebra for the functor \(\Tⁿ_U\) and a
homomorphism between two algebras.

\begin{definition}[\agdalink{https://elisabeth.stenholm.one/univalent-material-set-theory/functor.n-slice.html\#12741}]

A \(\Tⁿ_U\)-algebra, \((X,m)\) consists of an \((n+1)\)-locally
\(U\)-small type \(X\) and a map \(m : \Tⁿ_U X → X\).\end{definition}

The restriction to \((n+1)\)-locally \(U\)-small carrier types for
\(\Tⁿ_U\) algebras, is because \(\Tⁿ_U\) is only functorial on such
types.

\begin{definition}[\agdalink{https://elisabeth.stenholm.one/univalent-material-set-theory/functor.n-slice.html\#13290}]

Given two \(\Tⁿ_U\) algebras, \((X,m)\) and \((X',m')\), a
\(\Tⁿ_U\)-algebra homomorphism from \((X,m)\) to \((X',m')\) is a pair
\((φ , α)\) where \(φ : X →  X'\) and
\(α : φ ∘ m ∼ m' ∘ \Tⁿ_U φ\).\end{definition}

For two \(\Tⁿ_U\)-algebra homomorphisms \((φ,α)\) and \((ψ,β)\) to be
equal, there needs to be a homotopy between the underlying maps φ and ψ.
But there is also a coherence condition on α and β, which can be seen as
saying that the two natural ways of proving that
\(m'␣(\Tⁿ_U␣φ) =_X' ψ␣(m␣a)\) are equal identifications in \(X'\). This
holds trivially if \(X'\) is a mere set, and in that case only the
homotopy between the maps is relevant.

\begin{lemma}[\agdalink{https://elisabeth.stenholm.one/univalent-material-set-theory/functor.n-slice.html\#13862}]

\label{lma-T-n-alg-hom-eq}For any two \(\Tⁿ_U\)-algebra homomorphisms
\((φ,α)\) and \((ψ,β)\), from \((X,m)\) to \((X',m')\), their identity
type is characterised by:

\begin{align}
     \left((φ , α) = (ψ, β)\right) 
      ≃ ∑_{ε : φ ∼ ψ} ∏_{a:\Tⁿ_U X} α␣a · \ap{m'} (\Tⁿ_U␣ε␣a) = ε␣(m␣a) · β␣a
 \end{align}\end{lemma}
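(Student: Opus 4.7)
The plan is to apply the usual characterisation of equality in Σ-types to reduce the claim to a computation of a transport. Concretely, by the structure of $\Tⁿ_U$-algebra homomorphisms,
\begin{align*}
    ((φ,α) = (ψ,β)) \;≃\; ∑_{p : φ = ψ} \tr{λσ.\, σ ∘ m ∼ m' ∘ \Tⁿ_U σ}{p}{α} = β.
\end{align*}
Then I would apply function extensionality (in the contravariant direction) to replace $p : φ = ψ$ by a homotopy $ε : φ ∼ ψ$, keeping track of the fact that $\funext$ sends $\reflhtpy$ to $\refl$ and is an equivalence; this turns the Σ into an Σ over homotopies, which is the shape appearing on the right-hand side.

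The core of the proof is the computation of the transported witness. For a path $p : φ = ψ$, transport in the family $λσ.\, ∏_{a : \Tⁿ_U X} σ(m\,a) = m'(\Tⁿ_U σ\,a)$ acts pointwise, with the left-hand side of the equality transported by precomposition with $\ap{-}{p}$ at $m\,a$ and the right-hand side transported via $\ap{λσ. m'(\Tⁿ_U σ\,a)}p$. Path induction on $p$ (combined with the pointwise behaviour of $\funext$, and Proposition~\ref{T-n-preserves-homotopies} which tells us that $\Tⁿ_U$ sends $\reflhtpy$ to $\reflhtpy$) yields the formula
\begin{align*}
    \tr{λσ.\, σ ∘ m ∼ m' ∘ \Tⁿ_U σ}{\funext ε}{α}(a) \;=\; ε(m\,a)^{-1} · α(a) · \ap{m'}(\Tⁿ_U ε\,a).
\end{align*}
Now applying function extensionality once more to convert the equality of transported $α$ with $β$ into a pointwise equality, and rearranging using standard groupoid laws, the condition $\tr{}{\funext ε}{α} = β$ becomes the coherence
\[
    α(a) · \ap{m'}(\Tⁿ_U ε\,a) \;=\; ε(m\,a) · β(a),
\]
as required.

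The main obstacle is the transport computation itself: the family $λσ.\, σ ∘ m ∼ m' ∘ \Tⁿ_U σ$ involves $σ$ on both sides of the homotopy and inside $\Tⁿ_U$, so the transport splits into two contributions that must be combined correctly. I would handle this by path induction on $p$, reducing to $φ ≡ ψ$ and $ε \equiv \reflhtpy$, and then use $\Tⁿ_U \reflhtpy = \reflhtpy$ from Proposition~\ref{T-n-preserves-homotopies} to simplify; the general case then follows by function extensionality. Once this formula is in hand, the rest of the argument is standard equivalence reasoning.
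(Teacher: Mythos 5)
Your proposal is correct, but it takes a genuinely different route from the paper. You use the standard characterisation of equality in Σ-types and then compute the transport in the family $λσ.\, σ ∘ m ∼ m' ∘ \Tⁿ_U σ$ explicitly, arriving at the formula $\tr{λσ.\, σ ∘ m ∼ m' ∘ \Tⁿ_U σ}{\funext␣ε}{α}␣a = ε(m␣a)^{-1} · α␣a · \ap{m'}(\Tⁿ_U ε␣a)$; this is the correct transport law for a family of path types (the two contributions are $\ap{λσ.σ(m␣a)}$ and $\ap{λσ.m'(\Tⁿ_U σ␣a)}$ applied to $\funext␣ε$, and the latter is $\ap{m'}(\Tⁿ_U ε␣a)$ precisely by the definition of $\Tⁿ_U ε$ in Proposition \ref{T-n-preserves-homotopies}), and the rearrangement to $α␣a · \ap{m'}(\Tⁿ_U ε␣a) = ε(m␣a) · β␣a$ is sound. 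The paper instead invokes the structure identity principle: it posits the families $P␣ψ ≔ (φ ∼ ψ)$ and $Q␣ψ␣β␣p' ≔ ∏_{a} α␣a · \ap{m'}(\Tⁿ_U p'␣a) = p'(m␣a) · β␣a$, exhibits a point of $Q␣φ␣α␣\reflhtpy$ using $\Tⁿ_U \reflhtpy = \reflhtpy$, and verifies the two fiberwise equivalences $(φ = ψ) ≃ P␣ψ$ and $(α = α') ≃ Q␣φ␣α'␣\reflhtpy$ by function extensionality alone. The SIP route buys you freedom from any transport computation — everything is checked only over the reflexivity homotopy — whereas your route is more explicit and yields a reusable general transport formula, at the cost of having to manage the two-sided transport and its interaction with $\funext$ carefully. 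Both arguments hinge on the same key input, namely that $\Tⁿ_U$ sends $\reflhtpy$ to $\reflhtpy$ (Proposition \ref{T-n-preserves-homotopies}).
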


\begin{proof}

We use the structure identity principle. Given an algebra homomorphism
\((φ  , α)\), we need to define families \(P : (X → X') → \Type\) and
\(Q : ∏_{ψ : X  → X'} \left(ψ ∘ m ∼ m' ∘ \Tⁿ_U ψ\right) → P␣ψ → \Type\),
which should characterise the identity type of the base type and of the
fibration respectively. Then we need to construct elements \(p : P␣φ\)
and \(q :  Q␣φ␣α␣p\). Finally, we need to show that for all
\(ψ : X → X'\), \(\left(φ =  ψ\right) ≃ P␣ψ\) and that for all
\(α' : φ ∘ m ∼ m' ∘ \Tⁿ_U φ\), \(\left(α =  α'\right) ≃ Q␣φ␣α'␣p\).

The family \(P\) is defined as \(P␣ψ ≔ \left(φ ∼ ψ\right)\), and \(Q\)
is defined as
\(Q␣ψ␣β␣p' ≔ ∏_{a:\Tⁿ_U X} α␣a · \ap{m'} (\Tⁿ_U␣p'␣a) = p'␣(m␣a) · β␣a\).
For the element in \(P␣φ\) we take \(p ≔ \reflhtpy\), and for
\(Q␣φ␣α␣p\) and \((A  , f) : \Tⁿ_U X\) we have the following chain of
equalities: \begin{align}
      α␣(A , f) · \ap{m'} (\Tⁿ_U␣p␣(A , f))
           \label{alg-hom-eq-1}
           &= α␣(A , f) · \ap{m'} \refl \\
           &= α␣(A , f) \\
           &= p␣(m␣(A , f)) · α␣(A , f)
 \end{align} In step (\ref{alg-hom-eq-1}) we use Proposition
\ref{T-n-preserves-homotopies}.

It remains to construct the equivalences. By function extensionality,
\(\left(φ = ψ\right) ≃ \left(φ ∼ ψ\right)\). Moreover, for
\(α' : φ ∘ m ∼ m' ∘ \Tⁿ_U φ\) we have the following chain of
equivalences: \begin{align}
      \left(α = α'\right)
           \label{alg-hom-eq-2}
           &≃ \left(α ∼ α'\right) \\
           &≃ ∏_{a : \Tⁿ_U X} α␣a · \ap{m'} \refl = \refl · α'␣a \\
           \label{alg-hom-eq-3}
           &≃ ∏_{a:\Tⁿ_U X} α␣a · \ap{m'} (\Tⁿ_U␣p␣a) = p␣(m␣a) · α'␣a \\
           &≡ Q␣φ␣α'␣p
 \end{align} Step (\ref{alg-hom-eq-2}) is function extensionality, and
in step (\ref{alg-hom-eq-3}) we again use Proposition
\ref{T-n-preserves-homotopies}.\end{proof}

\hypertarget{u-likeness}{%
\subsection{U-likeness}\label{u-likeness}}

In classical set theory, the Mostowsky collapse relates any well-founded
\emph{set-like} relation to the well-founded hierarchy. A set-like
relation is a class relation, say \(R\), where \(\{y \ |\ (x,y)∈R\}\) is
a set for all \(x\). The notion of ``being small'', i.e.~being a set, is
also one of the basic distinctions in algebraic set theory
\cite{joyal_algebraic_1995}.

In type theory, types take the role of classes while the universe \(U\)
provides a measure of smallness, akin to being a set in set theory.
Hence, the notion of a \(U\)-like ∈-structure will mirror the classical
notion of an extensional, set-like relation.

\begin{definition}[\agdalink{https://elisabeth.stenholm.one/univalent-material-set-theory/e-structure.u-like.html\#1927}]

An ∈-structure, \((V,∈)\) is \textbf{\(U\)-like} if \(\El a\) is
essentially \(U\)-small for every \(a : V\).\end{definition}

\textbf{Remark:} To simplify notation, we will coerce \(\El a : U\) in
\(U\)-like ∈-structures.

\begin{proposition}[\agdalink{https://elisabeth.stenholm.one/univalent-material-set-theory/e-structure.u-like.html\#2077}]

Being \(U\)-like is a mere proposition for any ∈-structure,
\((V,∈)\).\end{proposition}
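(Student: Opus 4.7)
The statement unfolds to saying that the type $\prod_{a:V} \left(\sum_{A:U} A \simeq \El a\right)$ is a mere proposition. My plan is to reduce this to a pointwise check and then appeal to the earlier smallness lemma.

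First I would recall that a dependent product $\prod_{a:V} P\,a$ is a mere proposition whenever each $P\,a$ is a mere proposition --- this is a standard consequence of function extensionality, which the paper assumes freely. So it suffices to show that for each fixed $a : V$, the type $\sum_{A:U} A \simeq \El a$ is a mere proposition.

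But this pointwise statement is exactly Lemma \ref{smallprop}, which asserts that being essentially $U$-small is a mere proposition. Applying that lemma with $X := \El a$ finishes the argument. The proof therefore reduces to two lines: unfold the definition of $U$-likeness, then combine Lemma \ref{smallprop} with the propositionality of dependent products of propositions.

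There is no real obstacle here; the only thing to verify is that the definition of $U$-likeness was stated as a $\Pi$-type rather than, say, a $\Sigma$-type with additional structure, which the paper confirms by phrasing it as ``$\El a$ is essentially $U$-small for every $a : V$''. Hence no choice of witness needs to be tracked, and the proof is purely propositional.
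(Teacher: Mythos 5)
Your proof is correct and is precisely the intended argument: the paper states this proposition without an explicit proof, treating it as immediate from Lemma \ref{smallprop} (being essentially $U$-small is a proposition) together with the standard fact that a dependent product of mere propositions is a mere proposition, which is exactly your two-step reduction. Nothing further is needed.
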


The interaction between internalisable types and \(U\)-likeness is
straightforward:

\begin{proposition}[\agdalink{https://elisabeth.stenholm.one/univalent-material-set-theory/e-structure.internalisations.html\#6072}]

An ∈-structure, \((V,∈)\), is \(U\)-like if any only if for each
internalisable \(A : \Type\), the type \(A\) is essentially
\(U\)-small.\end{proposition}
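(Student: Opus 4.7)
The plan is to prove the biconditional by unfolding the definitions and exploiting the fact that essential $U$-smallness is preserved under equivalence. For the forward direction, assume $(V,\in)$ is $U$-like and let $A : \Type$ be internalisable, so there exists $a : V$ together with an equivalence $e : \El a \simeq A$. By $U$-likeness, $\El a$ is essentially $U$-small, so we obtain $B : U$ and an equivalence $\beta : B \simeq \El a$. Composing gives $e \circ \beta : B \simeq A$, witnessing that $A$ is essentially $U$-small.

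For the backward direction, assume that every internalisable $A : \Type$ is essentially $U$-small, and let $a : V$ be arbitrary. Then $\El a$ itself is internalisable, as witnessed by $a$ together with $\idequiv : \El a \simeq \El a$. By assumption, $\El a$ is essentially $U$-small, and since $a$ was arbitrary this shows $(V,\in)$ is $U$-like. There is essentially no obstacle: the entire content of the proposition reduces to the reflexive observation that every $\El a$ is trivially an internalisation of itself, plus the routine fact that essential $U$-smallness transfers along equivalences (a single composition of equivalences).
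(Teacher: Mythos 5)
Your proof is correct. The paper proves the same statement by a single chain of type equivalences: it rewrites $\prod_{A : \Type} \bigl(\sum_{a : V} \El\,a \simeq A\bigr) \to \sum_{X : U} A \simeq X$ by swapping the quantifiers and then contracting the singleton $\sum_{A : \Type} \El\,a \simeq A$, landing directly on the definition of $U$-likeness. Your two separate implications unfold exactly the same two ingredients — that $\El\,a$ internalises itself (the centre of that contraction) and that essential $U$-smallness transfers along equivalences (which is how the paper silently passes from $\sum_{X:U} A \simeq X$ to $\sum_{X:U} \El\,a \simeq X$) — so the mathematical content is identical; only the packaging differs. One small point worth making explicit in your version: an ``if and only if'' argument suffices here because both sides are mere propositions ($U$-likeness is shown to be a proposition in the paper, and the right-hand side is a $\Pi$-type valued in essential $U$-smallness, which is a proposition by Lemma~\ref{smallprop}), so the logical biconditional automatically upgrades to the equivalence of types that the paper's chain produces directly.
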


\begin{proof}

We have the following chain of equivalences: \begin{align}
    ∏_{A : \Type} &\left(∑_{a : V} \El a ≃ A\right) → ∑_{X : U} A ≃ X \nonumber\\
    &≃ ∏_{a : V} \left(∑_{A : \Type} \El a ≃ A\right) → ∑_{X : U} \El a ≃ X \\
    \label{u-like-internalisation-step-1}
    &≃ ∏_{a : V} \left(∑_{X : U} \El a ≃ X\right)
\end{align} In step (\ref{u-like-internalisation-step-1}) we use the
fact that \(∑_{A : \Type} \El a ≃ A\) is contractible.\end{proof}

We will now investigate in detail the relationship between \(U\)-like
∈-structures and \(\Tⁿ_U\)-coalgebras.

\begin{lemma}[∈-structures are coalgebras \agdalink{https://elisabeth.stenholm.one/univalent-material-set-theory/e-structure.u-like.html\#7586}]
\label{coalgebra-structures}

For a fixed \(V\), having a \(U\)-like ∈-structure on \(V\) is
equivalent to having a coalgebra structure \(V ↪ \T^∞_U␣V\), which is an
embedding. This equivalence sends the relation \(∈␣: V → V → \Type\) to
the coalgebra \(λ␣a.(\El␣a, π_0)\) and, in the other direction, it sends
the coalgebra \(m :  V → \T^∞_U␣V\) to the relation
\(λ␣b␣a.\fib\ (π_1␣(m␣a))\ b\).\end{lemma}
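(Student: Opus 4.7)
The plan is to construct two functions, one in each direction, and check that they are mutually inverse, at which point the claim follows. For the forward direction, starting from a $U$-like ∈-structure $(V,∈)$ on $V$, I would set $m(a) \coloneq (\El a, π_0)$, where $\El a$ is coerced to $U$ using the $U$-likeness witness, and $π_0 : \El a → V$ projects the first component. For the reverse direction, starting from $m : V → \T^∞_U V$, I would define $b ∈ a \coloneq \fib (π_1 (m\,a))\, b$. The first real task is to verify that each construction lands in the intended target: the constructed $m$ must be an embedding, and the constructed ∈-relation must be extensional and $U$-like.

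$U$-likeness of the reconstructed ∈ is immediate: $\El a = ∑_{b:V} \fib(π_1(m\,a))\,b ≃ π_0(m\,a) : U$ by the total space of fibers lemma (HoTT Book 4.8.2), which is also the lemma used in Proposition \ref{int-type-from-int-repr}. The key computation in both directions is comparing two descriptions of $(a = b)$: via extensionality on one side and via Proposition \ref{T-n-equality} on the other. Indeed, for the chosen $m$ we have, for any $z : V$, the equivalence $\fib\,π_0\,z ≃ (z ∈ a)$ by contracting the $∑_{x:V} x=z$ factor, so Proposition \ref{T-n-equality} yields
\begin{equation*}
(m\,a = m\,b) ≃ ∏_{z:V} \fib\,π_0\,z ≃ \fib\,π_0\,z \;≃\; ∏_{z:V} (z ∈ a) ≃ (z ∈ b).
\end{equation*}
The right-hand side matches the extensionality equivalence, so to conclude $m$ is an embedding I need to verify that this composite chain agrees with the canonical map $\ap m$, which amounts to chasing $\refl$ along the equivalences and observing it is sent to $λz.\idequiv$ on both sides (this is exactly the kind of refl-chase already carried out in Lemma \ref{singleton-emb-lma}). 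Symmetrically, in the reverse direction, extensionality of the reconstructed ∈ follows by composing $(a=b) ≃ (m\,a = m\,b)$ (because $m$ is an embedding) with Proposition \ref{T-n-equality}, and again the refl-chase establishes that the canonical extensionality map is the resulting equivalence.

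For the round trips I would check: starting from ∈ and reconstructing, the new relation at $(b,a)$ is $\fib\,π_0\,b = ∑_{(x,e):\El a} x=b ≃ b∈a$, naturally in $b$, which via extensionality gives the identity at the level of ∈-structures. Conversely, starting from $m$, the reconstructed ∈-structure gives $m'(a) = (\El a, π_0)$ with $\El a ≃ π_0(m\,a)$, and under this equivalence $π_0$ corresponds to $π_1(m\,a)$, which by Proposition \ref{T-n-equality} (applied via fibers) shows $m'(a) = m(a)$ in $\T^∞_U V$.

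The main obstacle I expect is not any single calculation but the accumulated bookkeeping around the \emph{canonicity} of the extensionality map: the definition of ∈-structure requires that the canonical map $(a=b) → ∏_z(z∈a) ≃ (z∈b)$ be an equivalence, and to really identify the two structures we must track, through every equivalence above, that $\refl$ corresponds to $λz.\idequiv$. This is the same pattern of refl-chasing already appearing in Lemmas \ref{ord-pair-from-embs-lma} and \ref{singleton-emb-lma}, and handling it cleanly (rather than just establishing abstract type equivalences) is the technical heart of the proof.
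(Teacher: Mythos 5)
Your proposal is correct, but it organises the argument differently from the paper. You build the two maps explicitly and verify that they are quasi-inverses, which forces you to carry out the two round-trip checks. The paper instead establishes in one stroke an equivalence between the types of \emph{unconditioned} data --- maps $V → ∑_{A:U}(A → V)$ on one side, and pairs of a relation together with a $U$-smallness witness for each $\El a$ on the other --- via a short chain of equivalences (the fibration/family correspondence, using that essential $U$-smallness is a proposition, followed by currying), and then simply reads off what this equivalence does to a coalgebra and to a relation by chasing elements through it. That removes the round-trip bookkeeping entirely; the only remaining work is to show that, under the equivalence, the coalgebra is an embedding iff the relation is extensional. On that point your argument coincides with the paper's: both use Proposition \ref{T-n-equality} to obtain $e : (m␣x = m␣y) ≃ ∏_{z:V} (z ∈ x) ≃ (z ∈ y)$ sending $\refl$ to $λ␣z.\idequiv$, and both conclude from the commuting triangle $e ∘ \ap{m} = e'$ (checked at $\refl$ by path induction) that $\ap{m}$ is an equivalence iff the canonical map $e'$ is; you correctly identify this refl-chase as the technical heart. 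One small slip: in your first round trip, the passage from the pointwise equivalence $b ∈' a ≃ b ∈ a$ to equality of ∈-structures goes through univalence and function extensionality (together with the fact that extensionality and $U$-likeness are mere propositions), not through extensionality of the structure --- extensionality identifies elements of $V$, not relations on $V$.
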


\begin{proof}

We have the following chain of equivalences: \begin{align}
\label{coalgebra-structures-step-1}
V → ∑_{A:U}(A → V) 
    &≃ \left(V → ∑_{F : V → \Type} ∑_{E : U} \left(\left(∑_{b : V} F␣b\right)≃E \right)\right) \\
\label{coalgebra-structures-step-2}
    &≃ ∑_{∈ : V → V → \Type} ∏_{a:V} ∑_{E : U} \left(\left(∑_{b : V} b ∈ a\right)≃E\right)
\end{align}

The equivalence (\ref{coalgebra-structures-step-1}) follows by
substituting \(∑_{A:U}(A → V)\) with the equivalent type
\newline\(∑_{F : V → \Type} ∑_{E : U} (∑_{b : V} F␣b)≃E\), since a
fibration \(A → V\), with a total type \(A : U\) is equivalent to a
family over \(V\) with \(U\)-small total type, as being \(U\)-small is a
mere proposition. The equivalence (\ref{coalgebra-structures-step-2}) is
essentially currying.

Chasing a coalgebra, respectively an ∈-relation, along the chain of
equivalences we see that it computes as stated.

The type
\(∑_{∈ : V → V → \Type} ∏_{a:V} ∑_{E : U} \left(\left(∑_{b : V} b ∈ a\right)≃E\right)\)
is exactly the type of ∈-structures on \(V\) which are \(U\)-like,
except for the extensionality requirement. It remains to show that the
coalgebra is an embedding if and only if the corresponding
\(∈\)-relation, given by the equivalence above, is extensional.

Given a coalgebra \(m : V → \T^∞_U␣V\), let ∈ be the corresponding
relation given by the equivalence. By Proposition \ref{T-n-equality} we
have, for any \(x, y : V\), an equivalence \begin{align*}
e : \left(m␣x = m␣y\right) ≃ \left(∏_{z : V} z ∈ x ≃ z ∈ y\right)
\end{align*}

This equivalence sends \(\refl\) to \(λ␣z.\idequiv\). Let
\(e' : x = y → ∏_{z : V} z ∈ x ≃ z ∈ y\) be the map defined by path
induction as \(e'␣\refl ≔ λ␣z.\idequiv\). The following diagram
commutes:

\begin{center}
\begin{tikzcd}
    x = y \arrow[rr, "\ap{m}"] \arrow[rd, "e'"'] &  & m␣x = m␣y \arrow[ld, "e"] \\
        & ∏_{z : V} z ∈ x ≃ z ∈ y &
\end{tikzcd}
\end{center}

Since \(e\) is an equivalence it follows that \(\ap{m}\) is an
equivalence if and only if \(e'\) is an equivalence.\end{proof}

\begin{lemma}

Given a \(U\)-like ∈-structure, \((V,∈)\), and any \(n:\Nat_{-1}\), the
following are equivalent:

\begin{enumerate}
\def\labelenumi{\arabic{enumi}.}
\tightlist
\item
  \(b ∈ a\) is an \(n\)-type for all \(a,b:V\),
\item
  \(λ␣a.(\El␣a, π_0)\) restricts to a map \(V ↪ \T^{n+1}_U␣V\).
\end{enumerate}

\end{lemma}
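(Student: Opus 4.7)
The plan is to reduce both directions to a single fiber computation for the map $\pi_0 : \El\ a → V$. For any $a, b : V$, we have a canonical equivalence
\begin{equation*}
\fib\ \pi_0\ b \;=\; \sum_{(x,e) : \El\ a} x = b \;\simeq\; \sum_{x : V} (x = b) × (x ∈ a) \;\simeq\; b ∈ a,
\end{equation*}
obtained by rearranging the $\sum$ and contracting the singleton $\sum_{x:V} x = b$. Consequently, $\pi_0 : \El\ a → V$ is an $n$-truncated map precisely when $b ∈ a$ is an $n$-type for every $b : V$.

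With this observation in hand, the equivalence of the two conditions is essentially a matter of unpacking definitions. For $(1) \Rightarrow (2)$: assume $b ∈ a$ is an $n$-type for all $a, b$. Then by the fiber computation, $\pi_0 : \El\ a → V$ is $n$-truncated for every $a$, so $(\El\ a, \pi_0)$ genuinely lies in $\T^{n+1}_U\ V = \sum_{A:U} A ↪_n V$, and the map $λ\ a.(\El\ a, \pi_0)$ factors through the inclusion $\T^{n+1}_U\ V → \T^\infty_U\ V$. For $(2) \Rightarrow (1)$: if the factorisation exists, then by definition $\pi_0$ is $n$-truncated for every $a$, so again by the fiber computation $b ∈ a$ is an $n$-type for all $b : V$.

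The remaining point is that the factored map $V → \T^{n+1}_U\ V$ is not merely a map, but an embedding. This follows because being an $n$-truncated map is a proposition, so the forgetful map $\T^{n+1}_U\ V → \T^\infty_U\ V$ is itself an embedding (its fibers are propositions). By Lemma \ref{coalgebra-structures}, the composite $V → \T^\infty_U\ V$ is an embedding; since it factors as $V → \T^{n+1}_U\ V → \T^\infty_U\ V$ with the second leg an embedding, a standard $2$-out-of-$3$ argument on $\ap$ shows the first leg is also an embedding.

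There is no real obstacle here; the proof is largely bookkeeping. The only subtle point is interpreting ``restricts to a map $V ↪ \T^{n+1}_U\ V$'' as the statement that the original embedding factors through the subtype inclusion $\T^{n+1}_U\ V ↪ \T^\infty_U\ V$, with the factored map automatically inheriting embedding status from the proposition-valued nature of $n$-truncatedness.
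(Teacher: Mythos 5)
Your proof is correct and takes essentially the same route as the paper, whose entire argument is the observation that the fibres of $π₀ : \El␣a → V$ are exactly the types $b ∈ a$; your fiber computation is precisely that observation, and the extra bookkeeping about the restricted map inheriting embedding status from the subtype inclusion $\T^{n+1}_U␣V ↪ \T^∞_U␣V$ is a correct elaboration of what the paper leaves implicit.
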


\begin{proof}

The fibres of \(π₀ : (∑_{b : V} b ∈ a) → V\) are exactly the types
\(b ∈ a\).\end{proof}

The following theorem combines the two previous lemmas in order to
characterise \(U\)-like ∈-structures of various levels in terms of
coalgebras.

\begin{theorem}[\agdalink{https://elisabeth.stenholm.one/univalent-material-set-theory/e-structure.u-like.html\#8328}]

\label{U-like-equiv-T-n-coalgebra} For a fixed \(V\) and for
\(n : \Nat^∞_{-1}\), having a \(U\)-like ∈-structure on \(V\) such that
\(b ∈ a\) is an \(n\)-type for all \(a,b:V\) is equivalent to having a
coalgebra structure \(V ↪ \T^{n+1}_U␣V\).\end{theorem}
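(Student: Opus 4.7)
The plan is to assemble the theorem directly from the two preceding lemmas, viewing them as factoring the desired equivalence through the intermediate notion of an embedding coalgebra into $\T^\infty_U␣V$. First I would unfold both sides: on the left, an $n$-type U-like ∈-structure is a pair of (a) a U-like ∈-structure and (b) the property that $b \in a$ is an $n$-type for all $a,b:V$; on the right, an embedding $V \hookrightarrow \T^{n+1}_U␣V$ is an embedding $V \hookrightarrow \T^\infty_U␣V$ whose image lies in the subtype $\T^{n+1}_U␣V \hookrightarrow \T^\infty_U␣V$ (using that $\T^{n+1}_U␣X = \sum_{A:U} A \hookrightarrow_n X$ sits inside $\T^\infty_U␣X = \sum_{A:U}(A \to X)$ via the forgetful map, which is itself an embedding because being $n$-truncated is a proposition).

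Next I would apply Lemma~\ref{coalgebra-structures} to convert the U-like ∈-structure part of the left-hand data into an embedding coalgebra $m : V \hookrightarrow \T^\infty_U␣V$, concretely given by $m␣a = (\El␣a, \pi_0)$. Under this correspondence, the second ingredient on the left --- that $b \in a$ is an $n$-type for all $a,b$ --- translates, by the second (unnamed) lemma immediately preceding the theorem, into the property that the coalgebra $\lambda a.(\El␣a, \pi_0)$ factors through $\T^{n+1}_U␣V$. Since the inclusion $\T^{n+1}_U␣V \hookrightarrow \T^\infty_U␣V$ is an embedding, the composite $V \to \T^{n+1}_U␣V \hookrightarrow \T^\infty_U␣V$ is an embedding if and only if the factorisation $V \to \T^{n+1}_U␣V$ is; this is the standard fact that a map is an embedding precisely when its composition with an embedding is.

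Putting these equivalences together yields the desired bijective correspondence. The only genuine bookkeeping step is checking that the equivalence of Lemma~\ref{coalgebra-structures} is compatible with the factorisation condition, i.e.\ that the propositional truncation-level condition on $b \in a$ matches the propositional $n$-truncatedness condition on the fibres of $\pi_0 : \El␣a \to V$; this is immediate since these fibres \emph{are} the types $b \in a$, as already observed in the proof of the second lemma.

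I do not anticipate any real obstacle: the work has been done in Lemma~\ref{coalgebra-structures} and the succeeding lemma, and the theorem is essentially their concatenation with the observation about embeddings into subtypes. The only point requiring a moment's care is ensuring the translation is coherent on both the structure (the relation $\in$ versus the coalgebra map $m$) and the property (truncation levels of fibres versus of $b \in a$), which is handled by the concrete formulas $m␣a = (\El␣a, \pi_0)$ and $b \in a = \fib\,(\pi_1(m␣a))\,b$ recorded in Lemma~\ref{coalgebra-structures}.
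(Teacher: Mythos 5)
Your proposal is correct and matches the paper's approach: the paper's proof is literally ``simple corollary of the previous two lemmas,'' and your argument is a careful unpacking of exactly that combination, including the (implicit in the paper) observation that the inclusion $\T^{n+1}_U␣V ↪ \T^∞_U␣V$ is an embedding of a subtype, so that being an embedding into $\T^{n+1}_U␣V$ coincides with the composite being an embedding into $\T^∞_U␣V$.
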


\begin{proof}

Simple corollary of the previous two lemmas.\end{proof}

\hypertarget{fixed-point-models}{%
\section{\texorpdfstring{Fixed-point models
\label{section-fixed-point-models}}{Fixed-point models }}\label{fixed-point-models}}

Rieger's theorem is a result in set theory which states that any class
with a set-like binary relation, which is a fixed-point of the related
powerset functor, is a model of ZFC\(^-\) (ZF without foundation)
\cite{rieger1957}. In this section we prove an analogous result,
considering the family of functors \(\Tⁿ_U\) as higher level
generalisations of the powerset functor. Any fixed-point for \(\Tⁿ_U\)
is in particular a \(\Tⁿ_U\)-coalgebra for which the coalgebra map is an
embedding. Thus, by Theorem \ref{U-like-equiv-T-n-coalgebra}, such a
fixed-point gives rise to a \(U\)-like ∈-structure. This induced
∈-structure will satisfy almost all the properties defined in Section
\ref{section-e-structures}, at some level, the only exception being
foundation.

So we assume in this section that we are given a type \(V\) and an
equivalence \(\sup : \Tⁿ_U V ≃ V\). Let \(\desup : V ≃ \Tⁿ_U V\) be the
inverse of \(\sup\). We will specifically show that the induced
∈-structure on \(V\) has the following properties:

\begin{itemize}
     \item Empty set.
     \item $U$-restricted $n$-separation.
     \item If $V$ is $(n+1)$-locally $U$-small, it has ∞-unordered $I$-tupling
          for all $(n-1)$-truncated types $I : U$.
     \item If $V$ is $(k+1)$-locally $U$-small, for some $k ≤ n$ then it has:
          \begin{itemize}
               \item $k$-unordered $I$-tupling for all $I : U$,
               \item $k$-replacement,
               \item $k$-union.
          \end{itemize}
     \item $V$ has exponentiation for all ordered pairing structures.
     \item $V$ has natural numbers represented by $f$ for any $(n-1)$-truncated
          representation $f : \Nat → V$.
 \end{itemize}

\textbf{Notation:} Given \(x : V\), we will use the notation
\(\overline{x} : U\) and \(\widetilde{x} : \overline{x} ↪_{n-1} V\) for
the type and \((n-1)\)-truncated map such that
\(x = \sup␣(\overline{x}, \widetilde{x})\),
i.e.~\(\overline{x} = π_0␣(\desup␣x)\) and
\(\widetilde{x} = π_1␣(\desup␣x)\).

\begin{definition}[\agdalink{https://elisabeth.stenholm.one/univalent-material-set-theory/e-structure.from-P-inf-coalgebra.html\#1051}]

Let \((V , ∈)\) be the ∈-structure on \(V\) given by Theorem
\ref{U-like-equiv-T-n-coalgebra}. Note that for \(x, y : V\),
\begin{align*}
      y ∈ x ≡ \fib␣\widetilde{x}␣y
 \end{align*}\end{definition}

\begin{proposition}[\agdalink{https://elisabeth.stenholm.one/univalent-material-set-theory/e-structure.from-P-inf-coalgebra.html\#1682}]

\label{index-type-lma}For any \(x:V\) we have that
\(\overline{x} ≃ \El␣x\).\end{proposition}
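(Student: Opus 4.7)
The plan is to unfold definitions and invoke a well-known HoTT lemma about total spaces of fibers. By definition, $\El x \equiv \sum_{y:V} y \in x$, and by the ∈-structure on $V$ established just before the proposition (induced via Theorem \ref{U-like-equiv-T-n-coalgebra}), we have the judgemental equality $y \in x \equiv \fib\ \widetilde{x}\ y$. Hence $\El x$ is (up to judgemental equality) the total space of the fibers of $\widetilde{x} : \overline{x} \to V$.

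The key step is then to apply Lemma 4.8.2 of the HoTT Book (already used earlier in the paper, e.g.\ in the proof of Proposition \ref{int-type-from-int-repr}), which states that for any $f : A \to B$ the map $\sum_{b:B} \fib\ f\ b \to A$ sending $(b, (a, p))$ to $a$ is an equivalence. Instantiating with $f := \widetilde{x}$ yields the chain
\begin{align*}
    \El x \;\equiv\; \sum_{y:V} y \in x \;\equiv\; \sum_{y:V} \fib\ \widetilde{x}\ y \;\simeq\; \overline{x},
\end{align*}
which, read backwards, gives the desired equivalence $\overline{x} \simeq \El x$.

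There is no real obstacle here: the proposition is essentially a definitional unfolding followed by one application of a standard lemma. The only thing to be careful about is that the definition of $\in$ coming from Lemma \ref{coalgebra-structures} really does reduce, after taking $m := \desup$, to $\fib\ \widetilde{x}\ y$, which is exactly what is recorded in the definition immediately preceding the proposition. Once this is noted, the proof is a one-liner citing the HoTT Book lemma.
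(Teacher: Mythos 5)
Your proposal is correct and matches the paper's proof essentially verbatim: both unfold $\El␣x$ to $\sum_{y:V} \fib␣\widetilde{x}␣y$ using the judgemental equality $y ∈ x ≡ \fib␣\widetilde{x}␣y$ recorded in the preceding definition, and then conclude by Lemma 4.8.2 of the HoTT Book. No differences worth noting.
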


\begin{proof}

We have the following: \begin{align}
      \El␣x
           ≡ \left(∑_{y : V} \fib␣\widetilde{x}␣y\right)
           ≃ \overline{x}
 \end{align} where the equivalence in the last step is Lemma 4.8.2 in
the HoTT Book \cite[p.~142]{hottbook}.\end{proof}

\begin{proposition}[\agdalink{https://elisabeth.stenholm.one/univalent-material-set-theory/fixed-point.core.html\#1223}]
\label{V-is-n-type}

The type \(V\) is an \(n\)-type.\end{proposition}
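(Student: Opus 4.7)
The plan is to leverage the fixed-point equivalence $\sup : \Tⁿ_U V \simeq V$ together with the already-established type-level bound on the codomain of $\Tⁿ_U$. By Corollary \ref{T-n-level}, the type $\Tⁿ_U V$ is an $n$-type, and being an $n$-type is an equivalence-invariant property of types. Since $\desup : V \simeq \Tⁿ_U V$ is given, $V$ inherits the same type level, so $V$ is an $n$-type. This is essentially a one-line proof.

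Alternatively, one could argue indirectly via Proposition \ref{m-h-level}. Under the coalgebra-to-∈-structure correspondence (Theorem \ref{U-like-equiv-T-n-coalgebra}) we have $y \in x \equiv \fib\,\widetilde{x}\,y$, and by construction $\widetilde{x} = \pi_1(\desup\,x)$ is an $(n-1)$-truncated map (this is precisely what membership in $\Tⁿ_U V = \sum_{A:U} A \hookrightarrow_{n-1} V$ records). Hence each $y \in x$ is an $(n-1)$-type, so the induced ∈-structure is of level $n$, and Proposition \ref{m-h-level} then gives that $V$ is an $n$-type. I expect the authors to take whichever route is shorter in their formalisation; the direct route via Corollary \ref{T-n-level} seems preferable since it avoids invoking the level of the induced ∈-structure and simply transfers the type level across the equivalence.

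There is no real obstacle here, as both ingredients — the type level of $\Tⁿ_U$ and equivalence-invariance of $n$-truncatedness — are standard and already available in the paper.
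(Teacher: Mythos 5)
Your primary argument — transferring the $n$-type level across the equivalence $V \simeq \Tⁿ_U V$ using Corollary \ref{T-n-level} — is exactly the paper's proof. Correct, and the same approach.
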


\begin{proof}

\(V ≃ \Tⁿ_U V\) and \(\Tⁿ_U␣V\) is an \(n\)-type by Corollary
\ref{T-n-level}.\end{proof}

Many of the constructions of the properties defined in Section
\ref{section-e-structures} require us to use \(\sup\) on the \(k\)-image
of some function, and its \(k\)-truncated inclusion map. For this, the
\(k\)-image needs to be \(U\)-small. We therefore need to assume that
\(V\) is appropriately locally \(U\)-small in several of the following
theorems.

\hypertarget{empty-set}{%
\subsection{Empty set}\label{empty-set}}

The empty set is a special case of unordered tupling but it does not
require any assumption about local smallness. Therefore we note this
special case separately.

\begin{theorem}[\agdalink{https://elisabeth.stenholm.one/univalent-material-set-theory/fixed-point.empty-set.html\#1153}]

\label{empty-set-fixed-point}The ∈-structure \((V, ∈)\) has
\textbf{empty set}.\end{theorem}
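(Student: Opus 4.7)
The plan is to exhibit the empty set as $\sup(\emptytype, \exfalso)$, where $\exfalso : \emptytype \to V$ is the unique map out of the empty type. Since $\emptytype$ is in $U$ by assumption (it is one of the type formers we assumed $U$ to be closed under), this is a legitimate element of $\Tⁿ_U V$, provided we can promote $\exfalso$ to an $(n-1)$-truncated map. But the fibres of $\exfalso$ at any $z : V$ are of the form $\sum_{e : \emptytype} \exfalso(e) = z$, which is empty; the empty type is a $k$-type for every $k \geq -2$, so $\exfalso$ is $k$-truncated for every $k$, in particular $(n-1)$-truncated.

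Having constructed $x := \sup(\emptytype, \exfalso) : V$, I would verify the defining property of the empty set. By the notation established just before the theorem, $\overline{x} = \emptytype$ and $\widetilde{x} = \exfalso$, and by the definition of the induced $\in$-relation we have
\begin{equation*}
    z \in x \equiv \fib \widetilde{x}\, z \equiv \sum_{e : \emptytype} \exfalso(e) = z.
\end{equation*}
The right-hand side is equivalent to $\emptytype$ because its first projection lands in the empty type. This gives the required equivalence $z \in x \simeq \emptytype$ for every $z : V$, which is precisely the empty-set property as stated in the remark following the definition of unordered tupling.

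There is no real obstacle here; the proof is essentially a one-line unfolding of definitions once $\sup$ is applied. The only subtle point worth mentioning explicitly is that, unlike the other tupling constructions to come, this one does not require any local smallness assumption on $V$, since we never need to take an image: the index type $\emptytype$ is already small and the map out of it is automatically of every truncation level.
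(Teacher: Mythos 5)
Your proof is correct and takes essentially the same route as the paper: define the empty set as $\sup␣(\emptytype, f)$ for the unique map $f : \emptytype → V$, observe that this map is $(n-1)$-truncated (the paper phrases this as ``it is an embedding, hence $(n-1)$-truncated''; your direct fibre computation is equivalent), and unfold $z ∈ ∅ ≡ \fib␣f␣z ≃ \emptytype$, noting as the paper does that no local smallness assumption is needed. One minor slip: the empty type is a $k$-type for every $k ≥ -1$ but \emph{not} for $k = -2$ (it is not contractible); this is harmless here since $n ≥ 0$ in this section, so $n-1 ≥ -1$.
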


\begin{proof}

The empty type \(\emptytype\) embeds into any type, in particular, it
embeds into \(V\). Let \(f : \emptytype → V\) be the unique map from
\(\emptytype\) to \(V\). Since the map is an embedding, it is also an
\((n-1)\)-truncated map. The empty set is defined as \begin{align*}
      ∅ ≔ \sup␣(\emptytype, f)
 \end{align*} For \(x : V\) we have the following chain of equivalences:
\begin{equation*}
      x ∈ ∅ ≃ \fib␣f␣x ≃ \left(∑_{s : \emptytype} f␣s = x\right) ≃ \emptytype \qedhere
 \end{equation*}\end{proof}

\hypertarget{restricted-separation-1}{%
\subsection{Restricted separation}\label{restricted-separation-1}}

\begin{theorem}[\agdalink{https://elisabeth.stenholm.one/univalent-material-set-theory/fixed-point.restricted-separation.html\#3727}]

\label{separation-fixed-point}The ∈-structure \((V, ∈)\) has
\textbf{\(U\)-restricted \(n\)-separation}.\end{theorem}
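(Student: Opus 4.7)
The plan is to construct the separation set $\{x␣|␣P\}$ directly as $\sup$ applied to a small, $(n-1)$-truncated family into $V$, without invoking Assumption \ref{image-assumption} because the carrier type will already lie in $U$. Fix $x : V$ and $P : \El␣x → \nType{n-1}_U$.

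First, I would pull $P$ back along the canonical equivalence $e : \overline{x} ≃ \El␣x$ from Proposition \ref{index-type-lma} (sending $a ↦ (\widetilde{x}␣a, (a, \refl))$) to obtain $P' ≔ P ∘ e : \overline{x} → \nType{n-1}_U$. Then set
\begin{equation*}
    A ≔ ∑_{a : \overline{x}} P'␣a,
\end{equation*}
which lies in $U$ by closure under $∑$, and define $g : A → V$ by $g␣(a, p) ≔ \widetilde{x}␣a$.

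Next, I would verify that $g$ is $(n-1)$-truncated. Its fibre over $y : V$ rearranges, after swapping the inner $∑$, to $∑_{(a,q) : \fib␣\widetilde{x}␣y} P'␣a$, which is $(n-1)$-truncated because $\fib␣\widetilde{x}␣y$ is (by the hypothesis on $\widetilde{x}$) and $P'␣a$ is (by assumption on $P$), and $(n-1)$-types are closed under $∑$. Hence $(A, g) : \Tⁿ_U␣V$, and we define $\{x␣|␣P\} ≔ \sup␣(A, g)$.

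Finally, the defining equivalence follows from the chain
\begin{align*}
    z ∈ \{x␣|␣P\}
        &≡ \fib␣g␣z \\
        &≃ ∑_{(a,q) : \fib␣\widetilde{x}␣z} P'␣a \\
        &≃ ∑_{e : z ∈ x} P␣(z, e),
\end{align*}
where the last step uses that, for each $(a, q) : \fib␣\widetilde{x}␣z \equiv z ∈ x$, applying $\ap{P}$ to the canonical path $(\widetilde{x}␣a, (a, \refl)) = (z, (a, q))$ in $\El␣x$ produces an equivalence $P'␣a = P␣(\widetilde{x}␣a, (a, \refl)) ≃ P␣(z, (a, q))$. The only real subtlety is this final transport coherence matching the pulled-back predicate $P'$ with the original $P$ along $e$; everything else is routine manipulation.
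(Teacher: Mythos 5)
Your construction is exactly the paper's: you take $\{x\mid P\} ≔ \sup$ of the total space $∑_{a:\overline{x}} P(\widetilde{x}\,a,(a,\refl))$ with the map $(a,\_) ↦ \widetilde{x}\,a$, check $(n{-}1)$-truncatedness (the paper phrases this as a composite of truncated maps rather than computing the fibre directly, but the content is the same), and conclude by swapping the $\Sigma$'s and transporting the predicate along the canonical identification of $\fib\,\widetilde{x}\,z$ with $z ∈ x$. The proposal is correct and needs no changes.
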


\begin{proof}

For \(x : V\) and \(Φ : \El x → \nType{(n-1)}_U\) we construct the term
\(\{x \mid  Φ\}\) as follows: \begin{equation*}
   \{x \mid Φ\} ≔ \sup␣\left(∑_{a : \overline{x}} Φ␣(\widetilde{x}␣a,(a,\refl)),
   λ(a,\_).\widetilde{x}␣a \right).
 \end{equation*} The map \(λ(a,\_).\widetilde{x}␣a\) is \((n-1)\)
truncated as it is the composition of the \((n-1)\)-truncated map
\(\widetilde{x}\) and the projection
\(π₀ : ∑_{a : \overline{x}} Φ␣(\widetilde{x}␣a,(a,\refl)) → \overline{x}\),
which is \((n-1)\)-truncated since \(Φ\) is a family of
\((n-1)\)-truncated types.

For \(z : V\) we then get the following chain of equivalences:

\begin{align*}
   z ∈ \{x \mid Φ\}
      &= ∑_{(a,\_) : ∑_{a : \overline{x}} Φ␣(\widetilde{x}␣a,(a,\refl))} \widetilde{x}␣a = z \\
      &≃ ∑_{(a,\_) : ∑_{a : \overline{x}} \widetilde{x}␣a = z} Φ␣(\widetilde{x}␣a,(a,\refl)) \\
      &≃ ∑_{e : (z ∈ x)} Φ␣(z,e) \qedhere
 \end{align*}\end{proof}

\hypertarget{unordered-tupling}{%
\subsection{Unordered tupling}\label{unordered-tupling}}

For unordered tupling we can construct the \(k\)-truncated version if
\(V\) is \((k+1)\)-locally \(U\)-small, and we can construct the
\(∞\)-truncated version if the indexing type has lower type level than
\(V\).

\begin{theorem}[\agdalink{https://elisabeth.stenholm.one/univalent-material-set-theory/fixed-point.unordered-tupling.html\#3005}]

\label{unordered-tupling-fixed-point}Let \(k : \Nat^∞\) be such that
\(k ≤ n\). If \(V\) is \((k+1)\)-locally \(U\)-small then the
∈-structure \((V , ∈)\) has \textbf{k-unordered I-tupling} for all
\(I : U\).\end{theorem}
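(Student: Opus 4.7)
The plan is to define $\{v\}_k$ by applying $\sup$ to the $(k-1)$-image factorisation of $v$, and then verify the defining equivalence via the standard characterisation of fibres of image inclusions.

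Given $v : I \to V$ with $I : U$, the hypothesis that $V$ is $(k+1)$-locally $U$-small allows me to apply Proposition \ref{small-n-images} at level $k-1$ to obtain a small type $\image_{k-1}\, v : U$, a $(k-1)$-connected surjection $q := \surj_{k-1}\, v : I \to \image_{k-1}\, v$, and a $(k-1)$-truncated embedding $m := \incl_{k-1}\, v : \image_{k-1}\, v \to V$ with $m \circ q = v$. Since $k \leq n$, the map $m$ is in particular $(n-1)$-truncated, so the pair $(\image_{k-1}\, v, m)$ is an element of $\Tⁿ_U V$. I then set
\[ \{v\}_k := \sup(\image_{k-1}\, v,\, m). \]

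To verify the defining equivalence, observe that by the description of the induced ∈-relation (Theorem \ref{U-like-equiv-T-n-coalgebra}), we have $z \in \{v\}_k \equiv \fib\, m\, z$ for any $z : V$. Using the standard characterisation of the $(k-1)$-image as $\sum_{z : V} \| \fib\, v\, z \|_{k-1}$ with inclusion given by the first projection, I obtain
\[ \fib\, m\, z \;\simeq\; \| \fib\, v\, z \|_{k-1} \;=\; \Big\| \sum_{i : I} v\, i = z \Big\|_{k-1}, \]
which is precisely the defining equivalence for $k$-unordered $I$-tupling.

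The only subtleties to confirm are that the $(k-1)$-image inclusion actually fits into $\Tⁿ_U V$ (which is why we need $k \leq n$, so that $(k-1)$-truncatedness upgrades to $(n-1)$-truncatedness) and that the local smallness hypothesis is exactly what Proposition \ref{small-n-images} demands at level $k-1$. Both match the theorem's assumptions precisely, so no substantive obstacle arises beyond bookkeeping of indices.
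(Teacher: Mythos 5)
Your proposal matches the paper's proof essentially verbatim: both define $\{v\}_k ≔ \sup(\image_{k-1}\,v, \incl_{k-1}\,v)$, use $k ≤ n$ to upgrade the $(k-1)$-truncated inclusion to an $(n-1)$-truncated map so that it lands in $\Tⁿ_U V$, and conclude via $z ∈ \{v\}_k ≡ \fib\,(\incl_{k-1}\,v)\,z ≃ \|\fib\,v\,z\|_{k-1}$. The argument is correct and requires no changes.
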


\begin{proof}

Let \(I : U\) and \(v : I → V\), and construct the element
\(\{v\}_k : V\) by \[\{v\}_k ≔ \sup␣(\image_{k-1}␣v, \incl_{k-1}␣v)\]
Since \(k ≤ n\), \(\incl_{k-1}␣v\) is an \((n-1)\)-truncated map. Then,
for any \(z : V\), we have the following chain of equivalences:
\begin{align*}
 z ∈ \{v\}_k 
      &= \fib␣(\incl_{k-1}␣v)\ z \\
      &≃ \big\| \fib␣v\ z\, \big\|_{k-1} \\
      &≡ \big\| ∑_{i : I} v␣i = z\, \big\|_{k-1} \qedhere
 \end{align*}\end{proof}

\begin{corollary}[\agdalink{https://elisabeth.stenholm.one/univalent-material-set-theory/fixed-point.unordered-tupling.html\#5236}]

Let \(k : \Nat^∞\) be such that \(k ≤ n\). If \(V\) is \((k+1)\)-locally
\(U\)-small then \(V\) contains the \(k\)-unordered \(1\)-tupling
\(\{x\}_k\), and the \(k\)-unordered \(2\)-tupling \(\{x, y\}_k\), for
any \(x, y : V\).\end{corollary}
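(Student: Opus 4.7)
The statement is an immediate specialisation of Theorem \ref{unordered-tupling-fixed-point} to the indexing types $I = \unittype$ and $I = \twoelemtype$, both of which lie in $U$ by our standing assumption that $U$ is closed under the usual type formers.

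Concretely, for the singleton case, given $x : V$, define the representation $v_x : \unittype \to V$ by $v_x\,* \coloneqq x$. Applying Theorem \ref{unordered-tupling-fixed-point} with $I \coloneqq \unittype$ and $v \coloneqq v_x$ yields an element $\{v_x\}_k : V$ whose defining equivalence gives, for any $z : V$,
\begin{align*}
z \in \{v_x\}_k \;\simeq\; \bigl\| \textstyle\sum_{i : \unittype} v_x\,i = z \bigr\|_{k-1} \;\simeq\; \|\, x = z\,\|_{k-1},
\end{align*}
which is precisely the defining property of $\{x\}_k$.

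For the pair case, given $x, y : V$, define $v_{x,y} : \twoelemtype \to V$ by sending the two elements of $\twoelemtype$ to $x$ and $y$ respectively. Applying Theorem \ref{unordered-tupling-fixed-point} with $I \coloneqq \twoelemtype$ and $v \coloneqq v_{x,y}$ yields $\{v_{x,y}\}_k : V$ with
\begin{align*}
z \in \{v_{x,y}\}_k \;\simeq\; \bigl\| \textstyle\sum_{i : \twoelemtype} v_{x,y}\,i = z \bigr\|_{k-1},
\end{align*}
which is precisely the defining property of $\{x, y\}_k$. There is no real obstacle here, since the only thing one must verify is that $\unittype$ and $\twoelemtype$ belong to $U$, which holds by assumption on $U$.
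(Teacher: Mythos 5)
Your proposal is correct and matches the paper's intent exactly: the corollary is stated without proof precisely because it is the immediate specialisation of Theorem \ref{unordered-tupling-fixed-point} to the one- and two-element indexing types (formally $\Fin 1$ and $\Fin 2$ in the paper's convention), which lie in $U$ by assumption. Nothing further is needed.
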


\begin{theorem}[\agdalink{https://elisabeth.stenholm.one/univalent-material-set-theory/fixed-point.unordered-tupling.html\#10439}]

\label{ordered-pairs-fixed-point}If \(V\) is \((n+1)\)-locally
\(U\)-small then it has an ordered pairing structure.\end{theorem}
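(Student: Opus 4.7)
The plan is to reduce the theorem to Theorem \ref{0-level-ordered-pairs}, which already supplies an ordered pairing structure in the form of the Wiener encoding $λ(x,y).\{\{\{x\}_n,∅\}_n,\{\{y\}_n\}_n\}_n$ whenever the ambient ∈-structure is of level $n$ and has $∅$, $n$-singletons, and $n$-unordered pairs. So I only need to verify the three hypotheses of that theorem for the induced ∈-structure $(V,∈)$.

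First I would note that $(V,∈)$ is of level $n$: by construction $y ∈ x ≡ \fib\,\widetilde{x}\,y$, and $\widetilde{x}$ is an $(n-1)$-truncated map, so each $y∈x$ is an $(n-1)$-type, which is exactly what Definition of level $n$ requires. Next, $V$ has empty set by Theorem \ref{empty-set-fixed-point}, which does not need any local smallness assumption.

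For the remaining ingredients, I would invoke the corollary following Theorem \ref{unordered-tupling-fixed-point}: assuming $V$ is $(n+1)$-locally $U$-small, $V$ has $n$-unordered $1$-tupling and $n$-unordered $2$-tupling. This is the only place the local smallness hypothesis is genuinely used, since forming $\{v\}_n$ and $\{x,y\}_n$ requires applying $\sup$ to the $(n-1)$-image of a map from a small type into $V$, and this image is $U$-small precisely by Proposition \ref{small-n-images} and the local smallness of $V$.

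Finally, having verified all three hypotheses (level $n$, empty set, $n$-singletons, $n$-unordered pairs), I would conclude by applying Theorem \ref{0-level-ordered-pairs} to obtain the ordered pairing structure. There is no genuine obstacle here; the theorem is a direct corollary of the previously established building blocks, and the whole proof amounts to citing Theorems \ref{empty-set-fixed-point}, \ref{unordered-tupling-fixed-point}, and \ref{0-level-ordered-pairs} in sequence.
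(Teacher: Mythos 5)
Your proposal is correct and follows essentially the same route as the paper: the paper's proof likewise cites the corollary on $k$-unordered $1$- and $2$-tupling, Theorem \ref{empty-set-fixed-point}, and Theorem \ref{0-level-ordered-pairs} (together with the fact that $V$ carries a level-$n$ ∈-structure, which you verify directly via the fibres of the $(n-1)$-truncated map $\widetilde{x}$). Nothing is missing.
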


\begin{proof}

Follows from the previous corollary together with Theorem
\ref{empty-set-fixed-point}, Theorem \ref{0-level-ordered-pairs} and
Proposition \ref{V-is-n-type}.\end{proof}

\begin{theorem}[\agdalink{https://elisabeth.stenholm.one/univalent-material-set-theory/fixed-point.unordered-tupling.html\#3196}]

\label{inf-unordered-tupling-fixed-point}Let \(k : \Nat\) be such that
\(k < n\). If \(V\) is \((n+1)\)-locally \(U\)-small, then the
∈-structure \((V , ∈)\) has \textbf{∞-unordered I-tupling} for all
\(I : \nType{k}_U\).\end{theorem}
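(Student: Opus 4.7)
The plan is to bypass the image factorisation used in Theorem \ref{unordered-tupling-fixed-point} entirely, by observing that the truncation hypothesis on $I$ already forces $v : I \to V$ to be $(n-1)$-truncated, so $(I,v)$ lives directly in $\Tⁿ_U V$ and we can simply apply $\sup$ to it.

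First, I would verify that $v : I \to V$ is $(n-1)$-truncated. By Proposition \ref{V-is-n-type}, $V$ is an $n$-type, so each identity type $v\,i = z$ is an $(n-1)$-type. Since $k < n$, the hypothesis that $I$ is a $k$-type in $U$ implies in particular that $I$ is an $(n-1)$-type. The fiber of $v$ over $z$ is $\fib\,v\,z \simeq \sum_{i:I}(v\,i = z)$, a $\Sigma$-type of an $(n-1)$-type of $(n-1)$-types, hence itself $(n-1)$-truncated.

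Second, I would define
\[
    \{v\}_\infty \;\coloneqq\; \sup(I, v),
\]
which is well-formed since $(I,v)$ is an element of $\Tⁿ_U V = \sum_{A:U} A \hookrightarrow_{n-1} V$: the index type $I$ is in $U$ by hypothesis, and $v$ is $(n-1)$-truncated by the previous step. Finally, unfolding the ∈-relation induced by the coalgebra structure (as given by Theorem \ref{U-like-equiv-T-n-coalgebra}), we compute for any $z : V$ that
\[
    z \in \{v\}_\infty \;\equiv\; \fib\,v\,z \;\simeq\; \sum_{i:I} v\,i = z,
\]
and since by the convention on $\Nat^\infty_{-2}$ we have $\|P\|_\infty \coloneqq P$, this is exactly the required equivalence $z \in \{v\}_\infty \simeq \|\sum_{i:I} v\,i = z\|_\infty$.

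There is no real obstacle here; the essential point is that when the indexing type $I$ has type level strictly below that of $V$, no truncation is needed to place the representing map inside $\Tⁿ_U V$. The $(n+1)$-local $U$-smallness of $V$ ensures that $\Tⁿ_U$ is functorial on $V$ in the ambient framework but is not otherwise used in this particular construction, which proceeds purely through the fixed-point equivalence $\sup$.
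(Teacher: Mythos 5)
Your proof is correct, and it takes a genuinely different route from the paper's. The paper defines \(\{v\}_\infty\) as \(\sup␣(\image_{n-1}␣v, \incl_{n-1}␣v)\), exactly as in the \(k\)-truncated case of Theorem \ref{unordered-tupling-fixed-point}: it first forms the \((n-1)\)-image of \(v\) (which is where the \((n+1)\)-local \(U\)-smallness of \(V\) is used, via Proposition \ref{small-n-images}), computes \(z ∈ \{v\}_\infty ≃ \big\|∑_{i:I} v␣i = z\big\|_{n-1}\), and only then invokes the hypothesis \(k < n\) to argue that the fiber is already \((n-1)\)-truncated, so the truncation can be dropped. You use the same key fact --- that \(∑_{i:I} v␣i = z\) is an \((n-1)\)-type because \(I\) is a \(k\)-type with \(k < n\) and \(V\) is an \(n\)-type --- but you deploy it earlier, to conclude that \(v\) itself is an \((n-1)\)-truncated map and hence that \((I,v)\) is already an element of \(\T^n_U␣V\), making the image factorisation redundant. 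Your version is shorter and, as you correctly observe, does not use the local \(U\)-smallness hypothesis at all, so it establishes the theorem under weaker assumptions. The only nitpick is your use of definitional equality \(≡\) in the final computation: in the abstract fixed-point setting \(\sup\) is merely an equivalence, so \(\widetilde{\{v\}_\infty}\) is only propositionally equal to \(v\) (via the homotopy \(\desup ∘ \sup ∼ \id\)), and the step \(z ∈ \{v\}_\infty ≃ \fib␣v␣z\) is an equivalence obtained by transporting along that identification rather than a judgmental identity. This does not affect the statement being proved, which only asks for an equivalence.
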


\begin{proof}

Let \(I : U\) and \(v : I → V\). We construct the element
\(\{v\}_∞ : V\) as in Theorem \ref{unordered-tupling-fixed-point}:
\[\{v\}_∞ ≔ \sup␣(\image_{n-1}␣v, \incl_{n-1}␣v)\] Then, for any
\(z : V\), we have the following chain of equivalences: \begin{align}
 z ∈ \{v\}_∞ 
      &= \fib␣(\incl_{k-1}␣v)\ z \\
      &≃ \big\| \fib␣v\ z\, \big\|_{n-1} \\
      &≡ \big\| ∑_{i : I} v␣i = z\, \big\|_{n-1} \\
      \label{unordered-inf-tupling-1}
      &≃ ∑_{i : I} v␣i = z
 \end{align} where step (\ref{unordered-inf-tupling-1}) is the fact that
\(∑_{i : I} v␣i = z\) is \((n-1)\)-truncated since \(V\) is an
\(n\)-type and \(I\) is a \(k\)-type for some \(k < n\).\end{proof}

\hypertarget{replacement-1}{%
\subsection{Replacement}\label{replacement-1}}

\begin{theorem}[\agdalink{https://elisabeth.stenholm.one/univalent-material-set-theory/fixed-point.replacement.html\#2631}]

\label{replacement-fixed-point}Let \(k : \Nat^∞\) be such that
\(k ≤ n\). If \(V\) is \((k+1)\)-locally \(U\)-small, then the
∈-structure \((V, ∈)\) has \textbf{\(k\)-replacement}.\end{theorem}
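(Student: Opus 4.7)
The plan is to mimic the construction used for $k$-unordered $I$-tupling in Theorem \ref{unordered-tupling-fixed-point}, with one small twist: the indexing is over $\El a$, which is not itself a $U$-small type a priori, only essentially $U$-small. I will use the index-type description of the carrier to convert this into a genuine map out of a $U$-small type before applying the image construction.

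Concretely, given $a : V$ and $f : \El a \to V$, I will first invoke Proposition \ref{index-type-lma} to obtain an equivalence $e : \overline{a} \simeq \El a$, where $\overline{a} : U$. The composite $f \circ e : \overline{a} \to V$ is then a map from a $U$-small type to a type which, by hypothesis, is $(k+1)$-locally $U$-small; hence by Proposition \ref{small-n-images} (with truncation level $k-1$) we obtain a $U$-small $(k-1)$-image, together with a $(k-1)$-truncated image inclusion $\incl_{k-1}(f \circ e) : \image_{k-1}(f \circ e) \hookrightarrow_{k-1} V$. Since $k \le n$, this inclusion is also $(n-1)$-truncated, so the pair lies in $\Tⁿ_U V$, and I define
\[ \{f(x) \mid x \in a\} \,\coloneqq\, \sup\bigl(\image_{k-1}(f \circ e),\, \incl_{k-1}(f \circ e)\bigr). \]

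To verify the defining property, I would compute, for $z : V$, the following chain of equivalences (analogous to the one in Theorem \ref{unordered-tupling-fixed-point}):
\[ z \in \{f(x) \mid x \in a\} \;=\; \fib\,(\incl_{k-1}(f \circ e))\,z \;\simeq\; \bigl\|\fib\,(f \circ e)\,z\bigr\|_{k-1} \;\simeq\; \Bigl\|\sum_{x : \overline{a}} f(e\,x) = z\Bigr\|_{k-1} \;\simeq\; \Bigl\|\sum_{x : \El a} f\,x = z\Bigr\|_{k-1}, \]
where the first equivalence unfolds $\in$ via the coalgebra structure, the second is the universal property of the $(k-1)$-image (its connected quotient map induces a $(k-1)$-truncated equivalence on fibres), and the last replaces the $U$-small indexing $\overline{a}$ by $\El a$ along $e$ inside the truncation.

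I do not anticipate a real obstacle here: the construction is essentially the unordered $I$-tupling construction applied to $I = \El a$, after converting $\El a$ into an actually $U$-small type via $\overline{a}$. The only bookkeeping point worth care is the correct alignment of truncation levels, namely that $(k+1)$-local $U$-smallness is exactly what Proposition \ref{small-n-images} demands to keep the $(k-1)$-image inside $U$, and that $k \le n$ is exactly what is needed for the image inclusion to qualify as a morphism in $\Tⁿ_U V$. The case $k = \infty$ is handled uniformly, since then $\image_{k-1}(f \circ e) \equiv \overline{a}$ with inclusion $f \circ e$, and the outer truncation in the chain of equivalences becomes the identity.
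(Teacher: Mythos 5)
Your proposal is correct and follows essentially the same route as the paper: the paper likewise defines $φ : \overline{x} → V$ by $φ\,a ≔ f\,(a,\refl)$ (which is exactly your $f ∘ e$ with $e$ the equivalence from Proposition \ref{index-type-lma}), sets $\{f(y) \mid y ∈ x\} ≔ \sup(\image_{k-1}\,φ, \incl_{k-1}\,φ)$, and verifies the defining property by the same chain of equivalences. The truncation-level bookkeeping you flag ($(k+1)$-local $U$-smallness for Proposition \ref{small-n-images}, and $k ≤ n$ for membership in $\Tⁿ_U V$) matches the paper's use of these hypotheses.
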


\begin{proof}

For \(x:V\) and \(f : \El␣x → V\), let \(φ : \overline{x} → V\) be the
function that sends \(a : \overline{x}\) to \(f␣(a,\refl)\). We define
the following element:
\[\{f(y) \mid y ∈ x\} ≔ \sup␣(\image_{k-1}␣φ, \incl_{k-1}␣φ)\] Since
\(k ≤ n\), \(\incl_{k-1}␣φ\) is an \((n-1)\)-truncated map. For
\(z : V\) we have the following chain of equivalences:

\begin{align}
      z ∈ \{f(y) \mid y ∈ x\} 
      &= \fib␣(\incl_{k-1}␣φ)␣z \\
      &≃ \; \big\| \fib␣φ\ z \big\|_{k-1} \\
      \label{replacement-step}
      &≃ \; \big\| ∑_{s : \El␣x}f␣s=z \big\|_{k-1}
 \end{align}

where (\ref{replacement-step}) uses Proposition
\ref{index-type-lma}.\end{proof}

\hypertarget{union-1}{%
\subsection{Union}\label{union-1}}

\begin{theorem}[\agdalink{https://elisabeth.stenholm.one/univalent-material-set-theory/fixed-point.union.html\#3059}]

\label{union-fixed-point}Let \(k : \Nat^∞\) be such that \(k ≤ n\). If
\(V\) is \((k+1)\)-locally \(U\)-small, then the ∈-structure \((V, ∈)\)
has \textbf{\(k\)-union}.\end{theorem}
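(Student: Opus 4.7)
The plan is to define $\bigcup_k x$ as the result of applying $\sup$ to the $(k-1)$-image of a suitable ``flattening'' map, mirroring the strategy used in Theorems~\ref{unordered-tupling-fixed-point} and \ref{replacement-fixed-point}. Given $x : V$, unpack it as $x = \sup(\overline{x}, \widetilde{x})$, and for each $a : \overline{x}$ further unpack $\widetilde{x}\,a = \sup(\overline{\widetilde{x}\,a}, \widetilde{\widetilde{x}\,a})$. Then define the flattening map
\begin{equation*}
    \mu : \sum_{a : \overline{x}} \overline{\widetilde{x}\,a} \longrightarrow V, \qquad \mu(a,b) \mathrel{\mathop:}= \widetilde{\widetilde{x}\,a}\,b.
\end{equation*}
The domain of $\mu$ is $U$-small since $U$ is closed under $\Sigma$-types, and $V$ is $(k+1)$-locally $U$-small by assumption, so Proposition~\ref{small-n-images} produces a small $(k-1)$-image with inclusion $\incl_{k-1}\,\mu : \image_{k-1}\,\mu \hookrightarrow_{k-1} V$. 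Since $k \le n$, this inclusion is in particular $(n-1)$-truncated, so we can set
\begin{equation*}
    \bigcup_k x \mathrel{\mathop:}= \sup\bigl(\image_{k-1}\,\mu,\ \incl_{k-1}\,\mu\bigr).
\end{equation*}

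The second step is to verify the defining property. For $z : V$, I would chain:
\begin{align*}
    z \in \bigcup_k x
    &\equiv \fib(\incl_{k-1}\,\mu)\,z \\
    &\simeq \bigl\| \fib\,\mu\,z \bigr\|_{k-1} \\
    &\equiv \Bigl\| \sum_{a : \overline{x}} \sum_{b : \overline{\widetilde{x}\,a}} \widetilde{\widetilde{x}\,a}\,b = z \Bigr\|_{k-1} \\
    &\simeq \Bigl\| \sum_{a : \overline{x}} z \in \widetilde{x}\,a \Bigr\|_{k-1} \\
    &\simeq \Bigl\| \sum_{y : V} (z \in y) \times (y \in x) \Bigr\|_{k-1}.
\end{align*}
The penultimate equivalence uses that $y \in \widetilde{x}\,a \equiv \fib\,\widetilde{\widetilde{x}\,a}\,y$, so $z \in \widetilde{x}\,a$ unfolds to the inner $\Sigma$. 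The final step uses the unfolding $y \in x \equiv \sum_{a : \overline{x}} \widetilde{x}\,a = y$ and then contracts the $\Sigma$ over $y$ paired with the equality $\widetilde{x}\,a = y$, which is centred at $y \mathrel{\mathop:}= \widetilde{x}\,a$; after swapping the order of $\Sigma$s this turns $\sum_{y : V} (z \in y) \times (\widetilde{x}\,a = y)$ into $z \in \widetilde{x}\,a$.

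The main obstacle is really just bookkeeping in the equivalence chain: one has to be careful to apply the contraction of $\sum_{y : V} \widetilde{x}\,a = y$ correctly (this is the point at which ``$y \in x$'' gets unfolded and absorbed), and one has to check that $k \le n$ is being used exactly where needed --- namely to ensure that $\incl_{k-1}\,\mu$ is $(n-1)$-truncated so that $\sup$ accepts it. The construction of the $(k-1)$-image itself is not an obstacle; it is handed to us by Proposition~\ref{small-n-images}, which only requires the local $U$-smallness hypothesis we have. No additional assumptions beyond those already stated are needed.
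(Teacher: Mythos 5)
Your proposal is correct and follows essentially the same route as the paper: the flattening map $\mu$ is exactly the paper's $φ_x$, the union is defined as $\sup$ of its $(k-1)$-image, and your final contraction step is precisely the content of Proposition~\ref{index-type-lma} ($\overline{x} ≃ \El\,x$), which the paper invokes by name at the same point.
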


\begin{proof}

For \(x : V\), let
\(φ_x : ∑_{a : \overline{x}} \overline{(\widetilde{x}␣a)} → V\) be the
function that sends \((a,b)\) to \(\widetilde{(\widetilde{x}␣a)}␣b\). We
then define the union by
\[⋃_k x ≔ \sup␣(\image_{k-1}␣(φ_x),\incl_{k-1}␣(φ_x))\] Since \(k ≤ n\),
\(\incl_{k-1}␣(φ_x)\) is an \((n-1)\)-truncated map. For \(z : V\) we
thus have the following chain of equivalences:

\begin{align}
   z ∈ ⋃_k x &= \fib␣(\incl_{k-1}␣(φ_x))␣z \\
           &≃ \; \big\| \fib␣φ_x␣z \big\|_{k-1} \\
           &≃ \; \big\| ∑_{a : \overline{x}} \; z ∈ \widetilde{x}␣a \big\|_{k-1} \\
           \label{union-middle-step}
           &≃ \; \big\| ∑_{y : V} \; (z ∈ y) × (y ∈ x) \big\|_{k-1},
 \end{align}

where (\ref{union-middle-step}) uses Proposition
\ref{index-type-lma}.\end{proof}

\hypertarget{exponentiation-1}{%
\subsection{Exponentiation}\label{exponentiation-1}}

The property of having exponentiation is relative to an ordered pairing
structure. It turns out that the construction of ordered pairs does not
matter. \((V,∈)\) has exponentiation for any ordered pairing structure.
In order to prove this though, we first need some lemmas.

\begin{lemma}[\agdalink{https://elisabeth.stenholm.one/univalent-material-set-theory/fixed-point.exponentiation.html\#5691}]

\label{function-eq}Let \(A\) be a type and \(B\) a type family over
\(A\). For any two functions \(φ, ψ : ∏_{a : A} B␣a\) there is an
equivalence \begin{align*}
      \left(φ = ψ\right)
           ≃ ∑_{e : A ≃ A} ∏_{a : A} (a , φ␣a) = (e␣a , ψ␣(e␣a))
 \end{align*} Moreover, this equivalence sends \(\refl\) to
\((\idequiv , \reflhtpy)\).\end{lemma}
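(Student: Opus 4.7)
The plan is to reduce, by a chain of equivalences, the right-hand side to $φ ∼ ψ$ and finish with function extensionality. The key observation is that everything except the pointwise equality $φ\,a = ψ\,a$ packages into a contractible type, which collapses cleanly.

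First, using the standard characterisation of equality in $∑_{a:A} B\,a$, the inner identification becomes $∑_{p : a = e\,a} \tr{B}{p}{φ\,a} = ψ\,(e\,a)$. Pulling the resulting $∑$ past the $∏$ by the type-theoretic axiom of choice rewrites the right-hand side of the statement as
\begin{equation*}
    ∑_{e : A ≃ A}\; ∑_{F : ∏_{a:A} a = e\,a}\; ∏_{a : A}\; \tr{B}{F\,a}{φ\,a} = ψ\,(e\,a).
\end{equation*}

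Next, I show that the total space of the first two $∑$'s is contractible. By $\funext$, the homotopy $F$ corresponds to an identification of the underlying function $π_0 e$ with $\mathrm{id}_A$. Swapping the two $∑$'s and using that being an equivalence is a proposition (with $\mathrm{id}_A$ trivially being an equivalence), we may replace $∑_{e : A ≃ A} \mathrm{id}_A = π_0 e$ by $∑_{f : A → A} \mathrm{id}_A = f$, which is contractible with centre $(\mathrm{id}_A, \refl)$. Specialising the remaining dependent type at this centre collapses the transport, leaving $∏_{a:A} φ\,a = ψ\,a$, i.e.\ $φ ∼ ψ$; a final application of $\funext$ gives $φ = ψ$.

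Finally, I trace $\refl$: going backwards, $\funext$ sends $\refl : φ = ψ$ to $\reflhtpy$, which sits over the contractible centre and corresponds to $(\idequiv, λ a.\,\refl)$ in the unfolded $∑$-type; this re-packages to $(\idequiv, \reflhtpy)$ once the $∑$-equalities are re-folded. The main obstacle is the bookkeeping at this last step: one must confirm that at each intermediate stage the chosen ``trivial'' witness in the contractible type really does correspond to $(\idequiv, \reflhtpy)$ under the chain, which in particular uses that $\funext$ sends $\reflhtpy$ to $\refl$. No new ideas are needed, but the chain must be traversed carefully.
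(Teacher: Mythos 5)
Your proposal is correct and follows essentially the same route as the paper's proof: both hinge on the $\Sigma$-type equality characterisation, the interchange of $\Pi$ and $\Sigma$, and the contractibility of $\sum_{e : A \simeq A} \id_A \sim e$ (which you just unfold slightly further via the based path space $\sum_{f : A \to A} \id_A = f$), followed by function extensionality and the same $\refl$-chase. The only difference is that you traverse the chain of equivalences from right to left where the paper goes left to right, which changes nothing of substance.
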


\begin{proof}

For \(φ, ψ : ∏_{a : A} B␣a\) we have the following chain of
equivalences: \begin{align}
      \left(φ = ψ\right)
           \label{function-eq-1}
           &≃ \left(φ ∼ ψ\right) \\
           \label{function-eq-2}
           &≃ ∑_{e : A ≃ A} ∑_{r : \id_A ∼ e} ∏_{a : A} \tr{B}{r␣a} (φ␣a) = ψ␣(e␣a) \\
           &≃ ∑_{e : A ≃ A} ∏_{a : A} ∑_{p : a = e␣a} \tr{B}{p} (φ␣a) = ψ␣(e␣a) \\
           \label{function-eq-3}
           &≃ ∑_{e : A ≃ A} ∏_{a : A} (a , φ␣a) = (e␣a , ψ␣(e␣a))
 \end{align} Step (\ref{function-eq-1}) is function extensionality. In
step (\ref{function-eq-2}) we use the fact that the type
\(∑_{e : A ≃ A} \id_A ∼ e\) is contractible, with
\((\idequiv, \reflhtpy)\) as the center of contraction. Finally, step
(\ref{function-eq-3}) is the characterisation of equality in
\(Σ\)-types.

Chasing \(\refl\) along the chain of equivalences, we get:
\begin{align*}
      \refl &↦ \reflhtpy \\
           &↦ (\idequiv, \reflhtpy, \reflhtpy) \\
           &↦ (\idequiv, λ␣a.(\refl, \refl)) \\
           &↦ (\idequiv , \reflhtpy)
 \end{align*}\end{proof}

\begin{lemma}[\agdalink{https://elisabeth.stenholm.one/univalent-material-set-theory/fixed-point.exponentiation.html\#2298}]

\label{graph-trunc-map}Let \(〈-,-〉 : V → V → V\) be an ordered pairing
structure on \(V\). Given a small type \(A : U\) and a type family
\(B : A → \Type\) together with \((n-1)\)-truncated maps
\(f : A ↪_{n-1} V\) and \(g : ∏_{a : A} B␣a ↪_{n-1} V\), there is an
\((n-1)\)-truncated map \begin{align*}
      \gr_{f,g} : \left(∏_{a : A} B␣a\right) ↪_{n-1} V
 \end{align*}\end{lemma}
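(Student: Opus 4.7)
The plan is to set $\gr_{f,g}(φ) := \sup(A, h_φ)$ where $h_φ : A \to V$ is defined by $h_φ(a) := 〈f(a), g(a)(φ(a))〉$, and then verify $(n-1)$-truncation by analysing its action on identity types. A preparatory observation is that the auxiliary map $θ : \sum_{a : A} B(a) \to V$, $θ(a, b) := 〈f(a), g(a)(b)〉$, is $(n-1)$-truncated: for a fibre at $v$ to be inhabited we need $v = 〈x, y〉$ (unique since $〈-,-〉$ is an embedding), and then the fibre simplifies to $\sum_{(a, p) : \fib\,f\,x} \fib\,(g\,a)\,y$, a $\sum$ of $(n-1)$-types over an $(n-1)$-type. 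Since $h_φ$ factors as $θ \circ s_φ$ through the embedding section $s_φ(a) := (a, φ(a))$, $h_φ$ is also $(n-1)$-truncated, so $(A, h_φ) \in \Tⁿ_U V$ and $\gr_{f,g}(φ)$ is well-defined.

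To show $\gr_{f,g}$ is $(n-1)$-truncated, I would use the standard characterisation that a map is $(n-1)$-truncated iff its action on every identity type is $(n-2)$-truncated. Combining $\sup$, Proposition \ref{T-n-equality}, Corollary \ref{total-fiberwise-equiv}, and function extensionality (along with the canonical equivalences $\sum_v \fib\,h_φ\,v \simeq A$ under which $π_0$ corresponds to $h_φ$), I can identify
\[(\gr_{f,g}(φ) = \gr_{f,g}(ψ)) \simeq \sum_{α : A \simeq A} \prod_{a : A} h_φ(a) = h_ψ(α\,a),\]
with $\refl$ sent to $(\idequiv, \reflhtpy)$. Lemma \ref{function-eq} gives the analogous characterisation $(φ = ψ) \simeq \sum_{e : A \simeq A} \prod_a (a, φ\,a) = (e\,a, ψ\,(e\,a))$, also with $\refl \mapsto (\idequiv, \reflhtpy)$. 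By chasing $\refl$ through both chains, the natural map $\ap{\gr_{f,g}}$ corresponds under these equivalences to $(e, r) \mapsto (e, a \mapsto \ap{θ}(r(a)))$, which acts as the identity on the first coordinate.

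Since this map is the identity on the first coordinate $A \simeq A$, its $(n-2)$-truncation reduces to the fibrewise $(n-2)$-truncation of $\prod_a \ap{θ}$. Because $θ$ is $(n-1)$-truncated each $\ap{θ}$ is $(n-2)$-truncated on identity types, and $\prod$ preserves $(n-2)$-truncation, so the induced map is $(n-2)$-truncated and consequently $\gr_{f,g}$ is $(n-1)$-truncated. The main obstacle is the coherence verification in the middle step --- that $\ap{\gr_{f,g}}$ really corresponds to the fibrewise-$\ap{θ}$ map under the stack of equivalences --- which is handled by tracking the image of $\refl$ at each stage and invoking path induction, in the same spirit as the proofs of Lemmas \ref{singleton-emb-lma} and \ref{lma-unordered-pair-emb}.
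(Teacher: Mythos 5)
Your proposal is correct and follows essentially the same route as the paper's proof: the same map $φ ↦ \sup␣(A, λ␣a.〈f␣a , g␣a␣(φ␣a)〉)$, the same verification that it lands in $\Tⁿ_U␣V$ by factoring through $θ␣(a,b) ≔ 〈f␣a , g␣a␣b〉$, and the same reduction of the truncation level of $\gr_{f,g}$ to the $(n-2)$-truncation of $\ap{θ}$ by chasing $\refl$ through Lemma \ref{function-eq} on the domain and the fibrewise characterisation of equality in $\Tⁿ_U␣V$ on the codomain. One small slip: $s_φ␣a ≔ (a , φ␣a)$ is not an embedding in general --- its fibre over $(a,b)$ is $φ␣a = b$, which is only an $(n-1)$-type (since $B␣a$ admits an $(n-1)$-truncated map into the $n$-type $V$) --- but that already makes $θ ∘ s_φ$ a composite of $(n-1)$-truncated maps, which is exactly how the paper argues.
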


\begin{proof}

Given \(φ : ∏_{a : A} B␣a\), we first need to construct an element in
\(V\). To this end, we construct the \((n-1)\)-truncated map
\begin{align*}
      &F_φ : A ↪_{n-1} V \\
      &F_φ␣a = 〈 f␣a , g␣a␣(φ␣a) 〉
 \end{align*} This map is \((n-1)\)-truncated as it is the composition
of the maps \(〈-,-〉\), \(λ␣(a,b).(f␣a , g␣a␣b)\) and
\(λ␣a.(a , φ␣a)\). The first is \((-1)\)-truncated by assumption, and
thus \((n-1)\)-truncated. The second is \((n-1)\)-truncated since \(f\)
is \((n-1)\)-truncated and \(g␣a\) is \((n-1)\)-truncated for all
\(a : A\). To see that the last map is \((n-1)\)-truncated, for any pair
\((a , b) : ∑_{a : A} B␣a\) we have the following chain of equivalences:
\begin{align*}
      \left(∑_{a' : A} (a' , φ␣a') = (a , b)\right)
           ≃ \left(∑_{a' : A} ∑_{p : a' = a} \tr{B}{p} (φ␣a') = b\right)
           ≃ \left(φ␣a = b\right)
 \end{align*} where we have used the fact that the type
\(∑_{a' : A} a' = a\) is contractible, with \((a , \refl)\) as the
center of contraction. Since \(V\) is \(n\)-truncated, it follows that
\(∑_{a' : A} (a' , φ␣a') = (a , b)\) is \((n-1)\)-truncated, and thus
\(λ␣a.(a , φ␣a)\) is an \((n-1)\)-truncated map.

This gives us the underlying map \begin{align*}
      &\gr_{f,g} :  \left(∏_{a : A} B␣a\right) → V \\
      &\gr_{f,g}␣φ ≔ \sup␣(A , F_φ)
 \end{align*} What is left is to show that this map is
\((n-1)\)-truncated. For this we show that
\(\ap{\gr_{f,g}} : φ = ψ → \gr_{f,g}␣φ = \gr_{f,g}␣ψ\) is
\((n-2)\)-truncated for all \(φ, ψ : ∏_{a : A} B␣a\).

Let \(φ, ψ : ∏_{a : A} B␣a\), let
\(α : \left(φ = ψ\right) ≃ ∑_{e : A ≃ A}  ∏_{a : A} (a , φ␣a) = (e␣a , ψ␣(e␣a))\)
be the equivalence given by Lemma \ref{function-eq} and let
\(β : \left(∑_{e : A ≃ A} F_φ ∼ F_ψ ∘ e\right) ≃  \left((A , F_φ) = (A , F_ψ)\right)\)
be the equivalence given by the usual characterisation of identity in
Σ-types together with univalence and function extensionality. We start
by observing that \begin{align*}
      \ap{\gr_{f,g}} ∼
      \ap{\sup} ∘\ β\ ∘\ (λ␣(e , H).(e , \ap{λ␣(a , b).〈 f␣a , g␣a␣b 〉} ∘ H)) ∘ α
 \end{align*} All three of \(α\), \(β\) and \(\ap{\sup}\) are
equivalences, and hence \((n-2)\)-truncated maps. For the last map, it
is enough to show that for all \(e : A ≃ A\) and all \(a : A\)
\begin{align*}
      \ap{λ␣(a , b).〈 f␣a , g␣a␣b 〉} : (a , φ␣a) = (e␣a , ψ␣(e␣a)) → F_φ␣a = F_ψ␣(e␣a)
 \end{align*} is \((n-2)\)-truncated. But this follows from the fact
that the map \(λ␣(a , b).〈 f␣a , g␣a␣b 〉\) is \((n-1)\)-truncated, as
it is the composition of two \((n-1)\)-truncated maps.\end{proof}

\begin{theorem}[Exponentiation \agdalink{https://elisabeth.stenholm.one/univalent-material-set-theory/fixed-point.exponentiation.html\#9105}]

\label{exponentiation-fixed-point}The ∈-structure \((V,∈)\) has
\textbf{exponentiation}, for any ordered pairing structure.\end{theorem}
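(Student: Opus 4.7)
The plan is to construct $b^a$ as an internalisation of the type of functions $\overline{a} → \overline{b}$ via their graphs, using Lemma \ref{graph-trunc-map}, and then obtain the required equivalence $z ∈ b^a ≃ \oper_{a\,b}\,z$ by first producing $z ∈ b^a ≃ \operr_{a\,b}\,z$ and appealing to Proposition \ref{operation-equiv}.

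First, I would invoke Lemma \ref{graph-trunc-map} with $A := \overline{a}$, the constant family $B : \overline{a} → \Type$ equal to $\overline{b}$, map $f := \widetilde{a}$, and the constant family $g$ equal to $\widetilde{b}$. This yields an $(n-1)$-truncated map $\gr : (\overline{a} → \overline{b}) ↪_{n-1} V$ sending $φ$ to $\sup(\overline{a}, F_φ)$, where $F_φ\,x = 〈\widetilde{a}\,x, \widetilde{b}\,(φ\,x)〉$. Since $U$ is closed under Π-types, $\overline{a} → \overline{b}$ lies in $U$, so I can define
\[
    b^a := \sup(\overline{a} → \overline{b},\ \gr).
\]

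Unfolding membership yields
\[
    z ∈ b^a \ \equiv \ \fib\,\gr\,z \ \equiv \ ∑_{φ : \overline{a} → \overline{b}} \left(\sup(\overline{a}, F_φ) = z\right),
\]
and by Proposition \ref{T-n-equality} the equality $\sup(\overline{a}, F_φ) = z$ is in turn equivalent to $∏_{w : V} \fib\,F_φ\,w ≃ (w ∈ z)$. Applying the equivalences $\overline{a} ≃ \El a$ and $\overline{b} ≃ \El b$ from Proposition \ref{index-type-lma} then reparametrises the outer sum as one over $\El a → \El b$ and replaces $F_φ$ by the graph map $x \mapsto 〈π_0\,x, π_0\,(φ'\,x)〉$, where $φ'$ is the conjugate of $φ$ under these equivalences. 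This rewrites $z ∈ b^a$ as $\operr_{a\,b}\,z$, and Proposition \ref{operation-equiv} then delivers the required equivalence with $\oper_{a\,b}\,z$.

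The main obstacle will be the bookkeeping of the transports along the equivalences $\overline{a} ≃ \El a$ (and likewise for $b$). Under the equivalence from Lemma 4.8.2 of the HoTT Book, an element $x : \overline{a}$ corresponds to $(\widetilde{a}\,x, (x, \refl)) : \El a$, whose first projection is $\widetilde{a}\,x$, so $F_φ$ matches the graph map pointwise. Still, one must verify that the chain of equivalences composes cleanly and that the direction of $\fib\,F_φ\,w ≃ (w ∈ z)$ matches the direction required in $\operr$.
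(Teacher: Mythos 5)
Your proposal is correct and follows essentially the same route as the paper: define $b^a := \sup(\overline{a} → \overline{b}, \gr_{\widetilde{a},λ\_.\widetilde{b}})$ via Lemma \ref{graph-trunc-map}, unfold membership as the fiber of $\gr$, convert the resulting equality in $V$ to a fiberwise equivalence (the paper invokes extensionality directly, which on a fixed point is exactly the content of Proposition \ref{T-n-equality} you cite), transport along Proposition \ref{index-type-lma}, and finish with Proposition \ref{operation-equiv}. The bookkeeping concerns you raise are handled in the paper exactly as you anticipate.
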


\begin{proof}

Let \(〈-,-〉 : V → V → V\) be an ordered pairing structure on \(V\).
Given \(x,y : V\), we define the element \begin{align*}
      &y^x : V \\
      &y^x ≔ \sup␣(\overline{x} → \overline{y} , \gr_{\widetilde{x},λ␣\_.\widetilde{y}})
 \end{align*} For \(f : V\) we then have the following chain of
equivalences:

\begin{align}
 f ∈ y^x 
      &≃ ∑_{φ : \overline{x} → \overline{y}} f = \gr_{\widetilde{x},λ␣\_.\widetilde{y}}␣φ \\
      \label{fixpoint-exp-step-1}
      &≃ ∑_{φ : \overline{x} → \overline{y}} ∏_{z : V} z ∈ f ≃ z ∈ \gr_{\widetilde{x},λ␣\_.\widetilde{y}}␣φ \\
      &≃ ∑_{φ : \overline{x} → \overline{y}} 
           ∏_{z : V} z ∈ f ≃ 
                ∑_{a : \overline{x}} 〈 \widetilde{x}␣a, \widetilde{y}␣(φ␣a)〉 = z \\
      \label{fixpoint-exp-step-2}
      &≃ ∑_{φ : \El␣x → \El␣y}
           ∏_{z : V} z ∈ f ≃ 
                ∑_{a : \El␣x} 〈 π_0␣a, π_0␣(φ␣a)〉 = z \\
      \label{fixpoint-exp-step-3}
      &≃ \oper␣x␣y␣f
 \end{align}

In step (\ref{fixpoint-exp-step-1}) we use extensionality and in step
(\ref{fixpoint-exp-step-2}) we use Proposition \ref{index-type-lma}.
Finally, in step (\ref{fixpoint-exp-step-3}) we use Proposition
\ref{operation-equiv}.\end{proof}

\hypertarget{natural-numbers}{%
\subsection{Natural numbers}\label{natural-numbers}}

\begin{theorem}[\agdalink{https://elisabeth.stenholm.one/univalent-material-set-theory/fixed-point.natural-numbers.html\#1627}]

\label{fixed-point-nat}The ∈-structure \((V,∈)\) has natural numbers
represented by \(f\), for any \((n - 1)\)-truncated representation
\(f\).\end{theorem}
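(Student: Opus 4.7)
The plan is to simply exhibit an internalisation of $f$ directly using the fixed-point structure, with no further constructions required. Since $f : \Nat \to V$ is $(n-1)$-truncated by assumption, and $\Nat : U$, the pair $(\Nat, f)$ is already an element of $\Tⁿ_U V = \sum_{A : U} A ↪_{n-1} V$. Thus we can define
\begin{equation*}
    n_f ≔ \sup␣(\Nat, f).
\end{equation*}

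What remains is to verify that $n_f$ is an internalisation of the representation $f$, i.e.\ that $z ∈ n_f ≃ \fib␣f␣z$ for every $z : V$. By the definition of $∈$ established at the start of Section \ref{section-fixed-point-models}, we have $z ∈ n_f ≡ \fib␣\widetilde{n_f}␣z$. Since $\sup$ and $\desup$ are mutually inverse equivalences, $\desup␣(\sup␣(\Nat, f)) = (\Nat, f)$, so $\widetilde{n_f}$ is equal to $f$ (over an identification of the index types), which yields the equivalence $\fib␣\widetilde{n_f}␣z ≃ \fib␣f␣z$.

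With $n_f$ shown to be an internalisation of $f$ in the sense of Definition 9 (the internalisation-of-a-representation definition), the claim that $(V, ∈)$ has natural numbers represented by $f$ follows immediately from Definition \ref{natural-numbers}. There is no genuine obstacle here: the result is essentially a corollary of the fact that $V$ is a fixed-point of $\Tⁿ_U$ together with the precise matching between the $(n-1)$-truncation level of the representation and the slice level of $\Tⁿ_U$. Notably, and in contrast with most of the other theorems in this section (unordered tupling, replacement, union, etc.), no local $U$-smallness hypothesis on $V$ is needed, because the index type $\Nat$ is already in $U$ and no image factorisation has to be invoked.
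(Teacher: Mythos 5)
Your proposal is correct and matches the paper's proof essentially verbatim: both define the internalisation as $\sup␣(\Nat,f)$, using that $(\Nat,f)$ is already a point of $\Tⁿ_U␣V$ since $f$ is $(n-1)$-truncated and $\Nat : U$, and then read off $z ∈ \sup␣(\Nat,f) ≃ \fib␣f␣z$ from the fixed-point structure. Your additional observations (the identification of $\widetilde{n_f}$ with $f$ via $\desup ∘ \sup$, and the absence of any local $U$-smallness hypothesis) are accurate but do not change the argument.
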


\begin{proof}

Let \(f : \Nat → V\) be an \((n-1)\)-truncated representation of
\(\Nat\). We construct the following element: \[n ≔ \sup␣(\Nat,f)\] Then
for any \(z : V\) we have
\[z ∈ n ≡ \fib␣\widetilde{n}␣z = \fib␣f␣z \qedhere\]\end{proof}

\hypertarget{the-initial-tux207f_u-algebra}{%
\section{\texorpdfstring{The initial \(\Tⁿ_U\)-algebra
\label{section-initial-algebras}}{The initial \textbackslash Tⁿ\_U-algebra }}\label{the-initial-tux207f_u-algebra}}

The most straightforward way to construct a fixed-point of a functor is
by constructing its initial algebra. For polynomial functors, the
initial algebras are well understood. They are called W-types and are
used to create a variety of inductive data structures, such as models of
set theory in type theory: The initial algebra for \(\T^∞_U\) formed the
basis for Aczel's setoid model of CZF in Martin-Löf type theory
\cite{aczel1978}. The initial algebras for \(\Tⁿ_U\), for finite \(n\),
are subtypes of this type.

However, the functor \(\Tⁿ_U\) is not a polynomial functor, for finite
\(n\). So, its initial algebra will not be a simple W-type. Looking
carefully at the type \(∑_{A:U} A ↪_{n-1} X\) one can see that it is not
strictly positive, since the type
\(A ↪_{n-1} X ≔ ∑_{f : A → X} ∏_{x:X} \isntype{(n-1)} (\fib␣f␣x)\)
contains a negative occurrence of \(X\). Being strictly positive is a
usual requirement for inductive definitions, so it is a bit surprising
that \(\Tⁿ_U\) still has an initial algebra.

In this section, we will construct the initial algebra for \(\Tⁿ_U\),
which then becomes a fixed-point ∈-structure of level \(n\). One
interesting aspect of doing this proof in HoTT is that the carrier for a
\(\Tⁿ_U\)-algebra can lie on any type level.

We start by recalling Aczel's W-type.

\begin{definition}[\agdalink{https://elisabeth.stenholm.one/univalent-material-set-theory/trees.w-types.html\#1620}]

Let \(V^∞ ≔ W_{A : U} A\), and denote its (uncurried) constructor
\(\sup^∞ : \T^∞_U V^∞ → V^∞\).\end{definition}

\textbf{Remark:} The superscript of the name, \(V^∞\), indicates that
there is no bound on the level of the type. In fact, \(V^∞\) has the
same type level as \(U\).

The pair \((V^∞, \sup^∞)\) is the initial algebra for the functor
\(\T^∞_U : \Type → \Type\), and hence a fixed-point of \(\T^∞_U\). Any
fixed-point has a canonical coalgebra structure, and by Lemma
\ref{coalgebra-structures} this gives rise to a \(U\)-like ∈-structure.
Thus, we will denote by \(∈^∞ : V^∞ → V^∞ → \Type\), this elementhood
relation on \(V^∞\). \textbf{Note:} by construction,
\(x ∈^∞ \sup^∞␣(A , f) ≡ \fib␣f␣x\).

This is the ∈-structure explored in \emph{Multisets in type theory}
\cite{gylterud-multisets}, which also introduced some of the axioms of
the previous sections. In \emph{From multisets to sets in Homotopy Type
Theory} \cite{gylterud-iterative} a 0-level ∈-structure was carved out
as a subtype of \(V^∞\). And we will now prove that this 0-level
structure is, as previously suspected, the initial \(\T^0_U\) algebra.
In fact, we will generalise the construction to form a type \(Vⁿ\) for
all \(n : \Nat\) and show that \(Vⁿ\) is the initial algebra for
\(\Tⁿ_U\).

\hypertarget{iterative-n-types}{%
\subsection{\texorpdfstring{Iterative
\(n\)-types}{Iterative n-types}}\label{iterative-n-types}}

The types \(Vⁿ\) form a stratified hierarchy of ∈-structures, where
\(V^∞\) is at the top. This is analogous to how the \(n\)-types in \(U\)
form a hierarchy with \(U\) itself at the top.

\begin{definition}[\agdalink{https://elisabeth.stenholm.one/univalent-material-set-theory/iterative.set.html\#3450}]

Given \(n:\Nat_{-1}\), define by induction on \(V^∞\), a predicate
\(\ittype{n} : V^∞ → \Type\) by:

\begin{center}
      $\ittype{n}\left(\sup^∞␣(A,f)\right) ≔ \left(\isntruncmap{n} f\right)×\left(∏_{a : A}\ittype{n} (f␣a)\right)$
 \end{center}

\end{definition}

This predicate is propositional, and an element of \(V^∞\) for which the
predicate is true is called \emph{an iterative \(n\)-type}.

\textbf{Remark:} We could define iterative \((-2)\)-types analogously,
but there are no elements in \(V^∞\) which satisfy this predicate, as
the existence of such an element would imply that \(V^∞\) is essentially
\(U\)-small.

\begin{definition}[\agdalink{https://elisabeth.stenholm.one/univalent-material-set-theory/iterative.set.html\#4442}]

For \(n : \Nat_{-1}\), let \(V^{n+1} ≔ ∑_{x:V^∞}\ittype{n} x\) denote
the type of iterative \(n\)-types, which is a subtype of
\(V^∞\).\end{definition}

Even though the type \(Vⁿ\) is not itself \(U\)-small, it has
\(U\)-small identity types

\begin{proposition}[\agdalink{https://elisabeth.stenholm.one/univalent-material-set-theory/iterative.set.html\#5655}]

\label{V-n-locally-small}The type \(Vⁿ\) is locally
\(U\)-small.\end{proposition}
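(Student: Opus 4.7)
The plan is to first reduce the problem from $V^n$ to $V^\infty$ and then proceed by induction on the W-type structure of $V^\infty$. Since $V^n = \sum_{x : V^\infty} \ittype{n-1}(x)$ is defined via a propositional predicate, for $x, y : V^n$ the identity type $x =_{V^n} y$ is equivalent to $\pi_0 x =_{V^\infty} \pi_0 y$. Hence it suffices to prove that for any two iterative $(n-1)$-types $x, y : V^\infty$, the type $x =_{V^\infty} y$ is essentially $U$-small.

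The key ingredient is the standard characterization of identity in the W-type $V^\infty$. Combining it with univalence and function extensionality --- as in the proof of Proposition \ref{T-n-equality} --- we obtain for any $(A,f), (B,g) : \T^\infty_U V^\infty$ an equivalence
\[
    \left(\sup^\infty(A,f) = \sup^\infty(B,g)\right)
    \ \simeq\
    \sum_{\alpha : A \simeq B} \prod_{a:A} f(a) = g(\alpha(a)).
\]

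Next I would proceed by $V^\infty$-induction on $x$ to prove that if $x$ is an iterative $(n-1)$-type, then for every iterative $(n-1)$-type $y : V^\infty$ the identity type $x =_{V^\infty} y$ is essentially $U$-small. In the inductive step, writing $x = \sup^\infty(A,f)$ and $y = \sup^\infty(B,g)$, the second conjunct in the definition of $\ittype{n-1}$ ensures that each $f(a)$ and each $g(b)$ is itself an iterative $(n-1)$-type. Using the displayed equivalence, it suffices to show that $\sum_{\alpha : A \simeq B} \prod_{a:A} f(a) = g(\alpha(a))$ is essentially $U$-small. Since $A, B : U$ and $U$ is univalent and closed under the usual type formers, $A \simeq B$ lies in $U$. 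By the inductive hypothesis applied at $a$ with $y' := g(\alpha(a))$, each $f(a) = g(\alpha(a))$ is essentially $U$-small. Closure of $U$ under $\Sigma$- and $\Pi$-types then yields the desired smallness.

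The main obstacle I anticipate is phrasing the induction motive strongly enough that the hypothesis applies at arbitrary subterms: one must quantify over all iterative $(n-1)$-types $y$ in the motive, rather than fixing one, so that the hypothesis can be reused at each $g(\alpha(a))$. Once the motive is set up correctly, the inductive step is routine smallness arithmetic under closure properties of $U$.
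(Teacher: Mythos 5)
Your proof is correct and follows essentially the same reduction as the paper: since being an iterative $(n-1)$-type is a proposition, $V^n$ is a subtype of $V^\infty$ and its identity types are those of the underlying elements of $V^\infty$, so everything comes down to local $U$-smallness of $V^\infty$. The only difference is that the paper simply cites Lemma~3 of \emph{Multisets in type theory} \cite{gylterud-multisets} for that fact, whereas you re-prove it by W-induction using the characterisation of $\sup^\infty(A,f) = \sup^\infty(B,g)$ via univalence, function extensionality and closure of $U$ under $\Sigma$- and $\Pi$-types --- which is precisely the argument behind the cited lemma (and your restriction of the induction motive to iterative $(n-1)$-types is harmless but unnecessary, since the same induction gives local smallness of all of $V^\infty$).
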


\begin{proof}

First, we observe that, since being an iterative \((n-1)\)-type is a
proposition, \(Vⁿ\) is a subtype of \(V^∞\). It was shown in
\emph{Multisets in type theory} \cite[Lemma~3]{gylterud-multisets} that
the latter is locally \(U\)-small. Since the equality of a subtype is
the underlying equality of the base type, it follows that \(Vⁿ\) is
locally \(U\)-small.\end{proof}

In the following proofs we will leave out elements of types which are
propositions, unless they are necessary, in order to increase
readability. For the full details of the proofs, please see the Agda
formalisation.

\begin{proposition}[\agdalink{https://elisabeth.stenholm.one/univalent-material-set-theory/iterative.set.html\#6169}]

The map \(\sup^∞ : \T^∞_U V^∞ → V^∞\) restricts to a map
\(\supⁿ : \Tⁿ_U Vⁿ → Vⁿ\).\end{proposition}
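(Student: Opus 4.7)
The plan is to unpack an element of $\Tⁿ_U\ Vⁿ$ as a pair $(A,f)$ with $A : U$ and $f : A \to Vⁿ$ an $(n-1)$-truncated map, compose with the subtype projection $\pi_0 : Vⁿ \to V^\infty$ to obtain a map into $V^\infty$, and then verify that $\sup^\infty(A, \pi_0 \circ f)$ satisfies the predicate $\ittype{n-1}$. So I define
\[
    \supⁿ(A,f) \;\coloneqq\; \bigl(\sup^\infty(A, \pi_0 \circ f),\, p\bigr),
\]
and the task is to construct the proof $p : \ittype{n-1}\!\left(\sup^\infty(A, \pi_0 \circ f)\right)$.

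By the defining equation of $\ittype{n-1}$ on a $\sup^\infty$-node, this amounts to establishing two things: first, that the composite $\pi_0 \circ f : A \to V^\infty$ is an $(n-1)$-truncated map, and second, that for every $a : A$ the tree $\pi_0(f\,a)$ is itself an iterative $(n-1)$-type. The second conjunct is immediate: since $f\,a$ lives in $Vⁿ = \sum_{x:V^\infty}\ittype{n-1}\,x$, the second component $\pi_1(f\,a)$ is exactly a witness of $\ittype{n-1}(\pi_0(f\,a))$.

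For the first conjunct, I would use that $\pi_0 : Vⁿ \to V^\infty$ is an embedding, because $\ittype{n-1}$ is a proposition (as noted in the text just after its definition) and projections from total spaces of propositional families are embeddings, hence $(-1)$-truncated and in particular $(n-1)$-truncated. Then $\pi_0 \circ f$ is the composite of two $(n-1)$-truncated maps (the given $f$ and the embedding $\pi_0$), and composition preserves $n$-truncatedness of maps, so $\pi_0 \circ f$ is $(n-1)$-truncated as required.

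There is essentially no obstacle beyond bookkeeping: everything reduces to the two standard facts that subtype projections are embeddings and that composition of $(n-1)$-truncated maps is $(n-1)$-truncated, together with the recursive unfolding of $\ittype{n-1}$ on a $\sup^\infty$-node. The only subtle point worth highlighting explicitly in the write-up is the need to invoke propositionality of $\ittype{n-1}$ to justify that $\pi_0 : Vⁿ \to V^\infty$ is an embedding; once that is in hand, the restriction $\supⁿ$ is well-defined by the displayed formula above.
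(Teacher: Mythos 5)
Your proposal is correct and matches the paper's proof essentially verbatim: both define $\supⁿ(A,f)$ as $\sup^∞(A, π₀ ∘ f)$ paired with a witness obtained from (i) the fact that $π₀$ is a subtype inclusion, hence an embedding, hence $(n-1)$-truncated, so the composite $π₀ ∘ f$ is $(n-1)$-truncated, and (ii) the second components $π₁(f\,a)$ supplying the recursive part of the predicate. No differences worth noting.
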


\begin{proof}

Let \(A : U\) and \(f : A ↪_{n-1} Vⁿ\). The element
\(\sup^∞␣(A , π₀ ∘ f)\) is an iterative \((n-1)\)-type:

\begin{itemize}
      \item $π₀ ∘ f$ is $(n-1)$-truncated since $f$ is $(n-1)$-truncated, and
           $π₀$ is the inclusion of a subtype, which is an embedding, and thus
           $(n-1)$-truncated.
      \item For every $a : A$ we have $π₁␣(f␣a) : \ittype{(n-1)} (π₀␣(f␣a))$.
 \end{itemize}

Thus we construct the map \begin{align*}
      &\operatorname{sup}ⁿ : \Tⁿ_U Vⁿ → Vⁿ \\
      &\operatorname{sup}ⁿ␣(A , f) ≔ \left(\operatorname{sup}^∞␣(A , π₀ ∘ f), \longunderscore\right) \qedhere
 \end{align*}\end{proof}

\begin{proposition}[\agdalink{https://elisabeth.stenholm.one/univalent-material-set-theory/iterative.set.html\#6830}]

There is a map \(\desupⁿ : Vⁿ → \Tⁿ_U Vⁿ\).\end{proposition}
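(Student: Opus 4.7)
The plan is to mirror the previous proposition: use the $W$-type eliminator to pattern-match on an element of $Vⁿ$ and invert the data supplied by $\ittype{(n-1)}$. Given $(x,p) : Vⁿ$, the element $x : V^∞$ is of the form $\sup^∞(A,f)$ for some $A : U$ and $f : A → V^∞$, and the propositional predicate $p : \ittype{(n-1)}(x)$ unfolds to a pair $(t,q)$ with $t : \isntruncmap{(n-1)} f$ and $q : \prod_{a:A} \ittype{(n-1)}(f\,a)$. We define
\[
\desupⁿ(x,p) \;\coloneqq\; (A,\; \widetilde{f}), \qquad \widetilde{f}(a) \;\coloneqq\; (f\,a,\; q\,a) : Vⁿ.
\]
This specifies a candidate element of $\Tⁿ_U Vⁿ = \sum_{A:U} A \hookrightarrow_{n-1} Vⁿ$, and all that remains is to check that $\widetilde{f}$ is $(n-1)$-truncated.

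For this I would use that $Vⁿ$ is a subtype of $V^∞$, so identity types in $Vⁿ$ coincide with identity types in $V^∞$ via the first projection $\pi_0 : Vⁿ \to V^∞$, and in particular $\pi_0 \circ \widetilde{f} = f$ definitionally. Hence for any $y = (y_0,r) : Vⁿ$ we get a chain of equivalences
\[
\fib\,\widetilde{f}\,y \;\simeq\; \sum_{a:A} (f\,a,\,q\,a)=(y_0,r) \;\simeq\; \sum_{a:A} f\,a = y_0 \;\equiv\; \fib\,f\,y_0,
\]
and the right-hand side is $(n-1)$-truncated by the hypothesis $t$. So $\widetilde{f}$ is $(n-1)$-truncated, and $\desupⁿ(x,p) : \Tⁿ_U Vⁿ$ is well-defined.

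I expect no substantial obstacle: the whole construction is just the evident inverse of $\supⁿ$ at the level of the underlying $W$-type, with the iterativity predicate carrying exactly the extra data needed to promote $f$ to a map into the subtype $Vⁿ$ and to witness its truncation level. The only mildly delicate point is the equivalence $\fib\,\widetilde{f}\,y \simeq \fib\,f\,y_0$, which relies on the fact that $\ittype{(n-1)}$ is propositional so that equality in $Vⁿ$ reduces to equality in $V^∞$; this is precisely the observation already used in the proof of Proposition \ref{V-n-locally-small}.
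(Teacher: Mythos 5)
Your construction is exactly the paper's: decompose $x$ as $(\sup^∞(A,f),(p,q))$, send it to $(A,\lambda a.(f\,a,q\,a))$, and transfer the $(n-1)$-truncation of $f$ to the lifted map using that $\pi_0 : Vⁿ \to V^∞$ is a subtype inclusion. The paper phrases this last step as left-cancellation of truncated maps along the embedding $\pi_0$, while you compute the fiber equivalence $\fib\,\widetilde{f}\,y \simeq \fib\,f\,(\pi_0\,y)$ directly; these are the same argument.
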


\begin{proof}

Let \(x : Vⁿ\), without loss of generality we may assume that
\(x ≡ (\operatorname{sup}^∞␣(A , f) , (p , q))\) for some \(A : U\),
\(f : A → V^∞\), \(p : \isntruncmap{(n-1)} f\) and
\(q : ∏_{a : A} \ittype{(n-1)} (f␣a)\). The following function:
\begin{align*}
      λ␣a.(f␣a, q␣a) : A → Vⁿ
 \end{align*} is \((n-1)\)-truncated. This is because the composition
\(π₀ ∘ (λ␣a.(f␣a, q␣a))\) is \((n-1)\)-truncated, by \(p\), and \(π₀\)
is \((n-1)\)-truncated as it is the inclusion of a subtype. This implies
that the right factor, \(λ␣a.(f␣a, q␣a)\), is \((n-1)\)-truncated. Thus
we construct the map \begin{align*}
      &\desupⁿ : Vⁿ → \Tⁿ_U Vⁿ \\
      &\desupⁿ␣(\operatorname{sup}^∞␣(A , f) , (p , q))
           ≔ (A , λ␣a.(f␣a, q␣a)) \qedhere
 \end{align*}\end{proof}

\begin{proposition}[\agdalink{https://elisabeth.stenholm.one/univalent-material-set-theory/iterative.set.html\#8048}]

\label{sup-desup-equiv}The maps \(\supⁿ\) and \(\desupⁿ\) form an
equivalence.\end{proposition}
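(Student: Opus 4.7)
The plan is to show that $\supⁿ$ and $\desupⁿ$ are mutual inverses, by computing both composites and showing that each is (homotopic to) the identity. Since both maps are essentially just shuffling around the same data, the compositions should reduce nicely once we unfold definitions. The only subtlety comes from the propositional witnesses of $\ittype{(n-1)}$ bundled into elements of $Vⁿ$, and here we use the fact that being an iterative $(n-1)$-type is a proposition so that any two such witnesses are equal.

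First I would tackle $\desupⁿ ∘ \supⁿ ∼ \id_{\Tⁿ_U Vⁿ}$. Given $(A, f) : \Tⁿ_U Vⁿ$, unfolding gives $\supⁿ(A, f) = (\sup^∞(A, π_0 ∘ f), (p, q))$ for some truncation and iterativity witnesses, and then $\desupⁿ$ of this returns $(A, λa.(π_0(f\,a),\, q\,a))$. To identify this with $(A, f)$ it suffices to give a fiberwise identification $λa.(π_0(f\,a), q\,a) \sim f$; but for each $a : A$ the first coordinates agree definitionally, and the second coordinates are two elements of the proposition $\ittype{(n-1)}(π_0(f\,a))$, hence equal. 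Using function extensionality one gets the required identification in $\Tⁿ_U Vⁿ$ (where equality is characterised via Proposition \ref{T-n-equality}, or directly via pairs-in-$Σ$-types).

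Second, for $\supⁿ ∘ \desupⁿ ∼ \id_{Vⁿ}$, I would proceed by induction on $V^∞$ applied to the first projection, or equivalently by the destructuring used in the definition of $\desupⁿ$. For $x = (\sup^∞(A, f), (p, q))$, we compute $\supⁿ(\desupⁿ\,x) = (\sup^∞(A, π_0 ∘ (λa.(f\,a, q\,a))), (p', q'))$. Now $π_0 ∘ (λa.(f\,a, q\,a))$ is definitionally $f$, so the underlying $V^∞$-part is $\sup^∞(A, f)$, definitionally equal to $π_0\,x$. Since $\ittype{(n-1)}(\sup^∞(A,f))$ is a proposition, the iterativity components $(p, q)$ and $(p', q')$ are identified, and $x = \supⁿ(\desupⁿ\,x)$ follows from the characterisation of equality in the subtype $Vⁿ ↪ V^∞$.

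The main potential obstacle is bookkeeping for the propositional coherences: one must be careful when pattern-matching on an element of $Vⁿ$ that the implicit use of the induction principle of $V^∞$ gives access to the iterativity witnesses, and that the resulting equality respects the $Σ$-type structure of $Vⁿ = \sum_{x:V^∞} \ittype{(n-1)}\, x$. None of this is deep, but to keep the informal proof short I would note that the essential content is that $\supⁿ$ and $\desupⁿ$ are inverse on the underlying $V^∞$ part by construction, and that the iterativity data is propositional and thus adds no further constraint; full details are deferred to the Agda formalisation.
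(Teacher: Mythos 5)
Your proposal is correct and follows essentially the same route as the paper: compute both composites on destructured elements, observe that the underlying $V^∞$-parts (respectively the index type and underlying function) agree definitionally, and discharge the remaining components using that the iterativity and truncation witnesses are propositional, via the characterisation of equality in subtypes and $\Sigma$-types. The only cosmetic difference is that where you invoke propositionality of $\ittype{(n-1)}$ to identify the second coordinates of $\lambda a.(\pi_0(f\,a), q\,a)$ with those of $f$, the paper gets this definitionally by $\eta$ for $\Sigma$-types; both are fine.
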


\begin{proof}

For \((\operatorname{sup}^∞␣(A , f) , (p , q)) : Vⁿ\) we have
\begin{align*}
      π₀␣(\operatorname{sup}ⁿ␣&(\desupⁿ␣(\operatorname{sup}^∞␣(A , f) , (p , q)))) \\
           &≡ π₀␣(\operatorname{sup}ⁿ␣(A , λ␣a.(f␣a, q␣a))) \\
           &≡ (\operatorname{sup}^∞␣(A , f)) \\
           &≡ π₀␣(\operatorname{sup}^∞␣(A , f) , (p , q))
 \end{align*} Thus, by the characterisation of equality in subtypes,
\(\supⁿ ∘ \desupⁿ ∼ \id\).

For \(A : U\) and \(f : A ↪_{n-1} Vⁿ\) we have \begin{align*}
      π₀␣(\desupⁿ␣(\operatorname{sup}ⁿ␣(A , f))) ≡ A
 \end{align*} and \begin{align*}
      π₀␣(π₁&(\desupⁿ␣(\operatorname{sup}ⁿ␣(A , f)))) \\
           &≡ π₀␣(π₁(\desupⁿ␣(\operatorname{sup}^∞␣(A , π₀ ∘ f), (\_,π₁ ∘ f)))) \\
           &≡ λ␣a.(π₀␣(f␣a),π₁␣(f␣a)) \\
           &≡ π₀␣f
 \end{align*} Thus by the characterisation of equality in \(Σ\)-types,
and equality in subtypes, \(\desupⁿ ∘ \supⁿ ∼ \id\).\end{proof}

\begin{theorem}[\agdalink{https://elisabeth.stenholm.one/univalent-material-set-theory/iterative.set.html\#8458}]

\label{V-n-fixed-point}\(Vⁿ\) is a fixed-point to the functor
\(\Tⁿ_U\).\end{theorem}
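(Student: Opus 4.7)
The plan is to observe that this theorem is essentially a packaging result. In the sense of Section \ref{section-fixed-point-models}, being a fixed-point of $\Tⁿ_U$ means exhibiting an equivalence $\Tⁿ_U Vⁿ \simeq Vⁿ$, and this is precisely what has already been established by the pair $(\supⁿ, \desupⁿ)$ in the preceding two propositions together with Proposition \ref{sup-desup-equiv}.

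Concretely, I would just invoke the previous three propositions: the first gave $\supⁿ : \Tⁿ_U Vⁿ \to Vⁿ$ by restriction of the $W$-constructor (using that an $(n-1)$-truncated map into $Vⁿ$ composed with the subtype inclusion $\pi_0 : Vⁿ \hookrightarrow V^\infty$ is still $(n-1)$-truncated); the second gave the inverse $\desupⁿ : Vⁿ \to \Tⁿ_U Vⁿ$ by factoring the defining map of an iterative $(n-1)$-type through $Vⁿ$ (using that $\pi_0$ is an embedding to split off the required truncation level); and Proposition \ref{sup-desup-equiv} verified the two composites are homotopic to the identity. Putting these together is exactly the content of the theorem.

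There is no substantive obstacle left; the real work lay in checking that the $(n-1)$-truncation data of a map $A \hookrightarrow_{n-1} Vⁿ$ and of the family witnessing iterativity can be shuffled between the ambient $V^\infty$ and the subtype $Vⁿ$ without loss. Once that bookkeeping is done, the equivalence is essentially definitional (modulo the characterisation of equality in subtypes), which is how Proposition \ref{sup-desup-equiv} reads. As an immediate consequence, the entire machinery of Section \ref{section-fixed-point-models} applies to $V = Vⁿ$, and together with Proposition \ref{V-n-locally-small} (local $U$-smallness of $Vⁿ$) this will deliver all the axioms listed for fixed-point models on the induced $\in$-structure on $Vⁿ$.
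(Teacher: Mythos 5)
Your proposal matches the paper's proof exactly: the paper states the theorem as an immediate corollary of Proposition \ref{sup-desup-equiv}, which together with the two propositions constructing $\supⁿ$ and $\desupⁿ$ supplies the equivalence $\Tⁿ_U\,Vⁿ ≃ Vⁿ$. Your additional remarks about where the bookkeeping lies are accurate and consistent with how the paper carries out those preceding propositions.
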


\begin{proof}

Corollary of the previous proposition.\end{proof}

\begin{theorem}

\label{thm-n-lvl-e-str-Vn}There is an ∈-structure \((Vⁿ, ∈ⁿ)\), on
\(Vⁿ\), of level \(n\).\end{theorem}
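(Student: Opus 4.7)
The plan is to leverage Theorem \ref{U-like-equiv-T-n-coalgebra}, which converts $\Tⁿ_U$-coalgebra structures on a type into $U$-like ∈-structures of the matching type level. Since Proposition \ref{sup-desup-equiv} has just established that $\desupⁿ : Vⁿ → \Tⁿ_U Vⁿ$ is an equivalence, it is in particular an embedding, and therefore constitutes a coalgebra structure $Vⁿ ↪ \Tⁿ_U Vⁿ$ of the required form.

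More precisely, I would proceed as follows. First, observe that any equivalence is $(-1)$-truncated, so $\desupⁿ$ witnesses an element of $Vⁿ ↪ \Tⁿ_U Vⁿ$. Next, instantiate Theorem \ref{U-like-equiv-T-n-coalgebra} with the parameter $n-1$ (so that the functor on the coalgebra side is $\Tⁿ_U$): applied to $\desupⁿ$, this yields a $U$-like ∈-structure $(Vⁿ, ∈ⁿ)$ such that $b ∈ⁿ a$ is an $(n-1)$-type for all $a,b : Vⁿ$. By the definition of level for an ∈-structure (the one where ``level $(k+1)$'' means elementhood is a $k$-type), this is exactly the statement that $(Vⁿ, ∈ⁿ)$ has level $n$.

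There is essentially no obstacle here; the work has already been done in assembling the coalgebra $\desupⁿ$ and in proving the general correspondence theorem. The only subtlety worth spelling out is the index shift: the coalgebra into $\Tⁿ_U$ gives $(n-1)$-truncated fibers (since the second projection of an element of $\Tⁿ_U Vⁿ$ is an $(n-1)$-truncated map), which in turn corresponds precisely to level $n$ in the stratification of ∈-structures. Concretely, unwinding the correspondence of Lemma \ref{coalgebra-structures}, the induced relation is
\[
    b ∈ⁿ a \;≡\; \fib\ (π_1␣(\desupⁿ␣a))\ b,
\]
and extensionality is exactly the statement that $\desupⁿ$ is an embedding, which holds because it is an equivalence.
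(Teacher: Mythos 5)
Your proposal is correct and matches the paper's own proof, which likewise derives the result from the fixed-point equivalence $\supⁿ/\desupⁿ$ (Theorem \ref{V-n-fixed-point}, itself a corollary of Proposition \ref{sup-desup-equiv}) combined with Theorem \ref{U-like-equiv-T-n-coalgebra}. Your explicit accounting of the index shift (coalgebras into $\Tⁿ_U$ give $(n-1)$-truncated fibers, hence level $n$) is a correct unwinding of what the paper leaves implicit.
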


\begin{proof}

This follows from Theorem \ref{V-n-fixed-point} together with Theorem
\ref{U-like-equiv-T-n-coalgebra}.\end{proof}

\textbf{Remark:} By construction, for \(x, y : Vⁿ\) we have
\(y ∈ⁿ x ≡ π₀␣y ∈^∞ π₀␣x\).

\textbf{Remark:} The ∈-structure \((V⁰,∈⁰)\) is exactly the ∈-structure
studied in \emph{From multisets to sets in Homotopy Type Theory}
\cite{gylterud-iterative}, which there was established to be equivalent
to the model of set theory given in the HoTT Book \cite{hottbook}.

\hypertarget{induction-principle-and-recursors-for-vux207f}{%
\subsection{\texorpdfstring{Induction principle and recursors for
\(Vⁿ\)}{Induction principle and recursors for Vⁿ}}\label{induction-principle-and-recursors-for-vux207f}}

Vⁿ is a composite type: a Σ-type over a W-type and a predicate involving
induction on that W-type. Since it is not a primitive inductive type it
does not come with a ready-to-use induction principle. But, to some
extent, \(\supⁿ : \Tⁿ_U␣Vⁿ → Vⁿ\) acts as a constructor, for which we
have a corresponding induction principle. The expected computation rules
do not hold definitionally, but are instead proven to hold up to
identity.

\begin{proposition}[Elimination for Vⁿ \agdalink{https://elisabeth.stenholm.one/univalent-material-set-theory/iterative.set.html\#9361}]

\label{V-n-elim}Given any family \(P : Vⁿ → \Type\) and a function

\begin{center}
      $φ : ∏_{A : U} ∏_{f : A ↪_{n-1} Vⁿ} \left(∏_{a:A} P␣(f␣a)\right) → P␣(\supⁿ␣(A,f))$
 \end{center}

there is a function \(\elim_{Vⁿ}␣P\ φ : ∏_{x:Vⁿ} P␣x\).

Furthermore, there is a path
\(\elim_{Vⁿ}␣P\ φ␣(\supⁿ (A , f)) =  φ␣A␣f␣(\elim_{Vⁿ}␣P␣φ ∘ f)\), for
any \(A : U\) and \(f : A ↪_{n-1} Vⁿ\).\end{proposition}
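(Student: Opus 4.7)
The plan is to reduce induction on $V^n$ to the ordinary W-type induction on $V^\infty = W_{A:U} A$, using the fact (noted implicitly in the remark after the definition of $V^{n+1}$) that $\ittype{(n-1)}$ is a mere proposition, so that $V^n$ is a subtype of $V^\infty$.

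First I would introduce the auxiliary family $Q : V^\infty → \Type$ defined by $Q(x) := \prod_{p : \ittype{(n-1)}\, x} P(x, p)$. Any section of $Q$ yields a section of $P$ over $V^n$ by $\lambda (x,p).\ \mathrm{sec}(x)(p)$, so it suffices to construct a section of $Q$ by the standard W-induction on $V^\infty$. The step function for $Q$ at $\sup^\infty(A, h)$, given an inductive hypothesis $\mathrm{ih} : \prod_{a : A} Q(h\, a)$, takes an arbitrary $p : \ittype{(n-1)}\, (\sup^\infty(A, h))$, unfolds it to a pair $(q, r)$ with $q : \isntruncmap{(n-1)}\, h$ and $r : \prod_{a : A} \ittype{(n-1)}(h\, a)$, and forms the map $g : A → V^n$ by $g(a) := (h\, a,\, r\, a)$. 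Exactly as in the proof that $\desupⁿ$ is well-defined, $g$ is $(n{-}1)$-truncated, so $g : A ↪_{n-1} V^n$. Then $\varphi\, A\, g\, (\lambda a.\ \mathrm{ih}(a)(r\, a))$ lives in $P(\supⁿ(A, g))$, and since $\supⁿ(A, g) = (\sup^\infty(A, h), (q', r))$ for some $q'$, we may transport along the path $(q', r) = (q, r)$ (obtained by propositionality of $\ittype{(n-1)}$) to land in the desired fiber $P(\sup^\infty(A, h), (q, r))$.

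To verify the computation rule, fix $A : U$ and $f : A ↪_{n-1} V^n$. Unfolding, $\supⁿ(A, f) \equiv (\sup^\infty(A, \pi_0 \circ f), (q_f, \pi_1 \circ f))$ where $q_f$ is the $(n-1)$-truncatedness proof for $\pi_0 \circ f$ derived from $f$. By the W-computation rule, $\elim_{V^n}\, P\, \varphi(\supⁿ(A,f))$ reduces (up to the transport in the previous paragraph) to $\varphi\, A\, g'\, (\elim_{V^n}\, P\, \varphi \circ g')$ where $g' := \lambda a.\ (\pi_0(f\, a),\ \pi_1(f\, a))$. By $\eta$ for $\Sigma$ in each fibre, $g' = f$ as maps $A → V^n$, from which the path $\elim_{V^n}\, P\, \varphi(\supⁿ(A,f)) = \varphi\, A\, f\, (\elim_{V^n}\, P\, \varphi \circ f)$ follows after composing with the transport witnessing $(q', r) = (q_f, \pi_1 \circ f)$.

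The main obstacle is purely bookkeeping: the computation rule is only propositional, not definitional, because $\supⁿ$ and $\desupⁿ$ form only a propositional equivalence of the structure underlying $V^n$ (Proposition \ref{sup-desup-equiv}), so one must keep track of the transports introduced when moving between the canonical $V^\infty$-representative and the user-supplied $f : A ↪_{n-1} V^n$. The conceptual content — that induction on $V^n$ is just W-induction on $V^\infty$ with an extra proof-irrelevant bundle dragged along — is straightforward; the delicate part is stating the compatibility of these transports precisely enough to obtain the named computation path.
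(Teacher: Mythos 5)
Your construction of the eliminator is essentially the paper's: both reduce to ordinary W-induction on $V^∞$ with the propositional predicate $\ittype{(n-1)}$ carried along, rebuild the $(n-1)$-truncated map $g(a) := (h\,a, r\,a)$ exactly as in the definition of $\desupⁿ$, apply $φ$, and transport the result along a path whose only non-trivial content lives in the proposition component (in the paper this path is $α : \supⁿ(\desupⁿ\,x) = x$ from Proposition \ref{sup-desup-equiv}, which is the same path you obtain from propositionality of $\ittype{(n-1)}$). The one place you diverge is the computation rule: you discharge the transport bookkeeping by direct path algebra, using that all the paths involved project to $\refl$ under the subtype inclusion $Vⁿ ↪ V^∞$ and hence coincide; the paper instead proves a generic statement $Q\,B\,e$ about transporting along the retraction path of an arbitrary equivalence $e : Vⁿ ≃ B$, establishes it for $\idequiv$ where the transport is trivial, and then specialises to $e = \desupⁿ$ by equivalence induction via univalence. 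Your route is more elementary but leans on the specific fact that the relevant identifications live over a proposition; the paper's route packages the coherence once and for all and avoids inspecting the paths at all. Both are correct, and your acknowledgement that the residual work is transport bookkeeping is accurate.
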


\begin{proof}

Given \(x : Vⁿ\), we may assume that
\(x ≡ (\operatorname{sup}^∞␣(A,f) , (p , q))\) where \(A : U\),
\(f : A → V^∞\), \(p : \isntruncmap{(n-1)} f\) and
\(q : ∏_{a : A}  \ittype{(n-1)} (f␣a)\). The function
\(\elim_{Vⁿ}␣P\ φ\) is defined as follows \begin{align*}
      \elim_{Vⁿ}␣P\ φ\ x ≔ \tr{P}{α}{\left(φ␣A␣(π₁␣(\desupⁿ␣x))␣(λ␣a.\elim_{Vⁿ}␣P\ φ\ (f␣a , q␣a))\right)}
 \end{align*} where \(α : \supⁿ␣(\desupⁿ␣x) = x\) by Proposition
\ref{sup-desup-equiv}. To construct the path, we use univalence to do
equivalence induction. Let \(Q\) be the type family \begin{align*}
      &Q : ∏_{B : \Type} Vⁿ ≃ B → \Type \\
      &Q␣B␣e ≔ ∏_{p : ∏_{b : B} P␣\left(e^{-1}␣b\right)} 
                ∏_{b : B} \tr{P}{α}{\left(p␣\left(e␣\left(e^{-1}␣b\right)\right)\right)} = p␣b
 \end{align*} where
\(α : e^{-1}␣\left(e␣\left(e^{-1}␣b\right)\right) = e^{-1}␣b\) is the
proof that \(e\) is a retraction applied to the element \(e^{-1}␣b\).
Then, for the case \(B ≔ Vⁿ\) and \(e ≔ \idequiv\) we have
\begin{align*}
      Q␣Vⁿ␣\idequiv ≡ ∏_{p : ∏_{x : Vⁿ} P␣(a)} ∏_{x : Vⁿ} (p␣x = p␣x)
 \end{align*} which is inhabited by \(λ␣p.\reflhtpy\). By univalence we
thus have an element of the type \(Q␣B␣e\) for any type \(B\) and
equivalence \(e : Vⁿ ≃ B\). For \(B ≔ Vⁿ\) and \(e ≔ \desupⁿ\) we
construct the term \begin{align*}
      &p : ∏_{(A' , f') : \Tⁿ_U Vⁿ} P␣(\operatorname{sup}ⁿ␣(A' , f')) \\
      &p␣(A' , f') ≔ φ␣A'␣f'␣(\elim_{Vⁿ}␣P\ φ\ ∘ f')
 \end{align*} Then we have an element of the type
\(Q␣\left(\Tⁿ_U Vⁿ\right)␣\desupⁿ␣p\ (A , f)\), which unfolds to
\begin{align*}
      \left(\tr{P}{α}{\left(φ␣A␣(π₁␣(\desupⁿ␣x))␣(λ␣a.\elim_{Vⁿ}␣P\ φ\ (f␣a , q␣a))\right)}
      = φ␣A␣f␣(\elim_{Vⁿ}␣P\ φ\ ∘ f)\right)
 \end{align*} where
\(α : \supⁿ␣\left(\desupⁿ␣\left(\supⁿ␣(A' , f')\right)\right) = \supⁿ␣(A' , f')\)
is the same element as was used in the construction of
\(\elim_{Vⁿ}␣P\ φ\).\end{proof}

As usual, we can consider the specialisation from eliminators to
recursors, and in the recursors the computation rules end up holding
definitionally. Since \((n-1)\)-truncated maps are also functions,
\(Vⁿ\) actually has recursors for both \(\T^∞_U\)-algebras and
\(\Tⁿ_U\)-algebras.

\begin{proposition}[Untruncated recursion for Vⁿ \agdalink{https://elisabeth.stenholm.one/univalent-material-set-theory/iterative.set.html\#10751}]

Given any type \(X\) and map \(m : \T^∞_U X → X\) there is a map
\(\trec_{Vⁿ}␣X\ m :  Vⁿ → X\) such that for \((A , f) : \Tⁿ_U Vⁿ\) the
following definitional equality holds: \begin{align*}
      \trec_{Vⁿ}␣X\ m\ (\operatorname{sup}ⁿ␣(A , f))
           ≡ m␣(\T^∞_U (\trec_{Vⁿ}␣X\ m)\ (A , f))
 \end{align*}\end{proposition}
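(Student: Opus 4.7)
The plan is to leverage the fact that $V^∞ ≡ W_{A:U} A$ is a primitive W-type, and therefore enjoys a definitional recursion principle. Given $m : \T^∞_U X → X$, the usual W-type recursion produces a map $r : V^∞ → X$ satisfying the judgemental equation $r(\sup^∞(A, g)) ≡ m(A, r ∘ g)$. Since $Vⁿ$ is defined as a $\Sigma$-type over $V^∞$, the first projection $π₀ : Vⁿ → V^∞$ is readily available, and I would simply define $\trec_{Vⁿ}\ X\ m := r ∘ π₀$.

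To verify the displayed computation rule, I would unfold the definition of $\supⁿ$. By construction, $\supⁿ(A, f) ≡ (\sup^∞(A, π₀ ∘ f), *)$, where $*$ is the evident proof that the underlying tree is an iterative $(n-1)$-type. Projecting out the first component computes judgementally, so $π₀(\supⁿ(A, f)) ≡ \sup^∞(A, π₀ ∘ f)$. Applying $r$ and the W-type computation rule yields
\[
\trec_{Vⁿ}\ X\ m\ (\supⁿ(A, f)) ≡ r(\sup^∞(A, π₀ ∘ f)) ≡ m(A, r ∘ π₀ ∘ f) ≡ m(A, \trec_{Vⁿ}\ X\ m ∘ f),
\]
which matches $m(\T^∞_U (\trec_{Vⁿ}\ X\ m)(A, f))$ by the earlier note that $\T^∞_U φ\ (A, f) ≡ (A, φ ∘ f)$.

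The main subtlety to check is that the definition of $\supⁿ$ given earlier in this section reduces its first projection to $\sup^∞$ applied to a transformed pair \emph{definitionally}, and not merely propositionally. Inspection of the construction $\supⁿ(A, f) ≔ (\sup^∞(A, π₀ ∘ f), \_)$ confirms that this reduction holds judgementally, since $π₀$ of an explicit pair reduces. Consequently the entire chain above consists of definitional equalities with no transports or path algebra, so the stated equation holds on the nose rather than merely up to a propositional identification — which is exactly what makes the recursor a useful tool in the subsequent development.
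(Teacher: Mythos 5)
Your proof is correct, and it rests on the same idea as the paper's: the recursor comes from structural recursion on the underlying W-type $V^∞$, and the computation rule is definitional because $π₀␣(\supⁿ␣(A,f)) ≡ \sup^∞␣(A, π₀ ∘ f)$ holds judgementally by the definition of $\supⁿ$. The only difference is one of packaging. You build the plain W-type recursor $r : V^∞ → X$ with $r␣(\sup^∞␣(A,g)) ≡ m␣(A, r ∘ g)$ and set $\trec_{Vⁿ}␣X\ m ≔ r ∘ π₀$, whereas the paper defines the map directly on $Vⁿ$ by $\trec_{Vⁿ}␣X\ m\ (\sup^∞␣(A,f),(p,q)) ≔ m␣(A, λ␣a.\trec_{Vⁿ}␣X\ m\ (f␣a, q␣a))$, threading the iterativity witness $q$ through the recursive calls (and discarding it in the body). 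Your factorisation makes it manifest that the result does not depend on that witness and that the recursor is the restriction of a map defined on all of $V^∞$; the paper's direct definition keeps the recursive calls on elements of $Vⁿ$, matching how the truncated recursor and the eliminator are subsequently phrased. In both cases the displayed chain is a sequence of unfoldings and β-reductions, so the equation holds on the nose, as you argue.
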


\begin{proof}

Given \(x : Vⁿ\), we may assume that
\(x ≡ (\operatorname{sup}^∞␣(A,f) , (p , q))\) where \(A : U\),
\(f : A → V^∞\), \(p : \isntruncmap{(n-1)} f\) and
\(q : ∏_{a : A}  \ittype{(n-1)} (f␣a)\). The map \(\trec_{Vⁿ}\) is
defined as: \begin{align*}
      \trec_{Vⁿ}␣X\ m\ x ≔ m␣(A , λ␣a.\trec_{Vⁿ}␣X\ m\ (f␣a , q␣a))
 \end{align*} For \((A , f) : \Tⁿ_U Vⁿ\) we thus have the following
chain of definitional equalities: \begin{align*}
      \trec_{Vⁿ}␣X\ m\ (\operatorname{sup}ⁿ␣(A , f))
           &≡ \trec_{Vⁿ}␣X\ m\ (\operatorname{sup}^∞␣(A, π₀ ∘ f) , (\_, π₁ ∘ f)) \\
           &≡ m␣(A , \trec_{Vⁿ}␣X\ m ∘ f) \\
           &≡ m␣(\T^∞_U (\trec_{Vⁿ}␣X\ m)\ (A , f)) \qedhere
 \end{align*}\end{proof}

\begin{corollary}[Truncated recursion for Vⁿ \agdalink{https://elisabeth.stenholm.one/univalent-material-set-theory/iterative.set.html\#11356}]

Given a \(\Tⁿ_U\)-algebra \((X , m)\) there is a map
\(\tnrec_{Vⁿ}␣(X , m) : Vⁿ → X\) such that for \((A , f) : \Tⁿ_U Vⁿ\)
the following definitional equality holds: \begin{align*}
      \tnrec_{Vⁿ}␣(X , m)␣(\operatorname{sup}ⁿ␣(A , f)) 
           ≡ m␣(\Tⁿ_U (\tnrec_{Vⁿ}␣(X , m))\ (A , f))
 \end{align*}\end{corollary}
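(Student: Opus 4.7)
The plan is to reduce truncated recursion to the already-established untruncated recursion by turning the $\Tⁿ_U$-algebra $(X,m)$ into a $\T^∞_U$-algebra $(X,m')$, applying the previous proposition, and then checking that the definitional computation rule survives this reduction.

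Concretely, since $(X,m)$ is a $\Tⁿ_U$-algebra the carrier $X$ is $(n{+}1)$-locally $U$-small, so the $(n{-}1)$-image of any map from a small type into $X$ exists in $U$ by Proposition~\ref{small-n-images}. Hence for $(A,g) : \T^\infty_U X$ we may set
\[
m'(A,g) \;\coloneqq\; m\bigl(\Tⁿ_U(\mathrm{id}_X)(A,g)\bigr) \;\equiv\; m\bigl(\image_{n-1}\, g,\ \incl_{n-1}\, g\bigr),
\]
which gives a map $m' : \T^\infty_U X \to X$. I would then define
\[
\tnrec_{Vⁿ}(X,m) \;\coloneqq\; \trec_{Vⁿ}(X,m')
\]
by appealing to the untruncated recursion principle.

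It remains to verify the computation rule. For $(A,f) : \Tⁿ_U Vⁿ$, the untruncated recursion rule gives the definitional equality
\[
\trec_{Vⁿ}(X,m')\bigl(\supⁿ(A,f)\bigr) \;\equiv\; m'\bigl(\T^\infty_U(\trec_{Vⁿ}(X,m'))(A,f)\bigr) \;\equiv\; m'\bigl(A,\ \tnrec_{Vⁿ}(X,m)\circ f\bigr),
\]
using that $\T^\infty_U\phi(A,f)\equiv(A,\phi\circ f)$. Unfolding $m'$ and using $\mathrm{id}_X\circ h\equiv h$ definitionally, this is
\[
m\bigl(\image_{n-1}(\tnrec_{Vⁿ}(X,m)\circ f),\ \incl_{n-1}(\tnrec_{Vⁿ}(X,m)\circ f)\bigr) \;\equiv\; m\bigl(\Tⁿ_U(\tnrec_{Vⁿ}(X,m))(A,f)\bigr),
\]
which is exactly the required computation rule.

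I do not anticipate any real obstacle: the smallness of $(n{-}1)$-images under Assumption~\ref{image-assumption} makes $m'$ well-defined, and the definitional character of the computation rule transfers because the functorial action of $\Tⁿ_U$ is, by construction, given by post-composition followed by $(n{-}1)$-image factorisation, so the two sides of the equality reduce to the same term. The only point to be careful about is noting that the $(n{+}1)$-local $U$-smallness of $X$ (part of the definition of a $\Tⁿ_U$-algebra) is precisely what is needed to apply Proposition~\ref{small-n-images} in the definition of $m'$.
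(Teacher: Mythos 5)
Your proposal is correct and matches the paper's proof essentially verbatim: both define $\tnrec_{Vⁿ}(X,m)$ as $\trec_{Vⁿ}$ applied to the $\T^∞_U$-algebra $m' \coloneqq λ(A,f).\,m(\image_{n-1} f, \incl_{n-1} f)$ and verify the computation rule by the same chain of definitional equalities. Your explicit remark that the $(n{+}1)$-local $U$-smallness of $X$ is what makes $m'$ well-defined is a correct observation that the paper leaves implicit.
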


\begin{proof}

The map \(\tnrec_{Vⁿ}␣(X , m)\) is defined as: \begin{align*}
      \tnrec_{Vⁿ}␣(X , m) 
           ≔ \trec_{Vⁿ}␣X\ (λ␣(A , f).m␣(\image_{n-1} f , \incl_{n-1} f))
 \end{align*} For \((A , f) : \Tⁿ_U Vⁿ\) we have the following chain of
definitional equalities: \begin{align*}
      \tnrec_{Vⁿ}␣(X , m)␣(\operatorname{sup}ⁿ␣(A , f))
           &≡ \trec_{Vⁿ}␣X\ m'␣(\operatorname{sup}ⁿ␣(A , f)) \\
           &≡ m'␣(A , \trec_{Vⁿ}␣X\ m' ∘ f) \\
           &≡ m␣(\image_{n-1} (\trec_{Vⁿ}␣X\ m' ∘ f) , \incl_{n-1} (\trec_{Vⁿ}␣X\ m' ∘ f)) \\
           &≡ m␣(\image_{n-1} (\tnrec_{Vⁿ}␣(X , m) ∘ f) , \incl_{n-1} (\tnrec_{Vⁿ}␣(X , m) ∘ f))\\
           &≡ m␣(\Tⁿ_U (\tnrec_{Vⁿ}␣(X , m))\ (A , f))
 \end{align*} where
\(m' ≔ λ␣(A , f).m␣(\image_{n-1} f , \incl_{n-1} f)\).\end{proof}

\hypertarget{initiality-of-vux207f}{%
\subsection{Initiality of Vⁿ}\label{initiality-of-vux207f}}

In the proof of initiality we are to show that the type of homomorphisms
from the initial algebra, into any other algebra, is contractible. If we
were working with mere sets, it would be sufficient to show that the
underlying maps of the homomorphisms are equal to a specified canonical
map. But being a homomorphism is actually a structure when the types
involved are of higher levels. So, the proof of contractibility has to
coherently transfer this structure when proving that every homomorphism
is equal to the center of contraction. This is achieved by using the
induction principle for \(Vⁿ\) and the characterisation of the identity
type on \(\Tⁿ_U\)-algebra homomorphisms.

\begin{theorem}[\agdalink{https://elisabeth.stenholm.one/univalent-material-set-theory/iterative.set.html\#13409}]

\label{V-n-initial}The algebra \((Vⁿ,\supⁿ)\) is initial: given any
other \(\Tⁿ_U\)-algebra \((X,m)\) the type of algebra homomorphisms from
\((Vⁿ,\supⁿ)\) to \((X,m)\) is contractible.\end{theorem}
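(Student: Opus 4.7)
The plan is to exhibit $(\tnrec_{Vⁿ}(X,m), \reflhtpy)$ as the center of contraction, taking advantage of the fact that the truncated recursor from the preceding corollary makes the algebra-map equation $\tnrec_{Vⁿ}(X,m) \circ \supⁿ \equiv m \circ \Tⁿ_U \tnrec_{Vⁿ}(X,m)$ hold definitionally. For an arbitrary homomorphism $(φ, α)$, I use Lemma \ref{lma-T-n-alg-hom-eq} to reduce the identity $(\tnrec_{Vⁿ}(X,m), \reflhtpy) = (φ, α)$ to the data of a homotopy $ε : \tnrec_{Vⁿ}(X,m) ∼ φ$ together with a coherence witness asserting $\ap{m}(\Tⁿ_U ε\ (A,f)) = ε(\supⁿ(A,f)) · α(A,f)$ for every $(A,f) : \Tⁿ_U Vⁿ$.

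I construct $ε$ by induction on $Vⁿ$ using Proposition \ref{V-n-elim}. In the step at $\supⁿ(A,f)$, the inductive hypothesis supplies a restricted homotopy $ε \circ f : \tnrec_{Vⁿ}(X,m) \circ f ∼ φ \circ f$. Because $\Tⁿ_U g\ (A,f)$ depends on $g$ only through the composite $g \circ f$, this restricted homotopy, via function extensionality and Proposition \ref{T-n-preserves-homotopies}, determines a canonical path $q_{A,f} : \Tⁿ_U \tnrec_{Vⁿ}(X,m)\ (A,f) = \Tⁿ_U φ\ (A,f)$. Exploiting the definitional equation $\tnrec_{Vⁿ}(X,m)(\supⁿ(A,f)) \equiv m(\Tⁿ_U \tnrec_{Vⁿ}(X,m)\ (A,f))$, I set
\[ ε(\supⁿ(A,f)) \;:=\; \ap{m}(q_{A,f}) · α(A,f)^{-1}. \]
With this choice, the required coherence at $(A,f)$ collapses under path cancellation to the single identification $\Tⁿ_U ε\ (A,f) = q_{A,f}$.

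The main obstacle is precisely this last identification. The induction principle for $Vⁿ$ only gives its computation rule up to a transport along the path $\supⁿ(\desupⁿ\ x) = x$ of Proposition \ref{sup-desup-equiv}, so when we later compute $\Tⁿ_U ε\ (A,f)$ via function extensionality applied to the full $ε$, we must verify it agrees with $q_{A,f}$, which was assembled from $ε \circ f$ alone. The argument is that $\Tⁿ_U(-)\ (A,f)$ factors through precomposition with $f$, and the $β$-rule from Proposition \ref{V-n-elim} applied fiberwise lets us rewrite $ε \circ f$ into exactly the form produced by the recursive call, matching the two paths. The path-algebra bookkeeping this entails is routine but tedious, and is carried out in full in the Agda formalisation.
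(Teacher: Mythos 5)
Your proposal is correct and follows essentially the same route as the paper's proof: the same centre of contraction $(\tnrec_{Vⁿ}\,(X,m),\reflhtpy)$, the same reduction via Lemma \ref{lma-T-n-alg-hom-eq}, the same construction of the homotopy by the eliminator of Proposition \ref{V-n-elim} with step case built from $α(A,f)$ and $\ap{m}$ applied to the path induced by the restricted homotopy $ε ∘ f$, and the same final coherence argument using that $\Tⁿ_U(-)\,(A,f)$ factors through precomposition with $f$ together with the propositional computation rule of the eliminator. The only differences are cosmetic (the direction of the homotopy, hence $α^{-1}$ in place of $α$, and deferring the final path algebra to the formalisation rather than writing out the chain).
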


\begin{proof}

The center of contraction is given by
\(\left(\tnrec_{Vⁿ}␣(X ,  m),\reflhtpy\right)\). We will use the
characterisation of equality between \(\Tⁿ_U\)-algebra homomorphisms
given by Lemma \ref{lma-T-n-alg-hom-eq} to show that any other
homomorphism is equal to \(\tnrec_{Vⁿ}␣(X , m)\).

Let \((φ , α)\) be another \(\Tⁿ_U\)-algebra homomorphism from
\((Vⁿ,\supⁿ)\) to \((X,m)\). We need to do two things:

\begin{itemize}
      \item Construct a homotopy $ε : φ ∼ \tnrec_{Vⁿ}␣(X , m)$.
      \item For each $(A,f) : \Tⁿ_U␣Vⁿ$, construct a path
           \begin{center}
                $α␣(A , f) · \ap{m} \left(\Tⁿ_U ε␣(A , f)\right)
                = ε \left(\operatorname{sup}ⁿ␣(A , f)\right) · \reflhtpy␣(A , f).$
           \end{center}
 \end{itemize}

To construct a homotopy from \(φ\) to \(\tnrec_{Vⁿ}␣(X , m)\) we use the
elimination principle on \(Vⁿ\), by Proposition \ref{V-n-elim}, with the
type family \(P␣x ≔ \left(φ␣x = \tnrec_{Vⁿ}␣(X , m)␣x\right)\). This
means that given \(A : U\), \(f : A ↪_{n-1} Vⁿ\) and
\(H : ∏_{a : A} φ␣(f␣a) =  \tnrec_{Vⁿ}␣(X , m)␣(f␣a)\), we need to
construct a path \begin{align*}
      φ␣(\operatorname{sup}ⁿ␣(A , f)) = \tnrec_{Vⁿ}␣(X , m)␣(\operatorname{sup}ⁿ␣(A , f))
 \end{align*}

We have the following chain of equalities: \begin{align}
      φ␣(\operatorname{sup}ⁿ␣(A , f))
           \label{T-rec-unique-1}
           &= m␣(\Tⁿ_U φ␣(A , f)) \\
           &≡ m␣(\image_{n-1} (φ ∘ f) , \incl_{n-1} (φ ∘ f)) \\
           \label{T-rec-unique-2}
           &= m␣(\image_{n-1} (\tnrec_{Vⁿ}␣(X , m) ∘ f) , \incl_{n-1} (\tnrec_{Vⁿ}␣(X , m) ∘ f)) \\
           &≡ m␣(\Tⁿ_U (\tnrec_{Vⁿ}␣(X , m))␣(A , f)) \\
           &≡ \tnrec_{Vⁿ}␣(X , m)␣(\operatorname{sup}ⁿ␣(A , f))
 \end{align} Step (\ref{T-rec-unique-1}) is the path \(α␣(A , f)\) and
step (\ref{T-rec-unique-2}) is the path
\(\ap{λ␣h.m␣(\image_{n-1} h , \incl_{n-1} h)} (\funext␣H)\). So let
\begin{align*}
      σ ≔ λ␣A␣f␣H.α␣(A , f) · \ap{λ␣h.m␣(\image_{n-1} h , \incl_{n-1} h)} (\funext␣H)
 \end{align*} then by Proposition \ref{V-n-elim} we have a homotopy
\(\elim_{Vⁿ}␣P\ σ : φ ∼ \tnrec_{Vⁿ}␣(X , m)\).

It remains to construct the second component of the \(Σ\)-type in Lemma
\ref{lma-T-n-alg-hom-eq}. We have the following chain of equalities:
\begin{align}
      α␣(A , f) &· \ap{m} \left(\Tⁿ_U \left(\elim_{Vⁿ}␣P\ σ\right)␣(A , f)\right) \\
           &≡ α␣(A , f) · \ap{m} \left(\ap{λ␣ψ.\Tⁿ_U ψ (A , f)} \left(\funext␣\left(\elim_{Vⁿ}␣P\ σ\right)\right)\right) \\
           \label{T-rec-unique-3}
           &= α␣(A , f) · \ap{m} \left(\ap{λ␣h.(\image_{n-1} h , \incl_{n-1} h)}
                \left(\ap{-∘ f} \left(\funext␣\left(\elim_{Vⁿ}␣P\ σ\right)\right)\right)\right) \\
           \label{T-rec-unique-4}
           &= α␣(A , f) · \ap{λ␣h.m␣(\image_{n-1} h , \incl_{n-1} h)}
                \left(\ap{-∘ f} \left(\funext␣\left(\elim_{Vⁿ}␣P\ σ\right)\right)\right) \\
           \label{T-rec-unique-5}
           &= α␣(A , f) · \ap{λ␣h.m␣(\image_{n-1} h , \incl_{n-1} h)}
                \left(\funext␣\left(\left(\elim_{Vⁿ}␣P\ σ\right) ∘ f\right)\right) \\
           &≡ σ␣A␣f␣\left(\left(\elim_{Vⁿ}␣P\ σ\right) ∘ f\right) \\
           \label{T-rec-unique-6}
           &= \elim_{Vⁿ}␣P\ σ \left(\operatorname{sup}ⁿ␣(A , f)\right) \\
           &= \elim_{Vⁿ}␣P\ σ \left(\operatorname{sup}ⁿ␣(A , f)\right) · \reflhtpy␣(A , f)
 \end{align} In steps (\ref{T-rec-unique-3}) and (\ref{T-rec-unique-4})
we use the fact that \(\operatorname{ap}\) and function composition
commute. Step (\ref{T-rec-unique-5}) uses the fact that function
extensionality respects precomposition. Finally, step
(\ref{T-rec-unique-6}) is the computation rule for
\(\elim_{Vⁿ}\).\end{proof}

\hypertarget{properties}{%
\subsection{Properties}\label{properties}}

Since \(Vⁿ\) is a fixed-point for \(\Tⁿ_U\), the induced ∈-structure
satisfies the properties as shown in Section
\ref{section-fixed-point-models}. But as \(Vⁿ\) is the initial algebra
for \(\Tⁿ_U\), it also satisfies foundation, which is not true of all
fixed-points for \(\Tⁿ_U\). More explicitly, we list the properties
which \((Vⁿ, ∈ⁿ)\) satisfies.

\begin{theorem}[\agdalink{https://elisabeth.stenholm.one/univalent-material-set-theory/iterative.set.properties.html}]

For \(n : \Nat^∞\), the ∈-structure \((Vⁿ, ∈ⁿ)\) satisfies the following
properties:

\begin{itemize}
      \item empty set,
      \item $U$-restricted $n$-separation,
      \item ∞-unordered $I$-tupling, for all $k : \Nat_{-1}$ such that $k < n$ and
            $k$-truncated types $I : U$,
      \item $k$-unordered $I$-tupling, for all $k : \Nat_{-1}$ such that $k ≤ n$ and $I : U$,
      \item $k$-replacement, for all $k : \Nat_{-1}$ such that $k ≤ n$,
      \item $k$-union, for all $k : \Nat_{-1}$ such that $k ≤ n$,
      \item exponentiation, for any ordered pairing structure,
      \item natural numbers for any $(n-1)$-truncated representation,
      \item foundation.
 \end{itemize}

\end{theorem}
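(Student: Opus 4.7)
The plan is to reduce the first eight properties directly to the corresponding fixed-point theorems of Section~\ref{section-fixed-point-models}, using the fact that $V^n$ is a fixed-point of $\Tⁿ_U$ (Theorem~\ref{V-n-fixed-point}), and to handle foundation separately via the induction principle for $V^n$ from Proposition~\ref{V-n-elim}.

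First I would verify the local smallness hypothesis required by the fixed-point theorems. By Proposition~\ref{V-n-locally-small} the type $V^n$ is (1-)locally $U$-small, and by Theorem~\ref{thm-n-lvl-e-str-Vn} the $\in$-structure is of level $n$, so $V^n$ itself is an $n$-type. A 1-locally $U$-small $n$-type is automatically $(n+1)$-locally $U$-small: its iterated identity types eventually become contractible, hence essentially $U$-small (using closure of $U$ under the unit type), and one propagates smallness up one level at a time. Thus $V^n$ is $(k+1)$-locally $U$-small for every $k \leq n$, unlocking all hypotheses needed below.

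With local smallness in hand, empty set, $U$-restricted $n$-separation, $k$-unordered $I$-tupling (for $k \leq n$), $\infty$-unordered $I$-tupling (for $k$-truncated $I$ with $k < n$), $k$-replacement, $k$-union, exponentiation for any ordered pairing structure, and natural numbers represented by any $(n-1)$-truncated $f : \Nat \to V^n$ follow by direct appeal to Theorems~\ref{empty-set-fixed-point}, \ref{separation-fixed-point}, \ref{unordered-tupling-fixed-point}, \ref{inf-unordered-tupling-fixed-point}, \ref{replacement-fixed-point}, \ref{union-fixed-point}, \ref{exponentiation-fixed-point}, and \ref{fixed-point-nat} respectively, since all of these theorems apply to any fixed-point of $\Tⁿ_U$ satisfying the relevant local smallness.

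For foundation I would use the elimination principle of $V^n$ with the type family $P := \Acc$. Given $A : U$, $f : A \hookrightarrow_{n-1} V^n$, and an induction hypothesis $h : \prod_{a:A} \Acc(f(a))$, I must construct $\Acc(\supⁿ(A,f))$. Applying the constructor $\acc$ reduces this to producing, for each $y : V^n$ with $y \in^n \supⁿ(A,f)$, a proof of $\Acc(y)$: unfolding the $\in$-relation induced by the coalgebra structure, such a $y$ comes equipped with some $a : A$ and a path $p : f(a) = y$, so transporting $h(a)$ along $p$ yields the required accessibility. Since $\Acc$ is a mere proposition by Lemma~\ref{lma-acc-prop}, no further coherence conditions arise. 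The main obstacle will be the local smallness step — carefully setting up the induction that upgrades 1-local to $(n+1)$-local $U$-smallness using the $n$-truncation of $V^n$; everything else is essentially bookkeeping on top of Section~\ref{section-fixed-point-models} and the induction principle for $V^n$.
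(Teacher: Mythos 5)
Your proposal is correct and follows essentially the same route as the paper: reduce the first eight properties to the fixed-point theorems of Section~\ref{section-fixed-point-models} after upgrading local $U$-smallness of $V^n$, and prove foundation by the elimination principle with $P := \Acc$, transporting the inductive hypothesis along the path in the fiber. The only cosmetic difference is in the smallness upgrade: you invoke the $n$-type structure of $V^n$, whereas the paper only needs that an essentially $U$-small identity type has essentially $U$-small identity types (so $1$-locally $U$-small already implies $(k+1)$-locally $U$-small for all $k$), but both arguments are sound.
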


\begin{proof}

The first two properties follow directly from Theorem
\ref{empty-set-fixed-point} and Theorem \ref{separation-fixed-point}
respectively since \(Vⁿ\) is a fixed-point for \(\Tⁿ_U\). The next four
properties follow from Theorem \ref{inf-unordered-tupling-fixed-point},
Theorem \ref{unordered-tupling-fixed-point}, Theorem
\ref{replacement-fixed-point} and Theorem \ref{union-fixed-point}
respectively, together with the fact that \(Vⁿ\) is locally small
(Proposition \ref{V-n-locally-small}), and thus \((k+1)\)-small for all
\(k : \Nat\).

By Theorem \ref{exponentiation-fixed-point}, \((Vⁿ, ∈ⁿ)\) has
exponentiation for any ordered pairing structure. (Note that
\((Vⁿ, ∈ⁿ)\) has at least one ordered pairing structure by Theorem
\ref{ordered-pairs-fixed-point}.)

For natural numbers, the result follows from Theorem
\ref{fixed-point-nat}, since \(Vⁿ\) is a fixed-point for \(\Tⁿ_U\).

Lastly, we use the induction principle for \(Vⁿ\) to show that
\((Vⁿ, ∈ⁿ)\) has foundation. Given \(A : U\) and \(f : A ↪_{n-1} Vⁿ\) we
need to construct an element of type
\[ \left(∏_{a:A} \Acc␣(f␣a)\right) → \Acc␣(\operatorname{sup}ⁿ␣(A,f)). \]
Therefore suppose we have \(p : ∏_{a:A} \Acc␣(f␣a)\). We construct the
following element:
\[\acc␣\left(λ␣y␣(a , q).\tr{Acc}{q}{(p␣a)}\right) : \Acc␣(\operatorname{sup}ⁿ␣(A,f)).\]
It then follows by Proposition \ref{V-n-elim} that we have
\[∏_{x:Vⁿ} \Acc␣x. \qedhere\]\end{proof}

\hypertarget{vux207f-as-an-n-type-universe-of-n-types}{%
\section{\texorpdfstring{\(\Vⁿ\) as an \(n\)-type universe of
\(n\)-types
\label{section-tarski-universe}}{\textbackslash Vⁿ as an n-type universe of n-types }}\label{vux207f-as-an-n-type-universe-of-n-types}}

We have seen that the type \(\Vⁿ\) can be equipped with a binary
relation ∈ⁿ, making it a model of our higher level generalisation of
material set theory. There is a second perspective on \(\Vⁿ\), namely as
a type theoretic universe à la Tarski. This has already been explored in
detail for the type \(\V⁰\) in a previous paper
\cite{gratzer2024category}, showing that it is a mere set universe of
mere sets which is closed under all the usual type formers and which has
definitional decoding. Here we will show that the corresponding universe
construction can be done for every \(\Vⁿ\).

Let \((\Vⁿ,∈ⁿ)\) be the \(n\)-level ∈-structure given by Theorem
\ref{thm-n-lvl-e-str-Vn}. Note that the type \(x ∈ⁿ y\) is an
\((n-1)\)-type, for any elements \(x, y : \Vⁿ\).

\begin{proposition}[\agdalink{https://elisabeth.stenholm.one/univalent-material-set-theory/fixed-point.core.html\#1223}]

\label{Vn-is-n-type}The type \(\Vⁿ\) is an \(n\)-type.\end{proposition}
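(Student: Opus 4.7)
The plan is to observe that this is an immediate instance of Proposition \ref{V-is-n-type}, which was established in Section \ref{section-fixed-point-models} for an arbitrary fixed-point of $\Tⁿ_U$. By Theorem \ref{V-n-fixed-point}, the type $\Vⁿ$ carries such a fixed-point structure, via the equivalence $\supⁿ : \Tⁿ_U \Vⁿ ≃ \Vⁿ$ with inverse $\desupⁿ$ obtained from Proposition \ref{sup-desup-equiv}. Hence the general fixed-point argument applies verbatim.

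Concretely, I would first invoke Corollary \ref{T-n-level} to conclude that $\Tⁿ_U \Vⁿ$ is an $n$-type, since that corollary asserts $\Tⁿ_U X$ is an $n$-type for every type $X$ (its identity types are fiberwise equivalences between $(n-1)$-truncated fibers, cf.\ Proposition \ref{T-n-equality}). Then, using that being an $n$-type is preserved under equivalence and that $\Vⁿ ≃ \Tⁿ_U \Vⁿ$ via $\desupⁿ$, I transport the $n$-truncatedness along this equivalence to conclude that $\Vⁿ$ is an $n$-type.

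There is no real obstacle here, as all the work has already been done: the nontrivial ingredients are the characterisation of equality on $\Tⁿ_U$ (Proposition \ref{T-n-equality}) and the construction of $\supⁿ, \desupⁿ$ (Proposition \ref{sup-desup-equiv}). One could alternatively give a direct proof by $V^∞$-induction, showing that the subtype $\ittype{(n-1)} x$ forces identity types of $\Vⁿ$ to be $(n-1)$-types, but this would essentially reproduce the proof of Corollary \ref{T-n-level} in disguise, so the appeal to the general fixed-point result is the cleaner route.
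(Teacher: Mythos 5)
Your proposal is correct and matches the paper's own argument, which likewise derives the statement from Proposition \ref{V-is-n-type} together with Theorem \ref{V-n-fixed-point} (i.e.\ $\Vⁿ ≃ \Tⁿ_U\,\Vⁿ$ and Corollary \ref{T-n-level}). The paper additionally notes that the result also follows independently from Proposition \ref{m-h-level} applied to the level-$n$ ∈-structure on $\Vⁿ$, but your route is one of the two it gives.
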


\begin{proof}

This follows both from Proposition \ref{m-h-level}, and from Proposition
\ref{V-is-n-type} together with Theorem
\ref{V-n-fixed-point}.\end{proof}

\begin{definition}[\agdalink{https://elisabeth.stenholm.one/univalent-material-set-theory/iterative.set.html\#7329}]

For \(n : ℕ^∞\), define the decoding function on \(\Vⁿ\) as the family
\begin{align*}
    &\Elⁿ : \Vⁿ → U \\
    &\Elⁿ␣x ≔ \overline{x}
\end{align*}\end{definition}

We take this as the decoding function rather than \(\El\) from
Definition \ref{def-el}. This is so that the decoding holds up to
definitional equality, since we have
\[\Elⁿ␣(\operatorname{sup}ⁿ␣(A,f)) ≡ A.\] But the two families are
equivalent by Proposition \ref{index-type-lma}.

\begin{proposition}[\agdalink{https://elisabeth.stenholm.one/univalent-material-set-theory/iterative.set.html\#17929}]

\label{Vn-decodes-into-n-types}For \(n : ℕ^∞\), the decoding \(\Elⁿ␣a\)
of any element \(a : \Vⁿ\), is an \(n\)-type.\end{proposition}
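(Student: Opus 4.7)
The plan is to reduce this to the already-established facts that $(\Vⁿ, \inⁿ)$ is an $\in$-structure of level $n$ and that $\Vⁿ$ itself is an $n$-type, and then apply closure of $n$-types under $\Sigma$.

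First, I would invoke Proposition \ref{index-type-lma} to obtain an equivalence $\Elⁿ␣a \equiv \overline{a} \simeq \El␣a$, where $\El␣a = \sum_{y:\Vⁿ}\, y\inⁿ a$. Since being an $n$-type is invariant under equivalence, it suffices to show that $\El␣a$ is an $n$-type.

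Next, I would assemble the two type-level ingredients. By Theorem \ref{thm-n-lvl-e-str-Vn}, the $\in$-structure $(\Vⁿ,\inⁿ)$ has level $n$, which by definition means $y \inⁿ a$ is an $(n-1)$-type for every $y:\Vⁿ$; in particular each such $y\inⁿ a$ is also an $n$-type. By Proposition \ref{Vn-is-n-type}, the carrier $\Vⁿ$ is itself an $n$-type. Consequently, the type $\sum_{y:\Vⁿ}\, y\inⁿ a$ is a $\Sigma$-type whose base and fibers are all $n$-types, hence an $n$-type.

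There is no real obstacle here: the proof is essentially a bookkeeping step that packages together Proposition \ref{index-type-lma}, Proposition \ref{Vn-is-n-type}, and the level of $(\Vⁿ,\inⁿ)$. One minor point to be careful about is the case $n=\infty$, where ``$\infty$-type'' is a vacuous condition, so the argument goes through trivially; and for finite $n$, closure of $n$-types under $\Sigma$ is the standard HoTT Book fact. Transporting the $n$-type property back along the equivalence $\overline{a} \simeq \El␣a$ then yields that $\Elⁿ␣a$ is an $n$-type.
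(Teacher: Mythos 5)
Your proof is correct and is essentially the paper's own argument: the paper likewise identifies $\Elⁿ␣a = \overline{a}$ with the total space of the fibers of the $(n-1)$-truncated map $\widetilde{a}$ (which is exactly the content of Proposition \ref{index-type-lma}, with $y ∈ⁿ a ≡ \fib␣\widetilde{a}␣y$ by definition) and concludes by closure of $n$-types under $Σ$, using that $\Vⁿ$ is an $n$-type and each fiber is an $(n-1)$-type.
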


\begin{proof}

The map \(\widetilde{a}\) is an \((n-1)\)-truncated map from \(\Elⁿ␣a\)
into \(\Vⁿ\). The domain \(\Elⁿ␣a\) is equivalent to the total space of
fibers of \(\widetilde{a}\), which is an \(n\)-type as the base,
\(\Vⁿ\), is an \(n\)-type and each fiber is an
\((n-1)\)-type.\end{proof}

Propositions \ref{Vn-is-n-type} and \ref{Vn-decodes-into-n-types} thus
show that \(\Vⁿ\) is an \(n\)-type universe of \(n\)-types. Section
\ref{representations-of-types-in-e-structures} explored internalisations
of types in ∈-structures. The type of internalisations of a type was
shown to be equivalent to the type of internalisable representations of
that type (Proposition \ref{is-prop-int-of-repr}). In the case of
\(\Vⁿ\), the internalisable representations are the \((n-1)\)-truncated
ones.

\begin{proposition}[\agdalink{https://elisabeth.stenholm.one/univalent-material-set-theory/iterative.set.html\#18692}]

For \(n : ℕ^∞\) and a type \(A : U\) together with a representation
\(f : A → \Vⁿ\), the representation is internalisable if and only if it
is \((n-1)\)-truncated.\end{proposition}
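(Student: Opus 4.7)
The plan is to prove the two implications separately, exploiting that the ∈-relation on $\Vⁿ$ arises by Theorem \ref{U-like-equiv-T-n-coalgebra} from the coalgebra $\desupⁿ$, and that $(\Vⁿ,∈ⁿ)$ is an ∈-structure of level $n$ by Theorem \ref{thm-n-lvl-e-str-Vn}.

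For the forward direction, suppose $f : A → \Vⁿ$ has an internalisation $a : \Vⁿ$, so that for every $z : \Vⁿ$ we have an equivalence $z ∈ⁿ a ≃ \fib␣f␣z$. Since $(\Vⁿ,∈ⁿ)$ is an ∈-structure of level $n$, the type $z ∈ⁿ a$ is an $(n-1)$-type for every $z$. Transporting along the equivalence, $\fib␣f␣z$ is an $(n-1)$-type for every $z$, which is exactly the statement that $f$ is an $(n-1)$-truncated map.

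For the backward direction, suppose $f : A ↪_{n-1} \Vⁿ$. Then $(A,f) : \Tⁿ_U \Vⁿ$, so we may form $\supⁿ␣(A,f) : \Vⁿ$, and I claim this is the desired internalisation. To verify this, I would unfold the ∈-relation as described in the remark after Theorem \ref{thm-n-lvl-e-str-Vn}: for $z : \Vⁿ$,
\begin{align*}
    z ∈ⁿ \supⁿ␣(A,f)
        &≡ π₀␣z ∈^∞ π₀␣(\supⁿ␣(A,f)) \\
        &≡ π₀␣z ∈^∞ \operatorname{sup}^∞(A, π₀ ∘ f) \\
        &≡ \fib␣(π₀ ∘ f)\ (π₀␣z).
\end{align*}
Since $\Vⁿ$ is a subtype of $V^∞$ via $π₀$, the canonical map $\fib␣f␣z → \fib␣(π₀ ∘ f)\ (π₀␣z)$ induced by post-composing the equalities with $π₀$ is an equivalence (the extra data lies in a proposition). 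Composing these, we obtain the required equivalence $z ∈ⁿ \supⁿ␣(A,f) ≃ \fib␣f␣z$.

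There is no serious obstacle here: the forward direction is essentially the definition of level, and the backward direction exploits that $\supⁿ$ provides a definitional fixed-point for $\Tⁿ_U$. The only mild subtlety is the passage between $\fib␣f␣z$ and $\fib␣(π₀ ∘ f)\ (π₀␣z)$, which is handled by the fact that $π₀ : \Vⁿ → V^∞$ is an embedding (its fibres, being the propositional predicate $\ittype{(n-1)}$, are contractible on points of $\Vⁿ$).
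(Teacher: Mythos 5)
Your proof is correct and follows essentially the same route as the paper: the forward direction reads off $(n-1)$-truncatedness from the fact that $(\Vⁿ,∈ⁿ)$ has level $n$, and the backward direction takes $\supⁿ␣(A,f)$ as the internalisation. The only cosmetic difference is that the paper observes the witnessing equivalences can be taken to be identity equivalences (since $z ∈ⁿ \supⁿ␣(A,f)$ unfolds judgmentally to $\fib␣f␣z$ via $\desupⁿ$), whereas you route through $π₀$ and the $∈^∞$-relation and then invoke that $π₀$ is an embedding — both are fine.
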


\begin{proof}

Suppose that \(f\) is internalisable, i.e.~we have an element of the
type \[∑_{a:\Vⁿ} ∏_{z:\Vⁿ}z∈ⁿa ≃ \fib f␣z.\] Since the type \(z ∈ⁿ a\)
is an \((n-1)\)-type for all \(z : \Vⁿ\) it follows that \(\fib f␣z\) is
an \((n-1)\)-type for all \(z : \Vⁿ\). Conversely, suppose that \(f\) is
\((n-1)\)-truncated. Then we take for \(a\) the element \(\supⁿ␣(A,f)\)
and for the family of equivalences, the family of identity
equivalences.\end{proof}

\begin{proposition}[\agdalink{https://elisabeth.stenholm.one/univalent-material-set-theory/iterative.set.html\#19119}]

\label{internalisations-Vn}For \(n : ℕ^∞\) and \(A : U\), the type of
\((n-1)\)-truncated representations of \(A\), \(A ↪_{n-1} \Vⁿ\), is
equivalent to the type of internalisations of \(A\),
\(∑_{a : A} \Elⁿ␣a ≃ A\).\end{proposition}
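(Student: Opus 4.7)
My plan is to chain together the three immediately preceding results: the proposition characterising internalisable representations into $\Vⁿ$ as exactly the $(n-1)$-truncated ones, Proposition \ref{equiv-int-repr} which equates internalisable representations with internalisations, and Proposition \ref{index-type-lma} (together with the definition of $\Elⁿ$) which identifies $\Elⁿ\,a$ with $\El\,a$.

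More concretely, I would first observe that the type $A ↪_{n-1} \Vⁿ$ is by definition the subtype of $A → \Vⁿ$ consisting of those maps that are $(n-1)$-truncated. Being $(n-1)$-truncated and being internalisable are both propositions (the former by standard facts, the latter by Proposition \ref{is-prop-int-of-repr}), and the preceding proposition shows that they hold for the same representations. Hence the subtype of $(n-1)$-truncated representations coincides with the subtype of internalisable representations, giving
\begin{align*}
    \left(A ↪_{n-1} \Vⁿ\right) ≃ \sum_{f : A → \Vⁿ}\ \sum_{a : \Vⁿ}\ ∏_{z : \Vⁿ}\ z ∈ⁿ a ≃ \fib f\ z.
\end{align*}

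Next I apply Proposition \ref{equiv-int-repr}, which directly rewrites the right-hand side as $∑_{a : \Vⁿ} \El\,a ≃ A$. Finally, since $\Elⁿ\,a ≔ \overline{a}$ and Proposition \ref{index-type-lma} supplies a canonical equivalence $\overline{a} ≃ \El\,a$, postcomposition with this equivalence induces an equivalence
\begin{align*}
    \left(\El\,a ≃ A\right) ≃ \left(\Elⁿ\,a ≃ A\right)
\end{align*}
for each $a$, and hence an equivalence of the total types over $\Vⁿ$. Composing everything yields the desired equivalence $\left(A ↪_{n-1} \Vⁿ\right) ≃ \sum_{a : \Vⁿ} \Elⁿ\,a ≃ A$.

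There is no real obstacle here: every step is a direct invocation of a previously established result, and the only mild care needed is to pass between the two (equivalent) decoding families $\El$ and $\Elⁿ$, which is routine given Proposition \ref{index-type-lma}. The proposition is essentially a repackaging of the preceding one together with Proposition \ref{equiv-int-repr}, rephrased in terms of the Tarski-style decoding $\Elⁿ$ that is used in the universe perspective on $\Vⁿ$.
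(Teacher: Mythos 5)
Your proposal is correct and matches the paper's own (primary) argument exactly: the paper likewise derives the result from the preceding proposition characterising internalisable representations into $\Vⁿ$ as the $(n-1)$-truncated ones, Proposition \ref{equiv-int-repr}, and the equivalence of the families $\El$ and $\Elⁿ$. (The paper also sketches an alternative proof via the commuting triangle $\Elⁿ ∘ \supⁿ = π₀$ and a comparison of fibers over $A$, but your route is the one it states first.)
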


\begin{proof}

This follows by Proposition \ref{equiv-int-repr} and the fact that the
families \(\El\) and \(\Elⁿ\) are equivalent, together with the previous
proposition. Alternatively, \(\supⁿ\) is an equivalence making the
following diagram commute

\begin{center}
    \begin{tikzcd}
        \Poⁿ_U␣\Vⁿ \arrow[rr, "\supⁿ"] \arrow[rd, "π₀"'] &   & \Vⁿ \arrow[ld, "\Elⁿ"] \\
                                                         & U &                       
    \end{tikzcd}
\end{center}

Therefore the type of fibers of \(π₀\) over \(A\), which is
\(A ↪_{n-1} \Vⁿ\), is equivalent to the type of fibers of \(\Elⁿ\) over
\(A\), which is equivalent to \(∑_{a : A} \Elⁿ␣a ≃ A\) by
univalence.\end{proof}

The universes are cumulative both with regards to universe levels and
with regards to type levels. So far we have assumed only two universes,
\(U\) and \(\Type\), but assume for the next proposition a (cumulative)
hierarchy of universes \(U₀, U₁, ⋯, U_{\ell}, ⋯\) and let \(\Vⁿ_{\ell}\)
denote the initial algebra to the functor \(\Poⁿ_{U_{\ell}}\).

\begin{proposition}[\agdalink{https://elisabeth.stenholm.one/univalent-material-set-theory/iterative.set.html\#16978}]

For \(n : ℕ^∞\), there is an internalisation of \(\Vⁿ_{\ell}\) in
\(\Vⁿ_{\ell + 1}\).\end{proposition}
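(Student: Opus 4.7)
The plan is to reduce the construction of an internalisation of $\Vⁿ_\ell$ in $(\Vⁿ_{\ell+1}, ∈ⁿ)$ to constructing a suitable $(n-1)$-truncated representation $\Vⁿ_\ell ↪_{n-1} \Vⁿ_{\ell+1}$, via Proposition~\ref{internalisations-Vn}. To apply that proposition we need $\Vⁿ_\ell : U_{\ell+1}$, which holds since $\Vⁿ_\ell$ is a subtype of $V^∞_\ell = W_{A : U_\ell} A$, and the latter uses $U_\ell$ as an index type and hence lives in $U_{\ell+1}$.

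First, I construct a lifting map $\iota : \Vⁿ_\ell \to \Vⁿ_{\ell+1}$. By universe cumulativity, the natural reindexing map $\Tⁿ_{U_\ell} \Vⁿ_{\ell+1} \to \Tⁿ_{U_{\ell+1}} \Vⁿ_{\ell+1}$ along $U_\ell \subseteq U_{\ell+1}$ composes with $\supⁿ_{\ell+1}$ to give a $\Tⁿ_{U_\ell}$-algebra structure on $\Vⁿ_{\ell+1}$. Applying the truncated recursor for $\Vⁿ_\ell$ yields a map $\iota$ satisfying, for $(A,f) : \Tⁿ_{U_\ell} \Vⁿ_\ell$,
\begin{equation*}
    \iota(\supⁿ_\ell(A,f)) \;\equiv\; \supⁿ_{\ell+1}\bigl(\image_{n-1}(\iota \circ f),\, \incl_{n-1}(\iota \circ f)\bigr).
\end{equation*}
Once $\iota$ is shown to be $(n-1)$-truncated, the $(n-1)$-image of $\iota \circ f$ collapses to $A$ itself, simplifying the right-hand side to $\supⁿ_{\ell+1}(A, \iota \circ f)$ up to a canonical identification.

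The heart of the proof is to show that $\iota$ is an embedding. I proceed by double induction on $\Vⁿ_\ell$ using Proposition~\ref{V-n-elim}. For $x = \supⁿ_\ell(A,f)$ and $y = \supⁿ_\ell(B,g)$, the inductive hypothesis provides, for all $a : A$ and $b : B$, that $\ap{\iota}$ is an equivalence at $f(a)$ and $g(b)$. Characterising both identity types via Proposition~\ref{T-n-equality}, it suffices to show that the canonical map
\begin{equation*}
    \Bigl(\textstyle\prod_{z : \Vⁿ_\ell} \fib f\, z \simeq \fib g\, z\Bigr)
    \;\longrightarrow\;
    \Bigl(\textstyle\prod_{z : \Vⁿ_{\ell+1}} \fib (\iota \circ f)\, z \simeq \fib (\iota \circ g)\, z\Bigr)
\end{equation*}
is an equivalence. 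The key observation is that any inhabitant of $\fib(\iota \circ f)\, z$ witnesses $z = \iota(f(a))$ for some $a$, forcing $z$ into the image of $\iota$. Combined with the inductive hypothesis, this yields a natural equivalence $\fib (\iota \circ f)\, z \simeq \sum_{z' : \Vⁿ_\ell} (\iota z' = z) \times \fib f\, z'$ (and similarly for $g$), from which the required equivalence follows by rearranging sums and projecting along $\iota$.

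With $\iota : \Vⁿ_\ell ↪_{n-1} \Vⁿ_{\ell+1}$ established, Proposition~\ref{internalisations-Vn} delivers the internalisation, concretely $\supⁿ_{\ell+1}(\Vⁿ_\ell, \iota) : \Vⁿ_{\ell+1}$. The principal obstacle is the fiber-reindexing step in the third paragraph: matching the quantifier over $\Vⁿ_\ell$ to the one over $\Vⁿ_{\ell+1}$ requires the inductive hypothesis in order to translate identifications $\iota(f(a)) = z$ back into identifications within $\Vⁿ_\ell$, which works precisely because the relevant fibers of $\iota \circ f$ are supported entirely on the image of $\iota$.
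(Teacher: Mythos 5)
Your proposal is correct in outline and ends at the same place as the paper ---
an $(n-1)$-truncated map $\Vⁿ_{\ell} → \Vⁿ_{\ell+1}$ fed into Proposition
\ref{internalisations-Vn} to produce the code $\supⁿ␣(\Vⁿ_{\ell}, ι)$ --- but
the route to that map is genuinely different. The paper defines the lift on the
ambient W-type, $φ : \V^∞_{\ell} → \V^∞_{\ell+1}$ with
$φ␣(\sup^∞␣(A,f)) ≔ \sup^∞␣(A, φ∘f)$, cites \cite{gratzer2024category} for the
fact that this is an embedding, and then only checks that $φ$ preserves the
(propositional) predicate of being an iterative $n$-type, so that it restricts
to $\Vⁿ_{\ell} ↪ \Vⁿ_{\ell+1}$; an embedding is in particular
$(n-1)$-truncated, and Proposition \ref{internalisations-Vn} finishes. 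This
buys two things your version forgoes: the recursion happens where the functor
action is plain postcomposition, so no $\image_{n-1}$ appears in the
computation rule, and the hard embedding argument is outsourced. Your version
instead uses the truncated recursor on $\Vⁿ_{\ell}$ and proves embeddability by
induction; this is self-contained and stays entirely inside the truncated
world, but it forces you to manage an interleaving that your write-up currently
treats as sequential: the recursor returns
$\supⁿ␣(\image_{n-1}(ι∘f), \incl_{n-1}(ι∘f))$, and collapsing that image to
$(A, ι∘f)$ --- which you need before Proposition \ref{T-n-equality} gives you
the untruncated fibrewise characterisation of $ι␣x = ι␣y$ --- already requires
$ι$ to be $(n-1)$-truncated on the children of $x$. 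So the image collapse must
be established \emph{inside} the induction from the inductive hypothesis, not
``once $ι$ is shown to be $(n-1)$-truncated'' afterwards. Two smaller points:
the equivalence
$\fib␣(ι∘f)␣z ≃ ∑_{z'}(ι␣z' = z)×\fib␣f␣z'$ holds for free (contract
$∑_{z'} f␣a = z'$) and does not need the inductive hypothesis --- the
hypothesis is what you need afterwards, to turn $ι␣(f␣a) = ι␣z'$ back into
$f␣a = z'$ when restricting the fibrewise equivalence along $ι$; and a single
induction on $x$ with the statement $∏_{y}\,\mathsf{isEquiv}(\ap{ι})$ suffices,
since the step only invokes the hypothesis at children of $x$ against arbitrary
elements. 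None of this is a fatal gap, but it is where the real work of your
approach lives, and it is precisely the work the paper avoids by lifting at the
level of $\V^∞$.
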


\begin{proof}

The map \begin{align*}
    &φ : \V^∞_{\ell} → \V^∞_{\ell + 1} \\
    &φ␣(\operatorname{sup}^∞␣(A,f)) ≔ \operatorname{sup}^∞␣(A,φ∘f)
\end{align*} is an embedding, as shown in a related paper
\cite{gratzer2024category}. To show that it restricts to iterative
\(n\)-types, let \(\sup^∞␣((A,f),\longunderscore) : \Vⁿ_{\ell}\),
i.e.~\(f\) is \((n-1)\)-truncated and \(f␣a\) is an iterative \(n\)-type
for all \(a : A\). Then \(φ ∘ f\) is \((n-1)\)-truncated as it is the
composition of two \((n-1)\)-truncated maps. Moreover, by the induction
hypothesis \(φ␣(f␣a)\) is an iterative \(n\)-type since \(f␣a\) is an
iterative \(n\)-type, for all \(a : A\).

The map φ is therefore an embedding \(\Vⁿ_{\ell} ↪ \Vⁿ_{\ell + 1}\). By
Proposition \ref{internalisations-Vn} the map gives rise to an
internalisation of \(\Vⁿ_{\ell}\), namely
\(\sup^{n+1}␣(\Vⁿ_{\ell},φ)\).\end{proof}

\begin{proposition}[\agdalink{https://elisabeth.stenholm.one/univalent-material-set-theory/iterative.set.html\#5084}]

For \(n : ℕ^∞\), there is an embedding
\(\Vⁿ ↪ \V^{n+1}\).\end{proposition}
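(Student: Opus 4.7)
The plan is to exhibit the forgetful map induced by the fact that every iterative $(n-1)$-type is an iterative $n$-type, and then argue this map is an embedding by the 3-for-2 property applied to its first projection to $V^\infty$.

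Concretely, first I would establish the pointwise implication $\ittype{(n-1)} x \to \ittype{n} x$ for every $x : V^\infty$. Since $\ittype{k}$ is defined by induction on $V^\infty$, I would use $V^\infty$-induction: for $x = \sup^\infty(A,f)$, I need to turn a pair $(p,q)$, where $p : \isntruncmap{(n-1)} f$ and $q : \prod_{a:A} \ittype{(n-1)}(f\,a)$, into a pair $(p',q')$ with $p' : \isntruncmap{n} f$ and $q' : \prod_{a:A} \ittype{n}(f\,a)$. For $p'$ one uses that $(n-1)$-types are $n$-types, and for $q'$ one applies the induction hypothesis pointwise. The case $n = \infty$ is trivial since then $\ittype{(n-1)}$ and $\ittype{n}$ coincide up to the convention $\infty + 1 = \infty$.

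This gives a map $\iota : Vⁿ \to V^{n+1}$ defined by $\iota(x,p) = (x, j_x\,p)$, where $j_x$ is the witness produced above. By construction, $\iota$ preserves the underlying element of $V^\infty$, so the triangle
\begin{equation*}
    \pi_0 \circ \iota = \pi_0 : Vⁿ \to V^\infty
\end{equation*}
commutes definitionally (or at worst up to $\reflhtpy$). Both projections $\pi_0 : Vⁿ \to V^\infty$ and $\pi_0 : V^{n+1} \to V^\infty$ are embeddings, since the predicates $\ittype{(n-1)}$ and $\ittype{n}$ are propositional (this is already used implicitly in Proposition \ref{V-n-locally-small}).

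Finally, I would conclude that $\iota$ is an embedding by the standard 3-for-2 principle: if $g \circ f$ is an embedding and $g$ is an embedding, then $f$ is an embedding. Here $g = \pi_0 : V^{n+1} \to V^\infty$ and $g \circ \iota = \pi_0 : Vⁿ \to V^\infty$ are both embeddings, hence $\iota$ is. The only genuinely technical step is the induction establishing $\ittype{(n-1)} \to \ittype{n}$, and this is routine once one unfolds the definition of $\ittype{k}$ and invokes the fact that $(n-1)$-truncated maps are $n$-truncated; I do not anticipate a real obstacle.
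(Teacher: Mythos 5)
Your proposal is correct and matches the paper's (one-line) proof, which simply observes that every iterative $(n-1)$-type is an iterative $n$-type; you have merely spelled out the routine induction on $V^∞$ and the left-cancellation argument for embeddings over the common subtype inclusions into $V^∞$. No gaps.
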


\begin{proof}

This is simply the fact that an iterative \(n\)-type is also an
iterative \((n+1)\)-type.\end{proof}

Note that there is a size issue with internalising \(\Vⁿ\) in
\(\V^{n+1}\). If there was an element \(v : \V^{n+1}\) such that
\(\El^{n+1}␣v ≃ \Vⁿ\), then the type \(\Vⁿ\) would be essentially
\(U\)-small, which would induce a paradox. However, assuming again a
hierarchy of universes, the type \(\Vⁿ_{\ell}\) can be internalised in
\(\V^{n+1}_{\ell + 1}\).

The universe \(\Vⁿ\) also contains all the usual types and type formers,
assuming they exist in the underlying universe \(U\).

\begin{proposition}[\agdalink{https://elisabeth.stenholm.one/univalent-material-set-theory/iterative.set.html\#23577}]

For \(n : ℕ^∞\), the universe \(\Vⁿ\) contains the following types and
type formers:

\begin{itemize}
\tightlist
\item
  the empty type, unit type and booleans,
\item
  the natural numbers,
\item
  Π-types,
\item
  Σ-types,
\item
  coproducts and
\item
  identity types.
\end{itemize}

\end{proposition}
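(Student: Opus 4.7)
The plan is to exhibit, for each type and type former in the list, an $(n-1)$-truncated representation of the target type into $\Vⁿ$, and then construct the corresponding element of $\Vⁿ$ directly via $\supⁿ$. By Proposition \ref{internalisations-Vn}, this yields an internalisation, and the definitional identity $\Elⁿ(\supⁿ(A, f)) \equiv A$ ensures that the decoding of each constructed element is definitionally the expected type. Throughout, I would fix an ordered pairing structure $〈-,-〉$ on $\Vⁿ$, which exists by Theorem \ref{ordered-pairs-fixed-point}.

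For the base types, the empty type is handled by Theorem \ref{empty-set-fixed-point}; $\unittype$ can be internalised as $\supⁿ(\unittype, λ\_.∅)$, whose constant map has a single nonempty fiber and is $(n-1)$-truncated because $\Vⁿ$ is an $n$-type (Proposition \ref{Vn-is-n-type}); $\twoelemtype$ is realised as $\supⁿ(\twoelemtype, f)$ with $f$ picking out two distinct elements, say $∅$ and $\{∅\}_0$; and the natural numbers follow from Theorem \ref{fixed-point-nat} applied to any $(n-1)$-truncated representation, for example the von Neumann encoding of Example \ref{von-neumann-nats}. The identity-type case is equally short: for $a : \Vⁿ$ and $x, y : \overline{a}$, the decoding $\overline{a}$ is an $n$-type by Proposition \ref{Vn-decodes-into-n-types}, so $x = y$ is an $(n-1)$-type; consequently any map $(x = y) → \Vⁿ$ is automatically $(n-1)$-truncated (its fibers being Σ's of $(n-1)$-types), and I can take the constant map at $∅$.

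For Σ-types, given $a : \Vⁿ$ and $b : \overline{a} → \Vⁿ$, I would set the internalisation to be $\supⁿ(∑_{x : \overline{a}} \overline{b\,x},\ g)$ with $g(x, y) ≔ 〈\widetilde{a}(x), \widetilde{b\,x}(y)〉$. This $g$ factors as the embedding $〈-,-〉$ composed with $(x, y) ↦ (\widetilde{a}(x), \widetilde{b\,x}(y))$, whose fibers are iterated Σ's of the $(n-1)$-truncated fibers of $\widetilde{a}$ and $\widetilde{b\,x}$, hence $(n-1)$-truncated. For coproducts, given $a, b : \Vⁿ$, I would construct a tagged embedding $\overline{a} + \overline{b} ↪ \Vⁿ$, sending $\mathrm{inl}(x) ↦ 〈∅, \widetilde{a}(x)〉$ and $\mathrm{inr}(y) ↦ 〈\{∅\}_0, \widetilde{b}(y)〉$; the distinctness of $∅$ and $\{∅\}_0$ disambiguates the tags, and combined with the $(n-1)$-truncation of the maps $\widetilde{a}$, $\widetilde{b}$ and the embedding-ness of $〈-,-〉$ this gives an $(n-1)$-truncated map.

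The most technically delicate step is the Π-type case, since a function is not determined merely by the multiset of its output values; each function must be encoded by its entire graph. The needed $(n-1)$-truncated graph construction is exactly the content of Lemma \ref{graph-trunc-map}: applied with $f ≔ \widetilde{a}$ and $g ≔ λ x.\,\widetilde{b\,x}$, it yields an $(n-1)$-truncated map $\gr_{\widetilde{a},\,λ x.\widetilde{b\,x}} : (∏_{x : \overline{a}} \overline{b\,x}) ↪_{n-1} \Vⁿ$, and I would set the Π-type internalisation to $\supⁿ(∏_{x : \overline{a}} \overline{b\,x},\ \gr_{\widetilde{a},\,λ x.\widetilde{b\,x}})$. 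In every case, the definitional computation $\Elⁿ(\supⁿ(A, f)) \equiv A$ gives the expected decoded type on the nose.
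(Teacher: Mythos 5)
Your proposal is correct and follows essentially the same route as the paper: internalise each former by exhibiting an $(n-1)$-truncated representation and applying $\supⁿ$, with Lemma \ref{graph-trunc-map} for Π-types, the pairing map $λ(i,j).〈\widetilde{a}\,i,\widetilde{(b\,i)}\,j〉$ for Σ-types, and the tagged map via $∅$ versus $\{∅\}_0$ for coproducts. The only divergences are minor: you build the unit type and booleans directly with $\supⁿ$ where the paper invokes its $0$-unordered tupling theorem (yielding the same elements), and for identity types you internalise $x=y$ for $x,y:\Elⁿ a$ whereas the paper's prose internalises $a=b$ for codes $a,b:\Vⁿ$ --- both cases follow from the same observation that a map out of an $(n-1)$-type into an $n$-type is $(n-1)$-truncated.
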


\begin{proof}

For the empty type, unit type and booleans, we internalise them as the
elements ∅, \(\{∅\}₀\) and \(\{∅,\{∅\}₀\}₀\) respectively, using
Theorems \ref{empty-set-fixed-point} and
\ref{unordered-tupling-fixed-point}. The natural numbers have an
internalisation as they have an \((n-1)\)-truncated representation by
Example \ref{von-neumann-nats}.

For Π-types and Σ-types, suppose we have an element \(a : \Vⁿ\) and a
map \(b : \Elⁿ␣a → \Vⁿ\). In the first case, note that
\(\gr_{\,\widetilde{a},λ␣i.\widetilde{(b␣i)}}\) in Lemma
\ref{graph-trunc-map} gives an \((n-1)\)-truncated representation of
\(∏_{i : \Elⁿ␣a} \Elⁿ␣(b␣i)\). For Σ-types, we use the ordered pairing
structure given by Theorem \ref{ordered-pairs-fixed-point}. The map
\(λ␣(i,j).〈\widetilde{a}␣i,\widetilde{(b␣i)}␣j〉\) is an
\((n-1)\)-truncated representation of \(∑_{i : \Elⁿ␣a} \Elⁿ␣(b␣i)\).

Given \(a, b : \Vⁿ\), the map \begin{align*}
    &f : \Elⁿ␣a + \Elⁿ␣b → \Vⁿ \\
    &f␣(\inl␣i) ≔ 〈∅,\widetilde{a}␣i〉 \\
    &f␣(\inr␣j) ≔ 〈\{∅\}₀,\widetilde{b}␣j〉
\end{align*} is \((n-1)\)-truncated. To see this, note that for
\(z : \Vⁿ\) we have the following chain of equivalences: \begin{align}
    \fib␣f␣z &≡ \left(∑_{s : \Elⁿ␣a + \Elⁿ␣b} f␣s = z\right) \\
        &≃\left(∑_{i : \Elⁿ␣a} 〈∅,\widetilde{a}␣i〉 = z\right)
            + \left(∑_{j : \Elⁿ␣b} 〈\{∅\}₀,\widetilde{b}␣j〉 = z\right) \\
        &≃\left(∑_{((s , t) , p) : \fib␣〈-,-〉␣z} \left(\fib␣\widetilde{a}␣t\right) × \left(s = ∅\right)\right) \\
            &\hspace{10pt}+ \left(∑_{((s , t) , p) : \fib␣〈-,-〉␣z} \left(\fib␣\widetilde{b}␣t\right) × \left(s = \{∅\}₀\right)\right) \nonumber
\end{align} The last type is \((n-1)\)-truncated as all the components
are \((n-1)\)-truncated and the two summands are mutually exclusive
since \(∅ ≠ \{∅\}₀\). The map \(f\) therefore gives an internalisation
of the coproduct \(\Elⁿ␣a + \Elⁿ␣b\). For identity types we note that
the map \(λ␣p.␣∅ : a = b → \Vⁿ\) is \((n-1)\)-truncated as it is a
function from an \((n-1)\)-type into an \(n\)-type. So the type
\(a = b\) is internalisable.\end{proof}

Note that all the codes in \(\Vⁿ\) are constructed using \(\supⁿ\).
Therefore, the decoding of a code is definitionally equal to the type
being encoded.

As \(\Vⁿ\) is an \(n\)-type universe of \(n\)-types, a natural question
to ask is which \(n\)-types lie in the universe. For \(\V¹\) we have
seen that the classifying space of any group can be internalised. In
particular, we have an internalisation of the higher inductive type of
the circle in \(\V¹\). Using the same idea, we can internalise any
\(n\)-type in \(\V^{n+1}\).

\begin{proposition}[\agdalink{https://elisabeth.stenholm.one/univalent-material-set-theory/iterative.set.html\#20626}]

For \(n : ℕ^∞\), any \(U\)-small \(n\)-type can be internalised in
\(\V^{n+1}\).\end{proposition}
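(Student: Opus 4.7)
The plan is to exhibit an explicit $n$-truncated representation $X \hookrightarrow_n \V^{n+1}$, which by the natural analog of Proposition \ref{internalisations-Vn} for $\V^{n+1}$ yields an internalisation of $X$. Since $\V^{n+1}$ is of level $n+1$, internalisations correspond to $n$-truncated representations (rather than $(n{-}1)$-truncated ones), so we have more flexibility than when internalising in $\Vⁿ$. I will exploit this by using the most trivial candidate, namely a constant map to $∅$.

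First, I observe that $∅ : \V^{n+1}$ exists by Theorem \ref{empty-set-fixed-point}, so we may form the constant map $c : X → \V^{n+1}$ defined by $c(x) ≔ ∅$. I claim $c$ is $n$-truncated. Indeed, for any $v : \V^{n+1}$,
\begin{equation*}
\fib\,c\,v \;≡\; \sum_{x:X} (∅ = v) \;≃\; X × (∅ = v).
\end{equation*}
This is an $n$-type: $X$ is an $n$-type by hypothesis, and by Proposition \ref{Vn-is-n-type} the type $\V^{n+1}$ is an $(n{+}1)$-type, so $∅ = v$ is an $n$-type. Hence $c$ is an $n$-truncated map.

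Having produced an element $(X, c) : \T^{n+1}_U \V^{n+1}$, I define
\begin{equation*}
v_X \;≔\; \sup^{n+1}(X, c) \,:\, \V^{n+1},
\end{equation*}
using the fixed-point structure (Theorem \ref{V-n-fixed-point}). By the definitional computation rule $\El^{n+1}(\sup^{n+1}(A, f)) ≡ A$ noted after Definition \ref{def-el} (the Tarski-style decoding), we get $\El^{n+1}(v_X) ≡ X$, in particular $\El^{n+1}(v_X) ≃ X$, so $v_X$ is an internalisation of $X$.

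This proof is essentially immediate; there is no real obstacle beyond recognising that the off-by-one phenomenon (elements of an ∈-structure of level $n{+}1$ need only $n$-truncated representations, not embeddings) allows the constant map to work. For comparison, at level $n{+}1$ the map $c$ is an embedding only if $X$ is a proposition, so the same trick cannot be used to internalise arbitrary $n$-types in $\Vⁿ$ — which is consistent with the size issue noted in the remark after Proposition \ref{internalisations-Vn}.
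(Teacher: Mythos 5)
Your proof is correct and is essentially the paper's own argument: both take the constant map to $∅$, observe it is $n$-truncated because the domain is an $n$-type and the codomain $\V^{n+1}$ is an $(n+1)$-type, and then apply $\sup^{n+1}$. Your explicit fiber computation $\fib\,c\,v ≃ X × (∅ = v)$ just spells out the truncation claim the paper states in one line.
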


\begin{proof}

Let \(A : U\) be an \(n\)-type. Let \(f : A → \V^{n+1}\) be the constant
function sending any element to ∅. This is an \(n\)-truncated map as the
domain is an \(n\)-type and the codomain is an \((n+1)\)-type. The
element \(\sup^{n+1}␣(A,f)\) is thus an internalisation of \(A\) in
\(\V^{n+1}\).\end{proof}

For \(\V⁰\), the statement that any mere set has an internalisation is
essentially the axiom of wellfounded materialisation
\cite{shulman_stack_2010}. The statement that any \(n\)-type can be
internalised in \(\Vⁿ\) can thus be thought of as a higher level
generalisation of wellfounded materialisation. It amounts to being able
to equip every \(n\)-type with some higher iterative structure.

\begin{proposition}

For \(n : ℕ^∞\), the universe \(\Vⁿ\) is \textbf{not} a univalent
universe.\end{proposition}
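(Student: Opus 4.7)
The plan is to refute univalence by exhibiting two elements $a, b : \Vⁿ$ with $\Elⁿ\,a ≃ \Elⁿ\,b$ but $a ≠ b$; such a pair makes the canonical map $(a = b) → (\Elⁿ\,a ≃ \Elⁿ\,b)$ (which sends $\refl$ to $\idequiv$) fail to be an equivalence, since the codomain is inhabited while the domain is not. My candidates are $a ≔ \{∅\}_0$ and $b ≔ \{\{∅\}_0\}_0$, both constructible in $\Vⁿ$ by combining Theorem \ref{empty-set-fixed-point} with Theorem \ref{unordered-tupling-fixed-point}; the latter applies because $\Vⁿ$ is locally $U$-small by Proposition \ref{V-n-locally-small}.

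First I would check that the decoded types agree: by the defining equivalence of $0$-unordered $1$-tupling, $\Elⁿ\,\{x\}_0 ≃ ∑_{z : \Vⁿ} \|z = x\|_{-1}$, which is contractible for any $x : \Vⁿ$, so in particular $\Elⁿ\,a ≃ \unittype ≃ \Elⁿ\,b$. Next I would establish $∅ ≠ \{∅\}_0$ as a preparatory lemma: if they were equal, extensionality would give $(∅ ∈ ∅) ≃ (∅ ∈ \{∅\}_0)$, which is absurd since the left side is empty while the right side is inhabited. Finally, assuming $a = b$, extensionality would yield $(∅ ∈ a) ≃ (∅ ∈ b)$, which unfolds to $\|∅ = ∅\|_{-1} ≃ \|∅ = \{∅\}_0\|_{-1}$; the left side is inhabited, so the right side must be as well, contradicting $∅ ≠ \{∅\}_0$. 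Hence $a ≠ b$, and $\Vⁿ$ cannot be univalent.

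The main obstacle is only notational bookkeeping --- chaining the repeated invocations of extensionality and unfolding the defining property of the $0$-truncated singleton. Conceptually, the argument is a direct witness of the ``off-by-one'' phenomenon emphasised in the introduction: the $∈$-structure on $\Vⁿ$ distinguishes elements that the decoding $\Elⁿ$ conflates, which is precisely how $\Vⁿ$ manages to be an $n$-type universe of $n$-types rather than sitting one level higher as a univalent universe would have to, by the Kraus--Sattler bound \cite{kraus_truncation_2015}.
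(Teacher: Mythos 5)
Your proposal is correct and follows essentially the same route as the paper, which also exhibits $\{∅\}_0$ and $\{\{∅\}_0\}_0$ as two distinct internalisations of the unit type. You merely spell out the details the paper leaves implicit, namely the computation of the decodings and the extensionality argument showing $\{∅\}_0 ≠ \{\{∅\}_0\}_0$.
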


\begin{proof}

The quickest way to see this is to note that \(\Vⁿ\) contains (at least)
two distinct internalisations of the unit type: \(\{∅\}₀\) and
\(\{\{∅\}₀\}₀\) (but there are of course many more). As these both
decode to the unit type but are not equal, univalence fails.\end{proof}

From the reasoning above it follows that \(\Vⁿ\) does not even have
partial univalence \citep{sattler_partial_2020} (univalence restricted
to \(k\)-types, for some \(k\)). The reason why univalence fails is
essentially that the decoding of an element in \(\Vⁿ\) only returns the
indexing type of the children to the root, seeing the element as a tree,
regardless of the rest of the tree. Thus, several trees can internalise
the same type, but be distinct as trees, i.e.~as elements of the
universe.

\hypertarget{conclusion-and-future-work}{%
\section{\texorpdfstring{Conclusion and future
work\label{section-conclusion}}{Conclusion and future work}}\label{conclusion-and-future-work}}

In this paper we defined the concept of an ∈-structure and gave
generalisations of the axioms of constructive set theory to higher level
structures. As instances of such higher level ∈-structures, we
generalised the construction of the type of iterative sets to obtain the
initial algebras, \(\Vⁿ\), to the functors \(\Poⁿ_U\). These were shown
to model the ∈-structure properties at the same level or lower.
Moreover, \(\Vⁿ\) was shown to be an \(n\)-type universe of \(n\)-types
with definitional decoding.

\hypertarget{related-work}{%
\subsection{Related work}\label{related-work}}

Gallozzi \cite{gallozzi} constructs a family of interpretations of set
theory in homotopy type theory indexed on two type levels, \(k\) and
\(h\). His interpretation is a generalisation of Aczel's model. In one
direction, he generalises by taking as the type of sets Aczel's W-type,
but over the small universe of \(k\)-types, rather than the whole
universe \(U\). In the other direction, he uses \(h\)-truncated Σ-types,
while Aczel uses untruncated Σ-types. Gallozzi then shows that this
models Aczel's CZF \cite{aczel1978} and Myhill's CST
\cite{myhill_constructive_1975}.

These models are setoid models\hspace{1pt}---\hspace{1pt}equality is not
interpreted as the identity type, as opposed to our models. They are not
∈-structures in our sense either, in that our version of extensionality
does not hold. Of course, he shows that his interpretation of
extensionality holds. The \(k\)-level W-type used by Gallozzi is a
\((k+1)\)-type, and it is a subtype of our type \(\V^{k+1}\), so any set
in that universe is also a set in our universe.

The HoTT Book model of set theory is equivalent to the iterative sets
model. It is not clear, however, how to generalise that construction as
we have done with the iterative sets, as this would require more complex
higher inductive types.

\hypertarget{future-work}{%
\subsection{Future work}\label{future-work}}

As we have seen, the universe \(\Vⁿ\) embeds into the universe
\(\V^{n+1}\). So the higher universes contain the types of the lower
universes. However, it remains to see which new types appear at each
level. For instance, we showed that the higher inductive type of the
circle, and more generally the classifying space of any group, lies in
\(\V¹\). For the universe \(\Vⁿ\) one would like to construct an
internalisation of (at least some) proper \(n\)-types. This amounts to
giving the type some kind of higher iterative structure, which would be
an interesting direction for further study.

\printbibliography
\end{document}